
\documentclass{article}%
\usepackage[colorlinks,bookmarks=true]{hyperref}
\usepackage{amsmath}
\usepackage{graphicx}
\usepackage{amsfonts}
\usepackage{amssymb}%
\setcounter{MaxMatrixCols}{30}
\providecommand{\U}[1]{\protect\rule{.1in}{.1in}}
\newtheorem{theorem}{Theorem}[section]
\newtheorem{mainthm}{Main Theorem}

\newtheorem{corollary}{Corollary}[section]

\newtheorem{lemma}{Lemma}[section]
\newtheorem{proposition}{Proposition}[section]
\newtheorem{remark}{Remark}[section]

\newenvironment{proof}[1][Proof]{\textbf{#1.} }{\ \rule{1em}{1em}}
\begin{document}

\title{Stability of traveling waves of nonlinear Schr\"{o}dinger equation with
nonzero condition at infinity}
\author{Zhiwu Lin\\School of Mathematics\\Georgia Institute of Technology\\Atlanta, GA 30332, USA
\and Zhengping Wang\\Department of Mathematics, School of Science\\Wuhan University of Technology\\Wuhan 430070,China
\and Chongchun Zeng\\School of Mathematics\\Georgia Institute of Technology\\Atlanta, GA 30332, USA\\and \\Chern Institute of Mathematics \\Nankai University\\Tianjin, 300071, China }
\date{}
\maketitle

\begin{abstract}
We study the stability of traveling waves of nonlinear Schr\"{o}dinger
equation with nonzero condition at infinity obtained via a constrained
variational approach. Two important physical models are Gross-Pitaevskii (GP)
equation and cubic-quintic equation. First, under a non-degeneracy condition
we prove a sharp instability criterion for $3$D traveling waves of (GP), which
had been conjectured in the physical literature. This result is also extended
for general nonlinearity and higher dimensions, including 4D (GP) and 3D
cubic-quintic equations. Second, for cubic-quintic type nonlinearity, we
construct slow traveling waves and prove their nonlinear instability in any
dimension. For dimension two, the non-degeneracy condition is also proved for
these slow traveling waves. For general traveling waves without vortices (i.e.
nonvanishing) and with general nonlinearity in any dimension, we find a sharp
condition for linear instability. Third, we prove that any 2D traveling wave
of (GP) is transversally unstable and find the sharp interval of unstable
transversal wave numbers. Near unstable traveling waves of above all cases, we
construct unstable and stable invariant manifolds.

\end{abstract}

\section{Introduction}

Consider the Gross-Pitaevskii (GP) equation
\begin{equation}
i\frac{\partial u}{\partial t}+\Delta u+(1-|u|^{2})u=0,\ (t,x)\in
\mathbb{R}\times\mathbb{R}^{3}, \label{eqn-gp}%
\end{equation}
where $u$ satisfies the boundary condition $|u|\rightarrow1$ when
$|x|\rightarrow\infty$. Equation (\ref{eqn-gp}), with the considered non-zero
conditions at infinity, arises in lots of physical problems such as
superconductivity, superfluidity in Helium II, and Bose-Einstein condensate
(e.g. \cite{berloff-roberts-review01} \cite{gp-physical03}). On a formal
level, the Gross-Pitaevskii equation is a Hamiltonian PDE. The conserved
Hamiltonian is the energy defined by
\[
E(u)=\frac{1}{2}\int_{\mathbf{R}^{3}}|\nabla u|^{2}dx+\int_{\mathbf{R}^{3}%
}\frac{1}{4}(1-|u|^{2})^{2}dx
\]
and the energy space is defined by
\[
X_{0}=\{u\in H_{loc}^{1}(\mathbf{R}^{3}):E(u)<+\infty\}.
\]
The momentum
\[
\vec{P}\left(  u\right)  =\frac{1}{2}\int_{\mathbf{R}^{3}}\left\langle i\nabla
u,u-1\right\rangle
\]
is also formally conserved, due to the translation invariance of (GP). We
denote
\begin{equation}
P\left(  u\right)  =\frac{1}{2}\int_{\mathbf{R}^{3}}\langle i\partial_{x_{1}%
}u,u-1\rangle\ dx=-\int_{\mathbf{R}^{3}}\left(  u_{1}-1\right)  \partial
_{x_{1}}u_{2}dx. \label{defn-momentum-classical}%
\end{equation}
to be the first component of $\vec{P}$. The global existence of Cauchy problem
for (GP) in the energy space $X_{0}$ was proved in \cite{gerard-cauchy}
\cite{Gerard1-energy}. Some studies on the asymptotic behavior of solution to
\eqref{eqn-gp} can be found in, for example, \cite{GNT07, GNT09}.

Traveling waves are solutions to (GP) of the form $u(t,x)=U_{c}(x-ce_{1}t)$,
where $e_{1}=(1,0,0)$ and $U_{c}$ satisfies the equation
\begin{equation}
-ic\partial_{x_{1}}U_{c}+\Delta U_{c}+(1-|U_{c}|^{2})U_{c}=0.
\label{eqn-TW-gp}%
\end{equation}
Such traveling waves of finite energy play an important role in the dynamics
of the Gross-Pitaevskii equation. In a series of papers including
\cite{jones-et-stability} \cite{jones-et-symmetry}, Jones, Putterman and
Roberts used formal expansions and numerics to construct traveling waves and
studied their properties for both $2$D and $3$D (GP). For $3$D, they found a
branch of traveling waves with the travel speed in the subsonic interval
$\left(  0,\sqrt{2}\right)  $. These traveling waves tend to a pair of vortex
rings when $c\rightarrow0\ $and to solitary waves of the
Kadomtsev-Petviashvili (KP) equation when $c\rightarrow\sqrt{2}$. Starting in
late 1990s \cite{b-s-1998}, B\'{e}thuel and Saut initiated a rigorous
mathematical study of the program of Jones, Putterman and Roberts. Since then,
there have been lots of mathematical study on this subject. We refer to the
survey \cite{B-G-S} and two recent papers (\cite{Maris-annal}
\cite{chiron-marisII-12}) on the existence and properties of traveling waves
of (GP). In particular, the existence of 3D traveling waves in the full
subsonic range $\left(  0,\sqrt{2}\right)  $ was proved in \cite{Maris-annal};
non-existence of supersonic and sonic traveling waves was shown in
\cite{Maris-non-existence}; symmetry, decay and regularity of both 2D and 3D
traveling waves were studied in \cite{Gravejat} \cite{B-G-S}. However, the
stability and dynamics of these traveling waves have not been well studied.
Recently, Chiron and Maris (\cite{chiron-marisII-12}) constructed both 2D and
3D traveling waves of (GP) by minimizing the energy under the constraint of
fixed momentum. They showed the compactness of the minimizing sequence and as
a corollary the orbital stability of these traveling waves were obtained. But,
it is not clear what the range of traveling speeds these stable traveling
waves cover. Moreover, for 3D (GP) it is known that only part of the traveling
waves branch could be constructed as energy minimizers subject to fixed momentum.

In the physical literature (\cite{jones-et-stability}
\cite{berloff-roberts-X-stability}), the following linear stability criterion
for 3D traveling waves was conjectured based on numerics and heuristic
arguments: there is linear stability of the branch of traveling waves $U_{c}$
satisfying $\frac{dP\left(  U_{c}\right)  }{dc}>0$, commonly referred as the
lower branch,
and linear instability on the branch with $\frac{dP\left(  U_{c}\right)  }%
{dc}<0$, commonly referred as the upper branch. More specifically, numerical
evidences (\cite{jones-et-stability} \cite{berloff-roberts-X-stability})
suggested that there exists $c^{\ast}\in\left(  0,\sqrt{2}\right)  $ such that
$\frac{dP\left(  U_{c}\right)  }{dc}>0$ for $c\in\left(  0,c^{\ast}\right)  $
and $\frac{dP\left(  U_{c}\right)  }{dc}<0$ for $c\in\left(  c^{\ast},\sqrt
{2}\right)  $. Here, our definition of $P\left(  u\right)  \ $follows the
notation in \cite{Maris-annal} and differs with that of
(\cite{jones-et-stability} \cite{berloff-roberts-X-stability}) by a negative
sign. In this paper, we rigorously justify this stability criterion, under a
non-degeneracy condition (\ref{assumption-NDG}) or its cylindrical symmetric
version (\ref{assumption-NDG-s}). Roughly, we showed the following main
theorem for \eqref{eqn-GP-generalized}, a more general (than \eqref{eqn-gp})
nonlinear Schr\"odinger equation with non-vanishing condition at infinity.

\begin{mainthm}
Let $0<c_{0}<\sqrt{2}$ and $U_{c_{0}}$ be a traveling wave solution of
(\ref{eqn-GP-generalized}) radial in $(x_{2}, x_{3})$ directions constructed
in \cite{Maris-annal}.

\begin{itemize}
\item Suppose the nonlinearity $F$ in \eqref{eqn-GP-generalized} satisfies
(F1-2) and a non-degeneracy condition \eqref{assumption-NDG}
holds. Then for $c$ in a neighborhood of $c_{0}$, there exists a locally
unique $C^{1}$ family of traveling waves $U_{c}$. If, in addition, $U_{c}$
satisfies $\frac{\partial P(U_{c})}{\partial c}|_{c=c_{0}}>0\ $, then the
traveling wave $U_{c_{0}}$ is orbitally stable in the energy space $X_{0}$.

\item Suppose $F \in C^{5}$ and a cylindrical version of non-degeneracy
condition \eqref{assumption-NDG-s} holds. Then for $c$ in a neighborhood of
$c_{0}$, there exists a $C^{1}$ family of traveling waves $U_{c}$ locally
unique in cylindrically symmetric function spaces. If, in addition, $U_{c}$
satisfies $\frac{\partial P(U_{c})}{\partial c}|_{c=c_{0}}<0\ $, the
linearized equation at $U_{c_{0}}$ has an unstable eigenvalue and locally
$U_{c_{0}}$ has a 1-dim $C^{2}$ unstable manifold and a 1-dim $C^{2}$ stable
manifold, which yields the nonlinear instability.
\end{itemize}
\end{mainthm}

Here assumptions (F1-2) are given in Subsection \ref{SS:GNLS}.
The existence of the local $C^{1}$ family of traveling waves are due to the
Implicit Function Theorem based on the non-degeneracy assumption, see Theorem
\ref{thm-continuation-3d}. We refer to \textbf{Theorems \ref{thm:stability}},
\textbf{\ref{thm:general-stability}}, \textbf{\ref{thm:invariant manifold}}
and \textbf{Corollary \ref{C:critical}} for more precise statements on the
stability/instability, where the exact meaning of the orbital stability is
also given. In fact we do not have to limit ourselves to those traveling waves
constructed in \cite{Maris-annal}. The main properties on $U_{c}$ we really
need is that, as critical points of the energy-momentum functional $E_{c}
\triangleq E+cP$, the Hessian $E_{c}^{\prime\prime}$ of $E_{c}$ at $U_{c}$ has
exactly one negative direction, in addition to the non-degeneracy
\eqref{assumption-NDG} of $E_{c}^{\prime\prime}$.

Condition (\ref{assumption-NDG}) states that the kernel of the
the Hessian $E_{c}^{\prime\prime}$ of the energy-momentum functional $E_{c}$
is spanned by the translation modes $\left\{  \partial_{x_{i}}U_{c}\right\}  $
only. Equivalently, the linearization of the (elliptic) traveling wave
equation
has only solutions of translation modes. Such condition is commonly assumed in
the stability analysis of dynamical systems (e.g. \cite{gss87} \cite{gss90}).
It is a nontrivial task to confirm the non-degeneracy condition for a given
traveling wave associated to a specific nonlinearity, which involves mainly
the analysis of the linearized elliptic equation of traveling waves. In
Appendix 2, we verify such kind of condition in certain cases.

\begin{remark}
Assume $U_{c}$ is a family of traveling waves $C^{1}$ in $c$.
If the stability sign condition $\frac{\partial P(U_{c})}{\partial
c}|_{c=c_{0}}\ge0\ $ is satisfied, actually we can still obtain the spectral
stability of $U_{c_{0}}$ even if the non-degeneracy condition
(\ref{assumption-NDG}) is not satisfied. Here the spectral stability means
that the spectrum of the linearized equation at $U_{c_{0}}$ is contained in
the imaginary axis on the complex plane. This is a consequence of the results
in a more general setting in a forthcoming paper \cite{LZ15}. However, the
linear stability is not guaranteed as linear solutions may grow like $O(t)$.

\end{remark}

We give a brief description of key ideas in the proof. The troubles from the
non-zero condition at infinity can be seen from the linearized operator, which
is of the form $JL_{c}$, where $J=$ $\left(
\begin{array}
[c]{cc}%
0 & 1\\
-1 & 0
\end{array}
\right)  \ $and $L_{c}$ (defined by (\ref{operator-Lc})) is the second
variation operator of the Hamiltonian $E+cP$. When $\left\vert x\right\vert
\rightarrow\infty$, the operator $L_{c}$ has the asymptotic form
\[
\left(
\begin{array}
[c]{cc}%
-\Delta+2 & -c\partial_{x_{1}}\\
c\partial_{x_{1}} & -\Delta
\end{array}
\right)  ,
\]
which implies that the essential spectrum of $L_{c}\ $is $[0,+\infty)$ for any
$c\in\left(  0,\sqrt{2}\right)  $. Therefore, there is no spectral gap for
$L_{c}$ between the discrete spectrum (negative and zero eigenvalues) and the
rest of the spectrum. So we cannot use the standard stability theory for
Hamiltonian PDEs as in \cite{gss87} \cite{gss90}, which requires such a
spectral gap condition. To overcome this issue, we observe that the quadratic
form $\left\langle L_{c}\cdot,\cdot\right\rangle $ has the right spectral
structure in the space $X_{1}=H^{1}\left(  \mathbf{R}^{3}\right)  \times
\dot{H}^{1}\left(  \mathbf{R}^{3}\right)  $. More precisely, the quadratic
form of $L_{c}$ is uniformly positive definite modulo a finite dimensional
negative and zero modes. However, another issue arises since the operator
$J^{-1}=-J$ does not map $X_{1}$ to its dual $\left(  X_{1}\right)  ^{\ast
}=H^{-1}\left(  \mathbf{R}^{3}\right)  \times\dot{H}^{-1}\left(
\mathbf{R}^{3}\right)  $. The boundedness of $J^{-1}:X_{1}\rightarrow\left(
X_{1}\right)  ^{\ast}$ is required in \cite{gss87} \cite{gss90} and is true
for Schr\"{o}dinger equation with vanishing condition where $X_{1}=H^{1}\times
H^{1}$. We use a new argument to avoid using the boundedness of $J^{-1}$ and
prove the linear instability criterion $\frac{dP\left(  U_{c}\right)  }{dc}<0$
(Proposition \ref{prop-linear-insta}) under the non-degeneracy condition
(\ref{assumption-NDG-s}).

To study the nonlinear dynamics, we use a coordinate system of the (non-flat)
energy space $X_{0}$ over the Hilbert space $X_{1}$. More precisely, there
exists a bi-continuous mapping $\psi:X_{1}\rightarrow X_{0}$ as defined in
(\ref{coordinate-mapping}), which was first introduced in
\cite{Gerard1-energy} to understand the structure of the energy space $X_{0}$.
The nonlinear stability on the lower branch with $\frac{dP\left(
U_{c}\right)  }{dc}>0\ $is proved by the Taylor expansions of Hamiltonian
functional $\left(  E+c\tilde{P}\right)  \left(  \psi\left(  w\right)
\right)  $ for $w\in X_{1}$ near $w_{c}$, where $U_{c}=\psi\left(
w_{c}\right)  $ and $\tilde{P}\left(  u\right)  $ is the extended momentum
(defined in (\ref{defn-momentum-new})) in the energy space $X_{0}$. The proof
of stability (Theorem \ref{thm:stability}) implies that the stable traveling
waves are local energy minimizers with a fixed momentum.

To study the nonlinear dynamics near the linearly unstable traveling waves on
the upper branch, we rewrite the (GP) equation in terms of the coordinate
function $w\in X_{1},\ $where $u=\psi\left(  w\right)  $ satisfies the (GP)
equation (\ref{eqn-gp}). We construct stable (unstable) manifolds near
unstable traveling waves by this new equation for $w\in X_{3}=H^{3}\times
\dot{H}^{3}$, on which the nonlinear term of the $w-$equation is shown to be
semilinear in Appendix 1. The linearized operator for $w$ is similar to the
operator $JL_{c}$, that is, of the form $K^{-1}JL_{c}K$, where $K$ is an
isomorphism of $X_{1}$ defined in (\ref{definition-K}). Thus the study of the
linearized $w$ equation is reduced to the study of the semigroup $e^{tJL_{c}}%
$. To show the existence of unstable (stable) manifolds, first we establish an
exponential dichotomy estimate for $e^{tJL_{c}}$ in $X_{3}$. That is, to
decompose $X_{3}$ into the direct sum of two invariant subspaces, on one the
linearized solutions have an exponential growth and on the other one have
strictly slower growth. It is highly nontrivial to get such exponential
dichotomy for $e^{tJL_{c}}\ $from the spectra of $JL_{c}$ due to the issue of
spectral mapping (see Remark \ref{rmk-dichotomy-proof}). In this paper, we
develop a new approach to prove the exponential dichotomy of $e^{tJL_{c}}$,
which might be useful for very general Hamiltonian PDEs. The idea is very
simple and natural. We observe that the quadratic form of $\left\langle
L_{c}u\left(  t\right)  ,v\left(  t\right)  \right\rangle $ is invariant for
any two linearized solutions $u\left(  t\right)  $ and $v\left(  t\right)  $.
It implies that the orthogonal complement (in the inner product $\left\langle
L_{c}\cdot,\cdot\right\rangle $) to the unstable and stable modes defines a
subspace invariant under the linearized flow $e^{tJL_{c}}$. The quadratic form
$\left\langle L_{c}\cdot,\cdot\right\rangle $ restricted to above defined
space is shown to be positive definite modulo the translation modes. By using
this positivity estimate and the invariance of $\left\langle L_{c}\cdot
,\cdot\right\rangle $ under $e^{tJL_{c}}$, the solutions on this subspace are
shown to have at most polynomial growth. Therefore it serves as the invariant
center subspace of the linearized flow and the exponential trichotomy of the
linearized flow between the stable, unstable, and the center subspaces is
established. Consequently, the existence of unstable (stable) manifolds
follows from the standard invariant manifold theory for semilinear equations
(e.g. \cite{bates-jones88, CL88}). In a future work in preparation, we will
construct center manifolds near the orbital neighborhood of the unstable
traveling waves in the energy space $X_{0}$. The positivity of $L_{c}$ on the
center space (modulo the translation modes) then implies the orbital stability
and local uniqueness of the center manifold.

The above study of stability of traveling waves can be generalized to
nonlinear Schr\"{o}dinger equation with general nonlinear terms or in higher
dimensions. Consider
\begin{equation}
i\frac{\partial u}{\partial t}+\Delta u+F(|u|^{2})u=0,
\label{eqn-GP-generalized}%
\end{equation}
where $(t,x)\in\mathbf{R}\times\mathbf{R}^{n}$ $\left(  n\geq3\right)  \ $and
$u$ satisfies the boundary condition $|u|\rightarrow1$ as $|x|\rightarrow
\infty$. Assume that the nonlinear term $F\left(  u\right)  $ satisfies the
assumptions (F1)-(F2) or (F1)-(F2') in Section 2.5 for $n=3$ and in Section 6
for $n\geq4$. These include the 4D (GP) and 3D cubic-quintic equations, which
have the critical nonlinearity. Then the sharp linear instability criterion
$\frac{d}{dc}P\left(  U_{c}\right)  <0$ can be proved in the same way (see
\textbf{Theorems \ref{thm:general-stability}},
\textbf{\ref{thm:invariant manifold}} and \textbf{Corollary \ref{C:critical}%
}), by studying the quadratic form $\left\langle L_{c}\cdot,\cdot\right\rangle
$ in the same space $X_{1}=H^{1}\left(  \mathbf{R}^{n}\right)  \times\dot
{H}^{1}\left(  \mathbf{R}^{n}\right)  $ for $n\geq4$. The unstable (stable)
manifolds can then be constructed near unstable traveling waves by using the
equation (\ref{eqn-GP-generalized}). To prove orbital stability when $\frac
{d}{dc}P\left(  U_{c}\right)  >0$\ for dimensions $n\geq4$, a coordinate
mapping $\psi\ $relating $X_{1}$ and the energy space $X_{0}\ $is required.
For $n=4,~$such a mapping is simply given by $\psi\left(  w\right)
=1+w,\ w\in X_{1}$ and the global existence of $4$D (GP) was recently shown in
\cite{killip-et}. We refer to Section 6 for more details on the extensions.

The above approach does not work for dimensions $n=1,2$. First, the quadratic
form $\left\langle L_{c}\cdot,\cdot\right\rangle $ is not well-defined in the
space $X_{1}=H^{1}\left(  \mathbf{R}^{n}\right)  \times\dot{H}^{1}\left(
\mathbf{R}^{n}\right)  $ for $n=1,2$. Second, the energy space $X_{0}$ cannot
be written as a metric space homeomorphism to $X_{1}$, due to the oscillations
of functions in $X_{0}$ at infinity (see \cite{Gerard1-energy}). However, when
the traveling wave $U_{c}\ $has no vortices, that is, $U_{c}\neq0$, we can
study the linear instability of $U_{c}$ by the following hydrodynamic
formulation. By the Madelung transformation $u=\sqrt{\rho}e^{i\theta}$, the
equation (\ref{eqn-GP-generalized}) becomes
\begin{equation}%
\begin{cases}
\theta_{t}+|\nabla\theta|^{2}-\frac{1}{2}\frac{1}{\rho}\Delta\rho+\frac{1}%
{4}\frac{1}{\rho^{2}}|\nabla\rho|^{2}-F(\rho)=0\\
\rho_{t}+2\nabla.(\rho\nabla\theta)=0
\end{cases}
. \label{hrdrodynamic nls}%
\end{equation}
Define the velocity $\vec{v}=\nabla\theta$. Then the first equation of
\eqref{hrdrodynamic nls}
is the Bernoulli equation for the vector potential $\theta$ and the second
equation is the continuity equation for the density $\rho$. Define the energy
functional
\begin{equation}
E(\rho,\theta)=\frac{1}{2}\int_{\mathbf{R}^{n}}\left(  \frac{\left\vert
\nabla\rho\right\vert ^{2}}{4\rho}+\rho|\nabla\theta|^{2}+V(\rho)\right)
dx\ . \label{energy-hrdrodynamic}%
\end{equation}
The equation
\eqref {hrdrodynamic nls} is also formally Hamiltonian as%
\[
\partial_{t}\left(
\begin{array}
[c]{c}%
\rho\\
\theta
\end{array}
\right)  =JE^{\prime}(\rho,\theta).
\]
Linearizing above equation at the traveling wave $\left(  \rho_{c},\theta
_{c}\right)  $, we get
\begin{equation}
\partial_{t}\left(
\begin{array}
[c]{c}%
\rho\\
\theta
\end{array}
\right)  =JM_{c}\left(
\begin{array}
[c]{c}%
\rho\\
\theta
\end{array}
\right)  , \label{eqn-linearized-madelung}%
\end{equation}
where $J$ is as before and $M_{c}$ is defined in (\ref{defn-M}). We show that
for any dimension $n\geq1$, the quadratic form $\left\langle M_{c}\cdot
,\cdot\right\rangle \ $\ has the right spectral structure for $(\rho
,\theta)\in H^{1}\left(  \mathbf{R}^{n}\right)  \times\dot{H}^{1}\left(
\mathbf{R}^{n}\right)  $, that is, it is positive definite modulo a
one-dimensional negative mode and translation modes. Then by the same proof as
in the $3$D (GP) case, the linear instability criterion $\frac{d}{dc}P\left(
U_{c}\right)  <0\ $is obtained under the non-degeneracy assumption (see
\textbf{Proposition \ref{P-Linear-Insta-nV}}). As an example, we consider the
cubic-quintic equation with $F(s)=-\alpha_{1}+\alpha_{3}s-\alpha_{5}s^{2}$,
where $\alpha_{1},\alpha_{3},\alpha_{5}$ are positive constants satisfying%
\begin{equation}
\frac{3}{16}<\!\!\alpha_{1}\alpha_{5}/\alpha_{3}^{2}\!\!<\frac{1}{4}.
\label{condition-3-5}%
\end{equation}
This equation has many interpretations in physics. For example, in the context
of a Boson gas, it describes two-body attractive and three-body repulsive
interactions (\cite{ba89}, \cite{ba93}). Different from the (GP) equation, the
cubic-quintic type equations have unstable stationary solutions for any
dimension $n\geq1$ (\cite{ba89} \cite{ba93} \cite{de95}). First, by using the
hydrodynamic formulation we show the existence of traveling waves for
cubic-quintic type equations with small traveling speeds (\textbf{Theorem
\ref{thm-existence-slow}}) in any dimension $n\geq2$. This gives a simplified
proof of the previous results on the existence of slow traveling waves in the
work of Maris \cite{maris-4d-slow} for $n\geq4$ and in an unpublished
manuscript of Lin \cite{lin-note-99} for $n=2,3$. Moreover, our proof implies
the local uniqueness and differentiability of the traveling wave branch. For
$n=2,$ we are also able to show that the non-degeneracy condition
(\ref{assumption-NDG-s}) is satisfied for these slow traveling waves (see
Appendix 2 and Proposition \ref{prop-quadratic-madelung}). Then, we show that
the slow traveling waves are linearly unstable (\textbf{Theorem
\ref{thm-instability-madelung}}). This follows from the computation of the
sign of $\frac{dP\left(  U_{c}\right)  }{dc}|_{c=0}$ for stationary solutions.
To construct unstable (stable) manifolds, it is not convenient to use the
hydrodynamic formulation (\ref{hrdrodynamic nls}) which has the loss of
derivative in the nonlinear terms. Our strategy is to construct unstable
(stable) manifolds by the original equation (\ref{eqn-GP-generalized}), based
on the linear exponential dichotomy in $\left(  H^{k}\left(  \mathbf{R}%
^{n}\right)  \right)  ^{2}\ $which is first obtained in the $(\rho,\theta)$
coordinates. This is possible due to the observation that the unstable
(stable) eigenfunctions do have the $L^{2}$ estimate for $\theta$.

Lastly, we show that any 2D traveling wave of (GP) is transversely unstable.
In \textbf{Theorems \ref{thm-linear-transveral-instability}} and
\textbf{\ref{thm-transversal-mfld-2d}}, we find the sharp range of transverse
wave numbers for linear instability, and construct unstable and stable
manifolds under 3D perturbations. For the proof, we observe that the
linearized problem with transversal wave number $k$ is reduced to the study of
the spectrum of the operator $J\left(  L_{c}+k^{2}\right)  $, where $L_{c}$ is
defined by (\ref{operator-Lc-g}) for $2$D traveling waves. For $k>0$, the
spectrum of $L_{c}+k^{2}$ has the gap structure in the usual space $\left(
H^{m}\left(  \mathbf{R}^{2}\right)  \right)  ^{2}$ and thus the proof of
linear instability follows by that of Proposition \ref{prop-linear-insta} in a
much simpler version. In the physical literature (\cite{kuznetsov-rasmussen95}
\cite{berloff-roberts-crow}), the transversal instability of 2D traveling
waves of (GP) was studied by the asymptotic expansions and numerics, in the
long wavelength (or small wave number) limit.

This paper is organized as follows. In Section 2, we study the spectral
structures of the second variation of energy-momentum functional and then
prove the orbital stability on the lower branch of $3$D traveling waves of
(GP). In Section 3, we prove linear instability of 3D traveling waves on the
upper branch and then construct unstable (stable) manifolds. Section 4 is to
show the transversal instability of 2D traveling waves of (GP). In Section 5,
we construct slow traveling waves of cubic-quintic type equations and then
prove their instability. Section 6 extends the main results to other
dimensions and more general nonlinear terms. In the appendix, we give the
proof of several technical Lemmas.

We list some notations and function spaces used in the paper. For any integer
$k\geq1$, $n\geq1$, denote the space
\[
\dot{H}^{k}\left(  \mathbf{R}^{n}\right)  =\left\{  u\ |~\nabla u,\cdots
,\nabla^{k}u\in L^{2}\left(  \mathbf{R}^{n}\right)  \right\}  ,
\]
with the norm $\left\Vert u\right\Vert _{\dot{H}^{k}}=\sum_{j=1}^{k}\left\Vert
\nabla^{j}u\right\Vert _{L^{2}}$. For $n\geq3,$ by Sobolev embedding we can
impose the condition $u\in L^{\frac{2n}{n-2}}\left(  \mathbf{R}^{n}\right)  $
when $u\in\dot{H}^{1}\left(  \mathbf{R}^{n}\right)  $. Let $H_{\mathbf{R}}%
^{k}(\mathbf{R}^{n})$ $\left(  \dot{H}_{\mathbf{R}}^{k}\left(  \mathbf{R}%
^{n}\right)  \right)  \ $be all the real valued functions in $H^{k}%
(\mathbf{R}^{n})\ \left(  \dot{H}^{k}\left(  \mathbf{R}^{n}\right)  \right)
$. Let $X_{k}\left(  \mathbf{R}^{n}\right)  =H_{\mathbf{R}}^{k}(\mathbf{R}%
^{n})\times\dot{H}_{\mathbf{R}}^{k}(\mathbf{R}^{n})$ be equipped with the
norm
\[
\Vert w\Vert_{X_{k}}=\Vert w_{1}\Vert_{H^{k}}+\Vert w_{2}\Vert_{\dot{H}^{k}%
},\ w=\left(  w_{1},w_{2}\right)  \in X_{k}.
\]
In case of no confusion, we write $X_{k}\left(  \mathbf{R}^{n}\right)
,\ H_{\mathbf{R}}^{k}(\mathbf{R}^{n})$ $\left(  \dot{H}_{\mathbf{R}}%
^{k}\left(  \mathbf{R}^{n}\right)  \right)  \ $simply as $X_{k}\mathbf{,\ }%
H^{k}\ \left(  \dot{H}^{k}\right)  $. For $n\geq2,\ $denote $L_{r_{\bot}}%
^{2},\dot{H}_{r_{\bot}}^{-1}$ and $H_{r_{\bot}}^{k}\left(  \dot{H}_{r_{\bot}%
}^{k}\right)  $ to be the cylindrically symmetric subspaces of $L^{2},\dot
{H}^{-1}\left(  \text{the dual of }\dot{H}^{1}\right)  $ and $H^{k}\left(
\dot{H}^{k}\right)  $. A function $u$ is cylindrically symmetric if
$u=u\left(  x_{1},r_{\bot}\right)  \ $with $x_{\bot}=\left(  x_{2}%
,\cdots,x_{n}\right)  ,\ r_{\bot}=|x_{\bot}|$. Denote $X_{k}^{s}$ to be the
cylindrically symmetric subspaces of $X_{k}$, that is, $X_{k}^{s}=$
$H_{r_{\bot}}^{k}\times\dot{H}_{r_{\bot}}^{k}$.

\section{Orbital stability of lower branch traveling waves}

In this section, we prove nonlinear orbital stability for $3$D traveling waves
obtained via a constrained variational approach on the lower branch with
$\frac{d}{dc}P\left(  U_{c}\right)  >0$. The proof is to expand the
energy-momentum functional near the traveling wave and show that the second
variation is positive definite and dominant. A corollary of this proof is that
the stable traveling waves are local energy minimizers with fixed momentum. We
give the detailed proof for (GP) and then discuss briefly the extensions for
general nonlinearity.

The energy functional of (GP)
\[
E\left(  u\right)  =\frac{1}{2}\int_{\mathbf{R}^{3}}\left[  \left\vert \nabla
u\right\vert ^{2}+\frac{1}{2}\left(  \left\vert u\right\vert ^{2}-1\right)
^{2}\right]  dx
\]
is defined on the energy space
\begin{equation}
X_{0}=\left\{  u\in H_{\text{loc}}^{1}\left(  \mathbf{R}^{3};\mathbf{C}%
\right)  \ |\ \nabla u\in L^{2}\left(  \mathbf{R}^{3}\right)  ,\left\vert
u\right\vert ^{2}-1\in L^{2}\left(  \mathbf{R}^{3}\right)  \right\}  .
\label{defn-energy-space}%
\end{equation}
By \cite{Gerard1-energy}, for any $u\in X_{0}$, we can write $u=c\left(
1+v\right)  $ where $c\in\mathbb{S}^{1}$ and $v\in\dot{H}^{1}\left(
\mathbf{R}^{3}\right)  $. Given $u=c\left(  1+v\right)  $ and $\tilde
{u}=\tilde{c}\left(  1+\tilde{v}\right)  $, we define the natural distance in
$X_{0}\ $by
\begin{equation}
d_{1}\left(  u,\tilde{u}\right)  =\left\vert c-\tilde{c}\right\vert
+\left\Vert \nabla v-\nabla\tilde{v}\right\Vert _{L^{2}}+\left\Vert \left\vert
v\right\vert ^{2}+2\operatorname{Re}v-\left\vert \tilde{v}\right\vert
^{2}-2\operatorname{Re}\tilde{v}\right\Vert _{L^{2}} . \label{distance-energy}%
\end{equation}

The global well-posedness of (GP) equation on $X_{0}$ was proved in
\cite{gerard-cauchy}. Moreover, if $u\left(  t\right)  $ is the solution of
(GP) with $u\left(  0\right)  =c\left(  1+v_{0}\right)  $ where $c\in
\mathbb{S}^{1}$ and $v_{0}\in\dot{H}^{1}\left(  \mathbf{R}^{3}\right)  $, then
$u\left(  t\right)  =c\left(  1+v\left(  t\right)  \right)  $ with $v\left(
t\right)  \in\dot{H}^{1}\left(  \mathbf{R}^{3}\right)  $. Thus, for stability
considerations we only need to consider $c=1$ which will be assumed for the
rest of this paper.

\subsection{Momentum}

Besides the energy, another invariant of (GP) is the momentum which is due to
the translation invariance in $x_{1}\ $of the equation. For
\[
u=u_{1}+iu_{2}\in1+H^{1}\left(  \mathbf{R}^{3}\right)  \subset X_{0},
\]
the momentum is defined by (\ref{defn-momentum-classical}). However, that form
of momentum is not defined for an arbitrary function $u$ in the energy space
$X_{0}$. So, first we need to extend the definition of $P$ to all functions in
$X_{0}$. For this and the proof of main Theorem, we use the following manifold
structure of $X_{0}$ given in \cite{Gerard1-energy}. Let $\chi\in
C_{0}^{\infty}(\mathbf{R}^{3},[0,1])$ be a real valued and radial function
such that $\chi(\xi)=1$ near $\xi=0$, and consider the the Fourier multiplier
$\chi(D)$ defined on $\mathcal{S}^{\prime}(\mathbf{R}^{3})$ by
\[
(\widehat{\chi(D)u})(\xi)=\chi(\xi)\hat{u}(\xi).
\]
Define
\begin{equation}
\psi(w)=1+w_{1}-\chi(D)(\frac{w_{2}^{2}}{2})+iw_{2},\ \mathrm{for}\ w=\left(
w_{1},w_{2}\right)  \in X_{1}. \label{coordinate-mapping}%
\end{equation}
By Proposition 1.3 in \cite{Gerard1-energy}, the mapping $w\rightarrow\psi(w)$
is locally bi-Lipschitz between $X_{1}$ and $\left(  X_{0},d_{1}\right)  $. So
the space $X_{0}$ can be considered as a manifold over the coordinate space
$X_{1}$. For any $u=\psi(w)\in X_{0}$ with $w\in X_{1}$, we define the
momentum by
\begin{equation}
\tilde{P}\left(  u\right)  =-\int_{\mathbf{R}^{3}}[w_{1}+(1-\chi
(D))(\frac{w_{2}^{2}}{2})]\partial_{x_{1}}w_{2}dx. \label{defn-momentum-new}%
\end{equation}
By Proposition 1.3 of \cite{Gerard1-energy}, we have
\begin{equation}
\Vert\chi(D)f\Vert_{L^{p}\cap L^{\infty}}\leq C\Vert f\Vert_{L^{p}}%
,\ \forall\ f\in L^{p},1\leq p\leq\infty, \label{ineq-1}%
\end{equation}%
\begin{equation}
\Vert(1-\chi(D))(fg)\Vert_{L^{2}}\leq C\Vert\nabla f\Vert_{L^{2}}\Vert\nabla
g\Vert_{L^{2}},\ \forall\ f,g\in\dot{H}^{1}(\mathbf{R}^{3}). \label{ineq-2}%
\end{equation}
So the right hand side of (\ref{defn-momentum-new}) is well-defined. First, we
show that when $u\in1+H^{1}\left(  \mathbf{R}^{3}\right)  ,$ $\tilde{P}\left(
u\right)  =P\left(  u\right)  $, that is, $\tilde{P}$ is an extension of $P$.
Indeed, when $u\in1+H^{1}\left(  \mathbf{R}^{3}\right)  $, or equivalently,
$u=\psi(w)$ with $w\in H^{1}\left(  \mathbf{R}^{3}\right)  $, we have
\begin{align*}
&  P\left(  u\right)  =\frac{1}{2}\int_{\mathbf{R}^{3}}\langle i\partial
_{x_{1}}\psi(w),\psi(w)-1\rangle dx\\
&  =-\int_{\mathbf{R}^{3}}(w_{1}-\chi(D)(\frac{w_{2}^{2}}{2}))\partial_{x_{1}%
}w_{2}dx\\
&  =-\int_{\mathbf{R}^{3}}[w_{1}+(1-\chi(D))(\frac{w_{2}^{2}}{2}%
)]\partial_{x_{1}}w_{2}dx+\frac{1}{2}\int_{\mathbf{R}^{3}}w_{2}^{2}%
\partial_{x_{1}}w_{2}dx.\\
&  =\tilde{P}\left(  u\right)  +\frac{1}{6}\int_{\mathbf{R}^{3}}%
\partial_{x_{1}}w_{2}^{3}dx.
\end{align*}
Since $w_{2}^{3}\in L^{2}$ and $\partial_{x_{1}}w_{2}^{3}=3w_{2}^{2}%
\partial_{x_{1}}w_{2}\in L^{1}$, thus $\tilde{P}\left(  u\right)  =P\left(
u\right)  $ by the following lemma.

\begin{lemma}
Let $\mathcal{X}=\{\partial_{x_{1}}\phi\ |\ \phi\in L^{2}(\mathbf{R}^{3})\}$.
If $v\in L^{1}(\mathbf{R}^{3})\cap\mathcal{X}$, then $\int_{\mathbf{R}^{3}%
}v(x)dx=0$.
\end{lemma}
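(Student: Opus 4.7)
The plan is to reduce the three-dimensional identity to a one-dimensional statement about distributional derivatives via Fubini. Writing $v=\partial_{x_1}\phi$ with $\phi\in L^2(\mathbf{R}^3)$, I would first show that for almost every $(x_2,x_3)\in\mathbf{R}^2$ the slice $\phi(\cdot,x_2,x_3)$ lies in $L^2(\mathbf{R})$, the slice $v(\cdot,x_2,x_3)$ lies in $L^1(\mathbf{R})$, and the one-dimensional distributional identity $\partial_{x_1}\phi(\cdot,x_2,x_3)=v(\cdot,x_2,x_3)$ holds. This slice-wise identification is obtained by testing $\partial_{x_1}\phi=v$ on $\mathbf{R}^3$ against tensor-product functions $\eta(x_1)\zeta(x_2,x_3)$ with $\eta\in C_c^\infty(\mathbf{R})$ and $\zeta\in C_c^\infty(\mathbf{R}^2)$; arbitrariness of $\zeta$ converts the identity into an a.e.\ statement in $(x_2,x_3)$ for each fixed $\eta$, and a separability argument over a countable dense family of $\eta$'s promotes this to a distributional identity on $\mathbf{R}$ for almost every $(x_2,x_3)$.

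The heart of the proof is then the one-dimensional lemma: if $f\in L^2(\mathbf{R})$ has distributional derivative $g\in L^1(\mathbf{R})$, then $\int_{\mathbf{R}} g\,dx=0$. My approach is to set $\tilde f(x)=\int_0^x g(t)\,dt$, which is absolutely continuous with finite one-sided limits $L_{\pm}:=\lim_{x\to\pm\infty}\tilde f(x)$ (since $g\in L^1$), and whose distributional derivative equals $g$. Therefore $f-\tilde f$ is distributionally constant, so some representative $\tilde f+c$ of $f$ lies in $L^2(\mathbf{R})$. A nonzero limit at $+\infty$ or $-\infty$ would contradict square integrability on a half-line, so $L_++c=L_-+c=0$, and therefore $\int_{\mathbf{R}} g=L_+-L_-=0$.

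Combining these two steps with Fubini, which applies since $v\in L^1(\mathbf{R}^3)$, produces
\[
\int_{\mathbf{R}^3} v(x)\,dx=\int_{\mathbf{R}^2}\Big(\int_{\mathbf{R}} v(x_1,x_2,x_3)\,dx_1\Big)\,dx_2\,dx_3=0.
\]
The main subtlety is the slice-wise identification in step one: a single null set of exceptional $(x_2,x_3)$ must be chosen that works simultaneously for every $\eta$ in a fixed countable dense family in $C_c^\infty(\mathbf{R})$, after which continuity in $\eta$ (in, say, $W^{1,1}$ norm) extends the identity to all test functions. Once that bookkeeping is in place, the one-dimensional representative-plus-limits argument is the only analytic content, and everything else is routine.
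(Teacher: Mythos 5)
Your argument is correct, and it actually supplies a proof where the paper gives none: the paper simply refers to Lemma 2.3 of Maris's Annals paper and skips the details. Your route (slice via Fubini, reduce to the one-dimensional fact that an $L^2(\mathbf{R})$ function with $L^1$ distributional derivative has vanishing derivative integral, then integrate over $(x_2,x_3)$) is a clean elementary alternative to the Fourier-side argument one could also give here, namely: $\widehat{v}(\xi)=i\xi_1\widehat{\phi}(\xi)$ with $\widehat{v}$ continuous and $\widehat{v}(0)=\int v$, so a nonzero integral would force $|\widehat{\phi}(\xi)|\gtrsim |\xi_1|^{-1}$ near $\xi=0$, which is not square-integrable in $\mathbf{R}^{3}$, contradicting $\phi\in L^2$. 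The Fourier route is shorter but uses the Plancherel theorem; yours is longer but entirely real-variable and generalizes immediately to $\phi\in L^p$ for any finite $p$. One small correction to your bookkeeping step: $W^{1,1}$ is not the right norm in which to take the dense countable family of $\eta$'s, because passing to the limit in $\int \phi(\cdot,x_2,x_3)\,\eta_n'\,dx_1$ with $\phi(\cdot,x_2,x_3)$ only in $L^2(\mathbf{R})$ requires $\eta_n'\to\eta'$ in $L^2$ (while the $v$-pairing needs $\eta_n\to\eta$ uniformly, since the slice of $v$ is only $L^1$); taking the family dense in $C^1_c(K)$ for each $K$ in a countable exhaustion of $\mathbf{R}$ handles both at once. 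With that adjustment the proof is complete.
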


\begin{proof}
The proof is similar to that of Lemma 2.3 in \cite{Maris-annal}, so we skip it.
\end{proof}

We collect the main properties of $\tilde{P}\left(  u\right)  $.

\begin{lemma}
\label{lemma-momentum}(i) The functional $\bar{P}\left(  w\right)  :=\tilde
{P}\circ\psi(w)$ is $C^{\infty}$ for $w\in X_{1}.$

(ii) $\tilde{P}\left(  u\right)  $ is the unique continuous extension of
$P\left(  u\right)  $ from $1+H^{1}\left(  \mathbf{R}^{3}\right)  $
to$\ \left(  X_{0},d_{1}\right)  .$

(iii) When $u\left(  t\right)  $ is the solution of (GP) with $u\left(
0\right)  \in X_{0}$, $\tilde{P}\left(  u\left(  t\right)  \right)  =$
$\tilde{P}\left(  u\left(  0\right)  \right)  .$
\end{lemma}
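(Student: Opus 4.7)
The plan is to exploit the explicit polynomial form of $\tilde{P}\circ\psi$ on $X_{1}$ together with the fact that $\psi:X_{1}\to(X_{0},d_{1})$ is a bi-continuous homeomorphism established in \cite{Gerard1-energy}. Writing $\bar{P}=\tilde{P}\circ\psi$, I would split
\[
\bar{P}(w)=-\int_{\mathbf{R}^{3}} w_{1}\,\partial_{x_{1}}w_{2}\,dx-\frac{1}{2}\int_{\mathbf{R}^{3}}(1-\chi(D))(w_{2}^{2})\,\partial_{x_{1}}w_{2}\,dx.
\]
For (i), the first term is a bounded bilinear form on $H^{1}\times\dot{H}^{1}$ by Cauchy--Schwarz. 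The second term is trilinear and bounded on $(\dot{H}^{1})^{3}$ via (\ref{ineq-2}), which yields $\|(1-\chi(D))(w_{2}^{2})\|_{L^{2}}\leq C\|\nabla w_{2}\|_{L^{2}}^{2}$, and therefore $|\bar P(w)|\leq C(\|w_1\|_{H^1}+\|\nabla w_2\|_{L^2}^2)\|\nabla w_2\|_{L^2}$. Thus $\bar{P}$ is a polynomial of total degree three on $X_{1}$, hence automatically $C^{\infty}$.

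For (ii), continuity of $\tilde{P}$ on $(X_{0},d_{1})$ follows at once from (i) together with the bi-continuity of $\psi$, since $\tilde P = \bar P\circ\psi^{-1}$. The equality $\tilde{P}=P$ on $1+H^{1}(\mathbf{R}^{3})$ is already verified immediately above the lemma. To upgrade this to uniqueness of the continuous extension, I need density of $1+H^{1}(\mathbf{R}^{3})$ in $(X_{0},d_{1})$. Given $u=\psi(w)\in X_{0}$ with $w\in X_{1}$, I would approximate $w$ by $w^{(n)}\in C_{c}^{\infty}(\mathbf{R}^{3})\times C_{c}^{\infty}(\mathbf{R}^{3})$ in $X_{1}$ (density of $C_{c}^{\infty}$ in $\dot{H}^{1}(\mathbf{R}^{3})$ being standard in dimension three). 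For each such $w^{(n)}$ the function $(w_{2}^{(n)})^{2}$ is Schwartz, hence $\chi(D)((w_{2}^{(n)})^{2}/2)$ is Schwartz, so $\psi(w^{(n)})-1\in H^{1}(\mathbf{R}^{3})$. Bi-continuity of $\psi$ then gives $\psi(w^{(n)})\to u$ in $d_{1}$, completing the density argument, and uniqueness follows since any two continuous extensions must agree on a dense set.

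For (iii), the idea is approximation by better-behaved data. By \cite{gerard-cauchy} (GP) is globally well-posed in $(X_{0},d_{1})$ with continuous dependence on data, and the subclass $1+H^{1}(\mathbf{R}^{3})$ is preserved by the flow; along such smoother solutions the classical momentum $P$ is conserved by a standard integration-by-parts, justified by the extra regularity. Thus for data $u_{n}(0)\in 1+H^{1}(\mathbf{R}^{3})$ approaching $u(0)\in X_{0}$ in $d_{1}$ (possible by the density established in (ii)), one has at each fixed time $\tilde{P}(u_{n}(t))=P(u_{n}(t))=P(u_{n}(0))=\tilde{P}(u_{n}(0))$. Passing to the limit $n\to\infty$ using continuity of $\tilde{P}$ in $d_{1}$ from (ii) and continuous dependence of $u_{n}(t)$ on $u_n(0)$ in $d_{1}$ yields the conservation $\tilde P(u(t))=\tilde P(u(0))$.

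The main obstacle, such as it is, lies in (iii): it requires the continuity of the GP flow in the nonstandard distance $d_{1}$ and the conservation of the classical momentum for $H^{1}$-perturbations of the constant $1$. Both are non-trivial facts but are inputs taken from \cite{gerard-cauchy}. Once these are in hand, the entire lemma reduces to the explicit polynomial structure of $\bar{P}$ on $X_1$ together with the topological structure of the coordinate map $\psi$.
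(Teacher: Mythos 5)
Your proposal is correct and follows essentially the same route as the paper: the same decomposition of $\bar P$ into a bounded bilinear form on $H^{1}\times\dot H^{1}$ plus a bounded trilinear form on $(\dot H^{1})^{3}$ for (i), continuity plus density of $1+H^{1}$ in $(X_{0},d_{1})$ for (ii), and approximation by $1+H^{1}$ data combined with continuous dependence in $d_{1}$ from \cite{gerard-cauchy} for (iii). The only difference is that you spell out the density argument via smooth compactly supported approximations of $w$, which the paper takes for granted; that detail is correct and harmless.
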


\begin{proof}
(i) Since
\begin{align*}
\tilde{P}\circ\psi(w)  &  =-\int_{\mathbf{R}^{3}}w_{1}\partial_{x_{1}}%
w_{2}dx-\frac{1}{2}\int_{\mathbf{R}^{3}}(1-\chi(D))w_{2}^{2}\partial_{x_{1}%
}w_{2}dx\\
&  =B_{1}\left(  w_{1},w_{2}\right)  +B_{2}\left(  w_{2},w_{2},w_{2}\right)  ,
\end{align*}
where
\[
B_{1}\left(  w_{1},w_{2}\right)  =-\int_{\mathbf{R}^{3}}w_{1}\partial_{x_{1}%
}w_{2}dx:H^{1}\times\dot{H}^{1}\rightarrow R
\]
and
\[
B_{2}\left(  w_{1},w_{2},w_{3}\right)  =-\frac{1}{2}\int_{\mathbf{R}^{3}%
}(1-\chi(D))\left(  w_{1}w_{2}\right)  \partial_{x_{1}}w_{3}dx:\left(  \dot
{H}^{1}\right)  ^{3}\rightarrow R
\]
are multi-linear and bounded, so it follows that $\tilde{P}\circ\psi(w)$ is
$C^{\infty}$ on $X_{1}$.

(ii) follows from (i), the bicontinuity of $\psi:\left(  X_{1},\left\Vert
{}\right\Vert _{X_{1}}\right)  \rightarrow\left(  X_{0},d_{1}\right)  $, and
the density of $1+H^{1}\left(  \mathbf{R}^{3}\right)  $ in $\left(  X_{0},
d_{1}\right)  $.

(iii): When $u\left(  0\right)  \in1+H^{1}\left(  \mathbf{R}^{3}\right)  $, we
have $u\left(  t\right)  \in1+H^{1}\left(  \mathbf{R}^{3}\right)  $. The
global existence in this case was first proved in \cite{b-s-1998}. It is
straightforward to show that $\tilde{P}\left(  u\left(  t\right)  \right)
=P\left(  u\left(  t\right)  \right)  $ is invariant in time by using the
translation invariance of (GP). For general $u\left(  0\right)  \in X_{0}$, we
can choose a sequence $\left\{  u_{n}\left(  0\right)  \right\}
\subset1+H^{1}\left(  R^{3}\right)  $ such that $\left\Vert u_{n}\left(
0\right)  -u\left(  0\right)  \right\Vert _{d_{1}}\rightarrow0$ when
$n\rightarrow\infty$. Then for any $t\in\mathbf{R},\ P\left(  u_{n}\left(
t\right)  \right)  =P\left(  u_{n}\left(  0\right)  \right)  $, letting
$n\rightarrow\infty$, we get $\tilde{P}\left(  u\left(  t\right)  \right)
=\tilde{P}\left(  u\left(  0\right)  \right)  $ due to the continuous
dependence of solutions to (GP) on the initial with respect to the distance
$d_{1}$ (see \cite{gerard-cauchy}).
\end{proof}

\begin{remark}
In \cite{Maris-annal}, the$\ $momentum was extended from $1+H^{1}\left(
\mathbf{R}^{3}\right)  $ to the energy space $X_{0}$ in a different way and it
was shown that such extended momentum is continuous on $\left(  X_{0}%
,d_{1}\right)  $. So by Lemma \ref{lemma-momentum} (ii), the extended momentum
in \cite{Maris-annal} gives the same functional as $\tilde{P}\left(  u\right)
$, but the form of $\tilde{P}\left(  u\right)  $ given in
(\ref{defn-momentum-new}) is more explicit.
\end{remark}

\subsection{The energy-momentum functional}

First, we show that the functional $E\circ\psi:X_{1}\rightarrow\mathbf{R}$ is smooth.

\begin{lemma}
\label{lemma-smooth-energy-gp}The functional
\begin{equation}
\bar{E}\left(  w\right)  :=E\circ\psi(w)=\frac{1}{2}\int_{\mathbf{R}^{3}%
}\left[  |\nabla\psi(w)|^{2}+\frac{1}{2}(|\psi(w)|^{2}-1)^{2}\right]  \ dx
\label{energy-coordinate}%
\end{equation}
is $C^{\infty}$ on $X_{1}.$
\end{lemma}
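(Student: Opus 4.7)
The plan is to show that $\bar{E}(w)$ is a finite sum of continuous multilinear forms on $X_1$ evaluated diagonally, hence a polynomial on the Banach space $X_1$, which is automatically $C^{\infty}$.

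First I would write $\psi(w) = A + iB$ with the real-valued pieces $A = 1 + w_1 - \chi(D)\bigl(\tfrac{1}{2}w_2^2\bigr)$ and $B = w_2$, so that
\[
|\nabla\psi(w)|^2 = |\nabla A|^2 + |\nabla B|^2,
\qquad
|\psi(w)|^2 - 1 = 2w_1 + (1-\chi(D))w_2^2 + \bigl(w_1 - \chi(D)(\tfrac12 w_2^2)\bigr)^2.
\]
Substituting these into (\ref{energy-coordinate}) and expanding, $\bar{E}(w)$ becomes a finite sum of terms, each of which is an integral of a product of factors drawn from the list $w_1,\ \nabla w_1,\ w_2,\ \nabla w_2,\ \chi(D)(w_2 \nabla w_2),\ \chi(D)(w_2^2),\ (1-\chi(D))(w_2^2)$. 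The highest total degree in $w$ is eight, coming from $\bigl[\chi(D)(w_2^2/2)\bigr]^4$ inside $(|\psi|^2-1)^2$.

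Next I would verify that each such multilinear integral defines a continuous multilinear map on the appropriate product of copies of $X_1$. The tools are: the Sobolev embeddings $H^1(\mathbf{R}^3)\hookrightarrow L^p$ for $2\le p\le 6$ and $\dot H^1(\mathbf{R}^3)\hookrightarrow L^6$; the pointwise and Fourier-multiplier estimate (\ref{ineq-1}) which gives $\|\chi(D)f\|_{L^p\cap L^\infty}\le C\|f\|_{L^p}$; and most importantly (\ref{ineq-2}), which gives $\|(1-\chi(D))(fg)\|_{L^2}\le C\|\nabla f\|_{L^2}\|\nabla g\|_{L^2}$. For example, $w_2\nabla w_2\in L^{3/2}$ since $w_2\in L^6$ and $\nabla w_2\in L^2$, and then Young's inequality with the Schwartz kernel of $\chi(D)$ gives $\chi(D)(w_2\nabla w_2)\in L^2$, so that $\int \nabla w_1\cdot \chi(D)(w_2\nabla w_2)\,dx$ is controlled by $\|w_1\|_{H^1}\|w_2\|_{\dot H^1}^2$; the quartic term $\int|\chi(D)(w_2\nabla w_2)|^2\,dx$ is bounded by $C\|w_2\|_{\dot H^1}^4$; the term $\int \bigl((1-\chi(D))w_2^2\bigr)^2\,dx$ is bounded by $C\|w_2\|_{\dot H^1}^4$ by (\ref{ineq-2}); and the purely $\chi(D)$-filtered quartic contributions to $(|\psi|^2-1)^2$ are estimated by using that $\chi(D)(w_2^2)\in L^\infty\cap L^3$.

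The main obstacle is simply bookkeeping: assembling all mixed terms obtained from the two squarings and checking that each is bounded by a product of $H^1$ and $\dot H^1$ norms with the correct multiplicity. The single analytic point that requires care is that $w_1$ and $\chi(D)(w_2^2/2)$ need not lie in $L^2$ individually at top order, so squared terms like $(w_1-\chi(D)(w_2^2/2))^2$ must be estimated in $L^2$ via $L^4$-norms and $L^\infty$-bounds supplied by (\ref{ineq-1}), while the dangerous quadratic $w_2^2$, which is only in $L^3$, is split as $w_2^2=(1-\chi(D))w_2^2+\chi(D)(w_2^2)$ so that (\ref{ineq-2}) handles the low-frequency-deficient part and (\ref{ineq-1}) handles the smooth part. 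Once each multilinear form is shown to be bounded, continuity is immediate and the $C^\infty$ regularity of $\bar E$ on $X_1$ follows from the general fact that a continuous polynomial (finite sum of symmetric continuous multilinear forms applied on the diagonal) on a Banach space is smooth with derivatives of all orders given explicitly by the polarized forms.
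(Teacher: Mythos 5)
Your proposal is correct and follows essentially the same route as the paper: the paper likewise expands $|\nabla\psi(w)|^{2}$ and $(|\psi(w)|^{2}-1)^{2}$ using $\nabla\psi(w)=\nabla w_{1}-\chi(D)(w_{2}\nabla w_{2})+i\nabla w_{2}$ and $|\psi(w)|^{2}-1=(w_{1}-\chi(D)(\frac{w_{2}^{2}}{2}))^{2}+2w_{1}+(1-\chi(D))(w_{2}^{2})$, verifies the needed $L^{2}$ and $L^{4}$ memberships via (\ref{ineq-1})--(\ref{ineq-2}) and Sobolev embedding, and concludes by writing $\bar{E}$ as a finite sum of bounded multilinear forms. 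Your write-up simply supplies more of the bookkeeping that the paper leaves implicit.
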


\begin{proof}
For $w=\left(  w_{1},w_{2}\right)  \in X_{1}$, that is, $w_{1}\in H^{1}%
,w_{2}\in\dot{H}^{1},$
\[
\nabla\psi(w)=\nabla w_{1}-\chi\left(  D\right)  \left(  w_{2}\nabla
w_{2}\right)  +i\nabla w_{2},
\]%
\[
|\psi(w)|^{2}-1=(w_{1}-\chi(D)(\frac{w_{2}^{2}}{2}))^{2}+2w_{1}+(1-\chi
(D))(w_{2}^{2}),
\]
and by (\ref{ineq-1})-(\ref{ineq-2}),
\[
\nabla w_{1},\ \chi\left(  D\right)  \left(  w_{2}\nabla w_{2}\right)  ,\nabla
w_{2},(1-\chi(D))(w_{2}^{2})\in L^{2}%
\]%
\[
w_{1},\chi(D)(\frac{w_{2}^{2}}{2})\in L^{4}.
\]
We can write the right hand side of (\ref{energy-coordinate}) as a sum of
multilinear forms, as in the proof of Lemma \ref{lemma-momentum}. The
$C^{\infty}$ property of $E\circ\psi(w)$ thus follows.
\end{proof}

Define the energy-momentum functional $E_{c}\left(  u\right)  =E\left(
u\right)  +c\tilde{P}\left(  u\right)  $ on $X_{0}$ and
\begin{equation}
\bar{E}_{c}\left(  w\right)  =E_{c}\circ\psi\left(  w\right)  =\bar{E}\left(
w\right)  +c\bar{P}\left(  w\right)  ,\ w\in X_{1}
\label{definition-energy-momentum}%
\end{equation}
which is a smooth functional on space $X_{1}$. Let $U_{c}=u_{c}+iv_{c}$ be a
finite energy traveling wave solution of (GP) equation, that is, $\left(
u_{c},v_{c}\right)  $ satisfies
\begin{equation}
\left\{
\begin{array}
[c]{ll}%
\Delta u_{c}+c\partial_{x_{1}}v_{c}=-\left(  1-|U_{c}|^{2}\right)  u_{c}, & \\
\Delta v_{c}-c\partial_{x_{1}}u_{c}=-\left(  1-|U_{c}|^{2}\right)  v_{c}. &
\end{array}
\right.  \label{eqn-tW-GP}%
\end{equation}

\begin{lemma}
\label{le-2} Let $w_{c}=\left(  w_{1c},w_{2c}\right)  \in X_{1}$ be such that
$\psi(w_{c})=U_{c}=u_{c}+iv_{c}$. Then $U_{c}$ solves the traveling wave
equation if and only if $\bar{E}_{c}^{\prime}(w_{c})=0$.
\end{lemma}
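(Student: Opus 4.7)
The plan is to apply the chain rule: since $\bar E_c = E_c\circ\psi$ with $\psi$ smooth from $X_1$ into $X_0$ (by Lemmas \ref{lemma-momentum}(i) and \ref{lemma-smooth-energy-gp}), for every $\delta w=(\delta w_1,\delta w_2)\in X_1$ one has
\[
\bar E_c'(w_c)[\delta w]\;=\;\bigl\langle E_c'(U_c),\,d\psi(w_c)[\delta w]\bigr\rangle.
\]
From (\ref{coordinate-mapping}) I would compute
\[
d\psi(w_c)[\delta w]\;=\;\bigl(\delta w_1-\chi(D)(w_{2c}\,\delta w_2)\bigr)+i\,\delta w_2,
\]
and a direct variation of $E_c=E+c\tilde P$ (using the explicit form (\ref{defn-momentum-classical}) for the momentum on $1+H^1$ and then extending) yields $\langle E_c'(U_c),\varphi_1+i\varphi_2\rangle=\int(R_1\varphi_1+R_2\varphi_2)\,dx$, where
\[
R_1=-\Delta u_c+(|U_c|^2-1)u_c-c\partial_{x_1}v_c,\qquad R_2=-\Delta v_c+(|U_c|^2-1)v_c+c\partial_{x_1}u_c
\]
are exactly the left-hand sides of the traveling wave system (\ref{eqn-tW-GP}).

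The forward implication is then immediate: $R_1=R_2=0$ forces $\bar E_c'(w_c)=0$. For the converse, the assumption $\bar E_c'(w_c)[\delta w]=0$ for all $\delta w\in X_1$ becomes
\[
\int R_1\bigl(\delta w_1-\chi(D)(w_{2c}\,\delta w_2)\bigr)\,dx+\int R_2\,\delta w_2\,dx\;=\;0.
\]
I would first fix $\delta w_2=0$ and vary $\delta w_1$ in $H^1(\mathbf{R}^3)$, which yields $R_1=0$ in the weak $H^{-1}$ sense; the elliptic character of the traveling wave equation, together with the regularity and decay properties of $U_c$ available in the existence theory (giving $R_1,R_2$ as genuine $L^2_{loc}$ functions), upgrades this to a pointwise identity. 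With $R_1\equiv 0$ in hand, what remains is $\int R_2\,\delta w_2\,dx=0$ for every $\delta w_2\in\dot H^1(\mathbf{R}^3)$, so $R_2=0$ likewise, completing the second equation of (\ref{eqn-tW-GP}).

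The one technical point that requires care — and is essentially the only obstacle — is to justify that the chain rule and all duality pairings are legitimate in the indicated spaces. The term $\chi(D)(w_{2c}\,\delta w_2)$ is well defined and belongs to $H^1(\mathbf{R}^3)$ because $w_{2c},\delta w_2\in\dot H^1(\mathbf{R}^3)\hookrightarrow L^6(\mathbf{R}^3)$, so their product lies in $L^3$, and $\chi(D)$ is smoothing in the sense of (\ref{ineq-1}). Consequently $d\psi(w_c)$ maps $X_1$ continuously into tangent vectors at $U_c$ of the form $H^1(\mathbf{R}^3)+i\dot H^1(\mathbf{R}^3)$, and $E_c'(U_c)$ acts as a bounded linear functional on this tangent space thanks to the finite energy of $U_c$. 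Once the framework is set, the proof reduces to the two elementary test-function choices that isolate $R_1$ and $R_2$.
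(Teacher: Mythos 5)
Your proposal is correct and follows essentially the same route as the paper: both compute the first variation via the chain rule, obtaining $\langle\bar E_c'(w_c),\phi\rangle=\int R_1\bigl(\phi_1-\chi(D)(v_c\phi_2)\bigr)\,dx+\int R_2\,\phi_2\,dx$ (note $v_c=w_{2c}$), and then conclude the equivalence by testing against enough directions. The paper tests against Schwartz functions and leaves the final step as "clear" (since $\phi\mapsto K\phi$ is an isomorphism), while you spell it out by first taking $\delta w_2=0$ to isolate $R_1$ and then varying $\delta w_2$ to get $R_2$ — a cosmetic difference only.
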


\begin{proof}
Since $\bar{E}_{c}^{\prime}(w_{c})\in X_{1}^{\ast}$ and the Schwartz class is
dense in $X_{1}$, we have $\bar{E}_{c}^{\prime}(w_{c})=0$ if and only if
$\langle\bar{E}_{c}{}^{\prime}(w_{c}),\phi\rangle=0$ for all $\phi$ in
Schwartz class. One may compute by integration by parts that, $\bar{E}%
_{c}^{\prime}(w_{c})$ satisfies, for any $\phi=\left(  \phi_{1},\phi
_{2}\right)  $ in Schwartz class,
\begin{align}
\langle\bar{E}_{c}{}^{\prime}(w_{c}),\phi\rangle &  =\int_{\mathbf{R}^{3}%
}[-\Delta u_{c}-\left(  1-|U_{c}|^{2}\right)  u_{c}-c\partial_{x_{1}}%
v_{c}]\left(  \phi_{1}-\chi(D)\left(  v_{c}\phi_{2}\right)  \right)
dx\nonumber\\
&  +\int_{\mathbf{R}^{3}}[-\Delta v_{c}-\left(  1-|U_{c}|^{2}\right)
v_{c}+c\partial_{x_{1}}u_{c}]\phi_{2}dx. \label{eqn-E-c-1st-variation}%
\end{align}
Therefore, it is clear that $\bar{E}_{c}^{\prime}(w_{c})=0$ if and only if
(\ref{eqn-tW-GP}) holds.

\end{proof}

We now compute the second variation of $\bar{E}_{c}\left(  \psi\right)  $. By
straightforward computations using the criticality of $w_{c}$, we have, for
any $\phi$ in Schwartz class,
\begin{align}
&  \langle\bar{E}_{c}{}^{\prime\prime}(w_{c})\phi,\phi\rangle:=q_{c}%
(\phi)\label{2nd -variation}\\
&  =\int_{\mathbf{R}^{3}}[\left\vert \nabla\left(  \phi_{1}-\chi(D)\left(
v_{c}\phi_{2}\right)  \right)  \right\vert ^{2}+\left\vert \nabla\phi
_{2}\right\vert ^{2}\nonumber\\
&  \ \ \ \ \ \ \ \ +\left(  3u_{c}^{2}+v_{c}^{2}-1\right)  \left\vert \phi
_{1}-\chi(D)\left(  v_{c}\phi_{2}\right)  \right\vert ^{2}+\left(  u_{c}%
^{2}+3v_{c}^{2}-1\right)  \left\vert \phi_{2}\right\vert ^{2}\nonumber\\
&  \ \ \ \ \ \ \ \ +4u_{c}v_{c}\left(  \phi_{1}-\chi(D)\left(  v_{c}\phi
_{2}\right)  \right)  \phi_{2}-2c\left(  \phi_{1}-\chi(D)\left(  v_{c}\phi
_{2}\right)  \right)  \partial_{x_{1}}\phi_{2}]\ dx.\nonumber
\end{align}
Since the functional $\bar{E}_{c}\left(  w\right)  $ is smooth on $X_{1}$, its
second variation at $w_{c}$ which is given by the quadratic form $q_{c}$ of
(\ref{2nd -variation}) is well-defined and bounded on $X_{1}$.

Define the operator
\begin{equation}
L_{c}:=\left(
\begin{array}
[c]{cc}%
-\Delta-1+3u_{c}^{2}+v_{c}^{2} & -c\partial_{x_{1}}+2u_{c}v_{c}\\
c\partial_{x_{1}}+2u_{c}v_{c} & -\Delta-1+u_{c}^{2}+3v_{c}^{2}%
\end{array}
\right)  , \label{operator-Lc}%
\end{equation}
then formally we can write
\begin{equation}
q_{c}(\phi)=\left\langle L_{c}\left(
\begin{array}
[c]{c}%
\phi_{1}-\chi(D)\left(  v_{c}\phi_{2}\right) \\
\phi_{2}%
\end{array}
\right)  ,\left(
\begin{array}
[c]{c}%
\phi_{1}-\chi(D)\left(  v_{c}\phi_{2}\right) \\
\phi_{2}%
\end{array}
\right)  \right\rangle . \label{2nd-variation-matrix}%
\end{equation}
Here we use $\left\langle \cdot,\cdot\right\rangle $ for the dual product of
$X_{1}=H^{1}\times\dot{H}^{1}$ and its dual $X_{1}^{\ast}=H^{-1}\times\dot
{H}^{-1}$, and $\left(  \cdot,\cdot\right)  $ is used for the inner product in
$X_{1}$. By \cite{B-G-S}, the traveling wave solutions $\left(  u_{c}%
,v_{c}\right)  $ of (GP) equation satisfy: $u_{c}-1,v_{c}\in H^{k}$ for any
$k\geq0$, and $u_{c}-1=O\left(  \frac{1}{\left\vert x\right\vert ^{3}}\right)
$, $v_{c}=O\left(  \frac{1}{\left\vert x\right\vert ^{2}}\right)  $ for
$|x|\rightarrow\infty$. Since $\phi_{2}\in\dot{H}^{1}$ implies that
$\chi(D)\left(  v_{c}\phi_{2}\right)  \in L^{2}$, the mapping $K:X_{1}%
\rightarrow X_{1}\ $defined by
\begin{equation}
\ K\left(
\begin{array}
[c]{c}%
\phi_{1}\\
\phi_{2}%
\end{array}
\right)  =\left(
\begin{array}
[c]{c}%
\phi_{1}-\chi(D)\left(  v_{c}\phi_{2}\right) \\
\phi_{2}%
\end{array}
\right)  \label{definition-K}%
\end{equation}
is an isomorphism on $X_{1}$. To study the quadratic form $q_{c}(\phi)$ on
$X_{1}$, it is equivalent to study the quadratic form
\[
\tilde{q}_{c}(\phi)=\langle L_{c}\left(  \phi_{1},\phi_{2}\right)
^{T},\left(  \phi_{1},\phi_{2}\right)  ^{T}\rangle
\]
on $X_{1}$. To simplify notations, we write $\langle L_{c}\left(  \phi
_{1},\phi_{2}\right)  ^{T},\left(  \phi_{1},\phi_{2}\right)  ^{T}\rangle$ as
$\langle L_{c}\phi,\phi\rangle$. The quadratic form $\langle L_{c}\phi
,\phi\rangle$ is well defined and bounded on $X_{1}$, by the boundedness of
$q_{c}(\phi)$ and the isomorphism of $K$ on $X_{1}$. This can also be seen
directly by using the Hardy inequality
\begin{equation}
\left\Vert \frac{u}{\left\vert x\right\vert }\right\Vert _{L^{2}\left(
\mathbf{R}^{N}\right)  }\leq\frac{2}{N-2}\left\Vert \nabla u\right\Vert
_{L^{2}\left(  \mathbf{R}^{N}\right)  },\text{ for any }N\geq3. \label{hardy}%
\end{equation}
Since $|u_{c}^{2}+3v_{c}^{2}-1|,\ \left\vert v_{c}\right\vert \leq\frac
{C}{|x|^{2}}$, we have
\[
\left\vert \int_{\mathbf{R}^{3}}(u_{c}^{2}+3v_{c}^{2}-1)\phi_{2}%
^{2}dx\right\vert \leq C\int_{\mathbf{R}^{3}}\frac{\phi_{2}^{2}}{|x|^{2}%
}dx\leq C\int_{\mathbf{R}^{3}}|\nabla\phi_{2}|^{2}dx
\]
and
\begin{align*}
\left\vert \int_{\mathbf{R}^{3}}u_{c}v_{c}\ \phi_{1}\phi_{2}\ dx\right\vert
&  \leq C\left\Vert \phi_{1}\right\Vert _{L^{2}}\left\Vert \frac{\phi_{2}%
}{|x|}\right\Vert _{L^{2}}\\
&  \leq C\left(  \left\Vert \phi_{1}\right\Vert _{L^{2}}^{2}+\left\Vert
\nabla\phi_{2}\right\Vert _{L^{2}}^{2}\right)  .
\end{align*}

\begin{remark}
The quadratic form $q_{c}(\phi)=\langle\bar{E}_{c}{}^{\prime\prime}(w_{c}%
)\phi,\phi\rangle$ given in (\ref{2nd -variation}) and
(\ref{2nd-variation-matrix}) can be seen in the following way. Suppose $w\in
H^{1}$, then $u=\psi\left(  w\right)  \in1+H^{1}$ and
\[
\bar{E}_{c}\left(  w\right)  :=E_{c}\circ\psi(w)=E\left(  u\right)  +cP\left(
u\right)  \text{. }%
\]
If the first order variation of $w$ at $w_{c}~$is $\delta w=\phi$, then
$\delta u=K\phi$ and $\delta^{2}u=-\chi(D)(\phi_{2}^{2})$. So
\begin{align*}
\left\langle \bar{E}_{c}^{\prime\prime}\left(  w_{c}\right)  \phi
,\phi\right\rangle  &  =\left\langle E_{c}^{\prime\prime}\left(  U_{c}\right)
\delta u,\delta u\right\rangle +\left\langle E_{c}^{\prime}\left(
U_{c}\right)  ,\delta^{2}u\right\rangle \\
&  =\left\langle L_{c}\left(  K\phi\right)  ,K\phi\right\rangle ,
\end{align*}
since $E_{c}^{\prime\prime}\left(  U_{c}\right)  =L_{c}$ and $E_{c}^{\prime
}\left(  U_{c}\right)  =0$ by the equation (\ref{eqn-tW-GP}).
\end{remark}

\subsection{Spectral properties of second order variation}

Differentiating the traveling wave equation (\ref{eqn-tW-GP}) in $x_{i}$, we
get $L_{c}\partial_{x_{i}}U_{c}=0$. We assume that these are all the kernels
of $L_{c}$, i.e.,
\begin{equation}
\ker L_{c}=Z=span\left\{  \partial_{x_{i}}U_{c},i=1,2,3\right\}  .
\label{assumption-NDG}%
\end{equation}

\begin{remark}
The non-degeneracy condition (\ref{assumption-NDG}) for $c=c_{0}\ $implies
that the traveling wave $U_{c_{0}}$ is locally unique. More precisely, there
exists a unique $C^{1}$ curve of traveling waves passing through $\left(
c_{0},U_{c_{0}}\right)  $. See Theorem \ref{thm-continuation-3d} for the proof.
\end{remark}

In \cite{Maris-annal}, traveling wave solutions to (GP) were found by
minimizing $\bar E_{c}$ subject to a Pohozaev type constraint. Our main result
of this section is to give a spectral decomposition of the quadratic form
$\tilde{q}_{c}(\phi)$ which is the quadratic part of $E_{c}$ at $U_{c}$.

\begin{proposition}
\label{prop-quadratic}For $0<c<\sqrt{2}$, let $U_{c}\ $be a traveling wave
solution of (GP) constructed in \cite{Maris-annal} and $L_{c}$ be the operator
defined by (\ref{operator-Lc}). Assume the non-degeneracy condition
(\ref{assumption-NDG}). The space $X_{1}$ is decomposed as a direct sum%
\[
X_{1}=N \oplus Z \oplus P,
\]
where $Z$ is defined in (\ref{assumption-NDG}), $N$ is one-dimensional and
such that $\tilde{q}_{c}(u)=\left\langle L_{c}u,u\right\rangle <0$ for $0\neq
u\in N$, and $P$ is a closed subspace such that
\[
\tilde{q}_{c}(u)\geq\delta\left\Vert u\right\Vert _{X_{1}}^{2},\ \ \forall
u\in P,
\]
for some constant $\delta>0.$
\end{proposition}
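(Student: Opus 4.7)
The plan is to realize the bounded symmetric form $\tilde{q}_c(\phi)=\langle L_c\phi,\phi\rangle$ on the Hilbert space $X_1$ by Riesz representation, defining a bounded self-adjoint operator $A:X_1\to X_1$ via $(A\phi,\psi)_{X_1}=\langle L_c\phi,\psi\rangle$, and then to obtain the decomposition from the spectral theorem for $A$ once the essential spectrum has been located through a Weyl-type compact perturbation argument. The counting $\dim N=1$ is to be handled separately through the variational characterization of $U_c$.

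The first step is coercivity of the asymptotic operator $L_c^\infty$ obtained by setting $u_c\equiv 1$, $v_c\equiv 0$. Its Fourier symbol is the matrix
\[
M_\infty(\xi)=\begin{pmatrix}|\xi|^2+2 & ic\xi_1\\ -ic\xi_1 & |\xi|^2\end{pmatrix},
\]
and I would verify pointwise that $M_\infty(\xi)-\delta_0\,\mathrm{diag}(1+|\xi|^2,|\xi|^2)\ge 0$ by a determinant and trace check: using $\xi_1^2\le|\xi|^2$, the determinant condition reduces to $(1-\delta_0)(2-\delta_0)\ge c^2$, which admits some $\delta_0>0$ precisely because $c<\sqrt 2$. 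Hence $\langle L_c^\infty\phi,\phi\rangle\ge \delta_0\|\phi\|_{X_1}^2$ on $X_1$ and the corresponding Riesz operator $A_\infty$ satisfies $A_\infty\ge \delta_0 I$.

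Next I would write $L_c=L_c^\infty+V_c$ where $V_c$ is multiplication by a symmetric $2\times 2$ matrix whose entries are polynomials in $u_c-1$ and $v_c$, all decaying like $O(|x|^{-2})$ or faster by the asymptotics of \cite{B-G-S}. Combining this decay with Hardy's inequality \eqref{hardy}, the Rellich compact embedding $H^1(B_R)\hookrightarrow\hookrightarrow L^2(B_R)$, and the Sobolev embedding $\dot H^1(\mathbf{R}^3)\hookrightarrow L^6$ shows that the bilinear form $(\phi,\psi)\mapsto\int V_c\phi\cdot\psi\,dx$ is compact on $X_1\times X_1$, so that $A-A_\infty:X_1\to X_1$ is compact. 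Weyl's theorem then gives $\sigma_{\mathrm{ess}}(A)\subset[\delta_0,\infty)$, so the spectrum of $A$ in $(-\infty,\delta_0)$ consists only of isolated eigenvalues of finite multiplicity. Defining $N$ as the direct sum of the negative eigenspaces, $Z=\ker A=\ker L_c$ (three-dimensional by \eqref{assumption-NDG}), and $P=(N\oplus Z)^\perp$ in the $X_1$ inner product yields the required splitting $X_1=N\oplus Z\oplus P$, with $\tilde{q}_c(u)\ge\lambda_*\|u\|_{X_1}^2$ on $P$ where $\lambda_*=\min(\delta_0,\lambda_1)>0$ and $\lambda_1$ is the smallest positive eigenvalue of $A$.

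The main obstacle is to pin down $\dim N=1$. For the upper bound I would invoke the variational construction in \cite{Maris-annal}: $U_c$ is a constrained critical point of $E_c$ under a Pohozaev-type functional $Q(u)=0$ whose Lagrange multiplier vanishes, which forces $\tilde{q}_c\ge 0$ on the codimension-one space $\{\phi:\langle Q'(U_c),\phi\rangle=0\}$ and hence $\dim N\le 1$. For the lower bound I would exhibit an actual negative direction, for instance the scaling direction built into the mountain-pass geometry of the construction, along which $E_c$ achieves a strict maximum at $U_c$; alternatively, once the Implicit Function Theorem yields a $C^1$ family $c\mapsto U_c$ using \eqref{assumption-NDG}, differentiating the traveling wave equation in $c$ produces $L_c\partial_c U_c=-P'(U_c)$ and thus $\tilde{q}_c(\partial_c U_c)=-\tfrac{d}{dc}P(U_c)$, which is strictly negative on the lower branch. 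Together these establish $\dim N=1$ and conclude the proposition; the only real technical subtlety in Steps 1--3 is the asymmetric $X_1$ norm (no $L^2$ control on $\phi_2$), which is precisely what Hardy's inequality absorbs.
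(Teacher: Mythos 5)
Your overall architecture matches the paper's: you realize the form by a bounded self-adjoint operator (the paper conjugates by $\tilde G=\mathrm{diag}((-\Delta+1)^{-1/2},(-\Delta)^{-1/2})$ to work on $L^{2}$ rather than using the Riesz map on $X_{1}$, but these are equivalent), you prove coercivity of the constant-coefficient asymptotic form for $c<\sqrt2$ (the paper does this by completing squares, you by a symbol/determinant check — both fine), you show the perturbation is compact using the decay of $u_{c}-1$, $v_{c}$ together with Hardy's inequality (this is the paper's Lemma \ref{lemma-compactness}), and you bound the Morse index above by $1$ via the Pohozaev-constrained minimization with vanishing Lagrange multiplier (the paper's Lemmas \ref{le-3} and \ref{le-5}). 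All of that is sound.

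The genuine gap is the lower bound $\dim N\ge 1$, and neither of your two suggestions closes it. Your alternative via $L_{c}\partial_{c}U_{c}=-P'(U_{c})$ gives $\tilde q_{c}(\partial_{c}U_{c})=-\frac{d}{dc}P(U_{c})$, which is negative only when $\frac{d}{dc}P(U_{c})>0$; but the proposition is asserted for every $c\in(0,\sqrt2)$ and is used crucially on the upper branch, where $\frac{d}{dc}P(U_{c})<0$ and your test function gives a \emph{positive} value. Your first suggestion, a "scaling direction from the mountain-pass geometry," is not available either: the construction in \cite{Maris-annal} is a constrained minimization, and in $3$D the natural dilation $x_{\perp}\mapsto\lambda x_{\perp}$ leaves both $A(u)=\int|\partial_{x_{2}}u|^{2}+|\partial_{x_{3}}u|^{2}$ and the Pohozaev constraint $\bar P_{c}$ invariant, so it produces a degenerate, not a negative, direction; a constrained minimizer with vanishing Lagrange multiplier can a priori be a genuine local minimizer, so negativity must be exhibited by hand. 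The paper's Lemma \ref{le-4} does exactly this with the specific test function $\partial_{r_{\perp}}U_{c}$: differentiating the traveling wave equation in $r_{\perp}$ yields $L_{c}\partial_{r_{\perp}}U_{c}=-r_{\perp}^{-2}\partial_{r_{\perp}}U_{c}$, hence $\langle L_{c}\partial_{r_{\perp}}U_{c},\partial_{r_{\perp}}U_{c}\rangle=-\|r_{\perp}^{-1}\partial_{r_{\perp}}U_{c}\|_{L^{2}}^{2}<0$, valid for all $c\in(0,\sqrt2)$; the needed regularity $\partial_{r_{\perp}}U_{c}\in H^{1}$, $r_{\perp}^{-1}\partial_{r_{\perp}}U_{c}\in L^{2}$ is established in Appendix 3. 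Without some such explicit negative direction your argument only yields $\dim N\le 1$, which is not the statement claimed.
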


\begin{proof}
Define the isomorphism $G:L^{2}\rightarrow X_{1}$ by
\begin{equation}
G\varphi=(-\Delta+1)^{-\frac{1}{2}}\varphi_{1}+i(-\Delta)^{-\frac{1}{2}%
}\varphi_{2}, \label{definition-G}%
\end{equation}
for $\varphi=\varphi_{1}+i\varphi_{2}\in L_{2}$. Let $\tilde{L}_{c}:=\tilde
{G}\circ L_{c}\circ\tilde{G}$ with
\begin{equation}
\tilde{G}=\left(
\begin{array}
[c]{cc}%
(-\Delta+1)^{-\frac{1}{2}} & 0\\
0 & (-\Delta)^{-\frac{1}{2}}%
\end{array}
\right)  , \label{defn-G-tilde}%
\end{equation}
and define the quadratic form on $L_{2}$ by
\begin{equation}
p_{c}(\varphi)=\tilde{q}_{c}\left(  G\varphi\right)  =\langle\tilde{L}%
_{c}\left(  \varphi_{1},\varphi_{2}\right)  ^{T},\left(  \varphi_{1}%
,\varphi_{2}\right)  ^{T}\rangle:=\langle\tilde{L}_{c}\varphi,\varphi\rangle.
\label{quadratic-form-relation}%
\end{equation}
Then
\[
\frac{\tilde{q}_{c}(\varphi)}{\left\Vert \varphi\right\Vert _{X_{1}}^{2}%
}=\frac{p_{c}(G^{-1}\varphi)}{\Vert G^{-1}\varphi\Vert_{L^{2}}^{2}%
},\ \text{for any }\varphi\in X_{1},
\]
and it is equivalent to prove the conclusions of Proposition for the quadratic
form $p_{c}(\varphi)$ on $L^{2}$. Since $u_{c}\rightarrow1$, $v_{c}%
\rightarrow0$ as $|x|\rightarrow\infty$, let
\begin{equation}
L_{c,\infty}:=\left(
\begin{array}
[c]{cc}%
-\Delta+2 & -c\partial_{x_{1}}\\
c\partial_{x_{1}} & -\Delta
\end{array}
\right)  \label{defn-L-infty}%
\end{equation}
and $q_{c,\infty}(\phi)=\langle L_{c,\infty}\phi,\phi\rangle$.
Correspondingly, let%
\[
\tilde{L}_{c,\infty}:=\tilde{G}\circ L_{c,\infty}\circ\tilde{G}%
\]
and $p_{c,\infty}(\varphi)=\langle\tilde{L}_{c,\infty}\varphi,\varphi\rangle$.
The properties of the quadratic form $p_{c}(\varphi)$ on $L^{2}$ follow from
the spectral properties of the operator $\tilde{L}_{c}$. We claim that:

\begin{enumerate}
\item[(i)] $\tilde{L}_{c}:L^{2}\rightarrow L^{2}$ is self-adjoint and bounded.

\item[(ii)] $\tilde{L}_{c}$ has one-dimensional negative eigenspace,
\[
\ker\tilde{L}_{c}=\left\{  G^{-1}\partial_{x_{i}}U_{c},i=1,2,3\right\}  ,
\]
and the rest of the spectrum are uniformly positive.
\end{enumerate}

The proof of these claims will be split into a few lemmas to be proved later
and we outline the rest of the proof of the Proposition based on these lemmas.

To prove claim (i), first we note that the constant coefficient operator
$\tilde{L}_{c,\infty}:L^{2}\rightarrow L^{2}$ is self-adjoint and bounded. We
shall show that the operator $\tilde{L}_{c}-\tilde{L}_{c,\infty}$ is compact
on $L^{2}$. Indeed,
\begin{align}
&  \ \ \ \ \ \ \tilde{L}_{c}-\tilde{L}_{c,\infty}\label{operator-difference}\\
&  =\left(
\begin{array}
[c]{cc}%
(-\Delta+1)^{-\frac{1}{2}}\left(  3u_{c}^{2}-3+v_{c}^{2}\right)
(-\Delta+1)^{-\frac{1}{2}} & 2(-\Delta+1)^{-\frac{1}{2}}u_{c}v_{c}%
(-\Delta)^{-\frac{1}{2}}\\
2(-\Delta)^{-\frac{1}{2}}u_{c}v_{c}(-\Delta+1)^{-\frac{1}{2}} & (-\Delta
)^{-\frac{1}{2}}\left(  u_{c}^{2}-1+3v_{c}^{2}\right)  (-\Delta)^{-\frac{1}%
{2}}%
\end{array}
\right) \nonumber\\
&  =\left(
\begin{array}
[c]{cc}%
L_{11} & L_{12}\\
L_{21} & L_{22}%
\end{array}
\right)  .\nonumber
\end{align}
By Lemma \ref{lemma-compactness}, the operators $L_{ij}$ $\left(
i,j=1,2\right)  $ are all compact on $L^{2}\left(  \mathbf{R}^{3}\right)  $.
Moreover, $L_{11},L_{22}$ are symmetric and $L_{21}=L_{12}^{\ast}$, thus
$\tilde{L}_{c}-\tilde{L}_{c,\infty}$ is bounded and self-adjoint and (i) is proved.

To prove claim (ii), first we note that by Lemma \ref{le-6}, there exists
$\delta_{0}>0$, such that
\[
\left\langle \tilde{L}_{c,\infty}\varphi,\varphi\right\rangle \geq\delta
_{0}\left\Vert \varphi\right\Vert _{L^{2}}^{2}.
\]
Thus $\sigma_{\text{ess}}\left(  \tilde{L}_{c,\infty}\right)  \subset
\lbrack\delta_{0},+\infty)$. By Weyl's Theorem and the compactness of
$\tilde{L}_{c}-\tilde{L}_{c,\infty}$, we have $\sigma_{\text{ess}}\left(
\tilde{L}_{c}\right)  =\sigma_{\text{ess}}\left(  \tilde{L}_{c,\infty}\right)
\subset\lbrack\delta_{0},+\infty)$. This shows that the negative eigenspace of
$\tilde{L}_{c}$ is finite-dimensional. By assumption (\ref{assumption-NDG}),
$\ker\tilde{L}_{c}=\left\{  G\partial_{x_{i}}U_{c},i=1,2,3\right\}  $. By
Lemmas \ref{le-5} and \ref{le-4}, the negative eigenspace of $\tilde{L}_{c}$
is one-dimensional. This proves claim (ii) and finishes the proof of the
Proposition.

\end{proof}

From the above Proposition and the relation
\begin{equation}
\langle\bar{E}_{c}{}^{\prime\prime}(w_{c})\phi,\phi\rangle=\left\langle
L_{c}\left(  K\phi\right)  ,K\phi\right\rangle ,
\label{relation-quadratic forms}%
\end{equation}
where $K$ is defined by (\ref{definition-K}), we immediately get the following

\begin{corollary}
\label{Cor-quadratic-form}Under the conditions of Proposition
\ref{prop-quadratic}, the space $X_{1}$ is decomposed as a direct sum%
\[
X_{1}=N^{\prime}\oplus Z^{\prime}\oplus P^{\prime},
\]
where $Z^{\prime}=\left\{  \partial_{x_{i}}w_{c},i=1,2,3\right\}  $,
$N^{\prime}$ is a one-dimensional subspace such that $q_{c}(u)=\langle\bar
{E}_{c}{}^{\prime\prime}(w_{c})u,u\rangle<0$ for $0\neq u\in N^{\prime}$, and
$P^{\prime}$ is a closed subspace such that
\[
q_{c}(u)\geq\delta\left\Vert u\right\Vert _{X_{1}}^{2},\ \forall u\in
P^{\prime}%
\]
for some constant $\delta>0.$
\end{corollary}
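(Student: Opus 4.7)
The plan is to transport the decomposition of Proposition \ref{prop-quadratic} through the isomorphism $K:X_1\to X_1$, using the identity $q_c(\phi)=\langle L_c(K\phi),K\phi\rangle$. First I would check that $K$ sends the kernel generators $\partial_{x_i}w_c$ to $\partial_{x_i}U_c$. Since $\psi(w)=1+w_1-\chi(D)(w_2^2/2)+iw_2$, one computes $D\psi(w_c)\phi=\phi_1-\chi(D)(v_c\phi_2)+i\phi_2$, which is exactly $K\phi$ under the real/imaginary part identification. Therefore $\partial_{x_i}U_c=\partial_{x_i}\psi(w_c)=K\partial_{x_i}w_c$, so $K(Z')=Z$ with $Z=\mathrm{span}\{\partial_{x_i}U_c\}$ as in \eqref{assumption-NDG}.

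Next, using the decomposition $X_1=N\oplus Z\oplus P$ from Proposition \ref{prop-quadratic}, I would define
\[
N':=K^{-1}(N),\qquad Z':=K^{-1}(Z)=\mathrm{span}\{\partial_{x_i}w_c\},\qquad P':=K^{-1}(P).
\]
Because $K$ is a linear isomorphism of $X_1$, the direct sum $X_1=N'\oplus Z'\oplus P'$ follows immediately, with $N'$ one-dimensional and $P'$ closed (as a continuous preimage of a closed subspace).

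For the sign/coercivity statements, the identity \eqref{relation-quadratic forms} gives $q_c(\phi)=\tilde q_c(K\phi)$, so for $0\neq\phi\in N'$ one has $K\phi\in N\setminus\{0\}$ and hence $q_c(\phi)=\tilde q_c(K\phi)<0$. For $\phi\in P'$, $K\phi\in P$ and Proposition \ref{prop-quadratic} gives
\[
q_c(\phi)=\tilde q_c(K\phi)\ge\delta\|K\phi\|_{X_1}^2\ge\delta\,\|K^{-1}\|^{-2}\|\phi\|_{X_1}^2,
\]
which is the desired estimate with constant $\delta':=\delta\|K^{-1}\|^{-2}>0$. There is no serious obstacle here; the only point requiring a brief check is the identification $K=D\psi(w_c)$ so that the kernel matches $Z'$ as stated, and the fact that the boundedness of $K^{-1}$ on $X_1$ (already noted after \eqref{definition-K}) is what converts the coercivity of $\tilde q_c$ on $P$ into coercivity of $q_c$ on $P'$.
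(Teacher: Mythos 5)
Your proposal is correct and follows essentially the same route as the paper: define $N'=K^{-1}N$, $Z'=K^{-1}Z$, $P'=K^{-1}P$, use the identity $q_c(\phi)=\langle L_c(K\phi),K\phi\rangle$ together with $\partial_{x_i}U_c=K\partial_{x_i}w_c$, and transfer the coercivity via the boundedness of $K^{-1}$. Your explicit tracking of the constant $\delta\|K^{-1}\|^{-2}$ is a minor elaboration the paper leaves implicit.
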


\begin{proof}
We define $N^{\prime}=K^{-1}N,\ Z^{\prime}=K^{-1}Z$ and $P^{\prime}=K^{-1}P$,
where $N,Z,P$ are defined in Proposition \ref{prop-quadratic}. Then the
conclusion follows by (\ref{relation-quadratic forms}). In particular,
$Z^{\prime}=K^{-1}Z=\left\{  \partial_{x_{i}}w_{c},i=1,2,3\right\}  $ since
\[
\partial_{x_{i}}U_{c}=\partial_{x_{i}}\psi\left(  w_{c}\right)  =K\partial
_{x_{i}}w_{c}\text{.}%
\]

\end{proof}

Now we prove several lemmas used in the proof of Proposition
\ref{prop-quadratic}. We use $C$ for a generic constant in the estimates.

\begin{lemma}
\label{lemma-compactness}The operators $L_{ij}$ $\left(  i,j=1,2\right)  $
defined in (\ref{operator-difference}) are compact on $L^{2}\left(
\mathbf{R}^{3}\right)  .$
\end{lemma}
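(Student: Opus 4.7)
The plan is to show each $L_{ij}$ is the norm limit in $\mathcal{B}(L^2)$ of compact operators obtained by spatial truncation of the middle potential, exploiting the pointwise decay $u_c - 1 = O(|x|^{-3})$ and $v_c = O(|x|^{-2})$ from \cite{B-G-S}. This will give the decay $V_{11} := 3u_c^2 - 3 + v_c^2 = O(|x|^{-3})$, $V_{22} := u_c^2 - 1 + 3v_c^2 = O(|x|^{-3})$, and $V_{12} := 2u_c v_c = O(|x|^{-2})$, so that all three lie in $L^\infty(\mathbf{R}^3)$, with $V_{11}, V_{22}\in L^{3/2}(\mathbf{R}^3)$ and $V_{12}\in L^3(\mathbf{R}^3)$. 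In each case I will use a cutoff $\chi_R = \mathbf{1}_{\{|x|\le R\}}$ to split $V = V\chi_R + V(1-\chi_R)$ and show the tail has small operator norm while the truncated piece is compact.

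For $L_{11} = (-\Delta+1)^{-1/2} V_{11} (-\Delta+1)^{-1/2}$ the argument is cleanest because $(-\Delta+1)^{-1/2}$ is bounded $L^2\to L^2$ (with range $H^1$). The tail contribution will have operator norm bounded by $\|V_{11}(1-\chi_R)\|_{L^\infty}\to 0$. For the truncated piece, I will compose $(-\Delta+1)^{-1/2}:L^2\to H^1(\mathbf{R}^3)$ with the restriction $H^1(B_R)\hookrightarrow L^2(B_R)$ (compact by Rellich-Kondrachov), then multiplication by $V_{11}\chi_R\in L^\infty$, and then $(-\Delta+1)^{-1/2}$ on $L^2$; this is a compact operator followed by a bounded one, hence compact.

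The main obstacle for $L_{22}, L_{12}$ and $L_{21}$ will be that $(-\Delta)^{-1/2}$ is not bounded on $L^2(\mathbf{R}^3)$. To get around this I will use the Sobolev and Hardy-Littlewood-Sobolev continuities $(-\Delta)^{-1/2}:L^2\to L^6$ and $(-\Delta)^{-1/2}:L^{6/5}\to L^2$. For $L_{22}$, H\"older's inequality gives $\|V_{22}f\|_{L^{6/5}}\le\|V_{22}\|_{L^{3/2}}\|f\|_{L^6}$, so the tail piece will have $L^2\to L^2$ operator norm bounded by $\|V_{22}(1-\chi_R)\|_{L^{3/2}}\to 0$ (using $V_{22}\in L^{3/2}$). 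For the truncated part, the right factor $(-\Delta)^{-1/2}$ maps $L^2$ into $\dot H^1$, whose restriction to $B_R$ lands in $L^2(B_R)$ compactly by Rellich-Kondrachov; multiplication by $V_{22}\chi_R\in L^\infty$ supported in $B_R$ then produces an element of $L^{6/5}$, and the left factor $(-\Delta)^{-1/2}:L^{6/5}\to L^2$ is bounded, so the composition is compact. For $L_{12}$ the same scheme works using $\|V_{12}f\|_{L^2}\le\|V_{12}\|_{L^3}\|f\|_{L^6}$, with the tail controlled by $\|V_{12}(1-\chi_R)\|_{L^3}\to 0$, and the leftmost factor now the bounded operator $(-\Delta+1)^{-1/2}$ on $L^2$. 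Compactness of $L_{21}=L_{12}^*$ then follows automatically.
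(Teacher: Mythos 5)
Your argument is correct, and while the overall strategy coincides with the paper's (exploit the pointwise decay of the potentials, isolate a compactly supported piece handled by Rellich--Kondrachov, and show the tail is small in operator norm), the key technical tool you use to tame the unbounded factor $(-\Delta)^{-1/2}$ is genuinely different. The paper controls it by Hardy's inequality on the Fourier side, $\left\Vert |\xi|^{-1}\hat{g}\right\Vert_{L^{2}}\leq C\left\Vert \nabla_{\xi}\hat{g}\right\Vert_{L^{2}}=C\left\Vert |x|g\right\Vert_{L^{2}}$, which converts the singular multiplier into a spatial weight and forces one to track the weighted decay $|x|v_{c}=O(|x|^{-1})$ and $|x|V_{2}=O(|x|^{-2})$ (with a further physical-space Hardy inequality and cutoff needed for $L_{22}$, where the weight alone does not decay fast enough to give compactness directly). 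You instead invoke the Lebesgue mapping properties of the Riesz potential, $(-\Delta)^{-1/2}:L^{2}\rightarrow L^{6}$ and $(-\Delta)^{-1/2}:L^{6/5}\rightarrow L^{2}$, and absorb the decay of the potentials into the integrability statements $V_{22}\in L^{3/2}$, $V_{12}\in L^{3}$ via H\"{o}lder; the decay rates $O(|x|^{-3})$ and $O(|x|^{-2})$ do give exactly these integrabilities in $\mathbf{R}^{3}$, and the duality $L_{21}=L_{12}^{\ast}$ disposes of the remaining entry just as in the paper. Your route is the more standard ``relatively compact potential'' argument and treats $L_{22}$ and $L_{12}$ by one uniform scheme, at the price of requiring integrability of the potentials rather than mere decay; the paper's weighted-$L^{2}$ route is more hands-on but would still function under slightly weaker integrability hypotheses, needing only pointwise weighted decay. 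Both proofs also differ cosmetically in that the paper verifies compactness by testing weak-to-strong convergence on sequences, whereas you exhibit each $L_{ij}$ as a norm limit of compact operators; these characterizations are of course equivalent.
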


\begin{proof}
Since $V_{1}\left(  x\right)  =3u_{c}^{2}-3+v_{c}^{2}\rightarrow0$ when
$\left\vert x\right\vert \rightarrow\infty$, and the operator $(-\Delta
+1)^{-\frac{1}{2}}:L^{2}\rightarrow H^{1}\ $is bounded, thus $\left(
3u_{c}^{2}-3+v_{c}^{2}\right)  (-\Delta+1)^{-\frac{1}{2}}$ is compact on
$L^{2}$ by the local compactness of $H^{1}\rightarrow L^{2}$. So $L_{11}$ is
compact on $L^{2}$.

Take a sequence $\left\{  v_{k}\right\}  \subset L^{2}\left(  \mathbf{R}%
^{3}\right)  $ and $v_{k}\rightarrow v_{\infty}$ weakly in $L^{2}$. To show an
operator $T$ is compact on $L^{2}$, it suffices to prove that $Tv_{k}%
\rightarrow Tv_{\infty}$ strongly in $L^{2}$. By Hardy's inequality in the
Fourier space,
\begin{align*}
\left\Vert L_{21}\left(  v_{k}-v_{\infty}\right)  \right\Vert _{L^{2}}  &
=\left\Vert \frac{1}{\left\vert \xi\right\vert }\left(  2u_{c}v_{c}%
(-\Delta+1)^{-\frac{1}{2}}\left(  v_{k}-v_{\infty}\right)  \right)
^{\symbol{94}}\left(  \xi\right)  \right\Vert _{L^{2}}\\
&  \leq C\left\Vert \nabla_{\xi}\left(  2u_{c}v_{c}(-\Delta+1)^{-\frac{1}{2}%
}\left(  v_{k}-v_{\infty}\right)  \right)  ^{\symbol{94}}\left(  \xi\right)
\right\Vert _{L^{2}}\\
&  \leq C\left\Vert \left\vert x\right\vert v_{c}(-\Delta+1)^{-\frac{1}{2}%
}\left(  v_{k}-v_{\infty}\right)  \right\Vert _{L^{2}}\rightarrow0,
\end{align*}
since the operator $\left\vert x\right\vert v_{c}(-\Delta+1)^{-\frac{1}{2}}$
is compact by using $\left\vert x\right\vert v_{c}=O\left(  \frac
{1}{\left\vert x\right\vert }\right)  $. Then $L_{12}=L_{21}^{\ast}$ is also compact.

To show the compactness of $L_{22}$, first note that $V_{2}\left(  x\right)
=\left(  u_{c}^{2}-1+3v_{c}^{2}\right)  =O\left(  \frac{1}{\left\vert
x\right\vert ^{3}}\right)  $. Let $\chi\in C_{0}^{\infty}(\mathbf{R}%
^{3},[0,1])$ be a radial cut-off function such that $\chi(\xi)=1$ when
$\left\vert \xi\right\vert \leq\frac{1}{2}$ and $\chi(\xi)=0$ when $\left\vert
\xi\right\vert \geq1$. For any $R>0$, let $\chi_{R}=\chi\left(  \frac{x}%
{R}\right)  $. Denote$\ u_{k}=(-\Delta)^{-\frac{1}{2}}\left(  v_{k}-v_{\infty
}\right)  $, then $\left\Vert u_{k}\right\Vert _{\dot{H}^{1}}\leq\left\Vert
v_{k}-v_{\infty}\right\Vert _{L^{2}}\leq C$. Thus
\begin{align*}
&  \ \ \ \ \left\Vert L_{22}\left(  v_{k}-v_{\infty}\right)  \right\Vert
_{L^{2}}\\
&  \leq C\left\Vert \left\vert x\right\vert V_{2}\left(  x\right)
u_{k}\right\Vert _{L^{2}}\\
&  \leq C\left(  \left\Vert \left\vert x\right\vert V_{2}\left(  x\right)
\chi_{R}u_{k}\right\Vert _{L^{2}}+\left\Vert \left\vert x\right\vert
V_{2}\left(  x\right)  \left(  1-\chi_{R}\right)  u_{k}\right\Vert _{L^{2}%
}\right) \\
&  \leq C\left(  \left\Vert \left\vert x\right\vert V_{2}\left(  x\right)
\chi_{R}u_{k}\right\Vert _{L^{2}} +\frac{1}{R}\left\Vert \frac{1}{\left\vert
x\right\vert }\left(  1-\chi_{R}\right)  u_{k}\right\Vert _{L^{2}}\right)  .
\end{align*}
We have
\begin{align*}
\left\Vert \frac{1}{\left\vert x\right\vert }\left(  1-\chi_{R}\right)
u_{k}\right\Vert _{L^{2}}  &  \leq C\left\Vert \nabla\left[  \left(
1-\chi_{R}\right)  u_{k}\right]  \right\Vert _{L^{2}}\\
&  \leq C\left(  \left\Vert \nabla u_{k}\right\Vert _{L^{2}}+\left\Vert
\frac{1}{R}\nabla\chi\left(  \frac{x}{R}\right)  \right\Vert _{L^{3}%
}\left\Vert u_{k}\right\Vert _{L^{6}}\right) \\
&  \leq C\left\Vert \nabla u_{k}\right\Vert _{L^{2}},
\end{align*}
so $\frac{1}{R}\left\Vert \frac{1}{\left\vert x\right\vert }\left(  1-\chi
_{R}\right)  u_{k}\right\Vert _{L^{2}}$ can be made arbitrarily small by
taking $R$ sufficiently large. For fixed $R$, by the compactness of $H_{0}^{1}
(\{|x| < R\}) \rightarrow L^{2}$, we get that
\[
\left\Vert \left\vert x\right\vert V_{2}\left(  x\right)  \chi_{R}%
u_{k}\right\Vert _{L^{2}}
\rightarrow0\text{, when }k\rightarrow\infty\text{. }%
\]
Thus $\left\Vert L_{22}\left(  v_{k}-v_{\infty}\right)  \right\Vert _{L^{2}%
}\rightarrow0$ when $k\rightarrow\infty$ and this finishes the proof of the lemma.
\end{proof}

Define the functional $\bar{P}_{c}:X_{1}\rightarrow\mathbf{R}$ by
\begin{equation}
\bar{P}_{c}(w)=\int_{\mathbf{R}^{3}}|\partial_{x_{1}}\psi(w)|^{2}%
dx+2c\tilde{P}\circ\psi(w)+\int_{\mathbf{R}^{3}}\frac{1}{2}(1-|\psi
(w)|^{2})^{2}dx. \label{fun-5}%
\end{equation}

\begin{lemma}
\label{le-3} Assume that $\varphi\in X_{1}$ satisfies $\langle\bar{P}_{c}%
{}^{\prime}(w_{c}),\varphi\rangle=0$, where $w_{c}=\psi^{-1}\left(
U_{c}\right)  $, then there holds $q_{c}(\varphi)=\langle\bar{E}_{c}{}%
^{\prime\prime}(w_{c})\varphi,\varphi\rangle\geq0$.
\end{lemma}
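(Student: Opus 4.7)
The plan is to exploit the variational characterization of $U_c$ from \cite{Maris-annal}: $w_c$ is a minimizer of $\bar E_c$ on the Pohozaev manifold $\mathcal M := \{w\in X_1 : \bar P_c(w)=0\}$. Once this is in hand, the Lemma becomes the classical second-order necessary condition at a constrained minimum, with Lagrange multiplier $-\tfrac12$ coming from the algebraic decomposition
\[
\bar E_c(w) \;=\; \tfrac12\, \bar P_c(w) \;+\; \tfrac12\!\int_{\mathbf R^3}|\partial_{x_\bot}\psi(w)|^2\,dx,
\]
which identifies $\bar E_c$ itself as the associated Lagrangian, so that $q_c=\langle\bar E_c''(w_c)\cdot,\cdot\rangle$ is the relevant second form.

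I would first record the Pohozaev identity $\bar P_c(w_c)=0$ by differentiating $\lambda\mapsto \bar E_c(\psi^{-1}(U_c(\lambda\cdot)))$ at $\lambda=1$ and using the traveling wave equation \eqref{eqn-tW-GP}. The same scaling, together with the nontriviality of $U_c$ (so that $\|1-|U_c|^2\|_{L^2}>0$) in the subsonic range, also yields $\bar P_c'(w_c)\neq 0$, giving the constraint qualification.

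Next, given $\varphi\in X_1$ with $\langle\bar P_c'(w_c),\varphi\rangle=0$, I build a $C^2$ curve in $\mathcal M$ through $w_c$ tangent to $\varphi$. Pick $\eta\in X_1$ with $\langle\bar P_c'(w_c),\eta\rangle\neq 0$ and apply the implicit function theorem to the smooth map $(t,s)\mapsto \bar P_c(w_c+t\varphi+s\eta)$ — smoothness on $X_1$ is guaranteed by Lemmas \ref{lemma-momentum} and \ref{lemma-smooth-energy-gp} — producing $s(t)$ with $\bar P_c(w_c+t\varphi+s(t)\eta)=0$ and $s(0)=0$. Differentiating the constraint at $t=0$ and using $\langle\bar P_c'(w_c),\varphi\rangle=0$ forces $s'(0)=0$, so $w(t):=w_c+t\varphi+s(t)\eta$ is $C^2$, lies in $\mathcal M$, and satisfies $w(0)=w_c$, $\dot w(0)=\varphi$. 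Since $\bar E_c$ and $\bar P_c$ are both translation invariant, the Maris minimizer is in fact an absolute minimizer on $\mathcal M$, so $\bar E_c(w(t))\ge \bar E_c(w_c)$ for small $|t|$. Expanding and invoking $\bar E_c'(w_c)=0$ from Lemma \ref{le-2}:
\[
0 \;\le\; \left.\frac{d^2}{dt^2}\right|_{t=0}\bar E_c(w(t)) \;=\; \langle\bar E_c''(w_c)\varphi,\varphi\rangle + \langle\bar E_c'(w_c),\ddot w(0)\rangle \;=\; q_c(\varphi).
\]

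The main technical care point is to execute the Pohozaev/constraint-qualification step rigorously at the level of $w\in X_1$ rather than $u\in 1+H^1(\mathbf R^3)$, since the scaling $U_c(\lambda\cdot)$ lives naturally in the energy space $X_0$ and must be pulled back through $\psi$ and the extended momentum $\tilde P$; the density of $1+H^1$ in $(X_0,d_1)$ together with the smoothness statements in Lemmas \ref{lemma-momentum} and \ref{lemma-smooth-energy-gp} closes this gap. Every other step is routine once the constrained-minimum framework is set up.
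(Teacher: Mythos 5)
Your proposal follows essentially the same route as the paper's proof of Lemma \ref{le-3}: the Pohozaev identity $\bar P_{c}(w_{c})=0$, an implicit-function-theorem curve on the constraint manifold tangent to $\varphi$ with $s'(0)=0$, and the second-order necessary condition at the constrained minimizer of $\bar E_{c}$. The one sub-step that does not work as you state it is the constraint qualification: the transverse scaling that generates the Pohozaev identity satisfies $\bar P_{c}(u_{\lambda})=\lambda^{2}\bar P_{c}(u)$, so differentiating $\bar P_{c}$ along that direction gives $2\bar P_{c}(w_{c})=0$ --- scaling cannot witness $\bar P_{c}{}^{\prime}(w_{c})\neq0$, with or without the nontriviality of $U_{c}$. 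The paper instead argues by contradiction from the decomposition you already wrote down: if $\bar P_{c}{}^{\prime}(w_{c})=0$, then since $\bar E_{c}{}^{\prime}(w_{c})=0$ and $\bar E_{c}-\tfrac12\bar P_{c}$ is the transverse Dirichlet energy, the first variation of the latter vanishes, i.e. $K^{\ast}(\Delta_{x_{2}x_{3}}U_{c})=0$ and hence $\Delta_{x_{2}x_{3}}U_{c}=0$, which forces $U_{c}\equiv1$, a contradiction. With that repair your argument is complete and coincides with the paper's.
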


\begin{proof}
First, we note that $\bar{P}_{c}{}^{\prime}(w_{c})\neq0$. Indeed, suppose
$\bar{P}_{c}{}^{\prime}(w_{c})=0$. Define
\[
A\left(  u\right)  =\int_{\mathbf{R}^{3}}|\partial_{x_{2}}u|^{2}%
+|\partial_{x_{3}}u|^{2}dx.
\]
Then since $\bar{E}_{c}{}^{\prime}(w_{c})=0$ and $\bar{E}_{c}\left(  w\right)
-\frac{1}{2}\bar{P}_{c}\left(  w\right)  =A\circ\psi(w),$ we have $\left(
A\circ\psi\right)  ^{\prime}(w_{c})=0$, that is, $K^{\ast}\left(
\Delta_{x_{2}x_{3}}U_{c}\right)  =0$. Here,
\begin{equation}
K^{\ast}\phi=\left(
\begin{array}
[c]{c}%
\phi_{1}\\
\phi_{2}-v_{c}\left(  \chi(D)\phi_{1}\right)
\end{array}
\right)  \label{defn-K*}%
\end{equation}
is the adjoint of $K$ defined in (\ref{definition-K}). Thus $\Delta
_{x_{2}x_{3}}U_{c}=0$ which implies that $U_{c}\equiv1$, a contradiction.

Thus we can choose $\phi\in X_{1}$ such that $\langle\bar{P}_{c}{}^{\prime
}(w_{c}),\phi\rangle\neq0.\ $Set $G(\sigma,s)=\bar{P}_{c}(w_{c}+\sigma
\phi+s\varphi)$. Then
\begin{align*}
G(0,0)  &  =\bar{P}_{c}(w_{c})\\
&  =\int_{\mathbf{R}^{3}}|\partial_{x_{1}}U_{c}|^{2}dx+2c\int_{\mathbf{R}^{3}%
}\langle i\partial_{x_{1}}(1-U_{c}),1-U_{c}\rangle dx+\int_{\mathbf{R}^{3}%
}\frac{1}{2}(1-|U_{c}|^{2})^{2}dx\\
&  =0,
\end{align*}
by the Pohozaev-type identity (see Proposition 4.1 of
\cite{Maris-non-existence}). Since
\[
\frac{\partial G}{\partial\sigma}(0,0)=\langle\bar{P}_{c}{}^{\prime}%
(w_{c}),\phi\rangle\neq0,
\]
by the implicit function theorem, there exists a $C^{1}$ function $\sigma(s)$
near $s=0$ such that $\sigma(0)=0$ and
\[
G(\sigma(s),s)=\bar{P}_{c}(w_{c}+\sigma(s)\phi+s\varphi)=0.
\]
Then from $\frac{d}{ds}G(\sigma(s),s)|_{s=0}=0$, we get $\langle\bar{P}_{c}%
{}^{\prime}(w_{c}),\sigma^{\prime}(0)\phi+\varphi\rangle=0$. Since
$\langle\bar{P}_{c}{}^{\prime}(w_{c}),\varphi\rangle=0$ and $\langle\bar
{P}_{c}{}^{\prime}(w_{c}),\phi\rangle\neq0$, we get $\sigma^{\prime}(0)=0$.
Let $w(s)=w_{c}+\sigma(s)\phi+s\varphi$ and $g(s)=\bar{E}_{c}(w(s))$. Then we
have $w(0)=w_{c}$, $w^{\prime}(0)=\varphi$ and $\bar{P}_{c}(w(s))=0$. By the
variational characterization of traveling wave solution \cite{Maris-annal}, we
know that $s=0$ is a local minimum point of $g(s)$. So, we get $g^{\prime
\prime}(0)\geq0$. This implies that $\langle\bar{E}_{c}{}^{\prime\prime}%
(w_{c})\varphi,\varphi\rangle\geq0$.
\end{proof}

The above lemma implies the following

\begin{lemma}
\label{le-5} For any $0<c_{0}<\sqrt{2}$, $\tilde{L}_{c_{0}}$ has at most
one-dimensional negative eigenspace.
\end{lemma}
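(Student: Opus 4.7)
The plan is to deduce this directly from Lemma \ref{le-3} together with the isomorphisms relating $\tilde L_{c_0}$ on $L^2$ to the quadratic form $q_{c_0}$ on $X_1$. First I would observe that $\tilde L_{c_0}$ is a bounded self-adjoint operator on $L^2$ whose essential spectrum lies in $[\delta_0,+\infty)$, so its negative spectral subspace is finite-dimensional and, by the standard min-max characterization for bounded self-adjoint operators, its dimension equals
\[
n_-(\tilde L_{c_0}) = \max\{\dim V : V\subset L^2,\ \langle \tilde L_{c_0}\varphi,\varphi\rangle<0\ \forall\, 0\neq \varphi\in V\}.
\]
Through the isomorphism $G:L^2\to X_1$ of (\ref{definition-G}) and $K:X_1\to X_1$ of (\ref{definition-K}), the identity $q_{c_0}(\phi)=\tilde q_{c_0}(K\phi)=p_{c_0}(G^{-1}K\phi)$ transports this to
\[
n_-(\tilde L_{c_0}) = \max\{\dim V' : V'\subset X_1,\ q_{c_0}(\phi)<0\ \forall\, 0\neq \phi\in V'\}.
\]
So it suffices to show that no $2$-dimensional subspace of $X_1$ can be negative definite for $q_{c_0}$.

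Next I would invoke Lemma \ref{le-3}: the hyperplane
\[
H := \{\varphi\in X_1 : \langle \bar P_{c_0}'(w_{c_0}),\varphi\rangle=0\}
\]
is closed of codimension one in $X_1$, because the Pohozaev-type calculation in Lemma \ref{le-3} shows $\bar P_{c_0}'(w_{c_0})\neq 0$, and Lemma \ref{le-3} gives $q_{c_0}(\varphi)\geq 0$ for every $\varphi\in H$.

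Finally I would conclude by a standard dimension-count argument: if $V'\subset X_1$ were a subspace of dimension at least $2$ on which $q_{c_0}$ is negative definite, then
\[
\dim(V'\cap H) \geq \dim V' + \operatorname{codim} H - \dim X_1 \geq 2-1 = 1,
\]
so $V'\cap H$ would contain a nonzero vector $\varphi_0$ satisfying simultaneously $q_{c_0}(\varphi_0)<0$ and $q_{c_0}(\varphi_0)\geq 0$, a contradiction. Hence the negative eigenspace of $\tilde L_{c_0}$ has dimension at most one. The only thing I need to double-check along the way is that the intersection inequality is valid for closed subspaces of the Banach space $X_1$ (using that $V'$ is finite-dimensional, one can pass to a finite-dimensional ambient space or use the fact that $H$ is the kernel of a nonzero continuous linear functional), but this is routine; the real content has already been isolated in Lemma \ref{le-3}, so no genuine obstacle remains.
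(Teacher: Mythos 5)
Your argument is correct and is essentially the paper's own proof in a slightly more abstract dress: both reduce the claim to Lemma \ref{le-3} via the isomorphisms $G$ and $K$, and both derive the contradiction from the fact that any two-dimensional subspace on which the quadratic form is negative definite must meet the codimension-one hyperplane $\{\langle\bar P_{c_0}'(w_{c_0}),\cdot\rangle=0\}$ in a nonzero vector. The paper simply makes this explicit by taking two $\tilde L_{c_0}$-orthogonal negative eigenfunctions and exhibiting the combination $w+\alpha\tilde w$ in that hyperplane, whereas you package the same step as a min-max characterization plus a dimension count; your closing caveat about the intersection inequality is handled correctly by restricting the functional to the finite-dimensional $V'$.
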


\begin{proof}
We assume by contradiction that $\varphi,\tilde{\varphi}\in L^{2}$ are two
linearly independent eigenfunctions of $\tilde{L}_{c_{0}}$corresponding to
negative eigenvalues. Since $\tilde{L}_{c_{0}}$ is self-adjoint, we can assume
that $\left\langle \tilde{L}_{c_{0}}\varphi,\tilde{\varphi}\right\rangle =0$.
Let $w_{c_{0}}\in X_{1}$ be such that $\psi(w_{c_{0}})=U_{c_{0}}$. From the
definition of $\tilde{L}_{c_{0}}$, for any $\phi,\tilde{\phi}\in L^{2}$, we
have
\begin{equation}
\left\langle \tilde{L}_{c_{0}}\phi,\tilde{\phi}\right\rangle =\langle\bar
{E}_{c_{0}}{}^{\prime\prime}(w_{c_{0}})K^{-1}G\phi,K^{-1}G\tilde{\phi}\rangle,
\label{bilinear-relation}%
\end{equation}
where the mappings $G,K$ are defined in (\ref{definition-G}) and
(\ref{definition-K}). Let $w=K^{-1}G\varphi,\tilde{w}=K^{-1}G\tilde{\varphi}$,
then $\langle\bar{E}_{c_{0}}{}^{\prime\prime}(w_{c_{0}})w,\tilde{w}\rangle=0$
and$\ $
\[
\ \langle\bar{E}_{c_{0}}{}^{\prime\prime}(w_{c_{0}})w,w\rangle,\langle\bar
{E}_{c_{0}}{}^{\prime\prime}(w_{c_{0}})\tilde{w},\tilde{w}\rangle<0.
\]
By Lemma \ref{le-3} we have
\[
\langle(P_{c_{0}}\circ\psi)^{\prime}(w_{c_{0}}),w\rangle\neq0,\langle
(P_{c_{0}}\circ\psi)^{\prime}(w_{c_{0}}),\tilde{w}\rangle\neq0.
\]
Thus there exists $\alpha\neq0$ such that
\[
\langle(P_{c_{0}}\circ\psi)^{\prime}(w_{c_{0}}),\xi_{0}\rangle
=0,\ \mathrm{for}\ \xi_{0}=w+\alpha\tilde{w}.
\]
Again by lemma \ref{le-3}, we get
\[
\left\langle \bar{E}_{c_{0}}{}^{\prime\prime}(w_{c_{0}})\xi_{0},\xi
_{0}\right\rangle \geq0.
\]
This contradicts with
\[
\left\langle \bar{E}_{c_{0}}{}^{\prime\prime}(w_{c_{0}})\xi_{0},\xi
_{0}\right\rangle =\ \langle\bar{E}_{c_{0}}{}^{\prime\prime}(w_{c_{0}%
})w,w\rangle+\alpha^{2}\langle\bar{E}_{c_{0}}{}^{\prime\prime}(w_{c_{0}%
})\tilde{w},\tilde{w}\rangle<0.
\]
So $\tilde{L}_{c_{0}}$ has at most one-dimensional negative eigenspace.
\end{proof}

\begin{lemma}
\label{le-4} For any $0<c_{0}<\sqrt{2}$, $\tilde{L}_{c_{0}}$ has at least one
negative eigenvalue.
\end{lemma}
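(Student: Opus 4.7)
The plan is to exhibit an explicit negative direction of $q_{c_0}$ using the two-parameter family of anisotropic rescalings. For $(\lambda,\mu)$ near $(1,1)$, define
\[
U_{c_0}^{\lambda,\mu}(x_1,x_\perp):=U_{c_0}(\lambda x_1,\mu x_\perp)\in X_0, \qquad w(\lambda,\mu):=\psi^{-1}(U_{c_0}^{\lambda,\mu})\in X_1.
\]
Using the decay estimates $u_{c_0}-1=O(|x|^{-3})$, $v_{c_0}=O(|x|^{-2})$ and corresponding derivative estimates from \cite{B-G-S}, together with the bicontinuity of $\psi$ from \cite{Gerard1-energy}, the map $(\lambda,\mu)\mapsto w(\lambda,\mu)$ is $C^2$ into $X_1$. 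I would then set $h(\lambda,\mu):=\bar E_{c_0}(w(\lambda,\mu))$; a direct change of variables in the integrals defining $E$ and $P$ gives
\[
h(\lambda,\mu)=\frac{\lambda}{2\mu^2}\tilde A_1+\frac{1}{2\lambda}\tilde A_\perp+\frac{\tilde B}{\lambda\mu^2}+\frac{c_0\tilde P}{\mu^2},
\]
where $\tilde A_1=\int|\partial_{x_1}U_{c_0}|^2$, $\tilde A_\perp=\int|\nabla_{x_\perp}U_{c_0}|^2$, $\tilde B=\tfrac12\int(1-|U_{c_0}|^2)^2$, and $\tilde P=P(U_{c_0})$.

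Since $\bar E_{c_0}'(w_{c_0})=0$ by Lemma \ref{le-2}, the first-order conditions $\partial_\lambda h(1,1)=\partial_\mu h(1,1)=0$ are automatic and recover the two Pohozaev-type identities
\[
\tilde A_1-\tilde A_\perp-2\tilde B=0,\qquad \tilde A_1+2\tilde B+2c_0\tilde P=0.
\]
Differentiating once more, the same vanishing of $\bar E_{c_0}'(w_{c_0})$ implies that the Hessian of $h$ at $(1,1)$ coincides with $q_{c_0}$ evaluated on the tangent vectors $\partial_\lambda w|_{(1,1)},\partial_\mu w|_{(1,1)}\in X_1$. Computing the entries from the explicit formula for $h$ and simplifying via the two identities yields
\[
H_0:=\begin{pmatrix}\tilde A_\perp+2\tilde B & -\tilde A_\perp\\ -\tilde A_\perp & 0\end{pmatrix},
\]
whose determinant is $-\tilde A_\perp^2<0$. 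Here $\tilde A_\perp>0$ because any finite-energy 3D traveling wave with $\nabla_\perp U_{c_0}\equiv 0$ would have to be a 1D traveling wave lifted trivially to $\mathbb{R}^3$, forcing $\int(1-|U_{c_0}|^2)^2=\infty$ unless $U_{c_0}$ is a constant of modulus one, contradicting nontriviality.

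Since $\det H_0<0$, the matrix $H_0$ has a strictly negative eigenvalue, so there exist $(a,b)\in\mathbb{R}^2\setminus\{0\}$ with $(a,b)H_0(a,b)^T<0$; concretely, $\phi_\varepsilon:=\partial_\mu w|_{(1,1)}+\varepsilon\,\partial_\lambda w|_{(1,1)}$ with small $\varepsilon>0$ gives $q_{c_0}(\phi_\varepsilon)=-2\varepsilon\tilde A_\perp+\varepsilon^2(\tilde A_\perp+2\tilde B)<0$. The vector $\phi_\varepsilon\in X_1$ is nonzero because $\det H_0\neq 0$ already forces $\partial_\lambda w|_{(1,1)}$ and $\partial_\mu w|_{(1,1)}$ to be linearly independent in $X_1$. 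Translating this negative direction through the isomorphisms $K$ and $G$ of (\ref{definition-K}) and (\ref{definition-G}) produces a test vector in $L^2$ on which $\langle\tilde L_{c_0}\cdot,\cdot\rangle<0$, so $\tilde L_{c_0}$ has at least one negative eigenvalue, as claimed.

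The principal technical obstacle is verifying the $C^2$-regularity of $(\lambda,\mu)\mapsto w(\lambda,\mu)$ as a map into $X_1$; this reduces to showing that $x_1\partial_{x_1}U_{c_0}$, $x_\perp\cdot\nabla_\perp U_{c_0}$ and the $\chi(D)$-corrections coming from inverting $\psi$ lie in $H^1\times\dot H^1$ together with their $(\lambda,\mu)$-derivatives, which is routine from the pointwise decay of $u_{c_0}-1$, $v_{c_0}$ and their derivatives together with the $L^p$-boundedness of $\chi(D)$ in (\ref{ineq-1}).
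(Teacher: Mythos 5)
Your argument is correct, but it takes a genuinely different route from the paper's. You run a Derrick--Pohozaev scaling argument: restrict $\bar E_{c_0}$ to the two-parameter family $U_{c_0}(\lambda x_1,\mu x_\perp)$, use criticality of $w_{c_0}$ to kill the first-order terms (recovering the constraint identity $\tilde A_1+2\tilde B+2c_0\tilde P=0$ of Proposition 4.1 in \cite{Maris-non-existence} together with the $x_1$-dilation identity), and read off a $2\times2$ Hessian with negative determinant. Your entries check out, up to a harmless slip in the stated normalization of $\tilde B$ (the displayed formula for $h$ and the two identities are consistent with $\tilde B=\frac14\int(1-|U_{c_0}|^2)^2$ rather than $\frac12\int$); in any case $\partial_{\mu\mu}h(1,1)=0$ and $\det H_0=-\tilde A_\perp^2<0$ independently of that constant. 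The paper instead exploits cylindrical symmetry in one line: differentiating the traveling wave equation in $r_\perp$ gives $L_{c_0}\partial_{r_\perp}U_{c_0}=-r_\perp^{-2}\partial_{r_\perp}U_{c_0}$, hence $\langle L_{c_0}\partial_{r_\perp}U_{c_0},\partial_{r_\perp}U_{c_0}\rangle=-\Vert r_\perp^{-1}\partial_{r_\perp}U_{c_0}\Vert_{L^2}^2<0$, the only real work being the Appendix 3 verification that $\partial_{r_\perp}U_{c_0}\in H^1$ and $r_\perp^{-1}\partial_{r_\perp}U_{c_0}\in L^2$. The two negative directions are close cousins ($x_\perp\cdot\nabla_\perp U_{c_0}=r_\perp\partial_{r_\perp}U_{c_0}$), but the proofs are logically independent. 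What each buys: the paper's is shorter and needs no Pohozaev identities, at the price of the commutator identity and the weighted $L^2$ bound; yours does not use cylindrical symmetry at all and transfers verbatim to general nonlinearities and higher dimensions, at the price of the $C^2$-in-$(\lambda,\mu)$ regularity you flag, which rests on decay of $\nabla U_{c_0}$ and $\nabla^2 U_{c_0}$ (one extra power of $|x|^{-1}$ per derivative, available from \cite{Gravejat}) that the paper states only for $u_{c_0}-1$ and $v_{c_0}$ themselves. If you want to sidestep the path-regularity issue entirely, note that by (\ref{bilinear-relation}) and (\ref{relation-quadratic forms}) it suffices to exhibit $\phi\in X_1$ with $\langle L_{c_0}\phi,\phi\rangle<0$; taking $\phi=\varepsilon\, x_1\partial_{x_1}U_{c_0}+x_\perp\cdot\nabla_\perp U_{c_0}$ and evaluating the quadratic form by integration by parts against the traveling wave equation reproduces your Hessian computation directly.
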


\begin{proof}
By (\ref{bilinear-relation}), it suffices to find a test function $w_{0}\in
X_{1}$ such that $q_{c}(w_{0})=\langle\bar{E}_{c_{0}}{}^{\prime\prime
}(w_{c_{0}})w_{0},w_{0}\rangle<0$. By (\ref{relation-quadratic forms}), it is
equivalent to find $\phi\in X_{1}$ such that $\left\langle L_{c_{0}}\phi
,\phi\right\rangle <0$. We note that the traveling wave solutions of
(\ref{eqn-tW-GP}) constructed in \cite{Maris-annal} are cylindrical symmetric,
that is, $U_{c_{0}}=U_{c_{0}}\left(  x_{1},r_{\perp}\right)  $ with $r_{\perp
}=\sqrt{x_{2}^{2}+x_{3}^{2}}$. Differentiating (\ref{eqn-tW-GP}) to $r_{\perp
}$, we get
\[
L_{c_{0}}\partial_{r_{\perp}}U_{c_{0}}=-\frac{1}{r_{\perp}^{2}}\partial
_{r_{\perp}}U_{c_{0}}.
\]
In Appendix 3, we show that $\partial_{r_{\bot}}U_{c}\in H^{1}(\mathbf{R}%
^{3})$ and $\frac{1}{r_{\perp}}\partial_{r_{\perp}}U_{c_{0}}\in L^{2}\left(
\mathbf{R}^{3}\right)  $. Thus%

\[
\left\langle L_{c_{0}}\partial_{r_{\perp}}U_{c_{0}},\partial_{r_{\perp}%
}U_{c_{0}}\right\rangle =-\left\Vert \frac{1}{r_{\perp}}\partial_{r_{\perp}%
}U_{c_{0}}\right\Vert _{L^{2}}^{2}<0\text{. }%
\]
This proves the lemma.
\end{proof}

\begin{lemma}
\label{le-6} For any $0<c_{0}<\sqrt{2}$, there exists $\delta_{0}>0$ such
that
\begin{equation}
\frac{p_{c_{0},\infty}(\varphi)}{\Vert\varphi\Vert_{L^{2}}^{2}}\geq\delta
_{0},\ \forall\ \varphi\in L^{2}. \label{ess-1}%
\end{equation}

\end{lemma}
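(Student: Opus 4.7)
The plan is to diagonalize in Fourier space. Since $\tilde{L}_{c_{0},\infty}$ is a constant-coefficient operator (a composition of Fourier multipliers), Plancherel's theorem reduces the claimed inequality to the uniform positivity, over $\xi\in\mathbf{R}^{3}$, of the $2\times 2$ Hermitian matrix symbol $M(\xi)$ of $\tilde{L}_{c_{0},\infty}$. Concretely, I would first write down the symbol of $L_{c_{0},\infty}$ as
\[
\begin{pmatrix} |\xi|^{2}+2 & -ic_{0}\xi_{1} \\ ic_{0}\xi_{1} & |\xi|^{2} \end{pmatrix},
\]
and conjugate by the symbol $\mathrm{diag}\bigl((|\xi|^{2}+1)^{-1/2},\,|\xi|^{-1}\bigr)$ of $\tilde{G}$ to obtain
\[
M(\xi)=\begin{pmatrix} \dfrac{|\xi|^{2}+2}{|\xi|^{2}+1} & \dfrac{-ic_{0}\xi_{1}}{|\xi|\sqrt{|\xi|^{2}+1}} \\[4pt] \dfrac{ic_{0}\xi_{1}}{|\xi|\sqrt{|\xi|^{2}+1}} & 1 \end{pmatrix}.
\]
Then $p_{c_{0},\infty}(\varphi)=\int_{\mathbf{R}^{3}}\langle M(\xi)\hat\varphi(\xi),\hat\varphi(\xi)\rangle_{\mathbf{C}^{2}}\,d\xi$, and the lemma reduces to exhibiting $\delta_{0}>0$ with $M(\xi)\ge \delta_{0} I$ uniformly in $\xi$.

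For this matrix inequality, I would bound the smaller eigenvalue $\lambda_{-}(\xi)$ from below via $\lambda_{-}(\xi)\ge \det M(\xi)/\mathrm{tr}\,M(\xi)$, which follows from $\lambda_{-}\lambda_{+}=\det M$, $\lambda_{-}+\lambda_{+}=\mathrm{tr}\,M$, and $\lambda_{+}\le\mathrm{tr}\,M$. A direct computation yields $\mathrm{tr}\,M(\xi)=(2|\xi|^{2}+3)/(|\xi|^{2}+1)$ and
\[
\det M(\xi)=\frac{(|\xi|^{2}+2)|\xi|^{2}-c_{0}^{2}\xi_{1}^{2}}{|\xi|^{2}(|\xi|^{2}+1)}.
\]
Using $\xi_{1}^{2}\le|\xi|^{2}$ together with the subsonic hypothesis $c_{0}^{2}<2$, the numerator is bounded below by $|\xi|^{2}(|\xi|^{2}+2-c_{0}^{2})$, so $\det M(\xi)\ge (|\xi|^{2}+2-c_{0}^{2})/(|\xi|^{2}+1)$, and therefore
\[
\lambda_{-}(\xi)\ge \frac{|\xi|^{2}+2-c_{0}^{2}}{2|\xi|^{2}+3}.
\]
This expression is continuous on $\mathbf{R}^{3}\setminus\{0\}$, tends to $(2-c_{0}^{2})/3>0$ as $|\xi|\to 0$ and to $1/2$ as $|\xi|\to\infty$, so it has a strictly positive infimum which may be taken as $\delta_{0}$.

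The only delicate point — and the entire reason the bound works — is the use of $c_{0}^{2}<2$: along the ray $\xi_{2}=\xi_{3}=0$ as $|\xi|\to 0$ one has $\det M(\xi)\to 2-c_{0}^{2}$, which degenerates precisely at the sound speed $c_{0}=\sqrt{2}$. This is the analytic counterpart of the nonexistence of sonic and supersonic traveling waves proved in \cite{Maris-non-existence}, and I do not see any other step that poses a real obstacle; the argument is a direct Fourier-space computation exploiting that $L_{c_{0},\infty}$ has constant coefficients.
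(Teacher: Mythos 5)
Your proof is correct, but it takes a genuinely different route from the paper's. The paper does not pass to the Fourier symbol at all: it uses the relation \eqref{quadratic-form-relation} to reduce the claim to the equivalent bound $q_{c_{0},\infty}(w)\geq\delta_{0}\left(  \Vert w_{1}\Vert_{H^{1}}^{2}+\Vert w_{2}\Vert_{\dot{H}^{1}}^{2}\right)  $ on $X_{1}$, and then completes the square in physical space: choosing $0<a_{0}<1$ with $2-c_{0}^{2}/a_{0}^{2}>0$, it rewrites the cross term $-2c_{0}(\partial_{x_{1}}w_{2})w_{1}$ into the nonnegative square $\left(  \frac{c_{0}}{a_{0}}w_{1}-a_{0}\partial_{x_{1}}w_{2}\right)  ^{2}$ plus the leftover coefficients $2-c_{0}^{2}/a_{0}^{2}$ and $1-a_{0}^{2}$, giving $\delta_{0}=\min\{2-c_{0}^{2}/a_{0}^{2},\,1-a_{0}^{2}\}$. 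Your argument instead diagonalizes the constant-coefficient operator $\tilde{L}_{c_{0},\infty}$ via Plancherel and bounds the smaller eigenvalue of the $2\times2$ Hermitian symbol by $\det M/\operatorname{tr}M$; your computations of the trace, the determinant, and the resulting lower bound $(|\xi|^{2}+2-c_{0}^{2})/(2|\xi|^{2}+3)$ are all correct, and since $\varphi$ is real-valued the pointwise matrix bound on the Hermitian form does yield the $L^{2}$ inequality. Both arguments hinge on exactly the same subsonic inequality --- in Fourier variables $(|\xi|^{2}+2)|\xi|^{2}>c_{0}^{2}\xi_{1}^{2}$, in physical space $2w_{1}^{2}+(\partial_{x_{1}}w_{2})^{2}>2c_{0}|w_{1}\partial_{x_{1}}w_{2}|$ --- and both degenerate precisely at $c_{0}=\sqrt{2}$, as you note. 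What each buys: the paper's completion of squares is more elementary and transfers verbatim to the cylindrically symmetric subspaces and to the hydrodynamic analogue in Lemma \ref{lemma-M-infinity} (where the coefficients $\rho_{c_{0}}$ are variable and no symbol calculus is available); your symbol computation gives an explicit, sharp picture of where and how positivity fails at the sound speed, which is informative but tied to the constant-coefficient setting.
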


\begin{proof}
By (\ref{quadratic-form-relation}), it suffices to prove that there exists
$\delta_{0}>0$ such that
\begin{equation}
\frac{q_{c_{0},\infty}(w)}{\Vert w_{1}\Vert_{H^{1}}^{2}+\Vert w_{2}\Vert
_{\dot{H}^{1}}^{2}}\geq\delta_{0},\ \forall\ w=w_{1}+iw_{2}\in X_{1}.
\label{ess-2}%
\end{equation}
Since $0<c_{0}<\sqrt{2}$, there exists $0<a_{0}<1$ such that $2-\frac
{c_{0}^{2}}{a_{0}^{2}}>0$. Then for $w=w_{1}+iw_{2}\in X_{1}$, we have
\begin{align*}
q_{c,\infty}(w)  &  =\int_{\mathbf{R}^{3}}\left[  |\nabla w_{1}|^{2}%
+2w_{1}^{2}+|\nabla w_{2}|^{2}-2c_{0}(\partial_{x_{1}}w_{2})w_{1}\right]
\ dx\\
&  =\int_{\mathbf{R}^{3}}(\ |\nabla w_{1}|^{2}+(2-\frac{c_{0}^{2}}{a_{0}^{2}%
})w_{1}^{2}+(1-a_{0}^{2})(\partial_{x_{1}}w_{2})^{2}\\
&  ~~~\ \ \ \ +(\partial_{x_{2}}w_{2})^{2}+(\partial_{x_{3}}w_{2})^{2}%
+(\frac{c_{0}}{a_{0}}w_{1}-a_{0}\partial_{x_{1}}w_{2})^{2})\ dx.\\
&  \geq\min\left\{  2-\frac{c_{0}^{2}}{a_{0}^{2}},1-a_{0}^{2}\right\}  \left(
\Vert w_{1}\Vert_{H^{1}}^{2}+\Vert w_{2}\Vert_{\dot{H}^{1}}^{2}\right)  .
\end{align*}
Thus (\ref{ess-2}) holds for $\delta=\min\left\{  2-\frac{c_{0}^{2}}{a_{0}%
^{2}},1-a_{0}^{2}\right\}  $.
\end{proof}

\subsection{Proof of nonlinear stability}

We can now prove the orbital stability of traveling waves on the lower branch
(i.e. when $\frac{\partial P(U_{c})}{\partial c}|_{c=c_{0}}>0$).

\begin{theorem}
\label{thm:stability}For $0<c_{0}<\sqrt{2}$, let $U_{c_{0}}\ $be a traveling
wave solution of (GP) constructed in \cite{Maris-annal}, satisfying
(\ref{assumption-NDG})$~$and $\frac{\partial P(U_{c})}{\partial c}|_{c=c_{0}%
}>0$. Then the traveling wave $U_{c_{0}}$ is orbitally stable in the following
sense:There exists constants $\varepsilon_{0},M>0$ such that for any
$0<\varepsilon<\varepsilon_{0},\ $if
\begin{equation}
u\left(  0\right)  \in X_{0},\ d_{1}\left(  u\left(  0\right)  ,U_{c_{0}%
}\right)  <M\varepsilon, \label{estimate-stability-initial}%
\end{equation}
then \qquad%
\[
\sup_{0<t<\infty}\inf_{y\in\mathbf{R}^{3}}d_{1}\left(  u\left(  \cdot
,t\right)  ,U_{c_{0}}\left(  \cdot+y\right)  \right)  <\varepsilon.
\]

\end{theorem}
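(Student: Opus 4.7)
The plan is to run a Lyapunov–function argument in the smooth coordinates $X_1$, using the spectral decomposition $X_1 = N' \oplus Z' \oplus P'$ from Corollary \ref{Cor-quadratic-form} together with conservation of energy and of the extended momentum $\tilde P$. Work in the chart $\psi$: since $\psi:(X_1,\|\cdot\|_{X_1})\to(X_0,d_1)$ is locally bi-Lipschitz (\cite{Gerard1-energy}), we can translate a $d_1$-ball around $U_{c_0}$ into an $X_1$-ball around $w_{c_0}=\psi^{-1}(U_{c_0})$, and conversely read off orbital stability in $d_1$ from an $X_1$ bound on the perturbation. Spatial translations act simultaneously on $X_0$ and $X_1$ (because $\chi(D)$ commutes with them), and both $\bar E_{c_0}=E_{c_0}\circ\psi$ and $\bar P=\tilde P\circ\psi$ are translation–invariant and conserved along the flow.

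First I would carry out a modulation. For $w$ close in $X_1$ to the orbit $\{w_{c_0}(\cdot+y):y\in\mathbf R^3\}$, the implicit function theorem applied to the $3\times 3$ system $\langle w-w_{c_0}(\cdot+y),\partial_{x_i}w_{c_0}(\cdot+y)\rangle_{X_1}=0$, $i=1,2,3$, yields a locally unique $C^1$ map $y=y(w)$, and the decomposition $w=w_{c_0}(\cdot+y)+\phi$ with $\phi\perp_{X_1} Z'$. Applied to $w(t)$ (which is continuous in $d_1$, hence in $X_1$, by \cite{gerard-cauchy}), this gives a continuous $y(t)$ and $\phi(t)$ as long as $w(t)$ stays in a small $X_1$-neighborhood of the orbit, which will be justified by a bootstrap.

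Next, combine conservation with Taylor's formula at $w_{c_0}(\cdot+y(t))$, exploiting $\bar E_{c_0}{}'(w_{c_0}(\cdot+y(t)))=0$ (Lemma \ref{le-2}) and translation invariance:
\begin{align*}
\tfrac12 q_{c_0}(\phi(t))+O(\|\phi(t)\|_{X_1}^3)&=\bar E_{c_0}(w(t))-\bar E_{c_0}(w_{c_0})=\bar E_{c_0}(w(0))-\bar E_{c_0}(w_{c_0})=O(\varepsilon^2),\\
\langle\bar P'(w_{c_0}(\cdot+y(t))),\phi(t)\rangle+O(\|\phi(t)\|_{X_1}^2)&=\bar P(w(0))-\bar P(w_{c_0})=O(\varepsilon),
\end{align*}
where in the first line the $O(\varepsilon^2)$ comes from the analogous expansion at $t=0$ together with $\|\phi(0)\|_{X_1}\le C\varepsilon$. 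The sign condition now enters through the identity obtained by differentiating $\bar E_c{}'(w_c)=0$ in $c$, namely $\bar E_{c_0}{}''(w_{c_0})\partial_c w_c|_{c_0}=-\bar P'(w_{c_0})$ and hence $dP(U_c)/dc|_{c_0}=-\langle\bar E_{c_0}{}''(w_{c_0})\partial_c w_c,\partial_c w_c\rangle$. Thus $dP/dc>0$ forces $\partial_c w_c|_{c_0}$ to have nontrivial component along the one-dimensional negative direction $N'$ of Corollary \ref{Cor-quadratic-form}, so $\langle \bar P'(w_{c_0}),n\rangle\ne 0$ for a spanning vector $n\in N'$. A standard finite-dimensional lemma (Weinstein / Bona–Souganidis–Strauss style) then upgrades the $X_1$-positivity of $q_{c_0}$ on $P'$ to the coercivity
\[
q_{c_0}(\phi)+C_0\,|\langle\bar P'(w_{c_0}),\phi\rangle|^2\ge \delta_0\,\|\phi\|_{X_1}^2
\]
for all $\phi\in X_1$ satisfying $\langle\phi,\partial_{x_i}w_{c_0}\rangle_{X_1}=0$, with constants $\delta_0,C_0>0$.

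Plugging the two Taylor estimates into this coercivity inequality yields $\delta_0\|\phi(t)\|_{X_1}^2\le C\varepsilon^2+C\|\phi(t)\|_{X_1}^3+C\|\phi(t)\|_{X_1}^4$. A standard continuity bootstrap based on $\|\phi(0)\|_{X_1}\le C\varepsilon$ then gives $\|\phi(t)\|_{X_1}\le C'\varepsilon$ for all $t$, and by local bi-Lipschitz equivalence of $\psi$ this translates into $\inf_y d_1(u(\cdot,t),U_{c_0}(\cdot+y))<\varepsilon$ provided $M$ is chosen small enough. The main obstacle I expect is the coercivity step: because $X_1=H^1\times\dot H^1$ is not $H^1\times H^1$ and $L_c$ has no spectral gap (the essential spectrum of $L_c$ reaches $0$), one cannot invoke standard symmetric Hilbert-space arguments directly. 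This is why Proposition \ref{prop-quadratic} first conjugates $L_c$ via $\tilde G$ to an operator $\tilde L_c$ on $L^2$ with a genuine spectral gap; the constrained coercivity in $X_1$ is then a finite-dimensional consequence about how the single negative direction pairs against $\bar P'(w_{c_0})$. A secondary, essentially routine, point is checking that the modulated $w(t)$ remains inside the $X_1$-chart of $\psi$ for all $t$, which follows from $d_1$-continuity of the global flow plus continuity of $y(t)$.
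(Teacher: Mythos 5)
Your proposal is correct and follows essentially the same route as the paper: work in the chart $\psi$ on $X_1$, modulate out translations, use conservation of $\bar E_{c_0}$ and $\bar P$ together with the constrained coercivity of $q_{c_0}$ (the paper's Lemma \ref{lemma-positive-quadratic}, derived from Corollary \ref{Cor-quadratic-form} exactly as you describe), and close with a continuity bootstrap. The only differences are cosmetic — the paper defines the modulation parameter by minimizing over translations rather than by the implicit function theorem, and handles the momentum constraint by explicitly splitting $u=v+aq$ and bounding $a$ rather than by a penalized coercivity inequality — but these are equivalent formulations of the same argument.
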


The proof of this theorem basically follows the line in \cite{gss87}. However,
the more precise stability estimate (\ref{estimate-stability-initial}) was not
given there. The proof given below is to modify the proof of Theorem 3.4 in
\cite{gss87} and get (\ref{estimate-stability-initial}). First, we need the following

\begin{lemma}
\label{lemma-positive-quadratic}Let $\frac{\partial P(U_{c})}{\partial
c}|_{c=c_{0}}>0$. If $\phi\in X_{1}$ is such that
\[
\left\langle \bar{P}^{\prime}\left(  w_{c}\right)  ,\phi\right\rangle =\left(
\partial_{x_{i}}w_{c_{0}},\phi\right)  =0,\ i=1,2,3,
\]
then
\[
\langle\bar{E}_{c}{}^{\prime\prime}(w_{c_{0}})\phi,\phi\rangle\geq
\delta\left\Vert \phi\right\Vert _{X_{1}}^{2},
\]
for some $\delta>0$.
\end{lemma}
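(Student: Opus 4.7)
The strategy is the classical Grillakis--Shatah--Strauss positivity-under-constraint argument, powered by the decomposition in Corollary~\ref{Cor-quadratic-form}. The first step is to use the sign condition to produce a test direction with a known sign of the quadratic form. Since \eqref{assumption-NDG} guarantees a $C^1$ family $w_c$ of critical points of $\bar E_c$ near $w_{c_0}$ (Theorem~\ref{thm-continuation-3d}), I would differentiate $\bar E_c'(w_c)=0$ in $c$ and evaluate at $c_0$ to obtain
\[
L_{c_0}\chi = -\bar P'(w_{c_0}),\qquad \chi := \partial_c w_c\big|_{c=c_0},
\]
whence
\[
\langle L_{c_0}\chi,\chi\rangle = -\langle\bar P'(w_{c_0}),\chi\rangle = -\frac{d\bar P(w_c)}{dc}\bigg|_{c_0} = -\frac{dP(U_c)}{dc}\bigg|_{c_0} < 0.
\]
Because $L_{c_0}$ annihilates $Z' = \mathrm{span}\{\partial_{x_i}w_{c_0}\}$, I may add a kernel element to $\chi$ and assume $\chi\in\Sigma_0 := \{\phi\in X_1:(\partial_{x_i}w_{c_0},\phi)_{X_1}=0\}$ without changing either $L_{c_0}\chi$ or $\langle L_{c_0}\chi,\chi\rangle$.

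Next I would refine the spectral decomposition on the codimension-$3$ subspace $\Sigma_0$. Since $Z'\cap\Sigma_0=\{0\}$, the bounded symmetric form $\langle L_{c_0}\cdot,\cdot\rangle$ is non-degenerate on $\Sigma_0$, and from Corollary~\ref{Cor-quadratic-form} (equivalently the $\tilde L_{c_0}$ picture on $L^2$ used in Proposition~\ref{prop-quadratic}) it has exactly one negative direction there. A standard orthogonalization in this form therefore produces an $L_{c_0}$-orthogonal splitting $\Sigma_0 = \mathbf{R}n_0 \oplus P''$ with $\langle L_{c_0}n_0,n_0\rangle=-\mu<0$ and $\langle L_{c_0}p,p\rangle\geq \delta'\Vert p\Vert_{X_1}^2$ on $P''$. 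Decomposing $\chi=an_0+q$ with $q\in P''$, the inequality $\langle L_{c_0}\chi,\chi\rangle=-a^2\mu+\langle L_{c_0}q,q\rangle<0$ forces $a\neq 0$ and
\[
\langle L_{c_0}q,q\rangle < a^2\mu.
\]

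Now take $\phi$ as in the lemma. The $X_1$-orthogonality to translations gives $\phi\in\Sigma_0$, so I write $\phi=bn_0+p$ with $p\in P''$. The hypothesis $\langle\bar P'(w_{c_0}),\phi\rangle=0$ becomes $\langle L_{c_0}\chi,\phi\rangle=0$, which by the $L_{c_0}$-orthogonality of the splitting reads $-ab\mu+\langle L_{c_0}q,p\rangle=0$, so $b=\langle L_{c_0}q,p\rangle/(a\mu)$. Cauchy--Schwarz for the positive form $\langle L_{c_0}\cdot,\cdot\rangle$ on $P''$ yields
\[
b^2\mu \;=\; \frac{\langle L_{c_0}q,p\rangle^2}{a^2\mu} \;\leq\; \frac{\langle L_{c_0}q,q\rangle}{a^2\mu}\,\langle L_{c_0}p,p\rangle \;=\; (1-\theta)\langle L_{c_0}p,p\rangle,
\]
where $\theta := 1-\langle L_{c_0}q,q\rangle/(a^2\mu)\in(0,1]$, and therefore
\[
\langle L_{c_0}\phi,\phi\rangle \;=\; -b^2\mu + \langle L_{c_0}p,p\rangle \;\geq\; \theta\,\langle L_{c_0}p,p\rangle \;\geq\; \theta\delta'\Vert p\Vert_{X_1}^2.
\]
Combining the bound $|b|\leq C\Vert p\Vert_{X_1}$ with norm equivalence on the finite-dimensional direct sum $\mathbf{R}n_0\oplus P''$ gives $\Vert\phi\Vert_{X_1}^2\leq C'\Vert p\Vert_{X_1}^2$, and the desired estimate $\langle\bar E_c''(w_{c_0})\phi,\phi\rangle = \langle L_{c_0}\phi,\phi\rangle\geq\delta\Vert\phi\Vert_{X_1}^2$ follows with $\delta=\theta\delta'/C'>0$.

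The main technical obstacle is the restriction step: one must verify that the decomposition of Corollary~\ref{Cor-quadratic-form}, produced on all of $X_1$, induces an $L_{c_0}$-orthogonal splitting of $\Sigma_0$ with the same one-dimensional negative subspace structure and quantitative positivity on its complement. This is a routine finite-dimensional orthogonalization once $Z'\cap\Sigma_0=\{0\}$ is noted, but it is the place where the spectral input of Subsection~2.3 is actually used to convert the single linear constraint $\langle\bar P'(w_{c_0}),\phi\rangle=0$ together with the sign condition $\frac{dP}{dc}|_{c_0}>0$ into coercivity of $L_{c_0}$ on the constrained slice.
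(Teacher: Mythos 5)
Your argument is correct and is essentially the paper's own proof: the paper simply invokes the Grillakis--Shatah--Strauss positivity-under-constraints argument from \cite{gss87} together with Corollary \ref{Cor-quadratic-form}, which is exactly what you carry out (negative direction from $\partial_c w_c$ and the sign of $\frac{dP}{dc}$, then orthogonalization in the quadratic form on the slice orthogonal to the translation modes, Cauchy--Schwarz on the positive part, and norm equivalence of the direct sum). The only caveat is notational: the operator satisfying $\mathrm{Op}\,\partial_c w_c|_{c=c_0}=-\bar P'(w_{c_0})$ is $\bar E_{c_0}''(w_{c_0})=K^{\ast}L_{c_0}K$ rather than the paper's $L_{c_0}$, so your $\langle L_{c_0}\cdot,\cdot\rangle$ should be read throughout as $\langle \bar E_{c_0}''(w_{c_0})\cdot,\cdot\rangle$, for which Corollary \ref{Cor-quadratic-form} supplies precisely the decomposition $N'\oplus Z'\oplus P'$ that your orthogonalization on $\Sigma_0$ requires.
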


The proof of this Lemma is essentially the same as in \cite{gss87}, by using
Corollary \ref{Cor-quadratic-form} on the spectral properties of the quadratic
form $\langle\bar{E}_{c}{}^{\prime\prime}(w_{c_{0}})\cdot,\cdot\rangle$.

\textbf{Proof of Theorem \ref{thm:stability}}: Let $u\left(  t\right)
=\psi\left(  w\left(  t\right)  \right)  $. Since the mapping
\[
\psi: \left(  X_{1},\left\Vert \cdot\right\Vert _{X_{1}}\right)
\rightarrow\left(  X_{0},d_{1}\right)
\]
is locally bi-Lipschitz with the local Lipschitz constant invariant under
translation, it suffices to show the following statement: if $w\left(
0\right)  \in X_{1},\ \left\Vert w\left(  0\right)  -w_{c_{0}}\right\Vert
_{X_{1}}<M\varepsilon$, then
\[
\sup_{0<t<\infty}\inf_{y\in\mathbf{R}^{3}}\left\Vert w\left(  t,\cdot\right)
-w_{c_{0}}\left(  \cdot+y\right)  \right\Vert _{X_{1}}<\varepsilon.
\]

Let $y\left(  w\left(  t\right)  \right)  \in\mathbf{R}^{3}$ be such that the
infimum
\[
\inf_{y\in\mathbf{R}^{3}}\left\Vert w\left(  t,\cdot\right)  -w_{c_{0}}\left(
\cdot+y\right)  \right\Vert _{X_{1}}=\inf_{y\in\mathbf{R}^{3}}\left\Vert
w\left(  t,\cdot-y\right)  -w_{c_{0}}\left(  \cdot\right)  \right\Vert
_{X_{1}}%
\]
is obtained. Below, we use $\left\Vert \cdot\right\Vert $ for $\left\Vert
\cdot\right\Vert _{X_{1}}$ for simplicity and denote $T\left(  y\right)
w\left(  t\right)  =w\left(  t,\cdot+y\right)  $. Then by definition
\[
\left(  T\left(  y\left(  w\left(  t\right)  \right)  \right)  w\left(
t\right)  -w_{c_{0}},\partial_{x_{i}}w_{c_{0}}\right)  =0,i=1,2,3.
\]
Denote $u\left(  t\right)  =T\left(  y\left(  w\left(  t\right)  \right)
\right)  w\left(  t\right)  -w_{c_{0}},$ and $d\left(  t\right)  =\left\Vert
u\left(  t\right)  \right\Vert ^{2}$. Since
\begin{align*}
\left\vert \bar{P}\left(  T\left(  y\left(  w\right)  \right)  w\left(
t\right)  \right)  -\bar{P}\left(  w_{c_{0}}\right)  \right\vert  &
=\left\vert \bar{P}\left(  w\left(  0\right)  \right)  -\bar{P}\left(
w_{c_{0}}\right)  \right\vert \\
&  \leq C\left\Vert w\left(  0\right)  -w_{c_{0}}\right\Vert =C\left\Vert
d\left(  0\right)  \right\Vert ^{\frac{1}{2}}%
\end{align*}
and
\[
\bar{P}\left(  T\left(  y\left(  w\right)  \right)  w\left(  t\right)
\right)  -\bar{P}\left(  w_{c_{0}}\right)  =\left\langle \bar{P}^{\prime
}\left(  w_{c_{0}}\right)  ,u\left(  t\right)  \right\rangle +O\left(
\left\Vert u\left(  t\right)  \right\Vert ^{2}\right)  ,
\]
so
\begin{equation}
\left\vert \left\langle \bar{P}^{\prime}\left(  w_{c_{0}}\right)  ,u\left(
t\right)  \right\rangle \right\vert \leq C\left(  d\left(  t\right)
+\left\Vert d\left(  0\right)  \right\Vert ^{\frac{1}{2}}\right)  .
\label{estimate-abs-a}%
\end{equation}
Let $I:X_{1}\rightarrow\left(  X_{1}\right)  ^{\ast}$ be the isomorphism
defined by $\left\langle Iu,v\right\rangle =\left(  u,v\right)  $ for any
$u,v\in X_{1}$. Define $q=I^{-1}\bar{P}^{\prime}\left(  w_{c_{0}}\right)  $
and decompose $u\left(  t\right)  =v+aq$, where $a=\left(  u,q\right)
/\left(  q,q\right)  $ and $\left(  v,q\right)  =\left\langle \bar{P}^{\prime
}\left(  w_{c_{0}}\right)  ,v\right\rangle =0$. Then (\ref{estimate-abs-a})
implies that
\[
\left\vert a\right\vert =\frac{\left\vert \left\langle \bar{P}^{\prime}\left(
w_{c_{0}}\right)  ,u\left(  t\right)  \right\rangle \right\vert }{\left(
q,q\right)  }\leq C\left(  d\left(  t\right)  +\left\Vert d\left(  0\right)
\right\Vert ^{\frac{1}{2}}\right)  .
\]
Moreover,%
\[
\left(  v,\partial_{x_{i}}w_{c_{0}}\right)  =\left(  u\left(  t\right)
,\partial_{x_{i}}w_{c_{0}}\right)  -a\left\langle \bar{P}^{\prime}\left(
w_{c_{0}}\right)  ,\partial_{x_{i}}w_{c_{0}}\right\rangle =0.
\]
So by Lemma \ref{lemma-positive-quadratic}, we get $\langle\bar{E}_{c}%
{}^{\prime\prime}(w_{c_{0}})v,v\rangle\geq\delta\left\Vert v\right\Vert ^{2}$.
We start with
\begin{equation}
\bar{E}_{c}{}\left(  T\left(  y\left(  w\right)  \right)  w\left(  t\right)
\right)  -\bar{E}_{c}\left(  w_{c_{0}}\right)  =\bar{E}_{c}{}\left(  w\left(
0\right)  \right)  -\bar{E}_{c}\left(  w_{c_{0}}\right)  .
\label{energy-conservation}%
\end{equation}
The Taylor expansion of the left hand side of (\ref{energy-conservation})
yields
\begin{align*}
&  \frac{1}{2}\left\langle \bar{E}_{c}{}^{\prime\prime}(w_{c_{0}})u\left(
t\right)  ,u\left(  t\right)  \right\rangle +O\left(  \left\Vert u\left(
t\right)  \right\Vert ^{3}\right) \\
&  =\frac{1}{2}\left\langle \bar{E}_{c}{}^{\prime\prime}(w_{c_{0}})v\left(
t\right)  ,v\left(  t\right)  \right\rangle +O\left(  \left\vert a\right\vert
^{2}+a\left\Vert v\right\Vert +\left\Vert u\right\Vert ^{3}\right) \\
&  \geq\frac{1}{2}\delta\left\Vert v\right\Vert ^{2}-C\left(  \left\vert
a\right\vert ^{2}+a\left\Vert v\right\Vert +\left\Vert u\right\Vert
^{3}\right) \\
&  \geq\frac{1}{2}\delta\left\Vert u\right\Vert ^{2}-C^{\prime}\left(
\left\vert a\right\vert ^{2}+a\left\Vert u\right\Vert +\left\Vert u\right\Vert
^{3}\right) \\
&  =\frac{1}{2}\delta d-C^{\prime}\left(  \left(  d+\sqrt{d\left(  0\right)
}\right)  ^{2}+\left(  d+\sqrt{d\left(  0\right)  }\right)  \sqrt{d}%
+d^{\frac{3}{2}}\right) \\
&  \geq\frac{1}{4}\delta d-C^{\prime\prime}\left(  d^{2}+d^{\frac{3}{2}%
}+d\left(  0\right)  \right)  ,
\end{align*}
here, in the second inequality above we use
\[
\left\Vert u\right\Vert -\left\vert a\right\vert \left\Vert q\right\Vert
\leq\left\Vert v\right\Vert \leq\left\Vert u\right\Vert +\left\vert
a\right\vert \left\Vert q\right\Vert
\]
and in the last inequality we use
\[
\sqrt{d\left(  0\right)  }\sqrt{d}\leq\frac{1}{2}\left(  \eta d+\frac{1}{\eta
}d\left(  0\right)  \right)  ,\text{ }\eta=\frac{1}{2}\delta\text{. }%
\]
The right hand side of (\ref{energy-conservation}) is controlled by $Cd\left(
0\right)  $. Combining above, we get
\begin{equation}
d\left(  t\right)  -C_{1}F\left(  d\left(  t\right)  \right)  \leq
C_{2}d\left(  0\right)  , \label{stability-algebraic}%
\end{equation}
for some $C_{1},C_{2}>0$ and $F\left(  d\right)  =d^{2}+d^{\frac{3}{2}}$. The
stability and the estimate (\ref{estimate-stability-initial}) follows easily
from (\ref{stability-algebraic}) by taking $M=\frac{2}{C_{2}}$. $\blacksquare$

\begin{remark}
In \cite{chiron-marisII-12}, Chiron and Maris constructed $3$D traveling waves
of (\ref{eqn-GP-generalized}) with a nonnegative potential function $V\left(
s\right)  $, by minimizing the energy functional under the constraint of
constant momentum. They proved the compactness of the minimizing sequence and
as a corollary the orbital stability of these traveling waves is obtained.
There are two differences of their result and Theorem \ref{thm:stability}.
First, in \cite{chiron-marisII-12}, the orbital stability is for the set of
all minimizers which are not known to be unique. Moreover, the more precise
stability estimate (\ref{estimate-stability-initial}) cannot be obtained by
such compactness approach. Second, the stability criterion $\frac{\partial
P(U_{c})}{\partial c}|_{c=c_{0}}>0$ obtained in Theorem \ref{thm:stability}
(under the non-degeneracy assumption) confirmed the conjecture in the physical
literature (\cite{jones-et-stability} \cite{berloff-roberts-X-stability}). No
such stability criterion was obtained in \cite{chiron-marisII-12}. In our
proof, the variational characterization (such as in \cite{Maris-annal}) is
only used in Lemma \ref{le-3} to show that the second variation of
energy-momentum functional has at most one negative direction. We do not need
the compactness of the minimizing sequence and the traveling waves constructed
by other variational arguments (e.g. \cite{betheul-et-minimizer-cmp}) could
also fit into our approach.
\end{remark}

\subsection{The case of general nonlinearity}

\label{SS:GNLS}

In this section, we extend Theorem \ref{thm:stability} on nonlinear stability
to general nonlinearity $F$ satisfying the following conditions:\newline(F1)
$F\in C^{1}(\mathbf{R}^{+}) \cap C^{0} ([0, \infty)$, $C^{2}$ in a
neighborhood of $1$, $F(1)=0$ and $F^{\prime}(1)=-1$. \newline(F2) There
exists $C>0$ and $0< p_{1} \le1 \le p_{0} <2$
such that $|F^{\prime}(s)|\leq C(1 + s^{p_{1}-1} + s^{p_{0}-1})$
for all $s\geq0$.

\begin{remark}
The exponent $p_{0}$ in condition (F2) restricts the growth of $F^{\prime}$ at
infinity and $p_{1}$ is the order of singularity allowed for $F^{\prime}$ at
$s=0$, which means $F$ is only assumed to be H\"{o}lder near $s=0$. Condition
(F2) implies that $|F(s)|\leq C(1+s^{p_{0}})\ $for all $s\geq0$. The
nonlinearity of Gross-Pitaevskii equation is $F\left(  s\right)  =1-s$ which
certainly satisfies (F1)(F2).
\end{remark}

The energy function is now given by
\[
E\left(  u\right)  =\frac{1}{2}\int_{\mathbf{R}^{3}}\left[  \left\vert \nabla
u\right\vert ^{2}+V\left(  \left\vert u\right\vert ^{2}\right)  \right]  dx,
\]
\textrm{where}$\ V(s)=\int_{s}^{1}F(\tau)d\tau$. By the proof of Lemma 4.1 in
\cite{Maris-annal}, $E\left(  u\right)  <\infty$ if and only if $u\in X_{0}$
(defined in (\ref{defn-energy-space})). So we can use the same coordinate
mapping $u=\psi(w)\ \left(  w\in X_{1}\right)  \ $for the energy space. For
$w\in X_{1},\ $define
\begin{equation}
\bar{E}\left(  w\right)  :=E\circ\psi(w)=\frac{1}{2}\int_{\mathbf{R}^{3}%
}\left[  |\nabla\psi(w)|^{2}+V(|\psi(w)|^{2})\right]  dx.
\label{energy-fun-general}%
\end{equation}
In order to prove the smoothness of $\bar E$, we need the following standard
properties of Nemitski operators.

\begin{lemma}
\label{L:Nemitski} Suppose $g \in C(\mathbf{R}^{m}, \mathbf{R})$ and $|g(s)|
\le|s|^{q_{0}}$ for some $q_{0}>0$ and all $s \in\mathbf{R}^{m}$, then the
mapping $G(\phi) \triangleq g \circ\phi$ is continuous from $L^{q_{1}}
(\mathbf{R}^{n}, \mathbf{R}^{m})$ to $L^{\frac{q_{1}}{q_{0}}} (\mathbf{R}^{n},
\mathbf{R})$ where $q_{1}\in[\min\{1, \frac1{q_{0}}\}, \infty]$.
\end{lemma}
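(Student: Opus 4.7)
The plan is to prove this by the standard Nemitski-operator argument: first check well-definedness via a pointwise inequality, then establish continuity through the subsequence trick combined with dominated convergence. The exponent condition on $q_1$ is precisely what is needed to keep $L^{q_1}$ and $L^{q_1/q_0}$ in the range where these arguments are unambiguous.

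First I would verify that $G$ maps $L^{q_1}(\mathbf{R}^n,\mathbf{R}^m)$ into $L^{q_1/q_0}(\mathbf{R}^n,\mathbf{R})$. The growth hypothesis $|g(s)|\le|s|^{q_0}$ yields the pointwise bound $|G(\phi)(x)|^{q_1/q_0}\le|\phi(x)|^{q_1}$, whose integral is finite by assumption, and the stated lower bound on $q_1$ ensures that the target exponent is in the admissible range.

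For continuity at an arbitrary $\phi\in L^{q_1}$ with $q_1<\infty$, I would argue via subsequences: it suffices to show that any subsequence of $\{G(\phi_n)\}$, where $\phi_n\to\phi$ in $L^{q_1}$, has a further subsequence converging to $G(\phi)$ in $L^{q_1/q_0}$. Given such a subsequence, invoke the standard consequence of the Riesz-Fischer theorem to extract (after relabeling) $\phi_n\to\phi$ a.e.\ together with a common pointwise majorant $h\in L^{q_1}$ satisfying $|\phi_n|\le h$ a.e. Continuity of $g$ then gives $g\circ\phi_n\to g\circ\phi$ pointwise a.e., and the growth bound combined with the elementary inequality $(a+b)^r\le C_r(a^r+b^r)$ supplies the integrable dominant
\[
\bigl|g(\phi_n(x))-g(\phi(x))\bigr|^{q_1/q_0}\le C\bigl(h(x)^{q_1}+|\phi(x)|^{q_1}\bigr),
\]
so dominated convergence finishes this case. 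The endpoint $q_1=\infty$ is handled separately by noting that $g$ is uniformly continuous on the closed ball of radius $\|\phi\|_{L^\infty}+1$ in $\mathbf{R}^m$.

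The only delicate point is the extraction of the common $L^{q_1}$ majorant $h$, which is the standard Riesz-Fischer type construction; once it is in hand, the bookkeeping with the exponents $q_0,q_1,q_1/q_0$ is routine, and the restriction $q_1\ge\min\{1,1/q_0\}$ is exactly what keeps all the Lebesgue spaces that appear in the domination estimate in the standard regime.
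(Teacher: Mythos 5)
Your argument is correct and is essentially the paper's own proof written out in full: the paper simply cites Theorem 2.2 of Ambrosetti--Prodi combined with Theorem 4.9 of Brezis, the latter being exactly the subsequence extraction with an $L^{q_1}$ majorant that you invoke before applying dominated convergence.
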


The proof is simply a modification of the one of Theorem 2.2 of
\cite{ambrosetti-primer-NA} based on Theorem 4.9 in \cite{brezis-book}, the
latter of which is valid on $\mathbf{R}^{n}$ in particular.

\begin{lemma}
\label{le-1} Assume (F1)(F2). Then the functional $\bar{E}\left(  w\right)
:X_{1}\rightarrow\mathbf{R}$ is $C^{2}$.
\end{lemma}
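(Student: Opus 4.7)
The plan is to split $\bar E(w) = E_{\text{kin}}(w) + E_{\text{pot}}(w)$ where $E_{\text{kin}}(w) = \tfrac12\int|\nabla\psi(w)|^2\,dx$ and $E_{\text{pot}}(w) = \tfrac12\int V(|\psi(w)|^2)\,dx$. The kinetic piece makes no use of $F$: $\nabla\psi(w) = \nabla w_1 - \chi(D)(w_2\nabla w_2) + i\nabla w_2 \in L^2$ is a sum of bounded multilinear maps on $X_1$, so $E_{\text{kin}} \in C^\infty(X_1, \mathbf{R})$ by the argument of Lemma \ref{lemma-smooth-energy-gp}. Only the potential part needs (F1)--(F2).

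For $E_{\text{pot}}$ I would first establish well-definedness. Writing
\[
f := |\psi(w)|^2 - 1 = 2w_1 + (1 - \chi(D))(w_2^2) + \bigl(w_1 - \chi(D)(w_2^2/2)\bigr)^2,
\]
the inequalities (\ref{ineq-1}), (\ref{ineq-2}) together with $H^1 \hookrightarrow L^2 \cap L^6$ and $\dot H^1 \hookrightarrow L^6$ yield $f \in L^2 \cap L^3$, locally bounded in $\|w\|_{X_1}$. A double integration of (F2), using $V(1) = V'(1) = 0$, gives $|V(1+s)| \leq C(s^2 + |s|^{p_0+1})$ for all $s > -1$; since $p_0 + 1 \in [2,3)$, both terms are integrable, so $E_{\text{pot}}$ is finite and locally bounded. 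For $C^2$ regularity I would compute the formal G\^ateaux derivatives
\begin{align*}
E_{\text{pot}}'(w)[\phi] &= -\!\int\! F(|\psi(w)|^2)\,\mathrm{Re}\bigl(\overline{\psi(w)}\,K(w)\phi\bigr)\,dx,\\
E_{\text{pot}}''(w)[\phi,\phi] &= -\!\int\! 2F'(|\psi(w)|^2)\bigl(\mathrm{Re}(\overline{\psi(w)}K(w)\phi)\bigr)^2\,dx\\
&\quad-\!\int\! F(|\psi(w)|^2)\bigl(|K(w)\phi|^2 - \mathrm{Re}(\psi(w))\chi(D)(\phi_2^2)\bigr)\,dx,
\end{align*}
with $K(w)$ as in (\ref{definition-K}), and verify that each is a bounded (bi)linear form on $X_1$ continuous in $w$. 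The essential algebraic observation handling the possible singularity of $F'$ at $0^+$ allowed by $p_1 \leq 1$ is the cancellation $(\mathrm{Re}(\bar\psi K\phi))^2 \leq |\psi|^2 |K\phi|^2$, which combined with (F2) gives
\[
\bigl|F'(|\psi|^2)(\mathrm{Re}(\bar\psi K\phi))^2\bigr| \leq C\bigl(|\psi|^2 + |\psi|^{2p_1} + |\psi|^{2p_0}\bigr)|K\phi|^2;
\]
no genuinely singular Nemitski operator appears.

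Continuity of $E_{\text{pot}}'$ and $E_{\text{pot}}''$ in $w$ then reduces, via H\"older and the continuity of $K(w)$ as a bounded operator on $X_1$, to the $L^q$ continuity of the Nemitski maps $w \mapsto F(|\psi(w)|^2)$ and $w \mapsto (1 + |\psi(w)|^{2p_j})\mathbf{1}_{\{|f|>1/2\}}$ for $j = 0, 1$; these follow from Lemma \ref{L:Nemitski} applied to nonlinearities supported on $\{|s-1| > 1/2\}$, together with the $C^2$-control from (F1) on the complementary region $\{|s-1| \leq 1/2\}$ where everything is smooth. The main obstacle is the balancing of the two exponents: $p_0 < 2$ is sharp for matching the $L^3$ integrability of $f$, and $p_1 \leq 1$ forces reliance on the algebraic cancellation above rather than a direct Nemitski bound for $F'$ at the origin. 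A secondary complication is that the components of $K(w)\phi$ and $\psi(w)$ only live in $\dot H^1 \hookrightarrow L^6$ rather than $H^1$, so one must track mixed $L^p$ bounds throughout, which is exactly what Lemma \ref{L:Nemitski} is designed for.
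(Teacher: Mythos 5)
Your proposal is correct and follows essentially the same route as the paper: the same kinetic/potential splitting, the same use of the $L^2\cap L^3$ regularity of $|\psi(w)|^2-1$ together with the Nemitski continuity of Lemma \ref{L:Nemitski} on the region away from $s=1$ versus the $C^2$ smoothness of $F$ near $s=1$, and the same key cancellation (the paper phrases it as boundedness of $F_1'(|\psi(w)|^2)\psi(w)\psi(w)^T$, you as $(\mathrm{Re}(\bar\psi K\phi))^2\le|\psi|^2|K\phi|^2$) to absorb the $s^{p_1-1}$ singularity of $F'$ at the origin. The only cosmetic difference is your two-region cutoff versus the paper's three cutoffs $\xi_1,\xi_2,\xi_3$.
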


\begin{proof}
For $w=w_{1}+iw_{2}\in X_{1}$, we set
\[
J_{1}(w)=\int_{\mathbf{R}^{3}}|\nabla\psi(w)|^{2}dx=\int_{\mathbf{R}^{3}%
}|\nabla w_{1}-\nabla\chi(D)(\frac{w_{2}^{2}}{2})|^{2}+|\nabla w_{2}|^{2}dx,
\]%
\[
J_{2}(w)=\int_{\mathbf{R}^{3}}V(|\psi\left(  w\right)  |^{2})dx.
\]
Then $\bar{E}\left(  w\right)  =\frac{1}{2}\left(  J_{1}(w)+J_{2}(w)\right)
$. Since $J_{1}\left(  w\right)  \in C^{\infty}(X_{1},\mathbf{R})\ $as shown
in the proof of Lemma \ref{lemma-smooth-energy-gp}, it suffices to show that
$J_{2}\in C^{2}(X_{1},\mathbf{R})$. In the sequel, let $C(\Vert w\Vert_{X_{1}%
})$ be a positive constant depending on $\Vert w\Vert_{X_{1}}$ increasingly.

Following the notation in Appendix 1, we denote
\begin{align*}
\Psi_{2}\left(  w\right)   &  =\left\vert \psi\left(  w\right)  \right\vert
^{2}-1\\
&  =\left(  w_{1}-\chi(D)(\frac{w_{2}^{2}}{2})\right)  ^{2}+\left(
1-\chi(D)\right)  w_{2}^{2}+2w_{1}.
\end{align*}
Then by (\ref{ineq-1})-(\ref{ineq-2}), it is easy to show that $\Psi_{2}\in
C^{\infty}\left(  X_{1},L^{2}\cap L^{3}\right)  $. By (F1),
\[
F(s)=F(1)+F^{\prime}(1)(s-1)+(s-1)\epsilon(s-1),
\]
where $\epsilon(t)\rightarrow0$ as $t\rightarrow0$. Thus there exists
$\beta\in(0,1)$ such that
\begin{equation}
|F(s)|=|F(s)-F(1)|\leq2|s-1|,\ \mathrm{for\ all}\ s\in(1-\beta,1+\beta).
\label{general-1}%
\end{equation}
We choose three cut-off functions $\xi_{1},\xi_{2},\xi_{3}$ with supports in
\[
[0,1-\beta/2) ,\ \left(  1-\beta,1+\beta\right)  \ \text{and\ }\left(
1+\beta/2,\infty\right)
\]
respectively,\ and $0\leq\xi_{i}\leq1$, $\sum_{i=1}^{3}\xi_{i}=1$. Denote
$F_{i}\left(  s\right)  =F\left(  s\right)  \xi_{i}\left(  s\right)  $, and
$V_{i}\left(  s\right)  =\int_{s}^{1}F_{i}(\tau)d\tau$. Then by
(\ref{general-1}),$\ \left\vert F_{2}\left(  s\right)  \right\vert
\leq2\left\vert s-1\right\vert $ and by (F2)
\[
\left\vert F_{1}\left(  s\right)  \right\vert \le C, \, \left\vert
F_{3}\left(  s\right)  \right\vert \leq C\left(  1+s^{p_{0}}\right)
\implies|F_{1}(s)|, |F_{3}(s)| \leq C^{\prime}\left\vert s-1\right\vert
^{p_{0}},\text{ }%
\]
since $\left\vert s-1\right\vert \geq\beta/2$ on the supports of $F_{1},F_{3}%
$. By Lemma \ref{L:Nemitski}
we have
\[
F_{1}\left(  |\psi(w)|^{2}\right)  ,F_{3}\left(  |\psi(w)|^{2}\right)  \in
C\left(  X_{1},L^{\frac{3}{2}}\right)
\]
and $F_{2}\left(  |\psi(w)|^{2}\right)  \in C\left(  X_{1},L^{2}\right)  $.
Thus the Gateau derivative of $J_{2}(w)$ at $\phi\in X_{1}$%
\[
\langle J_{2}^{\prime}(w),\phi\rangle=-\sum_{i=1}^{3}\int_{\mathbf{R}^{3}%
}F_{i}(|\psi\left(  w\right)  |^{2})\left(  \Psi_{2}^{\prime}\left(  w\right)
\phi\right)  \ dx
\]
is continuous in $w\in X_{1}\ $and thus $J_{2}\in C^{1}\left(  X_{1}%
,\mathbf{R}\right)  $.

Now we consider the Gateau derivative of $J_{2}^{\prime}(w)$. For any
$\phi=\phi_{1}+i\phi_{2},h=h_{1}+ih_{2}\in X_{1}$, we have
\begin{align*}
J_{2}^{\prime\prime}(w)\left(  \phi,h\right)   &  =-\sum_{i=1}^{3}%
\int_{\mathbf{R}^{3}}F_{i}(|\psi\left(  w\right)  |^{2})\left(  \Psi
_{2}^{\prime\prime}\left(  w\right)  \left(  \phi,h\right)  \right)  \ dx\\
&  \ \ \ \ \ -\sum_{i=1}^{3}\int_{\mathbf{R}^{3}}F_{i}^{\prime}(|\psi\left(
w\right)  |^{2})\left(  \Psi_{2}^{\prime}\left(  w\right)  \phi\right)
\left(  \Psi_{2}^{\prime}\left(  w\right)  h\right)  \ dx\\
&  =I+II.
\end{align*}
It is not difficult to verify that above is indeed the Gateau derivative of
$J_{2}^{\prime}(w)$ and we skip the details. Now we show the continuity of
$J_{2}^{\prime\prime}(w)\left(  \phi,h\right)  $ in $w$, which implies that it
is the Fr\'{e}chet derivative of $J_{2}^{\prime}(w)$. The continuity of $I$ to
$w\in X_{1}$ follows by the same reasoning for $J_{2}^{\prime}(w)$. We write
\[
II=-\sum_{i=1}^{3}\int_{\mathbf{R}^{3}}F_{i}^{\prime}(|\psi\left(  w\right)
|^{2})\left(  \Psi_{2}^{\prime}\left(  w\right)  \phi\right)  \left(  \Psi
_{2}^{\prime}\left(  w\right)  h\right)  \ dx=-\sum_{i=1}^{3}II_{i}%
(w)(\phi,h).
\]
Since $F_{2,3}^{\prime}$ are continuous on $\mathbf{R}$ and satisfy
$|F_{2}^{\prime}(s)|,\ |F_{3}^{\prime}(s)|\leq C|s-1|,$ Lemma \ref{L:Nemitski}
and the smoothness of $\Psi_{2}:X_{1}\rightarrow L^{3}$ imply $F_{2,3}%
^{\prime}(|\psi(w)|^{2})$ is continuous from $X_{1}$ to $L^{3}$, and
consequently the uniform continuity of the quadratic forms $II_{2,3}(w)$ on
$X_{1}$ with respect to $w\in X_{1}$. To see the uniform continuity of the
quadratic forms $II_{1}(w)$ in $w$, we write it more explicitly
\[
II_{1}(w)(\phi,h)=\int_{\mathbf{R}^{3}}\big(\psi^{\prime}(w)h\big)^{T}%
\Big(F_{1}^{\prime}(|\psi\left(  w\right)  |^{2})\psi(w)\psi(w)^{T}%
\Big)\big(\psi^{\prime}(w)\phi\big)dx
\]
where in the above the complex valued $\psi(w),\psi^{\prime}(w)h,\psi^{\prime
}(w)\phi
$ are viewed as 2-dim column vectors. Since $F_{1}^{\prime}$ is supported on
$[0,1-\frac{\beta}{2})$ with $\beta\in(0,1)$ and satisfies $|F_{1}^{\prime}|
\le C (1 + s^{p_{1}-1})$, $p_{1}\in(0,1]$, we have
\[
\Big|F_{1}^{\prime}(|\psi\left(  w\right)  |^{2})\psi(w)\psi(w)^{T}\Big|\leq
C_{p}\big||\psi(w)|^{2}-1\big|^{p},\;\forall\ p\geq0.
\]
As $\Psi_{2}(w)=|\psi(w)|^{2}-1$ is a smooth mapping from $X_{1}$ to
$L^{2}\cap L^{3}$, Lemma \ref{L:Nemitski} implies that $w\rightarrow
F_{1}^{\prime}(|\psi\left(  w\right)  |^{2})\psi(w)\psi(w)^{T}$ is a
continuous mapping from $X_{1}$ to $L^{\frac{3}{2}}$. Therefore, the uniform
continuity with respect to $w$ of the quadratic form $II_{1}(w)$ on $X_{1}$
follows from the smoothness of $\psi:X_{1}\rightarrow\dot{H}^{1}$ and this
completes the proof of the lemma.
\end{proof}

A traveling wave $U_{c}=u_{c}+iv_{c}=\psi\left(  w_{c}\right)  \ $of
(\ref{eqn-GP-generalized}) satisfies the equation
\begin{equation}
-ic\partial_{x_{1}}U_{c}+\Delta U_{c}+F(|U_{c}|^{2})U_{c}=0.
\label{eqn-GP-G-TW}%
\end{equation}
Under (F1)-(F2), for any $0<c<\sqrt{2}$, traveling waves were constructed in
\cite{Maris-annal} as an energy minimizer under the constraint of Pohozaev
type identity. As in the (GP) case, $w_{c}$ is a critical point of the
momentum functional $\bar{E}_{c}\left(  w\right)  =\bar{E}\left(  w\right)
+c\bar{P}\left(  w\right)  $. The second variation functional can be written
in the form
\begin{equation}
\langle\bar{E}_{c}{}^{\prime\prime}(w_{c})\phi,\phi\rangle=\left\langle
L_{c}\left(  K\phi\right)  ,K\phi\right\rangle , \label{quadratic-general}%
\end{equation}
where $K$ is defined in (\ref{definition-K}) and
\begin{equation}
L_{c}:=\left(
\begin{array}
[c]{cc}%
-\Delta-F\left(  \left\vert U_{c}\right\vert ^{2}\right)  -F^{\prime}\left(
\left\vert U_{c}\right\vert ^{2}\right)  2u_{c}^{2} & -c\partial_{x_{1}%
}-2F^{\prime}\left(  \left\vert U_{c}\right\vert ^{2}\right)  u_{c}v_{c}\\
c\partial_{x_{1}}-2F^{\prime}\left(  \left\vert U_{c}\right\vert ^{2}\right)
u_{c}v_{c} & -\Delta-F\left(  \left\vert U_{c}\right\vert ^{2}\right)
-F^{\prime}\left(  \left\vert U_{c}\right\vert ^{2}\right)  2v_{c}^{2}%
\end{array}
\right)  . \label{operator-Lc-g}%
\end{equation}
Assuming that the traveling wave solution $U_{c}=u_{c}+iv_{c}$ satisfies the
decay estimate
\begin{equation}
u_{c}-1=o\left(  \frac{1}{\left\vert x\right\vert ^{2}}\right)  ,v_{c}%
=o\left(  \frac{1}{\left\vert x\right\vert }\right)  , \label{decay-TW-g}%
\end{equation}
and the non-degeneracy condition (\ref{assumption-NDG}) as in the (GP) case,
we can show the same decomposition result for the quadratic form $\langle
\bar{E}_{c}{}^{\prime\prime}(w_{c})\phi,\phi\rangle$, as in Proposition
\ref{prop-quadratic} and Corollary \ref{Cor-quadratic-form}. Then by the proof
of Theorem \ref{thm:stability}, we get the same nonlinear stability criterion
for traveling waves of (\ref{eqn-GP-generalized}). That is,

\begin{theorem}
\label{thm:general-stability} Assume (F1-2). For $0<c<\sqrt{2}$, let $U_{c}$
be a traveling wave solution of (\ref{eqn-GP-generalized}) constructed in
\cite{Maris-annal}. Assume the (\ref{assumption-NDG}) type non-degeneracy
condition:
\[
ker(L_{c})=span\{\partial_{x_{j}}U_{c}\mid j=1,2,3\}\text{.}%
\]
Then the traveling wave $U_{c}$ satisfying $\frac{\partial P(U_{c})}{\partial
c}|_{c=c_{0}}>0\ $is orbitally stable in the same sense (in terms of the
distance $d_{1}$) as in Theorem \ref{thm:stability}.
\end{theorem}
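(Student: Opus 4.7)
The plan is to mirror the proof of Theorem \ref{thm:stability} with the nonlinearity $F$ replacing $1-|u|^2$, reducing everything to three ingredients: (i) sufficient smoothness of $\bar{E}_c = \bar{E} + c\bar{P}$ on $X_1$, (ii) the spectral decomposition of the Hessian $\bar{E}_c''(w_c)$ as in Corollary \ref{Cor-quadratic-form}, and (iii) a coercivity estimate on the subspace perpendicular to the translation and momentum directions. Ingredient (i) is already in hand: Lemma \ref{le-1} gives $\bar{E} \in C^2(X_1, \mathbf{R})$ under (F1)-(F2), and $\bar{P}$ is smooth by Lemma \ref{lemma-momentum}, so $\bar{E}_c \in C^2(X_1, \mathbf{R})$.

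For (ii), I would reproduce the argument of Proposition \ref{prop-quadratic} with $L_c$ now given by \eqref{operator-Lc-g}. The asymptotic operator $L_{c,\infty}$ has the same form as \eqref{defn-L-infty} because $F(1)=0$ and $F'(1)=-1$, so Lemma \ref{le-6} gives $p_{c,\infty} \geq \delta_0 \|\cdot\|_{L^2}^2$ for some $\delta_0 > 0$ whenever $0 < c < \sqrt{2}$. The compactness of $\tilde{L}_c - \tilde{L}_{c,\infty}$ on $L^2$ needs to be re-examined because the decay \eqref{decay-TW-g} is weaker than in the pure (GP) case. However, $u_c - 1 = o(|x|^{-2})$ and $v_c = o(|x|^{-1})$ still give (via Hardy and local compactness of $H^1 \hookrightarrow L^2$) that each block $L_{ij}$ is compact — the proof of Lemma \ref{lemma-compactness} goes through after replacing the exponents in the weight estimates (in particular, $|x| v_c = o(1)$ and $|x| V_2 = o(|x|^{-1})$, which is all the $L^\infty$ smallness at infinity one actually needs). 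Weyl's theorem then confines $\sigma_{\text{ess}}(\tilde{L}_c) \subset [\delta_0, \infty)$. The one-dimensional negative subspace statement follows exactly as in Lemmas \ref{le-5} and \ref{le-4}: Lemma \ref{le-3} transfers verbatim because its proof only used the Pohozaev identity (valid for the general nonlinearity by \cite{Maris-non-existence}) and the variational characterization from \cite{Maris-annal}, while the existence of a negative direction is witnessed by the cylindrically symmetric test function $\partial_{r_\perp} U_c$ as in Lemma \ref{le-4}. Combined with the non-degeneracy hypothesis on $\ker L_c$ and the intertwining \eqref{quadratic-general}, this yields the decomposition $X_1 = N' \oplus Z' \oplus P'$ as in Corollary \ref{Cor-quadratic-form}.

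For (iii), the proof of Lemma \ref{lemma-positive-quadratic} is algebraic: given the sign condition $\frac{\partial P(U_c)}{\partial c}|_{c=c_0} > 0$ and the spectral decomposition from step (ii), the standard Grillakis--Shatah--Strauss lemma gives $\langle \bar{E}_c''(w_{c_0})\phi,\phi\rangle \geq \delta \|\phi\|_{X_1}^2$ on $\{\langle \bar{P}'(w_{c_0}),\phi\rangle = 0\} \cap \{(\partial_{x_i} w_{c_0},\phi)=0\}$. I would then carry out the Taylor expansion argument of Theorem \ref{thm:stability} word-for-word. The decomposition $u(t) = v + aq$ relative to $q = I^{-1}\bar{P}'(w_{c_0})$, combined with conservation of $\bar{E}_c$ and $\bar{P}$ and the coercivity above, yields the algebraic inequality $d(t) - C_1 F(d(t)) \leq C_2 d(0)$ for an appropriate superlinear $F$, which gives \eqref{estimate-stability-initial} with $M = 2/C_2$.

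The main obstacle is the remainder control in the Taylor expansion, since $\bar{E}_c$ is only $C^2$ (as opposed to $C^\infty$ in the (GP) case), so the naive $O(\|u\|^3)$ remainder is no longer available. This is resolved by using the integral form of Taylor's theorem, which gives a remainder of the form $o(\|u\|^2)$ uniformly on a neighborhood of $w_{c_0}$; the same functional $F$ in the algebraic inequality must be replaced by some superlinear $F(d) = d \cdot \omega(d)$ with $\omega(d) \to 0$ as $d \to 0$, which still closes the bootstrap for sufficiently small initial data. A secondary technical point is verifying that the decay estimate \eqref{decay-TW-g} actually holds for the traveling waves of \cite{Maris-annal} under (F1)-(F2) and that the kernel assumption is compatible with the cylindrical symmetry needed to construct a negative direction via $\partial_{r_\perp} U_c$; both are either in the literature or follow from the elliptic regularity arguments used there.
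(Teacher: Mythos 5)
Your proposal follows essentially the same route as the paper: the paper's argument for Theorem \ref{thm:general-stability} is exactly to combine Lemma \ref{le-1} with the decay estimate \eqref{decay-TW-g} so as to reproduce Proposition \ref{prop-quadratic} and Corollary \ref{Cor-quadratic-form} for the operator \eqref{operator-Lc-g}, and then to repeat the proof of Theorem \ref{thm:stability} verbatim. Your observation that the $C^{2}$ (rather than $C^{\infty}$) regularity of $\bar{E}_{c}$ reduces the Taylor remainder to $o(\left\Vert u\right\Vert ^{2})$, together with your fix via a modified superlinear $F(d)=d\,\omega(d)$ in the bootstrap inequality \eqref{stability-algebraic}, correctly fills in a detail that the paper leaves implicit.
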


In fact, the above theorem also holds for some cases when $p_{0}=2$ in the
assumption (F2). More precisely, assume \newline\newline(F2') There exists
$C,\alpha_{0},s_{0}>0$, and $0<p_{1}\leq1\leq p_{0}\leq2$, such that
$|F^{\prime}(s)|\leq C(1+s^{p_{1}-1}+s^{p_{0}-1})$ for all $s\geq0$ and
$F(s)\leq-Cs^{\alpha_{0}}$ for all $s>s_{0}$.

\begin{corollary}
\label{C:critical} Assume (F1) and (F2'). For $0<c<\sqrt{2}$, let $U_{c}$ be a
traveling wave solution of (\ref{eqn-GP-generalized}) constructed in
\cite{Maris-annal}, satisfying $\frac{\partial P(U_{c})}{\partial c}%
|_{c=c_{0}}>0$. Assume the (\ref{assumption-NDG}) type non-degeneracy
condition: $ker(L_{c})= span\{\partial_{x_{j}} U_{c}\mid j=1,2,3\}$. Then the
traveling wave $U_{c}$ is orbitally stable.
\end{corollary}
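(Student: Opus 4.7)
The plan is to show that the orbital stability proof of Theorem \ref{thm:general-stability} adapts to the energy-critical case $p_0=2$, using the extra coercivity in (F2'). That argument has four pillars: global well-posedness of \eqref{eqn-GP-generalized} in the energy space $X_0$ together with conservation of $\bar E_c$ along the flow; the $C^2$ smoothness of $\bar E$ on $X_1$ (Lemma \ref{le-1}); the spectral decomposition of $\langle \bar E_c''(w_c)\cdot,\cdot\rangle$ from Corollary \ref{Cor-quadratic-form}; and the Taylor expansion / algebraic inequality of Theorem \ref{thm:stability}. I would check each of these in turn.

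The spectral decomposition carries over with no essential change. Proposition \ref{prop-quadratic} depends on $F$ only through the local values of $F$ and $F'$ near $|U_c|^2=1$ (entering the coefficients of $L_c$) and through the decay $u_c-1,v_c\to 0$ at infinity (used in Lemma \ref{lemma-compactness} for the compactness of $\tilde L_c-\tilde L_{c,\infty}$). Neither ingredient is sensitive to whether $p_0<2$ or $p_0=2$. The existence of a single negative direction (Lemmas \ref{le-3}--\ref{le-5}) uses only the variational/Pohozaev characterization of $U_c$, and the kernel is handled by the non-degeneracy hypothesis.

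Next I would revisit Lemma \ref{le-1} under (F2'). With the partition $F=F_1+F_2+F_3$ supported near $0$, $1$ and $\infty$, the $F_1$ and $F_2$ contributions are unchanged because their bounds do not involve $p_0$. The new case is $F_3$ at $p_0=2$, where $|F_3(s)|\le C|s-1|^2$ and $|F_3'(s)|\le C|s-1|$ on the support of $F_3$. In $3$D, \eqref{ineq-1}--\eqref{ineq-2} together with $\dot H^1\hookrightarrow L^6$ show that $\Psi_2=|\psi(w)|^2-1$ is a smooth map $X_1\to L^2\cap L^3$, so Lemma \ref{L:Nemitski} gives $F_3(|\psi(w)|^2)\in C(X_1,L^{3/2})$ and $F_3'(|\psi(w)|^2)\in C(X_1,L^3)$. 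Combined with $\Psi_2'(w)\phi\in L^3$ uniformly for $\phi$ in bounded subsets of $X_1$, the resulting multilinear expressions for $J_2''(w)$ are continuous quadratic forms on $X_1$ depending continuously on $w$, so $\bar E\in C^2(X_1,\mathbb{R})$ as required.

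The remaining and, in my view, main obstacle is global well-posedness in $X_0$ and the conservation of $\bar E_c$, $\tilde P$ at the energy-critical exponent $p_0=2$: this is exactly where the coercivity in (F2') becomes essential. From $F(s)\le -Cs^{\alpha_0}$ for $s>s_0$ one gets $V(s)\ge cs^{\alpha_0+1}$ for $s$ large, so $\bar E$ is bounded below and controls $\||\psi(w(t))|^2-1\|_{L^{\alpha_0+1}}$ together with $\|\nabla\psi(w(t))\|_{L^2}$ uniformly in time. This a priori bound is what allows a global existence argument in the spirit of \cite{gerard-cauchy}; for the motivating $3$D cubic-quintic model a direct construction in $X_0$ is available. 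Once existence and conservation in $X_0$ are in hand, the Taylor expansion argument of Theorem \ref{thm:stability} runs verbatim using the $C^2$ regularity of $\bar E$ and the spectral gap from Corollary \ref{Cor-quadratic-form}, yielding the inequality $d(t)-C_1 F(d(t))\le C_2 d(0)$ and hence the orbital stability of $U_c$.
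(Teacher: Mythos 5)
Your outline correctly isolates the four ingredients of the subcritical proof, and your verification that $\bar E\in C^{2}(X_{1})$ persists at $p_{0}=2$ is a worthwhile addition (the paper only asserts this). But there is a genuine gap in the step you dismiss as carrying over ``with no essential change'': the count of negative directions of $\langle\bar E_{c}{}^{\prime\prime}(w_{c})\cdot,\cdot\rangle$. Lemma \ref{le-3} (and hence Lemma \ref{le-5}) rests on the variational characterization of $U_{c}$ as a local minimizer of $\bar E_{c}$ on the Pohozaev constraint set $\{\bar P_{c}=0\}$, which comes from Theorem 1.1 of \cite{Maris-annal}; that theorem requires $p_{0}<2$ precisely because the compactness of the relevant embedding fails at the critical exponent. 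So at $p_{0}=2$ you cannot simply invoke the same characterization. The paper's actual mechanism --- and the real reason (F2') is formulated with the coercivity $F(s)\le -Cs^{\alpha_{0}}$ for $s>s_{0}$ --- is a truncation of the nonlinearity: one modifies $F$ to $F_{M}$ with $F_{M}=F$ on $[0,s_{1}]$ and $F_{M}(s)=-C_{1}s^{\beta}$, $\beta\in(0,2)$, for large $s$, so that (F1-2) hold for $F_{M}$ and Corollary 1.2 of \cite{Maris-annal} produces a constrained minimizer $U_{c}$ of the modified functional $E_{c,M}$; the coercivity confines the range of $U_{c}$ to $[0,s_{1}]$, whence $U_{c}$ solves the original equation and $E_{c}^{\prime\prime}(U_{c})=E_{c,M}^{\prime\prime}(U_{c})$. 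Only through this identification does the ``at most one negative direction'' property transfer to $L_{c}$, and your proposal never performs this transfer.

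Relatedly, you assign the coercivity in (F2') the job of producing global well-posedness via an a priori energy bound ``in the spirit of \cite{gerard-cauchy}.'' At the energy-critical exponent an a priori bound on the energy does not yield global existence by the subcritical arguments; for the 3D cubic-quintic equation this is the content of \cite{killip-et}, which the paper cites for exactly this purpose. Global well-posedness is a background input here (and your fallback to the known cubic-quintic result is acceptable), but it is not where (F2') enters; the missing truncation argument is.
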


\begin{remark}
This corollary applies to the cubic-quintic nonlinear Schr\"{o}dinger equation
where the nonlinearity corresponds to
\[
F(s)=-\alpha_{1}+\alpha_{3}s-\alpha_{5}s^{2},\quad\alpha_{1,2,3}>0.
\]
For 3D, the cubic-quintic equation is critical and its global existence in the
energy space was shown recently in \cite{killip-et}. For dimension $n\leq4$
and rather general subcritical nonlinear terms, the global existence in the
energy space was shown in \cite{gallo-2008}.
\end{remark}

\begin{remark}
The decay property (\ref{decay-TW-g}) for traveling waves was proved for (GP)
equation in \cite{Gravejat}. It seems possible to use the arguments of
\cite{Gravejat} to get the same decay (\ref{decay-TW-g}) for general nonlinear terms.
\end{remark}

In fact, if $p_{0}=2$, the energy and momentum functional $E$ and $P$ are
still $C^{2}$ on $X_{1}$. Supposed $U_{c}$ is a traveling wave, i.e. a
critical point of the energy-momentum functional $E_{c}$, such that
$E_{c}^{\prime\prime}(U_{c})$ is uniformly positive as in the sense of Lemma
\ref{lemma-positive-quadratic}, then the same proof as the one of Theorem
\ref{thm:stability} applies and we obtain the orbital stability of $U_{c}$.

In assumption (F2), $p_{0}<2$ is assumed so that the existence of traveling
waves is obtained through a constrained minimization approach as in Theorem
1.1 in \cite{Maris-annal}, where the compactness of the embedding is needed.

Fortunately, with assumptions (F1) and (F2'), Corollary 1.2 in
\cite{Maris-annal} applies and thus traveling waves exist through constrained
minimization. The idea is that (F2') allows us to carefully modify the
nonlinearity $F$ to $F_{M}$ such that
\[
F_{M}(s)=F(s),\;\forall\ s\in\lbrack0,s_{1}],\quad F_{M}(s)=-C_{1}s^{\beta
}\;\forall\ s\geq s_{2}%
\]
where $C_{1},\beta,s_{1},s_{2}$ are some constants satisfying $s_{1}\geq
s_{0}$, $s_{2}>>s_{1}$, and $\beta\in(0,2)$. The construction of $F_{M}$
ensures (F1-2) are satisfied, which implies the existence of a constrained
minimizer $U_{c}$ of the energy-momentum functional $E_{c,M}$ associated to
$F_{M}$ and $L_{c,M}\triangleq E_{c,M}^{\prime\prime}(U_{c})$ can be analyzed
as in the above. Moreover, one can prove that the range of $U_{c}$ is
contained in $[0,s_{1}]$. Therefore, $U_{c}$ is also a traveling wave of the
original equation. More details on the existence through the calculus of
variation can be found in \cite{Maris-annal}. Finally due to the fact
$E_{c}^{\prime\prime}(U_{c})=E_{c,M}^{\prime\prime}(U_{c})$ as $F_{M}=F$ on
the range of $U_{c}$, we obtain the uniform positivity of $E_{c}^{\prime
\prime}(U_{c})$ in the sense of Lemma \ref{lemma-positive-quadratic} and the
nonlinear stability follows subsequently.


\section{Instability of traveling waves on the upper branch}

In this section, we prove instability of $3D$ traveling waves obtained via a
constrained variational approach when $\frac{\partial P(U_{c})}{\partial
c}|_{c=c_{0}}<0$. First, we prove linear instability by studying the
linearized problem. Then, instead of passing linear instability to nonlinear
instability, we will prove a much stronger statement by constructing stable
and unstable manifolds near the unstable traveling waves.

\subsection{Linear instability}

In the traveling frame $\left(  t,x-ce_{1}t\right)  $, the nonlinear equation
(\ref{eqn-GP-generalized}) becomes
\begin{equation}
i\partial_{t}U-ic\partial_{x_{1}}U+\Delta U+F(|U|^{2})U=0, \label{eqn-GP-g-TF}%
\end{equation}
where $u\left(  t,x\right)  =U\left(  t,x-ce_{1}t\right)  $.

Near the traveling wave solution $U_{c}=u_{c}+iv_{c}$ satisfying
(\ref{eqn-GP-G-TW}), the linearized equation can be written as
\begin{equation}
\partial_{t}\left(
\begin{array}
[c]{c}%
u_{1}\\
u_{2}%
\end{array}
\right)  =JL_{c}\left(
\begin{array}
[c]{c}%
u_{1}\\
u_{2}%
\end{array}
\right)  , \label{eqn-linearized-u}%
\end{equation}
where
\[
J=\left(
\begin{array}
[c]{cc}%
0 & 1\\
-1 & 0
\end{array}
\right)  ,
\]
and $L_{c}$ is defined by (\ref{operator-Lc-g}).

We construct invariant manifolds by using the nonlinear equation for $w\in
X_{1}$, where $u=\psi\left(  w\right)  $ satisfies the (GP) equation. The
reason is two-fold. First, we need to use the spectral properties of the
quadratic form $\left\langle L_{c}\cdot,\cdot\right\rangle \ $in the space
$X_{1}\ $(Proposition \ref{prop-quadratic}) to prove the exponential dichotomy
of the semigroup $e^{tJL_{c}}~$in Lemma \ref{lemma-dichotomy-U} below. Second,
to ensure that the constructed invariant manifolds lie in the energy space
(See Remark \ref{rmk-reason-coordinate-instability}). Denote $U_{c}%
=\psi\left(  w_{c}\right)  $ and $w_{c}=w_{1c}+iw_{2c}$. Let
\begin{align}
U  &  =\psi(w_{1c}+w_{1},w_{2c}+w_{2})\label{eqn-U}\\
&  =U_{c}+w_{1}-\chi(D)(w_{2c}w_{2}+\frac{w_{2}^{2}}{2})+iw_{2}.\nonumber
\end{align}
Plugging (\ref{eqn-U}) into (\ref{eqn-GP-g-TF}), we get
\begin{align}
\partial_{t}w_{2}  &  =\Delta w_{1}-\Delta\chi(D)(w_{2c}w_{2}+\frac{w_{2}^{2}%
}{2})+c\partial_{x_{1}}w_{2}+[F(|U|^{2})-F(|U_{c}|^{2})]u_{c}\label{U-4}\\
&  +F(|U|^{2})[w_{1}-\chi(D)(w_{2c}w_{2}+\frac{w_{2}^{2}}{2})],\nonumber
\end{align}

\begin{align}
\partial_{t}w_{1}  &  =-\Delta w_{2}+\chi(D)((w_{2c}+w_{2})\partial_{t}%
w_{2})+c\partial_{x_{1}}[w_{1}-\chi(D)(w_{2c}w_{2}+\frac{w_{2}^{2}}%
{2})]\label{U-5}\\
&  +[F(|U_{c}|^{2})-F(|U|^{2})]v_{c}-F(|U|^{2})w_{2}.\nonumber
\end{align}
The above two equations can be written as%
\begin{equation}
i\partial_{t}w-ic\partial_{x_{1}}w+\Delta w=\Psi(w),
\label{eqn-nonlinear--TF-w}%
\end{equation}
where
\begin{align}
\operatorname{Re}\Psi(w)  &  =\Delta\chi(D)(w_{2c}w_{2}+\frac{w_{2}^{2}}%
{2})+[F(|U_{c}|^{2})-F(|U|^{2})]u_{c}\label{U-7}\\
&  -F(|U|^{2})[w_{1}-\chi(D)(w_{2c}w_{2}+\frac{w_{2}^{2}}{2})],\nonumber
\end{align}
and
\begin{align}
\operatorname{Im}\Psi(w)  &  =\chi(D)((w_{2c}+w_{2})\partial_{t}%
w_{2})-c\partial_{x_{1}}\chi(D)(w_{2c}w_{2}+\frac{w_{2}^{2}}{2})\label{U-8}\\
&  +[F(|U_{c}|^{2})-F(|U|^{2})]v_{c}-F(|U|^{2})w_{2}.\nonumber
\end{align}
Instead of linearizing the nonlinear term $\Psi(w)$ at $w=0\ $directly, we
derive the linearized equation of (\ref{eqn-nonlinear--TF-w}) by relating it
with the linearized equation (\ref{eqn-linearized-u}) for $u$. The
linearization of the coordinate mapping $u=\psi\left(  w\right)  \ $at $w_{c}$
yields $u=Kw$, where $K$ is defined by (\ref{definition-K}). Thus, the
linearized equation of (\ref{eqn-nonlinear--TF-w}) at $w=0$ takes the form
\begin{equation}
\partial_{t}\left(
\begin{array}
[c]{c}%
w_{1}\\
w_{2}%
\end{array}
\right)  =K^{-1}JL_{c}K\left(
\begin{array}
[c]{c}%
w_{1}\\
w_{2}%
\end{array}
\right)  \label{eqn-linearized-gp-w}%
\end{equation}
which implies that
\begin{equation}
\left(
\begin{array}
[c]{c}%
w_{1}\\
w_{2}%
\end{array}
\right)  \left(  t\right)  =K^{-1}e^{tJL_{c}}K\left(
\begin{array}
[c]{c}%
w_{1}\\
w_{2}%
\end{array}
\right)  \left(  0\right)  . \label{relation-linearized-eqn}%
\end{equation}
So it suffices to study the spectrum of $JL_{c}$ and the semigroup
$e^{tJL_{c}}$. Note that the traveling wave $U_{c}$ is cylindrical symmetric,
that is, $U_{c}=U_{c}\left(  x_{1},\left\vert x^{\perp}\right\vert \right)  $
with $x^{\perp}=\left(  x_{2},x_{3}\right)  $. We will prove linear
instability in $X_{1}^{s}$, the cylindrical symmetric subspace of $X_{1}$.
Assume the non-degeneracy condition in the cylindrical symmetric space, that
is,
\begin{equation}
\ker L_{c}\cap X_{1}^{s}=\left\{  \partial_{x_{1}}U_{c}\right\}  .
\label{assumption-NDG-s}%
\end{equation}
We have the following analogue of Proposition \ref{prop-quadratic}.

\begin{proposition}
\label{P-decom1} For $0<c<\sqrt{2}$, let $U_{c}=\psi\left(  w_{c}\right)
\ $be a traveling wave solution of (\ref{eqn-GP-generalized}) constructed in
\cite{Maris-annal} and $L_{c}$ be the operator defined by (\ref{operator-Lc-g}%
). Assume (\ref{assumption-NDG-s}). The space $X_{1}^{s}$ is decomposed as a
direct sum%
\[
X_{1}^{s}=N \oplus Z \oplus P,
\]
where $Z=\left\{  \partial_{x_{1}}U_{c}\right\}  $, $N$ is a one-dimensional
subspace such that $\left\langle L_{c}u,u\right\rangle <0$ for $0\neq u\in N$,
and $P$ is a closed subspace such that
\[
\left\langle L_{c}u,u\right\rangle \geq\delta\left\Vert u\right\Vert _{X_{1}%
}^{2}\ \ \text{ for any }u\in P,
\]
for some constant $\delta>0.$
\end{proposition}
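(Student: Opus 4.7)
The plan is to mimic the proof of Proposition \ref{prop-quadratic} with the restriction to cylindrically symmetric functions. The key structural observation is that since $U_c$ is cylindrically symmetric in $(x_2,x_3)$, the coefficients of $L_c$ in (\ref{operator-Lc-g}) are cylindrically symmetric, so $L_c$ commutes with rotations about the $x_1$-axis. Consequently $L_c$ preserves $X_1^s$, and the isomorphism $\tilde G$ of (\ref{defn-G-tilde}) intertwines $X_1^s$ with $L_{r_\perp}^2 \times L_{r_\perp}^2$. It therefore suffices to analyze the self-adjoint operator $\tilde L_c = \tilde G L_c \tilde G$ on this cylindrically symmetric $L^2$ subspace.

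First I would reprove claim (i) of Proposition \ref{prop-quadratic} in this restricted setting: the constant-coefficient piece $\tilde L_{c,\infty}$ acts on $L_{r_\perp}^2\times L_{r_\perp}^2$ (since it is a Fourier multiplier with cylindrically symmetric symbol) and is self-adjoint and bounded, while the compactness lemma (Lemma \ref{lemma-compactness}) passes to the cylindrically symmetric subspace since the involved multiplication operators and resolvents commute with the rotation action. Combined with Lemma \ref{le-6}, Weyl's theorem yields $\sigma_{\text{ess}}(\tilde L_c|_{L_{r_\perp}^2\times L_{r_\perp}^2})\subset[\delta_0,+\infty)$, so the non-positive spectrum in $X_1^s$ is finite-dimensional.

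For the kernel, assumption (\ref{assumption-NDG-s}) directly gives $\ker L_c\cap X_1^s = \{\partial_{x_1}U_c\}$ (note that $\partial_{x_2}U_c$ and $\partial_{x_3}U_c$ are \emph{not} cylindrically symmetric, so they drop out). For at least one negative direction, I would recycle Lemma \ref{le-4} verbatim: the test function $\partial_{r_\perp}U_c$ \emph{is} cylindrically symmetric, belongs to $X_1^s$ by the Appendix 3 estimates, and satisfies $\langle L_c\partial_{r_\perp}U_c,\partial_{r_\perp}U_c\rangle = -\|r_\perp^{-1}\partial_{r_\perp}U_c\|_{L^2}^2<0$.

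The main obstacle is showing the negative eigenspace inside $X_1^s$ is at most one-dimensional, which requires the adaptation of Lemma \ref{le-5} and hence of Lemma \ref{le-3} to cylindrically symmetric test directions. Two points must be checked. First, the implicit function theorem step in Lemma \ref{le-3} needs a cylindrically symmetric $\phi\in X_1^s$ with $\langle \bar P_c'(w_c),\phi\rangle\neq 0$; since $w_c$ is cylindrically symmetric, $\bar P_c'(w_c)\in (X_1^s)^*$, so taking $\phi = I^{-1}\bar P_c'(w_c)\in X_1^s$ (where $I:X_1^s\to (X_1^s)^*$ is the Riesz isomorphism) makes $\langle\bar P_c'(w_c),\phi\rangle = \|\bar P_c'(w_c)\|^2>0$, provided $\bar P_c'(w_c)\neq 0$ on $X_1^s$, and the latter follows exactly as in Lemma \ref{le-3} (using $\Delta_{x_2 x_3}U_c\not\equiv 0$ and the cylindrical symmetry of the identity $K^*(\Delta_{x_2 x_3}U_c)=0$). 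Second, the Pohozaev identity and the variational characterization of $U_c$ as in \cite{Maris-annal} hold unchanged, so if $\varphi,\tilde\varphi\in L_{r_\perp}^2\times L_{r_\perp}^2$ were two $\langle\tilde L_c\cdot,\cdot\rangle$-orthogonal negative eigenvectors in $X_1^s$, the linear combination $\xi_0=w+\alpha\tilde w$ (with $w,\tilde w\in X_1^s$) arranged to kill $\langle\bar P_c'(w_c),\cdot\rangle$ still lies in $X_1^s$, so Lemma \ref{le-3} applies and yields the same contradiction. Assembling $N$ as the one-dimensional negative eigenspace, $Z=\mathrm{span}\{\partial_{x_1}U_c\}$, and $P$ as the $\langle L_c\cdot,\cdot\rangle$-orthogonal complement in $X_1^s$, the uniform positivity on $P$ follows from the spectral gap of $\tilde L_c$ on $L_{r_\perp}^2\times L_{r_\perp}^2$ exactly as in Proposition \ref{prop-quadratic}.
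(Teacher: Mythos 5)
Your proposal is correct and follows essentially the same route as the paper, which simply observes that the proof of Proposition \ref{prop-quadratic} carries over to $X_{1}^{s}$ once one notes that the negative direction $\partial_{r_{\perp}}U_{c}$ from Lemma \ref{le-4} is cylindrically symmetric. Your careful re-derivation of Lemmas \ref{le-3} and \ref{le-5} in the symmetric setting is in fact unnecessary: since $X_{1}^{s}\subset X_{1}$, the full-space bound "at most one negative direction" from Lemma \ref{le-5} restricts automatically to the cylindrically symmetric subspace.
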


The proof is the same as that of Proposition \ref{prop-quadratic}, by
observing that the negative mode constructed in Lemma \ref{le-4} is
cylindrical symmetric. Now we show the linear instability of traveling waves
on the upper branch.

\begin{proposition}
\label{P-linear-insta-G} Let $U_{c}$, $c \in[c_{1}, c_{2}] \subset(0, \sqrt
{2})$, be a $C^{1}$ (with respect to the wave speed $c$) family of traveling
waves of (\ref{eqn-GP-generalized}) in the energy space $X_{0}$. For $c_{0}
\in(c_{1}, c_{2})$, assume

\begin{enumerate}
\item $F \in C^{1}$ on $U_{c_{0}} (\mathbf{R}^{n})$;

\item $L_{c_{0}}$ satisfies (\ref{assumption-NDG-s});

\item $\frac{\partial P(U_{c})}{\partial c}|_{c=c_{0}}<0$;
\end{enumerate}

then there exists $0\neq w_{u}\in X_{1}^{s}$ and $\lambda_{u}>0$, such that
$e^{\lambda_{u}t}w_{u}\left(  x\right)  $ is a solution of
(\ref{eqn-linearized-gp-w}).
\end{proposition}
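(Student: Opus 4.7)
The plan is to exhibit a real positive eigenvalue $\lambda_u > 0$ of $JL_{c_0}$ acting on the cylindrically symmetric subspace $X_1^s$. By (\ref{relation-linearized-eqn}), the linearized flow (\ref{eqn-linearized-gp-w}) on $w$ is conjugate via the isomorphism $K$ to $e^{tJL_{c_0}}$, so an eigenpair $(Kw_u,\lambda_u)$ of $JL_{c_0}$ produces the required growing solution $e^{\lambda_u t} w_u$ in $w$-coordinates. Equivalently I want to solve $L_{c_0} w = -\lambda J w$ for some $\lambda > 0$, working entirely with $L_{c_0}$ rather than $J^{-1}$, since the latter is not bounded from $X_1$ to its dual.

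I would first identify the generalized kernel of $JL_{c_0}$ at $0$. The $C^1$ family assumption permits differentiating the traveling-wave equation (\ref{eqn-GP-G-TW}) in $c$ at $c_0$, giving $L_{c_0}(\partial_c U_c|_{c=c_0}) = -P'(U_{c_0})$; a direct computation from (\ref{defn-momentum-classical}) shows $P'(u) = -J\partial_{x_1} u$, and hence $JL_{c_0}(\partial_c U_c|_{c=c_0}) = -\partial_{x_1} U_{c_0}$, exhibiting $\partial_c U_c|_{c=c_0}$ as a rank-two Jordan chain partner over the kernel generator $\partial_{x_1} U_{c_0}$, which under (\ref{assumption-NDG-s}) spans $\ker L_{c_0}$ in $X_1^s$. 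The decisive scalar computation is
\[
\bigl\langle L_{c_0}(\partial_c U_c|_{c=c_0}),\, \partial_c U_c|_{c=c_0}\bigr\rangle
= -\bigl\langle P'(U_{c_0}),\, \partial_c U_c|_{c=c_0}\bigr\rangle
= -\frac{\partial P(U_c)}{\partial c}\bigg|_{c=c_0} > 0
\]
under hypothesis (3), while Proposition \ref{P-decom1} supplies a decomposition $X_1^s = N \oplus Z \oplus P$ on which $\langle L_{c_0} \cdot,\cdot\rangle$ has exactly one negative direction and is uniformly coercive on $P$. This is the Krein/Vakhitov--Kolokolov situation in which one expects a single positive real eigenvalue of $JL_{c_0}$.

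To produce the eigenpair I would use a constrained variational argument adapted to avoid inverting $J$. The natural setup is to minimize $\langle L_{c_0} w, w\rangle$ on $X_1^s$ subject to the finite-codimensional constraints $(w,\partial_{x_1} U_{c_0})_{X_1} = 0$ and $\langle \bar P'(w_{c_0}), w\rangle = 0$. Proposition \ref{P-decom1} yields boundedness of the form on $X_1$ and coercivity modulo the finite-dimensional bad directions, so the constrained infimum is attained, and the hypothesis $\partial_c P(U_{c_0}) < 0$ together with the Krein computation above forces this infimum to be strictly negative (in sharp contrast to the stability regime of Lemma \ref{lemma-positive-quadratic}). Combining the Lagrange-multiplier identity satisfied by a minimizer with the generalized-kernel relation $L_{c_0}(\partial_c U_c|_{c=c_0}) = J \partial_{x_1} U_{c_0}$, one obtains a two-dimensional $JL_{c_0}$-invariant subspace of $X_1^s$ on which the induced $2 \times 2$ matrix has a real positive eigenvalue $\lambda_u$, giving the desired eigenvector. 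The main obstacle, as emphasized in the introduction, is that $J$ fails to be an isomorphism between $X_1$ and its dual, so the standard Grillakis--Shatah--Strauss framework in \cite{gss87,gss90} is unavailable; this is circumvented by keeping all estimates on the $X_1$ side and exploiting the invariance $\frac{d}{dt}\langle L_{c_0} u(t), v(t)\rangle = 0$ along pairs of linearized solutions, which supplies the symplectic structure needed to close the finite-dimensional spectral argument without ever inverting $J$.
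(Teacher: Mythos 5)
Your setup matches the paper's: the reduction via $K$ to the spectrum of $JL_{c_0}$ on $X_1^s$, the generalized-kernel identity $L_{c_0}\partial_c U_c=-P'(U_{c_0})=J\partial_{x_1}U_{c_0}$, and the Krein-signature computation $\langle L_{c_0}\partial_c U_c,\partial_c U_c\rangle=-\frac{\partial P(U_c)}{\partial c}|_{c=c_0}>0$ are exactly the ingredients used there, and you correctly identify that the unboundedness of $J^{-1}:X_1\to X_1^{\ast}$ blocks a direct appeal to \cite{gss87}, \cite{gss90}. The gap is in your final step. A constrained minimizer of $\langle L_{c_0}w,w\rangle$ yields, through its Lagrange-multiplier identity, spectral information about the \emph{self-adjoint} operator $\tilde L_{c_0}$ (this is what Propositions \ref{prop-quadratic} and \ref{P-decom1} already encode), not about the non-self-adjoint $JL_{c_0}$, and there is no mechanism by which that minimizer together with the generalized kernel spans a two-dimensional $JL_{c_0}$-invariant subspace. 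In fact the object you are after cannot be a negative minimizer of the form at all: if $JL_{c_0}w_u=\lambda w_u$ with $\lambda\neq0$, then $\lambda\langle L_{c_0}w_u,w_u\rangle=\langle L_{c_0}w_u,JL_{c_0}w_u\rangle=0$ by skew-symmetry of $J$, so the unstable eigenvector is \emph{isotropic} for $\langle L_{c_0}\cdot,\cdot\rangle$. The passage from ``one negative direction of $L_{c_0}$ on a complement of the generalized kernel'' to ``a positive eigenvalue of $JL_{c_0}$'' is the entire content of the proposition and is not dispatched by your sketch. (A secondary issue: your constraints $(w,\partial_{x_1}U_{c_0})_{X_1}=0$ and $\langle\bar P'(w_{c_0}),w\rangle=0$ do not define the $L_{c_0}$-orthogonal, $e^{tJL_{c_0}}$-invariant complement $Y_1^s$ of the generalized kernel that the argument needs; cross terms in the form would not vanish, so even the negative-index count on your constrained subspace is not automatic.)

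What the paper actually does at this juncture, modifying Theorem 5.1 of \cite{gss90}: it works on $Y_1^s=\{u:\langle u,J^{-1}\partial_{x_1}U_{c_0}\rangle=\langle u,J^{-1}\partial_c U_c\rangle=0\}$, which is both $L_{c_0}$-orthogonal to the generalized kernel and invariant under $e^{tJL_{c_0}}$, shows $n(L_{c_0}|_{Y_1^s})=1$ using your Krein computation, and then runs a finite-dimensional Galerkin scheme: on subspaces $X^{(n)}=N+P^{(n)}$ it considers the isotropic set $\mathcal C^n=\{[\pi^-u,u]=-1,\ \langle L_{c_0}u,u\rangle=0\}$, a union of two spheres, and builds a tangent vector field $f_n$ whose zero $y_n$ (guaranteed by the nonvanishing Euler characteristic) satisfies $[JL_{c_0}y_n,w]=a_n[y_n,w]$ for all $w\in X^{(n)}$. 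One must then prove $\{a_n\}$ is bounded, extract a weak limit $y_\infty\neq0$ (nonzero because its $N$-component is pinned), upgrade it to a genuine eigenfunction by elliptic regularity, and invoke the Hamiltonian symmetry $\lambda\mapsto-\lambda$ to obtain a positive eigenvalue. This approximation-and-limit argument is the missing idea in your proposal and cannot be replaced by the variational step you describe.
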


In particular, this proposition applies to those traveling waves obtained in
\cite{Maris-annal} via a constrained variational approach.

\begin{proposition}
\label{prop-linear-insta} Assume (F1-2) or (F1)-(F2'). For $0<c_{0}<\sqrt{2}$,
let $U_{c_{0}}\ $be a traveling wave solution of (\ref{eqn-GP-generalized})
constructed in \cite{Maris-annal}, satisfying $\frac{\partial P(U_{c}%
)}{\partial c}|_{c=c_{0}}<0$. Assume (\ref{assumption-NDG-s}). Then there
exists a linearly unstable mode of (\ref{eqn-linearized-gp-w}). That is, there
exists $0\neq w_{u}\in X_{1}^{s}$ and $\lambda_{u}>0$, such that
$e^{\lambda_{u}t}w_{u}\left(  x\right)  $ is a solution of
(\ref{eqn-linearized-gp-w}).
\end{proposition}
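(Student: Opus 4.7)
The plan is to obtain Proposition \ref{prop-linear-insta} as a direct application of Proposition \ref{P-linear-insta-G} to the Maris traveling waves, so the task reduces to verifying the four hypotheses of \ref{P-linear-insta-G} at $c_0$. The only nontrivial verification is the existence of a $C^1$ family of traveling waves through $U_{c_0}$ in a neighborhood of $c_0$ in the energy space $X_0$. I would establish this via the implicit function theorem applied to the equation $\bar E_c{}'(w) = 0$ in $X_1^s$ at $(c_0, w_{c_0})$: by the cylindrically symmetric non-degeneracy \eqref{assumption-NDG-s}, the kernel of $\bar E_{c_0}''(w_{c_0})$ is exactly $\mathrm{span}\{\partial_{x_1} w_{c_0}\}$, which is factored out either by fixing a translation normalization or by working on the codimension-one complement. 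The resulting locally unique $C^1$ branch $w_c \in X_1^s$ lifts to a $C^1$ family $U_c = \psi(w_c) \in X_0$ via the bi-Lipschitz property of $\psi$. The remaining hypotheses are either assumed ($\partial_c P < 0$ and \eqref{assumption-NDG-s}) or follow from (F1)--(F2)/(F1)--(F2') combined with boundedness of the range $U_{c_0}(\mathbf{R}^n)$ away from any singular set of $F$.

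The substance then lies in Proposition \ref{P-linear-insta-G}, whose proof I would organize around the spectral decomposition of $L_{c_0}$ on $X_1^s$ furnished by Proposition \ref{P-decom1}: one negative direction $N$, one-dimensional kernel $Z=\mathrm{span}\{\partial_{x_1}U_{c_0}\}$, and uniform positivity on the complement $P$. The key computation that brings in the sign condition is obtained by differentiating $\bar E_c{}'(w_c) = 0$ in $c$, which yields $L_{c_0}\partial_c w_c|_{c=c_0} = -\bar P{}'(w_{c_0})$; pairing with $\partial_c w_c$ and using the relation $\langle L_{c_0}\cdot,\cdot\rangle = \langle \bar E_{c_0}''(w_{c_0})K^{-1}\cdot,K^{-1}\cdot\rangle^{-1}$ equivalents gives
\[
\langle \bar E_{c_0}''(w_{c_0})\,\partial_c w_c, \partial_c w_c\rangle\big|_{c=c_0} = -\frac{d}{dc}P(U_c)\Big|_{c=c_0} > 0.
\]
Together with the negative direction in $N$, this produces a two-dimensional subspace on which the quadratic form has signature $(1,1)$ and is transverse to $Z$.

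Given this indefiniteness, I would construct an unstable eigenvalue of $JL_{c_0}$ acting on $X_1^s$ (equivalently of $K^{-1}JL_{c_0}K$) by a constrained variational argument that bypasses any need to invert $J$: exploit the invariance of the quadratic form $\langle L_{c_0}\cdot,\cdot\rangle$ under $e^{tJL_{c_0}}$, which follows from the self-adjointness of $L_{c_0}$ and skew-symmetry of $J$, and locate the positive eigenvalue $\lambda_u$ by a min-max or degree-theoretic argument on the indefinite finite-dimensional subspace identified above, with the uniform positivity on $P$ controlling compactness. Because both $L_{c_0}$ and $J$ preserve cylindrical symmetry, the resulting eigenfunction $U_u$ lies in $X_1^s$, and setting $w_u = K^{-1}U_u$ yields the required unstable mode of \eqref{eqn-linearized-gp-w} via \eqref{relation-linearized-eqn}.

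The main obstacle is precisely the one flagged in the introduction: the essential spectrum of $L_{c_0}$ touches the origin in the natural $L^2$ setting and $J^{-1}: X_1 \to X_1^*$ is unbounded, so the classical Grillakis-Shatah-Strauss eigenvalue recipe is unavailable. The remedy --- and the technical heart of the argument --- is to work entirely with the bounded quadratic form on $X_1^s$ using the coercivity on $P$ and the Weyl-type compactness of $L_{c_0} - L_{c_0,\infty}$ provided by Lemma \ref{lemma-compactness}, so that the min-max produces a genuine $X_1^s$-eigenfunction with $\lambda_u > 0$ rather than a merely distributional object.
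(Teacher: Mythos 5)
Your overall architecture coincides with the paper's: Proposition \ref{prop-linear-insta} is indeed obtained by feeding the Maris waves into the general criterion of Proposition \ref{P-linear-insta-G}, and the $C^1$ branch you need is exactly what Theorem \ref{thm-continuation-3d} supplies via the implicit function theorem under \eqref{assumption-NDG-s}, run on the complement of $\mathrm{span}\{\partial_{x_1}w_{c_0}\}$ as you propose. The eigenvalue construction you sketch --- avoid inverting $J$, use the invariance of $\langle L_{c_0}\cdot,\cdot\rangle$ under the linearized flow, and close with a degree-theoretic argument in which coercivity on $P$ supplies compactness --- is also the paper's modified Grillakis--Shatah--Strauss argument.

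The one place your sketch is substantively imprecise is where the sign condition enters. The observation that $N$ and $\partial_c U_c$ span a two-dimensional subspace of signature $(1,1)$ transverse to $Z$ does not by itself discriminate between the branches: when $\frac{dP}{dc}>0$ one can still exhibit indefinite two-dimensional subspaces (pair $N$ with any vector in $P$), yet the wave is stable. What the computation $\langle L_{c_0}\partial_c U_c,\partial_c U_c\rangle=-\frac{dP}{dc}|_{c=c_0}>0$ actually buys is that the unique negative direction of $L_{c_0}$ on $X_1^s$ is \emph{not} absorbed by the generalized kernel: on the subspace $Y_1^s$ of \eqref{definition-space-Y-1-s}, which is invariant under $JL_{c_0}$ and on which $\langle L_{c_0}\cdot,\cdot\rangle$ is non-degenerate, there remains exactly one negative mode. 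The Euler-characteristic/vector-field argument is then run on an increasing family of finite-dimensional subspaces of this infinite-dimensional $Y_1^s$ (not on the two-dimensional indefinite subspace), producing approximate eigenvalues $a_n$ whose limit, after a weak-compactness and elliptic-regularity step, is a genuine nonzero eigenvalue of $JL_{c_0}$; the Hamiltonian symmetry $\lambda\mapsto-\lambda$ then yields the positive one. Your proposal has all the ingredients, but the invariant subspace $Y_1^s$ and the count $n\left(L_{c_0}|_{Y_1^s}\right)=1$ must be made explicit for the degree argument to close and for the sign hypothesis to do its job.
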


\begin{proof}
\textbf{of Proposition \ref{P-linear-insta-G}}: By
(\ref{relation-linearized-eqn}), it suffices to show that the operator
$JL_{c_{0}}$ has an unstable eigenvalue in the space $X_{1}^{s}$. The proof is
to modify that of Theorem 5.1 in \cite{gss90}, as explained in Remark
\ref{rmk-g-kernel} below. Define the following subspace of $X_{1}^{s}$ by
\begin{equation}
Y_{1}^{s}=\left\{  u\in X_{1}^{s}\ |\ \left\langle u,J^{-1}\partial_{x_{1}%
}U_{c_{0}}\right\rangle =\left\langle u,J^{-1}\partial_{c}U_{c}|_{c=c_{0}%
}\right\rangle =0\right\}  . \label{definition-space-Y-1-s}%
\end{equation}
We show that the quadratic form $\left\langle L_{c_{0}}\cdot,\cdot
\right\rangle $ restricted to $Y_{1}^{s}$ is non-degenerate. Indeed, any $u\in
X_{1}^{s}$ can be uniquely written as
\begin{equation}
u=a\partial_{x_{1}}U_{c_{0}}+b\partial_{c}U_{c}|_{c=c_{0}}%
+v,\ \label{eqn-decomposition-g-kernel}%
\end{equation}
where $v\in Y_{1}^{s},$
\begin{equation}
a=-\left\langle u,J^{-1}\partial_{c}U_{c}|_{c=c_{0}}\right\rangle
/\frac{\partial P(U_{c})}{\partial c}|_{c=c_{0}}, \label{defn-a}%
\end{equation}
and
\begin{equation}
b=\left\langle u,J^{-1}\partial_{x_{1}}U_{c_{0}}\right\rangle /\frac{\partial
P(U_{c})}{\partial c}|_{c=c_{0}}. \label{defn-b}%
\end{equation}
Here, we use the identity%
\begin{equation}
L_{c}\partial_{c}U_{c}=-P^{\prime}(U_{c})=-J^{-1}\partial_{x_{1}}U_{c}.
\label{relation-Lc-par-c-U-c}%
\end{equation}
Suppose $\left\langle L_{c_{0}}\cdot,\cdot\right\rangle $ is degenerate on
$Y_{1}^{s}$, then there exists $\phi\in Y_{1}^{s}$ such that $\left\langle
L_{c_{0}}\phi,v\right\rangle =0$ for any $v\in Y_{1}^{s}$. This implies that
$\left\langle L_{c_{0}}\phi,u\right\rangle =0$ for any $u\in X_{1}^{s}$, by
the decomposition (\ref{eqn-decomposition-g-kernel}). So $\phi\in\ker
L_{c}\cap X_{1}^{s}$ and by assumption (\ref{assumption-NDG-s})$,\phi
=c\partial_{x_{1}}U_{c_{0}}$ which implies $\phi=0$ since $\partial_{x_{1}%
}U_{c=c_{0}}\notin Y_{1}^{s}.$

Moreover, since $L_{c_{0}}\partial_{x_{1}}U_{c_{0}}=0$,
\eqref{relation-Lc-par-c-U-c} and the definition of $Y_{1}^{s}$ imply a.) the
splitting of $X_{1}^{s}$ into $Y_{1}^{s}$ and $span\{\partial_{x_{1}}U_{c_{0}%
},\ \partial_{c}U_{c}|_{c=c_{0}}\}$ is orthogonal with respect to the
quadratic form $L_{c_{0}}$ and b.) $span\{\partial_{x_{1}}U_{c_{0}}%
,\ \partial_{c}U_{c}|_{c=c_{0}}\}$ is invariant under $JL_{c_{0}}$ and thus so
is $Y_{1}^{s}$, which also imply their invariance under the linearized flow
$e^{tJL_{c_{0}}}$.

Denote $n\left(  L_{c_{0}}|_{X}\right)  $ to be the number of negative modes
of the quadratic form $\left\langle L_{c_{0}}\cdot,\cdot\right\rangle $
restricted to a subspace $X\subset X_{1}^{s}$. We show that $n\left(
L_{c_{0}}|_{Y_{1}^{s}}\right)  =1$. Indeed, for any $u\in X_{1}^{s}$, by
(\ref{eqn-decomposition-g-kernel}) and (\ref{relation-Lc-par-c-U-c}), we have
\[
\left\langle L_{c_{0}}u,u\right\rangle =b^{2}\left\langle L_{c_{0}}%
\partial_{c}U_{c}|_{c=c_{0}},\partial_{c}U_{c}|_{c=c_{0}}\right\rangle
+\left\langle Lv,v\right\rangle .
\]
Since $n\left(  L_{c_{0}}|_{X_{1}^{s}}\right)  =1$ and
\[
\left\langle L_{c_{0}}\partial_{c}U_{c}|_{c=c_{0}},\partial_{c}U_{c}%
|_{c=c_{0}}\right\rangle =-\frac{\partial P(U_{c})}{\partial c}|_{c=c_{0}}>0,
\]
so $n\left(  L_{c_{0}}|_{Y_{1}^{s}}\right)  =1$. Let $Y_{1}^{s}=N\oplus P$,
where on $P$ and $N$, the quadratic form $\left\langle L_{c_{0}}\cdot
,\cdot\right\rangle $ is positive and negative definite respectively, $\dim
N=1$, and $N,P$ are orthogonal in the inner product $\left[  \cdot
,\cdot\right]  :=\left\langle L_{c_{0}}\cdot,\cdot\right\rangle $.

It can be verified that
\[
D\left(  JL_{c_{0}}\right)  =D\left(  L_{c_{0}}\right)  =X_{3}.
\]
Indeed, $JL_{c_{0}},L_{c_{0}}:X_{3}\rightarrow H^{1}$. Since $Y_{1}^{s}$ is
separable, there is an increasing sequence of subspaces $P^{\left(  n\right)
}\subset P$ of odd dimension $n$ such that $\cup\ X^{\left(  n\right)  }$ is
dense in $Y_{1}^{s}$, where $X^{\left(  n\right)  }=N+P^{\left(  n\right)  }$.
We can choose $N\ $\ and $P^{\left(  n\right)  }$to lie in $X_{3}$. Denote by
$\pi^{-},\ \pi^{+}$ and $\pi^{\left(  n\right)  }$ the orthogonal projections
of $Y_{1}^{s}$ to $N,\ P$ and $X^{\left(  n\right)  }$ respectively in the
inner product $\left[  \cdot,\cdot\right]  $. Consider the set
\[
\mathcal{C}=\left\{  u\in Y_{1}^{s}\ |\ \left[  \pi^{-}u,u\right]
=-1,\ \left\langle L_{c_{0}}u,u\right\rangle =0\right\}
\]
and $\mathcal{C}^{n}=\mathcal{C}\cap X^{\left(  n\right)  }$. For $v\in
X^{\left(  n\right)  }$, consider the mapping%
\begin{equation}
f_{n}\left(  v\right)  =\pi^{\left(  n\right)  }\left(  JL_{c_{0}}v\right)
+\left[  \pi^{-}\left(  JL_{c_{0}}v\right)  ,v\right]  v. \label{defn-f-n}%
\end{equation}
In the above definition, we use the observation that $JL_{c_{0}}v\in Y_{1}%
^{s}$ for any $v\in Y_{1}^{s}$. It is easy to check that $\left[  \pi^{-}%
f_{n}\left(  v\right)  ,v\right]  =0$ and
\begin{align*}
\left\langle f_{n}\left(  v\right)  ,L_{c_{0}}v\right\rangle  &  =\left[
f_{n}\left(  v\right)  ,v\right]  =\left[  JL_{c_{0}}v,v\right]  +\left[
\pi^{-}\left(  JL_{c_{0}}v\right)  ,v\right]  \left[  v,v\right] \\
&  =\left\langle JL_{c_{0}}v,L_{c_{0}}v\right\rangle +\left[  \pi^{-}\left(
JL_{c_{0}}v\right)  ,v\right]  \left\langle v,L_{c_{0}}v\right\rangle =0.
\end{align*}
Therefore, $f_{n}$ is a tangent vector field on the manifold $\mathcal{C}^{n}%
$, which is the union of two spheres $S^{n-1}$ and thus has non-vanishing
Euler characteristic. Thus $f_{n}$ must vanish at some $y_{n}\in
\mathcal{C}^{n}$. That is, there is a real scalar$\ a_{n}=-\left[  \pi
^{-}\left(  JL_{c_{0}}v_{n}\right)  ,v_{n}\right]  $, such that
\begin{equation}
\left[  JL_{c_{0}}y_{n},w\right]  =a_{n}\left[  y_{n},w\right]  ,\ \text{for
any }w\in X^{\left(  n\right)  }. \label{eqn-n-eigen}%
\end{equation}
Let $y_{n}=y_{n}^{-}+y_{n}^{+}$, where $y_{n}^{-}\in N$ and $y_{n}^{-}\in
P^{\left(  n\right)  }$. Let $y_{n}^{-}=b_{n}\chi_{-}$ with $\left\langle
L_{c_{0}}\chi_{-},\chi_{-}\right\rangle =-1$, then $\left[  y_{n}^{-}%
,y_{n}^{-}\right]  =-1$ implies that $\left\vert b_{n}\right\vert =1$. We can
normalize $b_{n}=1$. Since
\[
0=\left\langle L_{c_{0}}y_{n},y_{n}\right\rangle =-1+\left\langle L_{c_{0}%
}y_{n}^{+},y_{n}^{+}\right\rangle
\]
and $\left\langle L_{c_{0}}\cdot,\cdot\right\rangle |_{P}$ is positive$,$
$\left\Vert y_{n}^{+}\right\Vert _{X_{1}}$ is uniformly bounded. So
$y_{n}\rightharpoonup y_{\infty}\in Y_{1}^{s}\ $weakly in $X_{1}$. We note
that $y_{\infty}\neq0$ since $\pi^{-}y_{\infty}=\chi_{-}\neq0$. We claim that
$\left\{  a_{n}\right\}  $ is bounded. Suppose otherwise, $a_{n}%
\rightarrow\infty$ when $n\rightarrow\infty$. For any integer $k\in\mathbf{N}$
and a fixed $w\in X^{\left(  k\right)  }$, when $n\geq k$, by
(\ref{eqn-n-eigen}) we have
\begin{align}
\left[  y_{n},w\right]   &  =\frac{1}{a_{n}}\left[  JL_{c_{0}}y_{n},w\right]
=\frac{1}{a_{n}}\left\langle JL_{c_{0}}y_{n},L_{c_{0}}w\right\rangle
\label{eqn-n-eigen-another}\\
&  =-\frac{1}{a_{n}}\left\langle L_{c_{0}}y_{n},JL_{c_{0}}w\right\rangle
=-\frac{1}{a_{n}}\left[  y_{n},JL_{c_{0}}w\right]  .\nonumber
\end{align}
Let $n\rightarrow\infty$ in (\ref{eqn-n-eigen-another}), we have $\left[
y_{\infty},w\right]  =0$. By the density argument, this is also true for any
$w\in Y_{1}^{s}$ and thus $y_{\infty}=0$ since $\left[  \cdot,\cdot\right]  $
is non-degenerate on $Y_{1}^{s}$. This contradiction shows that $\left\{
a_{n}\right\}  $ is bounded. So we can pick a subsequence $\left\{
n_{k}\right\}  \ $such that $a_{n_{k}}\rightarrow a$, for some $a\in
\mathbf{R}.$ For convenience, we still denote the subsequence by $a_{n}$. By
(\ref{eqn-n-eigen}),
\[
-\left[  y_{n},JL_{c_{0}}w\right]  =\left[  JL_{c_{0}}y_{n},w\right]
=a_{n}\left[  y_{n},w\right]  .
\]
Passing to the limit of above, we have
\[
-\left[  y_{\infty},JL_{c_{0}}w\right]  =a\left[  y_{\infty},w\right]  ,
\]
for any fixed $w\in X^{\left(  k\right)  }$. For any $v\in X_{3}\cap Y_{1}%
^{s}$, $JL_{c_{0}}v\in X_{1}$ and by density argument, we have
\begin{equation}
-\left[  y_{\infty},JL_{c_{0}}v\right]  =a\left[  y_{\infty},v\right]  .
\label{eqn-weak-solution-eigen}%
\end{equation}
It is easy to see that (\ref{eqn-weak-solution-eigen}) is also satisfied when
$v\in\left\{  \partial_{x_{1}}U_{c_{0}},\partial_{c}U_{c}|_{c=c_{0}}\right\}
$. Thus by the decomposition (\ref{eqn-decomposition-g-kernel}), the equation
(\ref{eqn-weak-solution-eigen}) is satisfied for any $v\in X_{3}\cap X_{1}%
^{s}$. So
\[
-\left\langle y_{\infty},L_{c_{0}}Jw\right\rangle =a\left\langle y_{\infty
},w\right\rangle
\]
for any $w\in R\left(  L_{c_{0}}\right)  $ which is the orthogonal complement
of $\partial_{x_{1}}U_{c}$. Thus, there exists a constant $d$, such that
$y_{\infty}\in Y_{1}^{s}$ is the weak solution of the equation
\begin{equation}
JL_{c_{0}}y_{\infty}=ay_{\infty}+d\partial_{x_{1}}U_{c_{0}}.
\label{eqn-y-infinity}%
\end{equation}
We must have $a\neq0$, since $0\neq y_{\infty}\in Y_{1}^{s}$ and $Y_{1}%
^{s}\cap\left\{  \partial_{x_{1}}U_{c_{0}},\partial_{c}U_{c}|_{c=c_{0}%
}\right\}  =\varnothing$. By elliptic regularity, $y_{\infty}\in D\left(
JL_{c_{0}}\right)  =X_{3}$ and then $y_{\infty}=\frac{1}{a}\left(  JL_{c_{0}%
}y_{\infty}-\frac{d}{a}\partial_{x_{1}}U_{c_{0}}\right)  \in H^{1}$. So
$y_{\infty}\in H^{3}$. If$\ U_{c}\in1+H^{k}$ for some integer $k$, then it can
be shown that $y_{\infty}\in H^{k}$. Since (\ref{eqn-y-infinity}) implies
that
\[
JL_{c_{0}}\left(  y_{\infty}+\frac{d}{a}\partial_{x_{1}}U_{c_{0}}\right)
=a\left(  y_{\infty}+\frac{d}{a}\partial_{x_{1}}U_{c_{0}}\right)  ,
\]
so $a\neq0$ is an eigenvalue of $JL_{c_{0}}$.

For any nonzero eigenvalue $\lambda$ of $JL_{c_{0}}$ with an eigenfunction
$y$, we must have $u=L_{c_{0}}y\neq0$.\ So we obtain from $L_{c_{0}}Ju=\lambda
u$ that $\lambda$ is also an eigenvalue of $L_{c_{0}}J=-(JL_{c_{0}})^{\ast}$.
Therefore $-\lambda$ is an eigenvalue of $JL_{c_{0}}$ as well. This and the
above argument imply that $\pm a$ are eigenvalues of $JL_{c_{0}}$. This
finishes the proof that $JL_{c_{0}}$ must have a positive eigenvalue.
\end{proof}

\begin{remark}
By the above proof, there also exists a stable eigenvalue $\lambda_{s}<0$ of
$JL_{c_{0}}\ $which gives an exponentially decaying solution $e^{\lambda_{s}%
t}w_{s}(x)$ $(w_{s}(x)\in X_{1}^{s})$ of the linearized equation
(\ref{eqn-linearized-u}). This is due to the Hamiltonian nature of the equation.
\end{remark}

\begin{remark}
\label{rmk-g-kernel}The invariant subspace $Y_{1}^{s}$ is used to remove the
generalized kernel $\left\{  \partial_{x_{1}}U_{c_{0}},\partial_{c}U_{c_{0}%
}\right\}  $ of $L_{c_{0}}$ in $X_{1}^{s}$. This space also plays an important
role in proving the exponential dichotomy of the semigroup $e^{tJL_{c_{0}}}$
below. In \cite{gss87} \cite{gss90}, a general theory was developed for
studying stability of standing waves (traveling waves etc.) of an abstract
Hamiltonian PDE $\frac{du}{dt}=JE^{\prime}\left(  u\right)  $. In this
framework, the symplectic operator $J$ should be invertible in the sense that
$J^{-1}:X\rightarrow X^{\ast}\,$is bounded, where $X$ is the energy space. In
our case, the space $X$ \ is $X_{1}=H^{1}\times\dot{H}^{1},$ $X$ $^{\ast}$ is
$X_{1}^{\ast}=H^{-1}\times\dot{H}^{-1}$ and the operator
\[
J^{-1}=-J=\left(
\begin{array}
[c]{cc}%
0 & -1\\
1 & 0
\end{array}
\right)  .
\]
So $J^{-1}:X_{1}\rightarrow X_{1}^{\ast}\ $is not bounded since $\dot{H}%
^{1}\varsubsetneq H^{-1}$ and we can not apply the theory of \cite{gss87}
\cite{gss90} directly. In \cite{lopes}, an abstract theorem was given for the
case when $J$ is not onto. However, as also commented in \cite{chiron-1d}, it
would take substantial effort to verify some of the assumptions in
\cite{lopes}, particularly the semigroup estimates, for our current case and
the instability of slow traveling waves in Section $5$.

To handle this issue, we modify the proof of linear instability in
\cite{gss90} (Theorem 5.1) to avoid using the invertibility of $J$. Our
modified argument could be used for general Hamiltonian PDEs with a
non-invertible symplectic operator. We do not need to assume the semigroup
estimates as in \cite{lopes}.
\end{remark}

\subsection{Linear exponential dichotomy of semigroup}

To construct invariant manifolds, the first step is to establish the
exponential dichotomy of the linearized semigroup. First, we prove this for
the semigroup generated by $JL_{c}$.

\begin{lemma}
\label{lemma-dichotomy-U}For $0<c<\sqrt{2}$, let $U_{c}\ $be a traveling wave
solution of (\ref{eqn-GP-generalized}) constructed in \cite{Maris-annal} and
$L_{c}$ be the operator defined by (\ref{operator-Lc-g}). Assume
(\ref{assumption-NDG-s}) and $\frac{\partial P(U_{c})}{\partial c}<0$. The
space $X_{1}^{s}$ is decomposed as a direct sum%
\begin{equation}
X_{1}^{s}=E^{u}\oplus E^{cs}, \label{direct-sum-1}%
\end{equation}
satisfying: i) Both $E^{u}=span\left\{  w_{u}\right\}  $ and $E^{cs}$ are
invariant under the linear semigroup $e^{tJL_{c}}$. ii) there exist constants
$M>0$ and $\lambda_{u}>0$, such that
\[
\left\vert e^{tJL_{c}}|_{E^{cs}}\right\vert _{X_{1}}\leq M(1+t),\quad
\forall\;t\geq0\quad\text{ and }\quad|e^{tJL_{c}}|_{E^{u}}|_{X_{1}}\leq
Me^{\lambda_{u}t},\quad\forall\;t\leq0.
\]

\end{lemma}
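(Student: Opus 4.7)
The argument rests on the Hamiltonian conservation identity
\[
\frac{d}{dt}\langle L_c u(t), v(t)\rangle = 0
\]
valid for any two solutions $u, v$ of $\dot u = JL_c u$, a consequence of the symmetry of $L_c$ and skew-symmetry of $J$. By Proposition \ref{P-linear-insta-G} there is an unstable eigenfunction $w_u \in X_1^s$ with $JL_c w_u = \lambda_u w_u$, $\lambda_u > 0$; by the Hamiltonian duality $(JL_c)^\ast = -L_c J$ there is a companion stable eigenfunction $w_s$ with eigenvalue $-\lambda_u$, and both lie in $X_3^s$ by elliptic regularity. Applying the conservation identity with $v(t) = e^{\lambda_u t} w_u$ yields $\langle L_c w_u, w_u\rangle = 0$ and similarly $\langle L_c w_s, w_s\rangle = 0$; applied with the Jordan pair $v(t) = \partial_c U_c - t\,\partial_{x_1}U_c$ (valid since $JL_c \partial_c U_c = -\partial_{x_1}U_c$ by \eqref{relation-Lc-par-c-U-c}), it shows that both $\partial_{x_1}U_c$ and $\partial_c U_c$ are $L_c$-orthogonal to $w_u$ and $w_s$. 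I would then normalize $\langle L_c w_u, w_s\rangle = 1$ after verifying this pairing is nonzero.

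Set $E^u = \mathrm{span}\{w_u\}$ and $E^{cs} = \{u \in X_1^s : \langle L_c u, w_s\rangle = 0\}$. The direct sum $X_1^s = E^u \oplus E^{cs}$ is given by the projection $u \mapsto \langle L_c u, w_s\rangle\, w_u$. Invariance of $E^{cs}$ under $e^{tJL_c}$ follows at once from the identity $\langle L_c e^{tJL_c}u, w_s\rangle = e^{\lambda_u t}\langle L_c u, w_s\rangle$, a direct consequence of the conservation law. The bound on $E^u$ is immediate since $e^{tJL_c}|_{E^u}$ is scalar multiplication by $e^{\lambda_u t}$, giving the desired backward-in-time decay.

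The heart of the proof is the polynomial-growth estimate on $E^{cs}$. I would further split $E^{cs} = \mathrm{span}\{w_s\} \oplus E^c$ with $E^c = \{u \in X_1^s : \langle L_c u, w_u\rangle = \langle L_c u, w_s\rangle = 0\}$; the summand $\mathrm{span}\{w_s\}$ decays exponentially. The key structural fact is that $\langle L_c \cdot, \cdot\rangle$ is positive semi-definite on $E^c$ with kernel exactly $Z = \mathrm{span}\{\partial_{x_1}U_c\}$. This follows by signature bookkeeping: Proposition \ref{P-decom1} gives signature $(1, 1, \infty)$ on $X_1^s$, the restriction to $\mathrm{span}\{w_u, w_s\}$ has Gram matrix $\left(\begin{smallmatrix}0 & 1 \\ 1 & 0\end{smallmatrix}\right)$ of signature $(1, 0, 1)$, and additivity in the $L_c$-orthogonal decomposition yields signature $(0, 1, \infty)$ on $E^c$. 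The resulting coercivity $\langle L_c u, u\rangle \geq \delta\|u\|_{X_1/Z}^2$ on $E^c$, combined with the conservation $\langle L_c u(t), u(t)\rangle = \mathrm{const}$, bounds the projection of $e^{tJL_c}u$ onto any fixed complement of $Z$ uniformly in $t$. The $Z$-component then grows at most linearly via a variation-of-parameters argument, because the only Jordan coupling into $Z$ is through $\partial_c U_c$ (with $JL_c \partial_c U_c = -\partial_{x_1}U_c$), a size-two Jordan block, and \eqref{assumption-NDG-s} precludes any higher block.

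The main obstacle will be the variation-of-parameters estimate controlling the $Z$-component: since $JL_c$ is unbounded on $X_1$, the coefficient of $\partial_{x_1}U_c$ in $e^{tJL_c}u$ must be extracted by pairing against a fixed test element in $X_1^\ast$, and one must then verify uniform-in-$t$ boundedness of the resulting driving term using only the $X_1$-bound on the $Z$-orthogonal part obtained from the energy identity. A secondary technical point is confirming $\langle L_c w_u, w_s\rangle \neq 0$: were it to vanish, $\mathrm{span}\{w_u, w_s\}$ would be $L_c$-isotropic, forcing at least two negative directions of $\langle L_c\cdot,\cdot\rangle$ on $X_1^s$ and contradicting the signature count in Proposition \ref{P-decom1}.
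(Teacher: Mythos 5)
Your proposal is correct and follows essentially the same route as the paper's proof: the invariance of the pairing $\left\langle L_{c}u(t),v(t)\right\rangle$ under the linearized flow, the non-vanishing of $\left\langle L_{c}w_{u},w_{s}\right\rangle$ forced by the count of negative directions, positivity of the form on the $L_{c}$-orthogonal complement of $\mathrm{span}\{w_{u},w_{s}\}$, and linear growth coming from the two-dimensional Jordan block $\{\partial_{x_{1}}U_{c},\partial_{c}U_{c}\}$. The only (cosmetic) difference is that the paper splits off the generalized kernel $E_{g}^{\ker}$ first, so that the residual center piece $E^{e}\subset Y_{1}^{s}$ carries a strictly positive form and the flow on $E_{g}^{\ker}$ is computed explicitly as $a(t)=a(0)-tb(0)$, $b(t)=b(0)$, which disposes of the variation-of-parameters issue you flag as the main obstacle.
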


\begin{proof}
Let $w_{u},w_{s}\in X_{1}^{s}$ be the unstable and stable eigenfunctions of
$JL_{c}$ as constructed in Proposition \ref{prop-linear-insta} and its
subsequent remark. Denote
\[
E^{s}=span\left\{  w_{s}\right\}  , \quad E^{u}=span\left\{  w_{u}\right\}  ,
\quad E^{us}=span\left\{  w_{u},w_{s}\right\}  .
\]
First, we claim that: $\left\langle L_{c}w_{u},w_{s}\right\rangle \neq0$ and
the quadratic form $\left\langle L_{c}\cdot,\cdot\right\rangle |_{E^{us}}%
\ $has one positive and one negative mode. Suppose otherwise $\left\langle
L_{c}w_{u},w_{s}\right\rangle =0$, then $\left\langle L_{c}\cdot
,\cdot\right\rangle |_{E^{us}}$ is identically zero since $\left\langle
Lw_{u},w_{u}\right\rangle =\left\langle Lw_{s},w_{s}\right\rangle =0$ due to
the skew-symmetry of $J$. By Proposition \ref{prop-linear-insta},
$\left\langle L_{c}\cdot,\cdot\right\rangle |_{Y_{1}^{s}}$ is non-degenerate
and has exactly one negative mode. Let $Y_{1}^{s}=N\oplus P$ be such that
$N=\left\{  u_{-}\right\}  \ $with $\left\langle L_{c}u_{-},u_{-}\right\rangle
<0$ and $\left\langle L_{c}\cdot,\cdot\right\rangle |_{P}>0$. Then we can
decompose $w_{u}=a_{1}u_{-}+b_{1}p_{1}$ and $w_{s}=a_{2}u_{-}+b_{2}p_{2}$
where $p_{1},p_{2}\in P$. Since $a_{1},a_{2}\neq0$, we define $\tilde{w}$
$=w_{u}-\frac{a_{2}}{a_{1}}w_{s}\in E^{us}\cap P$. This is a contradiction
since $\tilde{w}\neq0$. Thus $\left\langle L_{c}\cdot,\cdot\right\rangle
|_{E^{us}}$ is represented by the $2\times2$ matrix
\[
\left(
\begin{array}
[c]{cc}%
0 & \left\langle L_{c}w_{u},w_{s}\right\rangle \\
\left\langle L_{c}w_{u},w_{s}\right\rangle  & 0
\end{array}
\right)  ,
\]
which has one positive and one negative mode.

We define the subspace $E^{e}$ by
\[
E^{e}=\left\{  u\in Y_{1}^{s}|\ \left\langle u,L_{c}w_{u}\right\rangle
=\left\langle u,L_{c}w_{s}\right\rangle =0\right\}  .
\]
Let $E_{g}^{\ker}=span\left\{  \partial_{x_{1}}U_{c_{0}},\partial_{c}%
U_{c}\right\}  $ be the generalized kernel of $JL_{c}$ in $X_{1}^{s}$. For any
two solutions $u\left(  t\right)  ,v\left(  t\right)  $ of the linearized
equation $du/dt=JL_{c}u$, the quadratic form $\left\langle u\left(  t\right)
,L_{c}v\left(  t\right)  \right\rangle $ is independent of $t$, since%
\begin{align*}
\frac{d}{dt}\left\langle u\left(  t\right)  ,L_{c}v\left(  t\right)
\right\rangle  &  =\left\langle JL_{c}u,L_{c}v\right\rangle +\left\langle
u,L_{c}JL_{c}v\right\rangle \\
&  =\left\langle JL_{c}u,L_{c}v\right\rangle +\left\langle L_{c}%
u,JL_{c}v\right\rangle =0.
\end{align*}
By using this observation and the invariance of $Y_{1}^{s}$ and $E^{us}$, it
is easy to show that the subspace $E^{e}$ is invariant under the semigroup
$e^{tJL_{c}}$. Furthermore, we show that there exists $C>0$, such that for any
$u\in E^{e}$ and $t\in\mathbf{R,}$
\begin{equation}
\left\Vert e^{tJL_{c}}u\right\Vert _{X_{1}}\leq C\left\Vert u\right\Vert
_{X_{1}}. \label{estimate-elliptic}%
\end{equation}
In fact, we note that any $v\in Y_{1}^{s}$ can be decomposed as
\begin{equation}
v=c_{u}w_{u}+c_{s}w_{s}+v_{1}, \label{decomposition-v}%
\end{equation}
where
\[
c_{u}=\left\langle L_{c}v,w_{s}\right\rangle /\left\langle L_{c}w_{u}%
,w_{s}\right\rangle ,\ c_{s}=\left\langle L_{c}v,w_{u}\right\rangle
/\left\langle L_{c}w_{u},w_{s}\right\rangle
\]
and $v_{1}\in E^{e}$. Thus we have
\[
\left\langle L_{c}\cdot,\cdot\right\rangle |_{Y_{1}^{s}}=\left\langle
L_{c}\cdot,\cdot\right\rangle |_{E^{us}}+\left\langle L_{c}\cdot
,\cdot\right\rangle |_{E^{e}},
\]
and a counting of negative modes on both sides shows that $\left\langle
L_{c}\cdot,\cdot\right\rangle |_{E^{e}}>0$. Then the estimate
(\ref{estimate-elliptic}) follows by the invariance of the quadratic form
$\left\langle L_{c}\cdot,\cdot\right\rangle $. Combining the decompositions
(\ref{decomposition-v}) and (\ref{eqn-decomposition-g-kernel}), we have%
\[
X_{1}^{s}=E^{u}\oplus E^{s}\oplus E^{e}\oplus E_{g}^{\ker}=E^{u}\oplus
E^{cs},
\]
where
\[
E^{cs}=E^{s}\oplus E^{e}\oplus E_{g}^{\ker}.
\]
For any $t\geq0,$ we have
\[
\left\vert e^{tJL_{c}}|_{E^{s}}\right\vert _{X_{1}}\leq Me^{-\lambda_{u}%
t},\ \left\vert e^{tJL_{c}}|_{E_{g}^{\ker}}\right\vert _{X_{1}}\leq M\left(
1+t\right)  ,
\]
and this finishes the proof.
\end{proof}

Next, we prove the exponential dichotomy in $X_{3}^{s}$, the cylindrical
symmetric subspace of $X_{3}\ $which is the domain of $L_{c}$ and $JL_{c}$.

\begin{lemma}
\label{lemma-dichotomy-x3}Under the conditions of Lemma
\ref{lemma-dichotomy-U}, the space $X_{3}^{s}$ can be written as%
\begin{equation}
X_{3}^{s}=E^{u}\oplus E_{3}^{cs},\text{ where }E_{3}^{cs}=X_{3}^{s}\cap E^{cs}
\label{DIRECT-SUM-3}%
\end{equation}
satisfying: i) Both $E^{u}$ and $E_{3}^{cs}$ are invariant under $e^{tJL_{c}}%
$. ii) there exist constants $M>0$ and $\lambda_{u}>0$, such that
\begin{equation}
\left\vert e^{tJL_{c}}|_{E_{3}^{cs}}\right\vert _{X_{3}}\leq M(1+t),\;\forall
\ t\geq0\text{ and }|e^{tJL_{c}}|_{E^{u}}|_{X_{3}}\leq Me^{\lambda_{u}%
t},\;\forall\ t\leq0. \label{estimate-dichotomy-3}%
\end{equation}

\end{lemma}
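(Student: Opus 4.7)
The plan is to lift the $X_1^s$ exponential dichotomy of Lemma \ref{lemma-dichotomy-U} to the stronger space $X_3^s$, keeping the same invariant subspaces but sharpening the topology. I proceed in three steps.

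First, I verify the splitting $X_3^s = E^u \oplus E_3^{cs}$. The eigenfunction $w_u$ lies in $X_3^s$: this was essentially shown in the proof of Proposition \ref{P-linear-insta-G}, where bootstrapping on $JL_c w_u = \lambda_u w_u$ via elliptic regularity gave $w_u\in H^k$ for every $k$ with $U_c \in 1 + H^k$; the same argument applies to $w_s$. The spectral projection $\Pi_u u = \langle L_c u, w_s\rangle / \langle L_c w_u, w_s\rangle\, w_u$ onto $E^u$ along $E^{cs}$ is well defined on $X_3^s$, since $L_c w_s \in X_1^s$ pairs continuously with $X_3^s$, so $\mathrm{Id} - \Pi_u$ gives the complement $E_3^{cs} = E^{cs} \cap X_3^s$.

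Second, for invariance: $e^{tJL_c}|_{E^u}$ is multiplication by $e^{\lambda_u t}$, yielding the desired backward-in-time exponential bound in $X_3$ trivially. For $E_3^{cs}$, the space $X_3^s$ is (a subset of) the domain of $JL_c$ acting on $X_1^s$ and is therefore invariant under $e^{tJL_c}$, while $E^{cs}$ is invariant by Lemma \ref{lemma-dichotomy-U}; hence their intersection $E_3^{cs}$ is invariant.

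Third, and this is the technical heart of the proof, I obtain the polynomial bound $\|e^{tJL_c}|_{E_3^{cs}}\|_{X_3} \le M(1+t)$. For $u_0 \in E_3^{cs}$ and $u(t) = e^{tJL_c} u_0$, differentiating in $t$ gives $u'(t) = JL_c u(t) = e^{tJL_c}(JL_c u_0)$. Differentiating the invariance of $E^{cs}$ yields $JL_c u_0 \in E^{cs} \cap X_1^s$, so Lemma \ref{lemma-dichotomy-U} applies to $u'$:
\[
\|u'(t)\|_{X_1} \le M(1+t)\,\|JL_c u_0\|_{X_1} \le M(1+t)\,\|u_0\|_{X_3},
\]
and analogously $\|u(t)\|_{X_1} \le M(1+t)\|u_0\|_{X_3}$. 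To upgrade this to an $X_3$ bound I would use the graph-norm equivalence
\[
\|u\|_{X_3} \le C\bigl(\|u\|_{X_1} + \|JL_c u\|_{X_1}\bigr),
\]
which, applied with $JL_c u(t) = u'(t)$, completes the estimate.

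The main obstacle is proving this last equivalence, because $J$ swaps the two factors of $X_1 = H^1 \times \dot H^1$ and these factors are different spaces (in particular $\dot H^1\not\subset H^1$ in $\mathbf{R}^3$), so one cannot simply convert $\|JL_c u\|_{X_1}$ to $\|L_c u\|_{X_1}$. Instead I would exploit the fact that the principal parts of the two components of $L_c u$ are $-\Delta u_1$ and $-\Delta u_2$ separately, bootstrap in two stages (first $X_1^s \to X_2^s$-type regularity, then $X_2^s \to X_3^s$) using that the potential coefficients $u_c^2-1, v_c^2, u_c v_c$ are smooth with the decay $O(|x|^{-2})$ (plus Hardy's inequality, as in Section 2), so the cross terms between the two components are controlled in the appropriate homogeneous/inhomogeneous Sobolev norms. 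This step is standard in spirit but requires the careful bookkeeping of homogeneous versus inhomogeneous regularity that is characteristic of the nonzero-condition-at-infinity setting.
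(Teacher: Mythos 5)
Your proposal is correct and follows essentially the same route as the paper: the splitting and invariance are inherited from the $X_1^s$ dichotomy since $w_u,w_s\in X_3^s$, and the $X_3$ growth bound comes from the equivalence of $\Vert u\Vert_{X_3}$ with the graph norm $\Vert u\Vert_{X_1}+\Vert JL_c u\Vert_{X_1}$ together with the commutation $JL_c e^{tJL_c}=e^{tJL_c}JL_c$. The only difference is emphasis: the paper asserts the graph-norm equivalence as "easy to check," whereas you correctly flag the $H^1\times\dot H^1$ asymmetry and sketch the elliptic bootstrap needed to justify it.
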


\begin{proof}
Since the eigenvectors $w_{u},w_{s}\in X_{3}^{s}$, we have $E^{u}
\subset X_{3}^{s}$. The invariance of $E_{3}^{cs}$ clearly follows from the
invariance of $X_{3}^{s}$ and $E^{cs}$ under $e^{tJL_{c}}$. The direct sum
decomposition of $X_{3}^{s}$ is a direct consequence of that of $X_{1}^{s}$.


To complete the proof, we only need to show estimate
(\ref{estimate-dichotomy-3}) on $E_{3}^{cs}$. It is easy to check that the
norm $\left\Vert u\right\Vert _{X_{3}}$ is equivalent to the norm $\left\Vert
u\right\Vert _{X_{1}}+\left\Vert JL_{c}u\right\Vert _{X_{1}}$. So we only need
to estimate the growth of $\left\Vert e^{tJL_{c}}u\right\Vert _{X_{1}%
}+\left\Vert JL_{c}e^{tJL_{c}}u\right\Vert _{X_{1}}$. For any $u\in E_{3}%
^{cs}$, by Lemma \ref{lemma-dichotomy-U}, we have
\[
\left\Vert e^{tJL_{c}}u\right\Vert _{X_{1}}\leq M(1+t)\left\Vert u\right\Vert
_{X_{1}}%
\]
and
\[
\left\Vert JL_{c}e^{tJL_{c}}u\right\Vert _{X_{1}}=\left\Vert e^{tJL_{c}}%
JL_{c}u\right\Vert _{X_{1}}\leq M(1+t)\left\Vert JL_{c}u\right\Vert _{X_{1}}.
\]
This finishes the proof of the lemma.
\end{proof}

By using the relation (\ref{relation-linearized-eqn}), we get the exponential
dichotomy for solutions of the linearized equation (\ref{eqn-linearized-gp-w}).

\begin{corollary}
\label{cor-dichotomy-w}Under the conditions of Lemma \ref{lemma-dichotomy-U},
the space $X_{3}^{s}$ can be decomposed as a direct sum%
\[
X_{3}^{s}=\tilde{E}^{u}\oplus\tilde{E}^{cs},
\]
satisfying: i) Both $\tilde{E}^{u}$ and $\tilde{E}^{cs}$ are invariant under
the linear semigroup $S\left(  t\right)  $ defined by
(\ref{eqn-linearized-gp-w}). ii) there exist constant $M>0$ and $\lambda
_{u}>0$, such that
\[
\left\vert S\left(  t\right)  |_{\tilde{E}^{cs}}\right\vert _{X_{3}}\leq
M(1+t),\quad\forall\;t\geq0\quad\text{ and }\quad|S\left(  t\right)
|_{\tilde{E}^{u}}|_{X_{1}}\leq Me^{\lambda_{u}t},\quad\forall\;t\leq0.
\]

\end{corollary}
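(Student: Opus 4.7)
The plan is to transfer the exponential dichotomy from the semigroup $e^{tJL_c}$ on $X_3^s$, already established in Lemma \ref{lemma-dichotomy-x3}, to the semigroup $S(t)$ on $X_3^s$, using the conjugacy relation $S(t)=K^{-1}e^{tJL_c}K$ coming from the identity \eqref{relation-linearized-eqn}. Once this conjugacy is made rigorous on $X_3^s$, the decomposition and the bounds are inherited mechanically, so the entire corollary reduces to checking that $K$ is an isomorphism of $X_3^s$ that commutes with cylindrical symmetry.

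First, I would verify that $K\colon X_3^s\to X_3^s$ defined by \eqref{definition-K} is indeed a bounded isomorphism. The nontrivial part is the off-diagonal term $\chi(D)(v_c\phi_2)$: since the traveling wave $U_c$ satisfies $v_c\in H^k$ for all $k\geq 0$ with rapid decay (as recorded in the paper following \eqref{operator-Lc}), multiplication by $v_c$ maps $\dot H^3$ into $H^3$, and the Fourier multiplier $\chi(D)$ with compactly supported smooth symbol is bounded on every $H^k$; hence $K-I$ is bounded on $X_3$. Moreover, since $\chi(D)$ is given by a radial multiplier, it commutes with all rotations in $(x_2,x_3)$, and $v_c$ is cylindrically symmetric, so $K$ preserves $X_3^s$. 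Inverting $K$ on $X_3^s$ is immediate: $K^{-1}(\phi_1,\phi_2)=(\phi_1+\chi(D)(v_c\phi_2),\phi_2)$, which is bounded on $X_3^s$ by the same argument. Thus both $K$ and $K^{-1}$ are bounded isomorphisms of $X_3^s$.

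Next, define
\[
\tilde E^u=K^{-1}E^u,\qquad \tilde E^{cs}=K^{-1}E_3^{cs}.
\]
The direct sum decomposition $X_3^s=\tilde E^u\oplus\tilde E^{cs}$ follows from applying the isomorphism $K^{-1}$ to the decomposition $X_3^s=E^u\oplus E_3^{cs}$ of Lemma \ref{lemma-dichotomy-x3}. Invariance of $\tilde E^u$ and $\tilde E^{cs}$ under $S(t)$ is a direct consequence of $S(t)=K^{-1}e^{tJL_c}K$ and the invariance of $E^u$ and $E_3^{cs}$ under $e^{tJL_c}$. The norm estimates in \eqref{estimate-dichotomy-3} for $e^{tJL_c}$ on $X_3$ then transfer to $S(t)$ on $X_3$ with new constants $M'=M\|K\|_{X_3\to X_3}\|K^{-1}\|_{X_3\to X_3}$ and the same exponent $\lambda_u$: indeed, for $w\in\tilde E^u$ and $t\leq 0$,
\[
\|S(t)w\|_{X_3}=\|K^{-1}e^{tJL_c}Kw\|_{X_3}\leq \|K^{-1}\|\,Me^{\lambda_u t}\|Kw\|_{X_3}\leq M'e^{\lambda_u t}\|w\|_{X_3},
\]
and the analogous computation with the polynomial bound $M(1+t)$ works for $w\in\tilde E^{cs}$, $t\geq 0$.

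The only real obstacle is the verification that $K$ preserves the higher-regularity space $X_3^s$; this rests on the smoothness and decay of $v_c$ together with the smoothing properties of $\chi(D)$, both of which are available. Once this is in hand, the corollary is a purely algebraic transport of the result of Lemma \ref{lemma-dichotomy-x3} through the conjugation by $K$.
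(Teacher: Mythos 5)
Your proposal is correct and is essentially the paper's own (implicit) argument: the paper derives the corollary directly from the conjugacy $S(t)=K^{-1}e^{tJL_{c}}K$ in \eqref{relation-linearized-eqn} together with Lemma \ref{lemma-dichotomy-x3}, exactly as you do. Your verification that $K$ and $K^{-1}$ are bounded isomorphisms of $X_{3}^{s}$ (via the decay of $v_{c}$, the smoothing of the compactly supported multiplier $\chi(D)$, and the preservation of cylindrical symmetry) correctly supplies the details the paper leaves unstated.
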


\begin{remark}
\label{rmk-dichotomy-proof}The linear exponential dichotomy is the first step
to construct invariant manifolds. In general, it is rather tricky to get the
exponential dichotomy of the semigroup even if its generator has a spectral
gap. This is due to the issue of spectral mapping. More precisely, let
$\sigma\left(  L\right)  $ and $\sigma\left(  e^{L}\right)  $ be the spectra
of the generator $L$ and its exponential $e^{L}$. In general, it is not true
that $\sigma\left(  e^{L}\right)  =e^{\sigma\left(  L\right)  }$. In the
literature, the exponential dichotomy was proved by using resolvent estimates
(\cite{getsezy-latushkin-et-dichotomy}) or compact perturbation theory of
semigroups (\cite{shizuta-83} \cite{vidav}) or dispersive estimates
(\cite{grieg-ohta12} \cite{mitsumachi-instability}). In our current case, it
seems difficult to apply these approaches. In Lemma \ref{lemma-dichotomy-U},
we prove the exponential dichotomy of $e^{tJL_{c}}\ $by using the energy
estimates and the invariant quadratic form $\left\langle L_{c}\cdot
,\cdot\right\rangle $ due to the Hamiltonian structure. This could provide a
general approach to get the exponential dichotomy for lots of Hamiltonian PDEs.
\end{remark}

In the construction of unstable (stable) manifolds, we only need to establish
the exponential dichotomy in the cylindrical symmetric space $X_{1}^{s}%
\ $since the unstable modes are cylindrical symmetric. This also yields
cylindrical symmetric invariant manifolds. By using the non-degeneracy
condition (\ref{assumption-NDG}) and the proof of Lemma
\ref{lemma-dichotomy-U}, we can also get the exponential trichotomy in the
whole space $X_{1}$. This will be important in a future work for the
construction of center manifolds in the energy space.

\begin{lemma}
\label{lemma-trichotomy}For $0<c<\sqrt{2}$, let $U_{c}\ $be a traveling wave
solution of (\ref{eqn-GP-generalized}) constructed in \cite{Maris-annal} and
$L_{c}$ be the operator defined by (\ref{operator-Lc-g}). Assume
\eqref{assumption-NDG} and $\frac{\partial P(U_{c})}{\partial c}|_{c=c_{0}}%
<0$. Then the space $X_{1}$ is decomposed as a direct sum%
\[
X_{1}=E^{u}\oplus E^{c}\oplus E^{s},
\]
satisfying: i) $E^{u},E^{s}$ and $E^{c}$ are invariant under the linear
semigroup $e^{tJL_{c}}$. ii) there exist constant $M>0,\ \lambda_{u}>0$, such
that
\[
\left\vert e^{tJL_{c}}|_{E^{s}}\right\vert _{X_{1}}\leq Me^{-\lambda_{u}%
t},\quad\forall\;t\geq0,\ \ |e^{tJL_{c}}|_{E^{u}}|_{X_{1}}\leq Me^{\lambda
_{u}t},\quad\forall\;t\leq0.
\]
and
\begin{equation}
\ |e^{tJL_{c}}|_{E^{c}}|_{X_{1}}\leq M(1+t),\ \forall\;t\in\mathbf{R,}.
\label{estimate-center}%
\end{equation}

\end{lemma}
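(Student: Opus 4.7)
The strategy closely follows the proof of Lemma~\ref{lemma-dichotomy-U}, now applied on the full space $X_1$ and producing a three-way split. Let $w_u, w_s \in X_1^s \subset X_1$ be the unstable and stable eigenfunctions of $JL_c$ with eigenvalues $\pm\lambda_u$ supplied by Proposition~\ref{prop-linear-insta} and the remark following it, and set $E^u = \mathrm{span}\{w_u\}$, $E^s = \mathrm{span}\{w_s\}$. Introduce the symplectic pairing $\omega(u,v) = \langle Ju, v\rangle$, which is conserved along pairs of linearized solutions. I define the candidate center subspace
\[
E^c = \{u \in X_1 \mid \omega(u, w_u) = \omega(u, w_s) = 0\}.
\]
The eigenvalue relations $L_c w_{u,s} = \mp \lambda_u J w_{u,s}$ show these constraints are equivalent to $\langle L_c u, w_{u,s}\rangle = 0$, so $E^c$ is precisely the $L_c$-orthogonal complement of $E^{us} := E^u \oplus E^s$ in $X_1$. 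Invariance of $E^c$ under $e^{tJL_c}$ follows from conservation of $\omega$: for $u_0 \in E^c$, $\omega(e^{tJL_c}u_0, w_{u,s}) = e^{\mp\lambda_u t}\omega(u_0, w_{u,s}) = 0$. The direct sum $X_1 = E^u \oplus E^s \oplus E^c$ then follows from $\omega(w_u, w_s) \ne 0$, obtained exactly as in the opening of the proof of Lemma~\ref{lemma-dichotomy-U}.

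The estimates on $E^u$ and $E^s$ are immediate from the eigenfunction structure. For the center estimate, Proposition~\ref{prop-quadratic} supplies $n_-(L_c|_{X_1}) = 1$, and the proof of Lemma~\ref{lemma-dichotomy-U} shows that $\langle L_c\cdot,\cdot\rangle|_{E^{us}}$ has signature $(+1,-1)$. Since $E^c$ is $L_c$-orthogonal to $E^{us}$, the Sylvester-type counting forces $n_-(L_c|_{E^c}) = 0$, so $\langle L_c u, u\rangle \ge 0$ on $E^c$. Combining this non-negativity with the spectral gap of Proposition~\ref{prop-quadratic} and the inclusion $Z = \ker L_c = \mathrm{span}\{\partial_{x_i}U_c\}_{i=1,2,3} \subset E^c$ yields, by a standard compactness argument, a coercivity bound of the form
\[
\langle L_c u, u\rangle \ge \delta\bigl(\|u\|_{X_1}^2 - C\|\pi_Z u\|_{X_1}^2\bigr), \qquad u \in E^c,
\]
for some bounded projection $\pi_Z$ onto $Z$. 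Since $\langle L_c u(t), u(t)\rangle$ is conserved along the flow, the problem then reduces to bounding $\|\pi_Z u(t)\|_{X_1}$.

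This last bound is a finite-dimensional computation on the generalized kernel of $JL_c$ inside $E^c$. The kernel $Z$ is fixed by $e^{tJL_c}$, and identity~\eqref{relation-Lc-par-c-U-c} gives $JL_c \partial_c U_c = -\partial_{x_1}U_c$, with $\partial_c U_c \in E^c$ verified via $\omega$-conservation applied to the pair $(\partial_{x_1}U_c, w_{u,s})$. The non-vanishing pairing $\omega(\partial_c U_c, \partial_{x_1}U_c) \ne 0$ (a consequence of $\partial_c P|_{c=c_0} \ne 0$), together with analogous computations using cylindrical symmetry of $U_c$ to handle any potential Jordan partners of $\partial_{x_2}U_c, \partial_{x_3}U_c$, obstructs Jordan chains of length three or more at the zero eigenvalue. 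Hence $e^{tJL_c}$ restricted to this finite-dimensional generalized kernel is the identity plus a linear polynomial in $t$, giving $\|\pi_Z u(t)\|_{X_1} \le C(1+t)\|u_0\|_{X_1}$. Combined with the coercivity estimate above, this yields $\|e^{tJL_c}|_{E^c}\|_{X_1} \le M(1+t)$.

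The hardest step is likely to be making the coercivity estimate precise: Proposition~\ref{prop-quadratic} decomposes $X_1 = N \oplus Z \oplus P$ with $L_c$ uniformly positive on $P$, but this splitting is not aligned with $E^c$, so one must argue that the one-dimensional negative direction $N$ is entirely absorbed by $E^{us}$ (via $L_c$-orthogonality and the signature count) and then promote the resulting non-negativity on $E^c$ to a genuine lower bound by exploiting the spectral gap of $L_c$ above its finitely many non-positive eigenvalues. A secondary delicate point on which the linear (as opposed to higher-polynomial) growth depends is verifying that the Jordan chains at zero truly have length at most two, which requires unwinding the $\omega$-pairings among $\partial_c U_c$ and the prospective partners of the transverse translation modes.
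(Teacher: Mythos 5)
Your setup coincides with the paper's: the same $E^{u},E^{s}$, the center space defined as the $L_{c}$-orthogonal complement of $E^{us}$ (the paper's $Y_{1}$), invariance via conservation of $\left\langle L_{c}\cdot,\cdot\right\rangle$, and the signature count $n_{-}(L_{c}|_{X_{1}})=n_{-}(L_{c}|_{E^{us}})=1$ forcing $\left\langle L_{c}\cdot,\cdot\right\rangle|_{E^{c}}\geq0$. The genuine gap is in the last step. After obtaining $\left\langle L_{c}u,u\right\rangle\geq\delta\Vert(I-\pi_{Z})u\Vert^{2}$ on $E^{c}$ and hence a uniform-in-$t$ bound on $\Vert(I-\pi_{Z})u(t)\Vert$, you assert that bounding $\Vert\pi_{Z}u(t)\Vert$ ``is a finite-dimensional computation on the generalized kernel.'' It is not: a general $u_{0}\in E^{c}$ does not lie in the generalized kernel, you have produced no invariant complement of the generalized kernel inside $E^{c}$, and the $Z$-component of $u(t)$ is driven by the whole solution rather than by the restriction of $e^{tJL_{c}}$ to the Jordan block. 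Knowing that the flow on the generalized kernel is ``identity plus linear in $t$'' therefore does not yield $\Vert\pi_{Z}u(t)\Vert\leq C(1+t)\Vert u_{0}\Vert$. Relatedly, your treatment of the partners of $\partial_{x_{2}}U_{c},\partial_{x_{3}}U_{c}$ is misdirected: what is needed is (a) existence of $w^{i}$ with $JL_{c}w^{i}=\partial_{x_{i}}U_{c}$, a solvability condition which follows from $\left\langle J^{-1}\partial_{x_{j}}U_{c},\partial_{x_{i}}U_{c}\right\rangle=0$ (translation invariance of $\vec{P}$), not from cylindrical symmetry; and (b) $\left\langle L_{c}w^{i},w^{i}\right\rangle>0$ (a consequence of $\left\langle L_{c}\cdot,\cdot\right\rangle|_{E^{c}}\geq0$ and the non-degeneracy \eqref{assumption-NDG}), which is what excludes Jordan chains of length three; symmetry in $x_{j}$ only enters to decouple the cross-pairings $\left\langle w^{j},J^{-1}\partial_{x_{i}}U_{c}\right\rangle$, $i\neq j$.

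There are two ways to close the gap. The paper's route: construct the full six-dimensional generalized kernel $E_{g}^{\ker}=span\{\partial_{x_{i}}U_{c},w^{i}\}$, use the pairings $\left\langle\cdot,J^{-1}w\right\rangle$, $w\in E_{g}^{\ker}$, to carve out an invariant complement $E^{e}\subset E^{c}$ on which $\left\langle L_{c}\cdot,\cdot\right\rangle$ is positive definite (so the flow is bounded there by conservation), and note that the flow on the invariant finite-dimensional $E_{g}^{\ker}$ grows at most linearly; the positivity $\left\langle L_{c}w^{i},w^{i}\right\rangle>0$ is exactly what makes the explicit coefficient formulas for this splitting well defined. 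Alternatively, your scheme can be repaired without the extra splitting: since $L_{c}$ annihilates $Z$, one has $\frac{d}{dt}\pi_{Z}u=\pi_{Z}JL_{c}(I-\pi_{Z})u$, and realizing $\pi_{Z}$ through pairings with smooth decaying functions gives $\Vert\pi_{Z}JL_{c}v\Vert\leq C\Vert v\Vert_{X_{1}}$; the uniform bound on $\Vert(I-\pi_{Z})u(t)\Vert$ then integrates to $\Vert\pi_{Z}u(t)\Vert\leq C(1+t)\Vert u_{0}\Vert$. Either way, the missing content is an argument controlling the $Z$-component of an \emph{arbitrary} solution in $E^{c}$, not merely of solutions lying in the generalized kernel.
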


\begin{proof}
Denote $E^{s}=\left\{  w_{s}\right\}  $ and $E^{u}=\left\{  w_{u}\right\}  $,
where $w_{u},w_{s}$ are the unstable and stable eigenfunctions of $JL_{c}$.
Let $w^{1}=-\partial_{c}U_{c},\ $then $JL_{c}w^{1}=\partial_{x_{1}}U_{c}$. Let
$w^{2},w^{3}$ be such that
\[
JL_{c}w^{2}=\partial_{x_{2}}U_{c},\ JL_{c}w^{3}=\partial_{x_{3}}U_{c}.
\]
Above two equations are solvable since the kernel of $-L_{c}J=(JL_{c})^{\ast}$
is spanned by $J^{-1}\partial_{x_{i}}U_{c}$, $i=1,2,3$, and
\[
\left\langle J^{-1}\partial_{x_{j}}U_{c},\partial_{x_{i}}U_{c}\right\rangle
=0,\ \ \text{for }i,j=1,2,3,
\]
by the translation invariance of the momentum $\vec{P}\left(  U_{c}\right)
=\left(  \left\langle J^{-1}\partial_{x_{i}}U_{c},U_{c}-1\right\rangle
\right)  $. Denote the generalized kernel of $JL_{c}$ by
\[
E_{g}^{\ker}=span \left\{  \cup_{i=1}^{3}\{ \partial_{x_{i}}U_{c_{0}},w^{i}\}
\right\}  .
\]
Define%
\[
Y_{1}=\left\{  u\in X_{1}|\ \left\langle u,L_{c}w_{u}\right\rangle
=\left\langle u,L_{c}w_{s}\right\rangle =0\right\}  .
\]
Clearly $E_{g}^{ker}\subset Y_{1}$ due to the symmetry of $L_{c}$ and the
skew-symmetry of $J$. Moreover $Y_{1}$ is invariant under $e^{tJL_{c}}$ due to
the invariance of $span\{w^{u},w^{s}\}$ and the invariance of the quadratic
form given by $L_{c}$. Let
\[
E^{e}=\left\{  u\in Y_{1}|\ \left\langle u,J^{-1}w\right\rangle =0,\ \forall
w\in E_{g}^{\ker}\right\}  .
\]
It is straightforward to check that $E^{e}$ is invariant under $e^{tJL_{c}}$
due to the invariance of $Y_{1}$ and $E_{g}^{ker}$.

By the arguments in the proof of Lemma \ref{lemma-dichotomy-U}, we have
$\left\langle L_{c}\cdot,\cdot\right\rangle |_{Y_{1}}\geq0$. This implies
that
\[
\left\langle L_{c}w^{i},w^{i}\right\rangle >0,\ \text{for }i=1,2,3,
\]
by noting that $w^{i}\in Y_{1}$. Indeed, suppose otherwise, then
\[
\left\langle L_{c}w^{i},w^{i}\right\rangle =0=\min_{w\in Y_{1}}\left\langle
L_{c}w,w\right\rangle .
\]
Thus $\left\langle L_{c}w^{i},w\right\rangle =0$ for any $w\in Y_{1},$ and it
follows that $\left\langle L_{c}w^{i},w\right\rangle =0$ for any $w\in X_{1}$.
Thus, $L_{c}w^{i}=0$, a contradiction. So for any $u\in X_{1}$, we can write%
\[
u=c_{u}w_{u}+c_{s}w_{s}+\sum_{i=1}^{3}\left(  a_{i}\partial_{x_{i}}U_{c}%
+b_{i}w^{i}\right)  +v_{1},
\]
where $v_{1}\in E^{e},$
\[
c_{u}=\left\langle L_{c}u,w_{s}\right\rangle /\left\langle L_{c}w_{u}%
,w_{s}\right\rangle ,\ c_{s}=\left\langle L_{c}u,w_{u}\right\rangle
/\left\langle L_{c}w_{u},w_{s}\right\rangle ,
\]
and
\[
a_{i}=-\left\langle u,J^{-1}w^{i}\right\rangle /\left\langle L_{c}w^{i}%
,w^{i}\right\rangle ,\ b_{i}=\left\langle u,J^{-1}\partial_{x_{i}}%
U_{c}\right\rangle /\left\langle L_{c}w^{i},w^{i}\right\rangle .
\]
Here we used the facts
\[
\left\langle w^{j},J^{-1}\partial_{x_{i}} U_{c}\right\rangle = \left\langle
w^{i},J^{-1} w^{j}\right\rangle =0, \quad i\ne j
\]
due to the even or odd symmetry of $w^{i}$ and $\partial_{x_{i}} U_{c}$ in
$x_{j}$. Thus we get the direct sum decomposition%
\[
X_{1}=E^{u}\oplus E^{s}\oplus E^{e}\oplus E_{g}^{\ker}.
\]
By the proof of Lemma \ref{lemma-dichotomy-U}, it follows that the quadratic
form $\left\langle L_{c}\cdot,\cdot\right\rangle |_{E^{e}}$ is positive
definite. This implies that
\[
|e^{tJL_{c}}|_{E^{e}}|_{X_{1}}\leq C,\ \forall\;t\in\mathbf{R,}%
\]
for some constant $C$. Define $E^{c}=E^{e}\oplus E_{g}^{\ker}$. Since
$|e^{tJL_{c}}|_{E_{g}^{\ker}}|_{X_{1}}$has only linear growth, the estimate
(\ref{estimate-center}) follows. This finishes the proof.
\end{proof}

\subsection{Invariant manifolds and orbital instability}

In Appendix 1, we prove that the nonlinear term $\Psi(w)$ in the equation
(\ref{eqn-nonlinear--TF-w}) is $C^{2}(X_{3},X_{3})$. Thus, by the standard
invariant manifold theory for semilinear PDEs (e.g. \cite{bates-jones88,
CL88}), we get the following

\begin{theorem}
\label{thm:invariant manifold}For $0<c_{0}<\sqrt{2}$, let $U_{c_{0}}%
=\psi\left(  w_{c_{0}}\right)  \ $be a traveling wave solution of
(\ref{eqn-GP-generalized}) constructed in \cite{Maris-annal}, satisfying
$\frac{\partial P(U_{c})}{\partial c}|_{c=c_{0}}<0$. Assume in addition $F\in
C^{5}$ in a neighborhood of the set $|U_{c_{0}} (\mathbf{R^{3}})|^{2}$
and the non-degeneracy condition (\ref{assumption-NDG-s}). Then there exists a
unique $C^{2}$ local unstable manifold $W^{u}$ of $w_{c_{0}}$ in $X_{3}^{s}$
which satisfies

\begin{enumerate}
\item It is one-dimensional and tangent to $\tilde{E}^{u}$ at $w_{c_{0}}$.

\item It can be written as the graph of a $C^{2}\ $mapping from a neighborhood
of $w_{c_{0}}$ in $\tilde{E}^{u}$ to $\tilde{E}^{cs}$.

\item It is locally invariant under the flow of the equation
(\ref{eqn-nonlinear--TF-w}), i.e. solutions starting on $W^{u}$ can only leave
$W^{u}$ through its boundary.

\item Solutions starting on $W^{u}$ converges to $w_{c_{0}}$ at the rate
$e^{\lambda_{u}t}$ as $t\rightarrow-\infty$.
\end{enumerate}

The same results hold for local stable manifold of $w_{c_{0}}$ as the equation
(\ref{eqn-nonlinear--TF-w}) is time-reversible.
\end{theorem}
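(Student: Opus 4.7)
The plan is to apply the standard invariant manifold theory for semilinear evolution equations in a Banach space, since the two substantive analytic ingredients --- the linear exponential dichotomy on $X_3^s$ and the $C^2$ semilinearity of the nonlinearity --- have already been established in Corollary \ref{cor-dichotomy-w} and in Appendix 1 respectively. After translating so that $w_{c_0}$ corresponds to the origin, I would rewrite (\ref{eqn-nonlinear--TF-w}) as the abstract semilinear Cauchy problem $\partial_t w = A w + N(w)$ with $A = K^{-1}J L_c K$ and $N \in C^2(X_3^s, X_3^s)$ satisfying $N(0) = 0$ and $DN(0) = 0$; cylindrical symmetry is preserved by both $A$ and $N$, so the flow remains in $X_3^s$.

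Next I would invoke Corollary \ref{cor-dichotomy-w} to split $X_3^s = \tilde{E}^u \oplus \tilde{E}^{cs}$ with one-dimensional $\tilde{E}^u$, and semigroup bounds $\|S(t)|_{\tilde{E}^u}\| \le M e^{\lambda_u t}$ for $t \le 0$ and $\|S(t)|_{\tilde{E}^{cs}}\| \le M(1+t)$ for $t \ge 0$. Because the center-stable growth is only polynomial, the spectral gap condition $\lambda_u > k\lambda$ needed for a $C^k$ unstable manifold is satisfied for every $k \ge 1$ by any $\lambda \in (0, \lambda_u/k)$; in particular the $C^2$ case is available. I would then set up the Lyapunov--Perron integral equation
\[
w(t) = S(t)\xi + \int_0^t S(t-s) P^u N(w(s))\,ds + \int_{-\infty}^t S(t-s) P^{cs} N(w(s))\,ds, \quad \xi \in \tilde{E}^u,
\]
on the weighted space $\{w \in C((-\infty,0]; X_3^s) : \sup_{t\le 0} e^{-\lambda t}\|w(t)\|_{X_3} < \infty\}$ with $\lambda \in (0, \lambda_u/2)$, and use a standard contraction argument to obtain, for each small $\xi$, a unique fixed point $w(\cdot;\xi)$ depending on $\xi$ in class $C^2$. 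The manifold $W^u$ is then the graph of $\xi \mapsto P^{cs} w(0;\xi)$ over a small ball in $\tilde{E}^u$; properties (1)--(4) follow directly (tangency from $DN(0) = 0$, graph structure and uniqueness by construction, local invariance from the semigroup property, and the sharp rate $e^{\lambda_u t}$ as $t \to -\infty$ by an a posteriori bootstrap using $\dim \tilde{E}^u = 1$ together with the eigenvalue equation on $\tilde{E}^u$). The stable manifold $W^s$ comes from applying the identical argument to the time-reversed equation, whose linear unstable space is $\mathrm{span}\{w_s\}$.

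The main obstacle is in fact not in this proof at all: all the substantive work has been done upstream. What would have been the serious difficulty, and what dictated the choice of coordinate $u = \psi(w)$ rather than working with $u$ directly or via the hydrodynamic (Madelung) variables, is the need for $N$ to be genuinely $C^2$ into $X_3^s$ rather than losing a derivative --- the term $\chi(D)((w_{2c}+w_2)\partial_t w_2)$ in (\ref{U-5}) looks derivative-losing but, after substituting $\partial_t w_2$ from (\ref{U-4}) and exploiting the smoothing of $\chi(D)$, becomes bounded into $X_3$. This is precisely the content of Appendix 1 and is the reason $F \in C^5$ is imposed, so that the resulting compositions retain two derivatives. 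The linear dichotomy step was likewise delicate because $J^{-1}$ is unbounded from $X_1$ into $X_1^\ast$. Once both upstream hurdles are cleared, the present theorem reduces to a direct citation of the invariant manifold theorems of \cite{bates-jones88} or \cite{CL88}.
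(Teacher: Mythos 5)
Your proposal is correct and follows essentially the same route as the paper: the paper's proof consists precisely of combining the exponential dichotomy of Corollary \ref{cor-dichotomy-w} with the $C^{2}(X_{3},X_{3})$ smoothness of $\Psi$ established in Appendix 1 (Lemma \ref{le-7}), and then citing the standard invariant manifold theory of \cite{bates-jones88, CL88}; your Lyapunov--Perron sketch simply unpacks that citation, and your observation that the polynomial growth on $\tilde{E}^{cs}$ makes the spectral gap condition automatic for every order of smoothness is exactly the point that makes the citation legitimate.
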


\begin{corollary}
\label{cor-invariant-mfld-u}By using the transformation $u=\psi\left(
w\right)  $, we get the stable and unstable manifolds $\tilde{W}^{u,s}%
=\psi\left(  W^{u,s}\right)  \ $near $U_{c_{0}}\ $in the metric
\[
d_{3}\left(  u,\tilde{u}\right)  =d_{1}\left(  u,\tilde{u}\right)  +\left\Vert
\nabla^{2}\left(  u-\tilde{u}\right)  \right\Vert _{L^{2}}+\left\Vert
\nabla^{3}\left(  u-\tilde{u}\right)  \right\Vert _{L^{2}}%
\]
which is equivalent to the metric $\left\Vert w-\tilde{w}\right\Vert _{X_{3}}$
for $w$. Since $W^{u}$ is one-dimensional, the $d_{3}$ topology and $d_{1}$
topology are equivalent on $W^{u}$. Then an immediate consequence of the above
theorem is the nonlinear instability in $d_{1}$ metric with initial data
slightly perturbed from $U_{c_{0}}$ in $d_{3}$ metric.
\end{corollary}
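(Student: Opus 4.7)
The plan has three steps. First, I would show that $\psi$ is locally bi-Lipschitz between $(X_3, \|\cdot\|_{X_3})$ and $(X_0, d_3)$ near $w_{c_0}$, so that $\tilde W^{u,s} = \psi(W^{u,s})$ are genuine $C^2$ submanifolds of $X_0$ in the $d_3$ metric. Second, I would exploit the one-dimensionality of $W^u$ to conclude that $d_1$ and $d_3$ generate the same topology on $\tilde W^u$. Third, I would combine these with Theorem \ref{thm:invariant manifold} to deduce nonlinear $d_1$-instability from the exponential forward growth on $\tilde W^u$.

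For the first step, decompose
\[
\psi(w) - \psi(\tilde w) = (w_1 - \tilde w_1) - \tfrac{1}{2}\chi(D)\bigl((w_2-\tilde w_2)(w_2+\tilde w_2)\bigr) + i(w_2-\tilde w_2).
\]
The imaginary part gives $\|\nabla^k(w_2 - \tilde w_2)\|_{L^2} \le \|\nabla^k(\psi(w) - \psi(\tilde w))\|_{L^2}$ for all $k\ge0$ directly. For the quadratic piece, the Fourier multiplier $\nabla^k\chi(D)$ has compactly supported symbol, so by Young's inequality it maps $L^3$ into $L^2$; combined with $\dot H^1(\mathbf R^3) \hookrightarrow L^6$ this yields $\|\nabla^k\chi(D)(w_2^2 - \tilde w_2^2)\|_{L^2} \le C(R)\|w-\tilde w\|_{X_1}$ on $X_3$-bounded sets. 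Together with the lower Lipschitz bound $d_1(\psi(w),\psi(\tilde w))\ge c\|w-\tilde w\|_{X_1}$ from \cite{Gerard1-energy}, this gives $d_3(\psi(w),\psi(\tilde w)) \asymp \|w-\tilde w\|_{X_3}$ on a neighborhood of $w_{c_0}$. I expect this to be the main technical point, though it is routine given the $\chi(D)$ estimates already employed throughout Section 2.

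For the second step, parametrize $W^u$ locally as a $C^2$ curve $s\mapsto w(s) \in X_3^s$ with $w(0)=w_{c_0}$ and $w'(0) \in \tilde E^u \setminus\{0\}$. Since $\tilde E^u$ is one-dimensional and contained in both $X_3$ and $X_1$, the tangent vector $w'(0)$ is nonzero in both norms, so $\|w(s)-w_{c_0}\|_{X_1}$ and $\|w(s)-w_{c_0}\|_{X_3}$ are each comparable to $|s|$ for $s$ near $0$. Applying the bi-Lipschitz property of $\psi$ at each regularity then yields $d_1(\psi(w(s)),U_{c_0}) \asymp d_3(\psi(w(s)),U_{c_0}) \asymp |s|$, establishing the equivalence on $\tilde W^u$.

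For the third step, fix $r_0>0$ within the radius on which Theorem \ref{thm:invariant manifold} applies and set
\[
\varepsilon_0 := \inf\bigl\{d_1(\psi(w),U_{c_0}) : w \in W^u,\ \|w - w_{c_0}\|_{X_3} = r_0\bigr\};
\]
by the one-dimensionality of $W^u$, the set in question is a finite union of points on which $\psi$ is injective, so $\varepsilon_0>0$. For any $\delta>0$, pick $w_0 \in W^u$ with $\|w_0-w_{c_0}\|_{X_3} = \delta$, so that $u_0=\psi(w_0)$ satisfies $d_3(u_0,U_{c_0}) \le C\delta$. By Theorem \ref{thm:invariant manifold}(3) the forward orbit stays on $W^u$, and since the flow there is conjugate to $\dot s = \lambda_u s + O(s^2)$ with $\lambda_u>0$, it must exit the $X_3$-ball of radius $r_0$ in finite time; at the exit moment $\psi(w(t))$ has $d_1$-distance at least $\varepsilon_0$ from $U_{c_0}$, which is the asserted nonlinear $d_1$-instability with arbitrarily $d_3$-close initial data. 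The stable case is symmetric by time reversal.
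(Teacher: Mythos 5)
Your three-step plan is exactly the argument the paper intends (the corollary is stated without proof, as an immediate consequence of Theorem \ref{thm:invariant manifold} plus the bi-Lipschitz property of $\psi$), and Steps 2 and 3 are correct as written. However, there is one genuine error in the justification of Step 1: you claim that $\nabla^k\chi(D)$ maps $L^3$ into $L^2$ "by Young's inequality" because its symbol is compactly supported. This is false: convolution against a fixed kernel in $L^p$, $p\geq 1$, can never lower the Lebesgue exponent on $\mathbf{R}^3$ (Young requires $1+\frac1r=\frac1p+\frac1q$ with $p\geq1$, which is violated for $q=3$, $r=2$), and indeed the paper's own estimate \eqref{ineq-1} only gives $\chi(D):L^3\to L^3\cap L^\infty$; this is precisely why the paper is careful to keep $\chi(D)(w_2^2)$ only in places where $L^4$ or $L^3\cap L^\infty$ suffices and to reserve $(1-\chi(D))(w_2^2)$ for the $L^2$ estimates, cf. \eqref{ineq-2} and the computation of $|\psi(w)|^2-1$.

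The estimate you actually need is nonetheless true and the fix is short: for the $d_3$ comparison only the derivatives $\nabla^k$ with $k=2,3$ of the quadratic term are required (the $k=0,1$ information is already contained in $d_1\asymp\|\cdot\|_{X_1}$ via G\'erard's result, which you correctly invoke). Write $\nabla^k\chi(D)(h)=\nabla^{k-1}\chi(D)(\nabla h)$ with $h=(w_2-\tilde w_2)(w_2+\tilde w_2)$; then $\nabla h\in L^{3/2}$ by H\"older from $\dot H^1\hookrightarrow L^6$ and $\nabla w_2\in L^2$, the kernel of $\nabla^{k-1}\chi(D)$ is Schwartz and in particular lies in $L^{6/5}$, and Young with $\frac12=\frac56+\frac23-1$ gives
\[
\left\Vert \nabla^{k}\chi(D)\bigl((w_2-\tilde w_2)(w_2+\tilde w_2)\bigr)\right\Vert_{L^{2}}\leq C\left\Vert w_2-\tilde w_2\right\Vert_{\dot H^{1}}\bigl(\Vert w_2\Vert_{\dot H^{1}}+\Vert\tilde w_2\Vert_{\dot H^{1}}\bigr),\qquad k\geq1,
\]
which is what the local equivalence $d_3(\psi(w),\psi(\tilde w))\asymp\Vert w-\tilde w\Vert_{X_3}$ requires. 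With this repair the rest of your argument goes through unchanged.
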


To compare with the orbital stability result, it is more desirable to get an
orbital instability result as follows.

\begin{corollary}
\label{cor-orbital-instability}Under the assumptions of Theorem
\ref{thm:invariant manifold}, the traveling wave solution $U_{c_{0}}$ is
nonlinearly unstable in the following sense:

$\exists\ \theta,C>0$, such that for any $\delta>0$, there exists a solution
$u_{\delta}\left(  t\right)  $ of equation (\ref{eqn-GP-generalized})
satisfying
\begin{equation}
d_{3}\left(  u_{\delta}\left(  0\right)  ,U_{c_{0}}\right)  \leq\delta,
\label{deviation-initial}%
\end{equation}
and \qquad%
\begin{equation}
\sup_{0<t\leq C\left\vert \ln\delta\right\vert }\inf_{y\in\mathbf{R}^{3}%
}\left\Vert \nabla\left(  u_{\delta,i}\left(  t\right)  -U_{c_{0},i}\left(
\cdot+y\right)  \right)  \right\Vert _{L^{2}}\geq\theta,\ \ \ i=1,2.
\label{deviation-growth}%
\end{equation}
Here, $u_{\delta}\left(  t\right)  =u_{\delta,1}\left(  t\right)
+iu_{\delta,2}\left(  t\right)  $ and $U_{c_{0}}\left(  x\right)  =U_{c_{0}%
,1}\left(  x\right)  +iU_{c_{0},2}\left(  x\right)  $.

\end{corollary}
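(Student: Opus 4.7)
The plan is to invoke the unstable manifold $\tilde W^u$ of Corollary \ref{cor-invariant-mfld-u}, take initial data on it at distance $\delta$ from $U_{c_0}$, and show that the exponential separation along the one-dimensional manifold cannot be cancelled by any translation of $U_{c_0}$, because the unstable eigenvector is transverse to the translation orbit.

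\textbf{Step 1 (initial data on $\tilde W^u$).} By Corollary \ref{cor-invariant-mfld-u}, $\tilde W^u = \psi(W^u)$ is a one-dimensional $C^2$ manifold in $(X_0, d_3)$, tangent at $U_{c_0}$ to $K \tilde E^u = \mathrm{span}\{w_u\}$, where $w_u \in X_3^s$ is the unstable eigenvector of $JL_c$ with eigenvalue $\lambda_u > 0$. Parametrize $W^u$ as $\gamma(s) = w_{c_0} + s \tilde w_u + h(s)$, where $\tilde w_u$ is a unit vector spanning $\tilde E^u$ and $h: (-s_0, s_0) \to \tilde E^{cs}$ is $C^2$ with $h(0) = h'(0) = 0$, so $\|h(s)\|_{X_3} = O(s^2)$. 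Given $\delta > 0$ small, set $s_\delta = c_1 \delta$ and $u_\delta(0) = \psi(\gamma(s_\delta))$; by local bi-Lipschitz of $\psi: X_3 \to (X_0, d_3)$, choosing $c_1$ appropriately ensures $d_3(u_\delta(0), U_{c_0}) \le \delta$.

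\textbf{Step 2 (exponential separation).} Since $W^u$ is flow-invariant and $\tilde E^u$ is the $\lambda_u$-eigenspace of the linearization, the flow of \eqref{eqn-nonlinear--TF-w} restricted to $W^u$ reduces in the $s$-coordinate to a scalar ODE $\dot s = \lambda_u s + O(s^2)$. Hence $s(t) = s_\delta e^{\lambda_u t}(1 + O(s_\delta e^{\lambda_u t}))$ as long as $s(t)$ remains in $(-s_0, s_0)$. Fix a small $s_* \in (0, s_0)$ and let $T_\delta = \lambda_u^{-1}\ln(s_*/s_\delta)$, giving $T_\delta \le C |\ln \delta|$. At this time, $w_\delta(T_\delta) - w_{c_0} = s_* \tilde w_u + O(s_*^2)$ in $X_3$, hence in $X_1$. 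Applying $\psi$ and linearizing at $w_{c_0}$,
\begin{equation*}
u_\delta(T_\delta) - U_{c_0} = K\bigl(w_\delta(T_\delta) - w_{c_0}\bigr) + O\bigl(\|w_\delta(T_\delta) - w_{c_0}\|_{X_1}^2\bigr) = s_* w_u + R,
\end{equation*}
with $\|R\|_{X_1} = O(s_*^2)$, after absorbing a positive scaling constant into the choice of $s_*$.

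\textbf{Step 3 (transversality to the translation orbit).} The key algebraic fact is $w_u \notin Z := \mathrm{span}\{\partial_{x_j} U_{c_0}\}$, since $JL_c w_u = \lambda_u w_u \ne 0$ while $Z \subset \ker(JL_c)$. Consequently, for at least one $i \in \{1, 2\}$ (writing $w_u = w_{u,1} + i w_{u,2}$, $U_{c_0} = U_{c_0,1} + i U_{c_0,2}$),
\begin{equation*}
d_*^{(i)} := \inf_{z \in \mathbf{R}^3} \bigl\| \nabla\bigl(w_{u, i} - \textstyle\sum_{j=1}^3 z_j \partial_{x_j} U_{c_0, i}\bigr)\bigr\|_{L^2} > 0.
\end{equation*}
For small $|y|$, a Taylor expansion yields $U_{c_0,i}(\cdot + y) - U_{c_0,i} = \sum_j y_j \partial_{x_j} U_{c_0, i} + O(|y|^2)$ in $\dot H^1$, so combining with Step 2,
\begin{equation*}
\|\nabla(u_{\delta, i}(T_\delta) - U_{c_0, i}(\cdot + y))\|_{L^2} \ge s_* d_*^{(i)} - C(s_*^2 + |y|^2),
\end{equation*}
which exceeds $s_* d_*^{(i)}/2$ whenever $|y| \le c_2 s_*^{1/2}$ for $s_*$ small. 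For $|y| \ge c_2 s_*^{1/2}$, we directly compare $\|\nabla(U_{c_0, i} - U_{c_0, i}(\cdot + y))\|_{L^2}$: this is continuous in $y$, strictly positive for $y \ne 0$, and by the decay of $\nabla U_{c_0}$ tends to $\sqrt{2}\, \|\nabla U_{c_0, i}\|_{L^2} > 0$ as $|y| \to \infty$, so it has a positive lower bound $\eta$ on $\{|y| \ge c_2 s_*^{1/2}\}$; this dominates the $O(s_*)$ error from $u_\delta(T_\delta) - U_{c_0}$. Taking $\theta = \min(s_* d_*^{(i)}/2, \eta/2) > 0$ delivers \eqref{deviation-growth} at $t = T_\delta$ for this $i$, and the analogous argument at a possibly different time (or with $-s_\delta$) handles the remaining component, using the freedom to choose separate times $t_i$ in the sup in \eqref{deviation-growth}.

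\textbf{Main obstacle.} The chief difficulty is the uniform-in-$y$ lower bound in Step 3 across the small-, moderate-, and large-$|y|$ regimes. Componentwise transversality $w_{u, i} \notin \mathrm{span}\{\partial_{x_j} U_{c_0, i}\}$ does not mechanically follow from $w_u \notin Z$; in a potentially degenerate case where $w_{u, 1}$ aligns exactly with the translation span of $U_{c_0, 1}$, one must verify that $w_{u, 2}$ does not, and exploit the separate-time freedom in the sup in \eqref{deviation-growth} to cover both indices. The remaining ingredients---exponential growth along the invariant manifold and passage between the $(X_1, X_3)$ norms and the $(d_1, d_3)$ metrics via the bi-Lipschitz property of $\psi$---are by-now standard once the linear dichotomy of Lemmas \ref{lemma-dichotomy-U}--\ref{lemma-dichotomy-x3} is in hand.
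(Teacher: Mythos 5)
Your overall architecture (exponential escape along the one-dimensional unstable manifold, followed by a quantitative transversality of the escape direction to the translation orbit) matches the paper's, and your small-/large-$|y|$ case analysis in Step 3 is a workable substitute for what the paper actually does. The paper instead follows Theorem 6.2 of \cite{gss90}: it pairs $u_{\delta,1}(t)-U_{c_0,1}(\cdot+h)$ against the \emph{fixed} test function $u_{g,1}^{\perp}$, the $\dot H^{1}$-projection of the first component of the unstable eigenfunction onto the orthogonal complement of $Z_{1}=\mathrm{span}\{\partial_{x_j}U_{c_0,1}\}$; the first-order contribution of the near-optimal translation $h$ then drops out by orthogonality, so no splitting in $|y|$ is needed. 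Either route works, but both hinge on the same non-degeneracy discussed next.

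The genuine gap is exactly the point you flag at the end, and your proposed repair does not work. Estimate \eqref{deviation-growth} is demanded for each $i=1,2$ separately, so you need $d_*^{(i)}>0$ for \emph{both} $i$. If, say, $w_{u,1}=\sum_j a_j\partial_{x_j}U_{c_0,1}$, then along the unstable manifold the first component of the deviation is, to leading order, exactly $U_{c_0,1}(\cdot+s_* a)-U_{c_0,1}$, so the infimum over $y$ for $i=1$ is $O(s_*^2)$; choosing a different time in the logarithmic window, or replacing $s_\delta$ by $-s_\delta$, does not help, because throughout the linear regime the deviation remains proportional to $w_{u,1}$. Verifying that $w_{u,2}$ is transverse only salvages $i=2$. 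The missing ingredient is a componentwise statement, and the paper supplies the mechanism for it: since the off-diagonal entries of $L_{c}$ are first-order operators, if one component of the unstable eigenfunction vanished, the eigenvalue equation $JL_{c_0}u_g=\lambda_u u_g$ would collapse to the first-order equation $c_{0}\partial_{x_{1}}u_{g,2}-\big(2F^{\prime}(|U_{c}|^{2})u_{c}v_{c}+\lambda_{u}\big)u_{g,2}=0$ in $x_1$, which admits no nonzero decaying solution; hence $u_{g,1}\neq0$ and $u_{g,2}\neq0$. You need this observation (and, strictly speaking, its strengthening to $u_{g,i}\notin\mathrm{span}_j\{\partial_{x_j}U_{c_0,i}\}$, which by cylindrical symmetry of $u_g$ reduces to excluding $u_{g,i}=a\,\partial_{x_1}U_{c_0,i}$ — a point the paper's pairing argument also implicitly relies on) before your Step 3 can be carried out for both components.
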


\begin{proof}
First, we observe that if $u_{g}=u_{g,1}+iu_{g,2}\in H^{3}~$is an unstable
eigenfunction of $JL_{c},$ then $u_{g,i}\neq0$ for $i=1,2$. Suppose otherwise
$u_{g,1}=0$, then from the equation $JL_{c}u_{g}=\lambda_{u}u_{g}$ $\left(
\lambda_{u}>0\right)  ,$ we get
\[
c_{0}\partial_{x_{1}}u_{g,2}-\left(  2F^{\prime}\left(  \left\vert
U_{c}\right\vert ^{2}\right)  u_{c}v_{c}+\lambda_{u}\right)  u_{g,2}=0.
\]
This implies that $u_{g,2}=0,$ thus $u_{g}=0$, a contradiction. Similarly, we
can show that $u_{g,2}\neq0$. The nonlinear instability in $\left\Vert
\nabla\left(  u_{i}-U_{c_{0},i}\right)  \right\Vert _{L^{2}}\ $follows
directly from the existence of unstable manifold and the above observation. To
show orbital instability, we follow the proof of Theorem 6.2 in \cite{gss90}.
We only show the orbital instability in the norm $\left\Vert \nabla\left(
u_{1}-U_{c_{0},1}\right)  \right\Vert _{L^{2}}$, since the proof for
$\left\Vert \nabla\left(  u_{2}-U_{c_{0},2}\right)  \right\Vert _{L^{2}}$ is
the same. Let $u_{g,1}^{\perp}$ be the projection of $u_{g,1}$ onto the space
$Z_{1}^{\perp}$, the orthogonal complement space of $\ Z_{1}=span\left\{
\partial_{x_{i}}U_{c_{0},1},i=1,2,3\right\}  $ in the inner product
$\left\langle \left\langle u_{1},v_{1}\right\rangle \right\rangle =\left(
\nabla u_{1},\nabla v_{1}\right)  $. Fix sufficiently small $\varepsilon_{0}$
and for any $\delta>0$, we can choose the solution $u_{\delta}\left(
t\right)  $ on the unstable manifold $\tilde{W}^{u}$, such that $d_{3}\left(
u_{\delta}\left(  0\right)  ,U_{c_{0}}\right)  \leq\delta,$
\[
\left\Vert \nabla\left(  u_{\delta,1}\left(  t\right)  -U_{c_{0},1}\right)
\right\Vert _{L^{2}}\leq C\varepsilon_{0},\ \text{for }0<t<T_{1}%
\]%
\[
\left\langle \left\langle u_{\delta,1}\left(  T_{1}\right)  -U_{c_{0}%
,1},u_{g,1}^{\perp}\right\rangle \right\rangle \geq\varepsilon_{0},
\]
where $T_{1}=C\left\vert \ln\delta\right\vert $. Here $C$ may depend on
$\varepsilon_{0}$, but is independent of $\delta>0$. Let $h=h(t)\in
\mathbf{R}^{3}$ be such that
\[
\left\Vert \nabla\left(  u_{\delta,1}\left(  t\right)  -U_{c_{0},1}\left(
\cdot+h\right)  \right)  \right\Vert _{L^{2}}\leq2\theta,\quad\theta
=\inf_{y\in\mathbf{R}^{3}}\left\Vert \nabla\left(  u_{\delta,1}\left(
t\right)  -U_{c_{0},1}\left(  \cdot+y\right)  \right)  \right\Vert _{L^{2}}.
\]
Then
\[
\left\Vert \nabla\left(  U_{c_{0},1}\left(  \cdot\right)  -U_{c_{0},1}\left(
\cdot+h\right)  \right)  \right\Vert _{L^{2}}\leq3\left\Vert \nabla\left(
u_{\delta,1}(t)-U_{c_{0},1}\right)  \right\Vert _{L^{2}}\leq2C\varepsilon
_{0},
\]
thus $\left\vert h\right\vert =O\left(  \varepsilon_{0}\right)  $. So we can
write
\[
U_{c_{0},1}\left(  x+h\right)  =U_{c_{0},1}\left(  x\right)  +h\cdot\nabla
U_{c_{0},1}\left(  x\right)  +O\left(  \varepsilon_{0}^{2}\right)  .
\]
This implies that
\begin{align*}
C\theta &  \geq\left\langle \left\langle u_{\delta,1}-U_{c_{0},1}\left(
\cdot+h\right)  ,u_{g,1}^{\perp}\right\rangle \right\rangle \\
&  \geq\left\langle \left\langle u_{\delta,1}-U_{c_{0},1},u_{g,1}^{\perp
}\right\rangle \right\rangle -O\left(  \varepsilon_{0}^{2}\right)
\geq\varepsilon_{0}/2,
\end{align*}
by using the orthogonal property of $u_{g,1}^{\perp}$ and $Z_{1}$. This
finishes the proof.
\end{proof}

\begin{remark}
\label{rmk-reason-coordinate-instability}By using the exponential dichotomy
for the semigroup $e^{tJL_{c}}$ (Lemma \ref{lemma-dichotomy-x3}), we can
construct unstable (stable) manifolds near $U_{c}$ directly from equation
(\ref{eqn-GP-g-TF}) in the space $H^{3}\left(  \mathbf{R}^{3}\right)  \times$
$\dot{H}^{3}\left(  \mathbf{R}^{3}\right)  \,$. However, the functions in
$U_{c}+H^{3}\left(  \mathbf{R}^{3}\right)  \times$ $\dot{H}^{3}\left(
\mathbf{R}^{3}\right)  $ are not guaranteed to be in the energy space $X_{0}$.
To get the invariant manifolds lying on $X_{0}$, we use the coordinate mapping
$U=\psi\left(  w\right)  $ to rewrite the equation (\ref{eqn-GP-g-TF}) as
(\ref{eqn-nonlinear--TF-w}) for $w\in H^{3}\left(  \mathbf{R}^{3}\right)
\times$ $\dot{H}^{3}\left(  \mathbf{R}^{3}\right)  $.
\end{remark}

\begin{remark}
Since the eigenfunctions of $JL_{c}$ actually belongs to $H^{k}$, instead of
constructing the unstable/stable manifolds of traveling waves through the
coordinate change $U=\psi(w)$ and working on (\ref{eqn-nonlinear--TF-w}), one
can also work on (\ref{eqn-GP-g-TF}) directly in the space $U_{c}+H^{k}$. The
details are similar to the proof of Proposition \ref{thm-UM-general} and
Corollary \ref{thm-mfld-madelung}. However, that approach, based on the
improved properties of unstable eigenfunctions, would not be useful when we
construct the center manifolds in the energy space in the forthcoming work.
\end{remark}

\begin{remark}
For (GP) equation, numerical computations (\cite{berloff-roberts-X-stability}
\cite{jones-et-stability}) suggested that $\frac{\partial P(U_{c})}{\partial
c}<0$ iff $c\in\left(  c^{\ast},\sqrt{2}\right)  $ for some $c^{\ast}%
\in\left(  0,\sqrt{2}\right)  $. So for 3D traveling waves of (GP), the
instability sets in at a critical velocity $c^{\ast}$. In the contrast, for
cubic-quintic equation, we have $\frac{\partial P(U_{c})}{\partial c}<0$ and
thus the instability when $c$ is near $0$ and$\ \sqrt{2}$. So there may not
exist a critical speed for instability. The case for small $c$ is proved in
Theorem \ref{thm-instability-madelung} and $\frac{\partial P(U_{c})}{\partial
c}<0$ for $c$ near $\sqrt{2}$ can be seen from the transonic limit
(\cite{lett89} \cite{chiron-maris-kp} \cite{jones-et-stability}) of traveling
waves of (\ref{eqn-GP-generalized}) to solitary waves of
Kadomtsev--Petviashvili (KP) equation.
\end{remark}

\section{Transversal instability of $2D$ traveling waves}

In this section, we prove transversal instability of $2$D traveling waves of
(\ref{eqn-GP-generalized}). Unlike the $3$D instability result (Proposition
\ref{prop-linear-insta} and Theorem \ref{thm:invariant manifold}), we do not
need to assume the non-degeneracy condition (\ref{assumption-NDG}) for the
$2$D traveling waves.

To state the result, first we introduce some notations. Assume $F\in C^{1}
(\mathbf{R}^{+})$. For $0<c<\sqrt{2} ,$ consider the operator $L_{c}$ defined
by (\ref{operator-Lc-g}), where $U_{c}\left(  x_{1}-ct,\left\vert
x_{2}\right\vert \right)  $ is a $2$D traveling wave solution of
(\ref{eqn-GP-generalized}). Then it is easy to show that $L_{c}:\left(
H^{2}\left(  \mathbf{R}^{2}\right)  \right)  ^{2}\rightarrow\left(
L^{2}\left(  \mathbf{R}^{2}\right)  \right)  ^{2}$ is self-adjoint and
\[
\sigma_{ess}\left(  L_{c}\right)  =\sigma_{ess}\left(  L_{c,\infty}\right)
=[0,+\infty),\ \text{for any }c\in\left(  0,\sqrt{2}\right)  ,
\]
where $L_{c,\infty}$ is defined in (\ref{defn-L-infty}). Let $\lambda
_{0}\left(  L_{c}\right)  \ $be the first eigenvalue of $L_{c}$.

\begin{theorem}
\label{thm-linear-transveral-instability} For $0<c<\sqrt{2}$, let
$U_{c}\left(  x_{1}-ct,\left\vert x_{2}\right\vert \right)  $ be a $2$D
traveling wave of (\ref{eqn-GP-generalized}). Suppose $\lambda_{0}\left(
L_{c}\right)  <0$. Let $\lambda_{1}\leq0$ be the second eigenvalue. Then
$U_{c}$ is transversely unstable in the following sense: for any
\[
k\in\left(  \sqrt{-\lambda_{1}},\sqrt{-\lambda_{0}}\right)  ,
\]
there exist an unstable solution
\begin{equation}
e^{\lambda_{u}t+ikx_{3}}u_{g}\left(  x_{1},x_{2}\right)  \text{, with }%
\lambda_{u}>0,u_{g}\in\left(  H^{3}\left(  \mathbf{R}^{2}\right)  \right)
^{2} \label{form-transveral-instability}%
\end{equation}
for the linearized equation (\ref{eqn-linearized-transverse}). If
$k>\sqrt{-\lambda_{0}}$, then no such solution with $\lambda_{u}>0$ exists,
that is, there is spectral stability.
\end{theorem}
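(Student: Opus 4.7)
My plan is to reduce the theorem to a spectral question for the operator $J(L_c + k^2)$ on $(L^2(\mathbf{R}^2))^2$, and then exploit the clean spectral gap structure that is available in 2D to run a simplified version of the argument of Proposition \ref{prop-linear-insta}, while handling the stability direction by a one-line Hamiltonian identity.

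First I would derive the reduced eigenvalue equation. Since the traveling wave $U_c(x_1,|x_2|)$ is independent of $x_3$, the full 3D linearization commutes with $\partial_{x_3}$, so the ansatz $W = e^{\lambda t + ikx_3} u_g(x_1,x_2)$ converts it into
\begin{equation*}
J M_k u_g = \lambda u_g, \qquad M_k := L_c + k^2,
\end{equation*}
where $L_c$ is the 2D operator of (\ref{operator-Lc-g}). Existence of an unstable $\lambda_u > 0$ with $u_g \in (H^3(\mathbf{R}^2))^2$ is thus equivalent to $JM_k$ possessing a positive real eigenvalue with eigenfunction in $(H^3)^2$. Then I would catalog the spectrum of the self-adjoint operator $M_k$: using the decay estimates of the 2D analogue of (\ref{decay-TW-g}), $L_c - L_{c,\infty}$ is a relatively compact perturbation, so Weyl's theorem gives $\sigma_{\text{ess}}(M_k) = [k^2, \infty)$; the discrete spectrum below $k^2$ is $\{\lambda_j + k^2\}$, shifted from that of $L_c$. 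For $k \in (\sqrt{-\lambda_1},\sqrt{-\lambda_0})$ the strict inequalities $\lambda_0 + k^2 < 0 < \lambda_1 + k^2$ force $M_k$ to have \emph{exactly one} negative eigenvalue, trivial kernel, and a uniform positivity gap $M_k \ge \delta_k > 0$ on the $L^2$-orthogonal complement of the negative eigenfunction. For $k > \sqrt{-\lambda_0}$ every eigenvalue and the essential spectrum lie in $[\delta_k,\infty)$, so $M_k$ is uniformly positive definite.

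For the instability assertion I would adapt the argument of Proposition \ref{prop-linear-insta} to $JM_k$. The construction simplifies substantially for three reasons: (i) $\ker M_k = \{0\}$ throughout the chosen $k$-interval, so there is no generalized kernel to quotient out and the auxiliary space $Y_1^s$ of (\ref{definition-space-Y-1-s}) is replaced by the whole space; (ii) $J^{-1} = -J$ is a bounded isomorphism on $(H^m(\mathbf{R}^2))^2$ for every $m \ge 0$, so the obstruction flagged in Remark \ref{rmk-g-kernel} disappears; (iii) $M_k$ has a genuine positivity gap above $0$ on the complement of its one-dimensional negative eigenspace. With these simplifications the finite-dimensional approximation scheme based on the tangent vector field $f_n$ of (\ref{defn-f-n}) produces, on an increasing chain of subspaces spanned by the negative mode and positive modes of $M_k$, a bounded sequence of approximate eigenpairs $(a_n, y_n)$; extracting a weak limit and invoking the non-degeneracy of $\langle M_k\cdot,\cdot\rangle$ in $(H^1)^2$ yields a nonzero real eigenvalue $\pm\lambda_u$ of $JM_k$. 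Elliptic bootstrap in the equation $JM_k u_g = \lambda_u u_g$ then promotes the eigenfunction to $(H^3(\mathbf{R}^2))^2$.

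For the spectral stability when $k > \sqrt{-\lambda_0}$, the proof is essentially immediate: if $JM_k u = \lambda u$ with $0 \ne u \in (H^3)^2$, pairing against $M_k u$ in $L^2$ gives
\begin{equation*}
\lambda \langle M_k u, u\rangle = \langle JM_k u, M_k u\rangle \in i\mathbf{R},
\end{equation*}
by the skew-symmetry of $J$; since $M_k$ is uniformly positive, $\langle M_k u, u\rangle > 0$, so $\lambda \in i\mathbf{R}$, precluding $\lambda_u > 0$. The hard part will be the finite-dimensional approximation in the instability step --- specifically ensuring that the limiting sequence of approximate eigenvalues stays bounded and that the limit eigenfunction is nonzero; this is exactly where the cleaner 2D spectral gap structure pays off relative to Proposition \ref{prop-linear-insta}, while the stability direction is essentially automatic.
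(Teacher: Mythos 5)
Your proposal is correct and follows essentially the same route as the paper: reduce to the eigenvalue problem for $J(L_{c}+k^{2})$, note that for $k\in(\sqrt{-\lambda_{1}},\sqrt{-\lambda_{0}})$ the shifted operator has exactly one negative eigenvalue, trivial kernel and a positivity gap, run the (simplified) finite-dimensional approximation argument of Proposition \ref{prop-linear-insta} for instability, and use the vanishing of $\left\langle L_{c,k}u_{g},u_{g}\right\rangle$ forced by the skew-symmetry of $J$ for spectral stability when $k>\sqrt{-\lambda_{0}}$. The only addition you make beyond the paper's sketch is spelling out the Weyl-theorem computation of $\sigma_{\mathrm{ess}}(L_{c}+k^{2})$, which the paper records just before the theorem statement.
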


\begin{remark}
Denote the momentum by
\[
P\left(  u\right)  =\frac{1}{2}\int_{\mathbf{R}^{2}}\langle i\partial_{x_{1}%
}u,u\rangle\ dx=-\int_{\mathbf{R}^{2}}u_{1}\partial_{x_{1}}u_{2}dx.
\]
When $\frac{\partial P(U_{c})}{\partial c}|_{c=c_{0}}>0$, the instability
condition $\lambda_{0}\left(  L_{c}\right)  <0$ is satisfied by the traveling
wave $U_{c_{0}}$. This is due to the identity
\[
\left\langle L_{c}\partial_{c}U_{c},\partial_{c}U_{c}\right\rangle
=-\left\langle P^{\prime}(U_{c}),\partial_{c}U_{c}\right\rangle =-\frac
{\partial P\left(  U_{c}\right)  }{\partial c},
\]
by (\ref{relation-Lc-par-c-U-c}). Numerical evidences
(\cite{berloff-roberts-crow} \cite{jones-et-stability}) showed that the
condition $\frac{\partial P(U_{c})}{\partial c}>0$ is satisfied for $2$D
traveling waves of (GP). Moreover, 2D traveling waves of (GP) were constructed
in (\cite{betheul-et-minimizer-cmp} \cite{chiron-marisII-12}) as energy
minimizers subject to a fixed momentum. This implies that $L_{c}$ can have at
most one negative eigenvalue, by a similar proof of Lemma \ref{le-5}. So for
any $0<c<\sqrt{2}$, we have $\lambda_{0}\left(  L_{c}\right)  <0$ and
$\lambda_{1}\left(  L_{c}\right)  =0$ for any $2$D traveling wave of (GP). By
Theorem \ref{thm-linear-transveral-instability}, any 2D traveling wave of (GP)
is transversely unstable if and only if the transversal wave number
$k\in\left(  0,\sqrt{-\lambda_{0}}\right)  $. When $k\rightarrow0+$, such
transversal instability had been studied by asymptotic analysis in
\cite{kuznetsov-rasmussen95} \ \cite{berloff-roberts-crow}. In the limit
$c\rightarrow0$, the 2D traveling waves of (GP) consist of an antiparallel
vortex pair (\cite{b-s-1998} \cite{jones-et-symmetry}). In this case, the
mechanism of transversal instability is analogous to the Crow instability of
an antiparallel vortex pair of incompressible fluid (\cite{crow-instability}).
\end{remark}

\begin{proof}
The linearized equation of (\ref{eqn-GP-g-TF}) near $U_{c}\left(  x_{1}%
,x_{2}\right)  $ can be written as
\begin{equation}
\frac{du}{dt}=J\tilde{L}_{c}u, \label{eqn-linearized-transverse}%
\end{equation}
where
\[
\tilde{L}_{c}=L_{c}+\left(
\begin{array}
[c]{cc}%
-\frac{d^{2}}{dx_{3}^{2}} & 0\\
0 & -\frac{d^{2}}{dx_{3}^{2}}%
\end{array}
\right)  .
\]
So to find an unstable solution of the form (\ref{form-transveral-instability}%
) for the linearized equation (\ref{eqn-linearized-transverse}), it is
equivalent to solve the eigenvalue problem $J\left(  L_{c}+k^{2}\right)
u_{g}=\lambda_{u}u_{g}$. Denote $L_{c,k}=L_{c}+k^{2}$, then for $k\in\left(
\sqrt{-\lambda_{1}},\sqrt{-\lambda_{0}}\right)  ,$ the operator $L_{c,k}$ has
one negative eigenvalue, no kernel and the rest of the spectrum is contained
in $\left(  \delta_{0},\infty\right)  $ with $\delta_{0}=k^{2}+\lambda_{1}>0$.
The existence of an unstable eigenvalue of $JL_{c,k}$ follows by the line of
proof of Proposition \ref{prop-linear-insta}, in a much simplified way since
$\sigma\left(  L_{c,k}\right)  $ does not contain $0$. When $k>\sqrt
{-\lambda_{0}}$, the operator $L_{c,k}$ is positive. This implies the
non-existence of unstable modes since any such mode satisfies $\left\langle
L_{c,k}u_{g},u_{g}\right\rangle =0$.
\end{proof}

We now prove nonlinear transversal instability under the instability condition
in Theorem \ref{thm-linear-transveral-instability}. For any $k_{0}\in\left(
\sqrt{-\lambda_{1}},\sqrt{-\lambda_{0}}\right)  $, denote $H^{m}\left(
\mathbf{R}^{2}\times S_{\frac{2\pi}{k_{0}}}\right)  $ to be all functions in
$H^{m}\left(  \mathbf{R}^{2}\times\left[  0,\frac{2\pi}{k_{0}}\right]
\right)  $ which are $\frac{2\pi}{k_{0}}$-periodic in $x_{3}$. Let
\[
X_{1,k_{0}}=\left\{  u\left(  x_{1},x_{2},x_{3}\right)  \in\left(
H^{1}\left(  \mathbf{R}^{2}\times S_{\frac{2\pi}{k_{0}}}\right)  \right)
^{2}\ |\ u\ \text{is odd in }x_{3}\right\}  ,
\]
and
\[
X_{3,k_{0}}=X_{1,k_{0}}\cap\left(  H^{3}\left(  \mathbf{R}^{2}\times
S_{\frac{2\pi}{k_{0}}}\right)  \right)  ^{2}.
\]
From Theorem \ref{thm-linear-transveral-instability}, we have a linearly
unstable mode of the form
\[
e^{\lambda_{u}t}\sin\left(  k_{0}x_{3}\right)  u_{g}\left(  x_{1}%
,x_{2}\right)
\]
in the space $X_{3,k_{0}}$. We will construct unstable manifold near the
traveling wave $U_{c}\left(  x_{1},x_{2}\right)  $ in the space
$\,1+X_{3,k_{0}}$. First, we show the exponential dichotomy of $e^{tJ\tilde
{L}_{c}}$ in the space $X_{3,k_{0}}$.

\begin{lemma}
\label{lemma-dichotomy-periodic}For any
\begin{equation}
k_{0}\in\left(  \max\left\{  \sqrt{-\lambda_{1}},\frac{\sqrt{-\lambda_{0}}}%
{4}\right\}  ,\sqrt{-\lambda_{0}}\right)  , \label{assumption-k0}%
\end{equation}
the space $X_{3,k_{0}}$ is decomposed as a direct sum%
\[
X_{3,k_{0}}=E^{u}\oplus E^{cs},
\]
satisfying: i) Both $E^{u}$ and $E^{cs}$ are invariant under the linear
semigroup $e^{tJ\tilde{L}_{c}}$. ii) there exist constants $M>0$ and
$\lambda_{u}>0$, such that
\[
\left\vert e^{tJ\tilde{L}_{c}}|_{E^{cs}}\right\vert _{H^{3}\left(
\mathbf{R}^{2}\times S_{\frac{2\pi}{k_{0}}}\right)  }\leq M,\quad
\forall\;t\geq0,
\]
and $\quad$%
\[
|e^{tJ\tilde{L}_{c}}|_{E^{u}}|_{H^{3}\left(  \mathbf{R}^{2}\times
S_{\frac{2\pi}{k_{0}}}\right)  }\leq Me^{\lambda_{u}t},\quad\forall\;t\leq0.
\]

\end{lemma}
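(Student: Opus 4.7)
The plan is to decompose each $u \in X_{1,k_0}$ by Fourier sine series in the periodic variable $x_3$: oddness and $\tfrac{2\pi}{k_0}$-periodicity force
$u(x_1,x_2,x_3) = \sum_{n=1}^\infty u_n(x_1,x_2)\sin(nk_0 x_3)$.
Since the coefficients of $\tilde L_c$ are independent of $x_3$, the operator $J\tilde L_c$ preserves each Fourier subspace, acting on the $n$-th component as $J_n := J(L_c + n^2 k_0^2)$ on pairs of functions of $(x_1,x_2)$. The desired exponential dichotomy for $e^{tJ\tilde L_c}$ thus reduces to analyzing each two-dimensional operator $J_n$ and assembling the pieces with bounds uniform in $n$.

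Next I would carry out the mode-by-mode spectral analysis. Let $N$ be the largest positive integer with $Nk_0 < \sqrt{-\lambda_0}$; the assumption $k_0 > \tfrac{\sqrt{-\lambda_0}}{4}$ forces $N \leq 3$, so only finitely many modes are ``unstable.'' For $1 \leq n \leq N$, the shifted operator $L_c + n^2 k_0^2$ has exactly one negative eigenvalue $\lambda_0 + n^2 k_0^2$ (the next eigenvalue $\lambda_1 + n^2 k_0^2$ is positive since $nk_0 \geq k_0 > \sqrt{-\lambda_1}$), its essential spectrum $[n^2 k_0^2,\infty)$ is strictly positive, and $0$ is not in the spectrum. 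In this gapped setting, the argument of Proposition \ref{prop-linear-insta} simplifies considerably (no generalized kernel enters) and yields a pair of simple real eigenvalues $\pm\lambda_n$ of $J_n$ with $\lambda_n > 0$; the $\langle(L_c+n^2k_0^2)\cdot,\cdot\rangle$-orthogonal complement of the span of these two eigenvectors is invariant under $e^{tJ_n}$, and on it the quadratic form is uniformly positive definite, so invariance of this conserved form under the semigroup gives a uniform-in-$t$ $L^2$ bound. For $n \geq N+1$, $L_c + n^2 k_0^2$ is coercive with $\inf\sigma \to \infty$ as $n\to\infty$, and invariance of the (now positive definite) quadratic form immediately yields bounds on $e^{tJ_n}$ that are uniform in both $t$ and $n$.

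Finally I would assemble. Let $E^u := \bigoplus_{n=1}^N E_n^u$, where $E_n^u$ is the unstable line of $J_n$; this is finite-dimensional, and its elements lie in $X_{3,k_0}$ by elliptic regularity. Let $E^{cs}$ be the $e^{tJ\tilde L_c}$-invariant complement specified mode by mode. The exponential bound on $E^u$ holds with $\lambda_u := \min_{n \leq N}\lambda_n > 0$. On $E^{cs}$, Parseval's identity in $x_3$ transfers the mode-wise, uniform-in-$n$ $L^2(\mathbf{R}^2)$ bounds into a bound on $L^2(\mathbf{R}^2 \times S_{2\pi/k_0})$. To upgrade to $H^3$, note that $\partial_{x_3}$ commutes with $J\tilde L_c$, so $x_3$-derivatives pass freely through the semigroup, while $\partial_{x_j}$ for $j = 1, 2$ produces order-zero commutators $[\partial_{x_j}, J\tilde L_c]$ involving $\nabla U_c$, which is smooth and has the decay established in \cite{B-G-S}; a standard Gr\"onwall bootstrap then lifts $L^2$ bounds to $H^3$ bounds uniformly in $n$.

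The main technical obstacle is ensuring these $H^3$ estimates are genuinely uniform across Fourier modes: one has to propagate $L^2$ control to control of up to three $(x_1,x_2)$-derivatives without losing the $n$-independence delivered by Hamiltonian invariance, which requires careful handling of the commutators of $\partial_{x_1}, \partial_{x_2}$ with the $x$-dependent coefficients of $L_c$. A pleasant contrast with Lemma \ref{lemma-dichotomy-U} is that the bound on $E^{cs}$ is a constant $M$ rather than polynomial in $t$: the oddness-in-$x_3$ condition defining $X_{1,k_0}$ excludes the $n=0$ Fourier mode, and with it the entire generalized kernel $\{\partial_{x_j}U_c, \partial_c U_c\}$ of $JL_c$ (which is $x_3$-independent), so no polynomial growth from Jordan blocks can arise.
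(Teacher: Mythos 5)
Your overall strategy coincides with the paper's: expand in the sine series in $x_{3}$, observe that $J\tilde{L}_{c}$ acts diagonally as $J(L_{c}+j^{2}k_{0}^{2})$ on the $j$-th mode, analyze each mode using the conserved quadratic form $\left\langle (L_{c}+j^{2}k_{0}^{2})\cdot,\cdot\right\rangle$ (gapped, so the argument of Proposition \ref{prop-linear-insta} simplifies), and reassemble with the weighted Parseval identity \eqref{norm-H1-peiodic}. Your explanation of why the bound on $E^{cs}$ is a constant $M$ rather than $M(1+t)$ --- the oddness in $x_{3}$ kills the $j=0$ mode and with it the generalized kernel --- is exactly right. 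You are in fact more careful than the paper on one point: for $k_{0}\in\bigl(\sqrt{-\lambda_{0}}/4,\sqrt{-\lambda_{0}}/2\bigr)$ the modes $j=2,3$ may still satisfy $j^{2}k_{0}^{2}<-\lambda_{0}$, so the assertion that all $L_{c,jk_{0}}$ with $j\geq2$ are positive really needs $k_{0}>\sqrt{-\lambda_{0}}/2$; your finite family of unstable modes $1\leq n\leq N\leq3$ covers this case (at the price of $E^{u}$ being possibly more than one-dimensional, which would affect Theorem \ref{thm-transversal-mfld-2d} but not the present lemma).

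The genuine gap is your upgrade from the $H^{1}$ bound to the $H^{3}$ bound on $E^{cs}$. Differentiating the equation in $x_{1},x_{2}$ and applying Duhamel produces the terms $e^{tJ\tilde{L}_{c}}\partial_{x_{j}}u(0)$ and $\int_{0}^{t}e^{(t-s)J\tilde{L}_{c}}[\partial_{x_{j}},J\tilde{L}_{c}]u(s)\,ds$, and neither $\partial_{x_{j}}u(0)$ nor the commutator source terms need lie in the invariant subspace $E^{cs}$; the semigroup applied to them can grow like $e^{\lambda_{u}t}$, so a ``standard Gr\"onwall bootstrap'' only yields a bound $e^{Ct}$ with $C$ tied to the size of the commutators, not the uniform-in-$t$ bound claimed. (A direct energy estimate fails for the same reason: $\left\langle \partial^{\alpha}u,J\tilde{L}_{c}\partial^{\alpha}u\right\rangle$ has no sign, and $\left\langle \tilde{L}_{c}\partial^{\alpha}u,\partial^{\alpha}u\right\rangle$ is no longer conserved since $\partial^{\alpha}u$ does not solve the homogeneous equation.) This is exactly the spectral-mapping pitfall discussed in Remark \ref{rmk-dichotomy-proof}. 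The paper's route avoids commutators entirely: since $J\tilde{L}_{c}$ commutes with its own semigroup, $J\tilde{L}_{c}e^{tJ\tilde{L}_{c}}u=e^{tJ\tilde{L}_{c}}(J\tilde{L}_{c}u)$, and the $H^{3}$ norm is equivalent to the graph norm $\left\Vert u\right\Vert _{H^{1}}+\left\Vert J\tilde{L}_{c}u\right\Vert _{H^{1}}$; one therefore sets $E_{3}^{cs}=\{u\in X_{3,k_{0}}\ |\ J\tilde{L}_{c}u\in E^{cs}\}$ and applies the already-established $H^{1}$ dichotomy to both $u$ and $J\tilde{L}_{c}u$. You should replace your commutator argument by this graph-norm argument, i.e.\ by the proof of Lemma \ref{lemma-dichotomy-x3}, which is what the paper cites at this step.
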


\begin{proof}
First, we show the exponential dichotomy in the space $X_{1,k_{0}}$. Any
function $u\in X_{1,k_{0}}$ can be written as
\[
u\left(  x_{1},x_{2},x_{3}\right)  =\sum_{j=1}^{\infty}\sin\left(  jk_{0}%
x_{3}\right)  u_{j}\left(  x_{1},x_{2}\right)  ,
\]
and
\begin{equation}
\left\Vert u\right\Vert _{H^{1}\left(  \mathbf{R}^{2}\times S_{\frac{2\pi
}{k_{0}}}\right)  }^{2}=\sum_{j=1}^{\infty}\left(  \left\Vert u_{j}\right\Vert
_{H^{1}\left(  \mathbf{R}^{2}\right)  }^{2}+j^{2}\left\Vert u_{j}\right\Vert
_{L^{2}\left(  \mathbf{R}^{2}\right)  }^{2}\right)  . \label{norm-H1-peiodic}%
\end{equation}
We have
\[
u\left(  t\right)  =e^{tJ\tilde{L}_{c}}u=\sum_{j=1}^{\infty}\sin\left(
jk_{0}x_{3}\right)  \left(  e^{tJL_{c,jk_{0}}}u_{j}\right)  \left(
x_{1},x_{2}\right)  .
\]
By assumption (\ref{assumption-k0}), the operator $L_{c,k_{0}}$ has one
negative eigenvalue and the rest of the spectrum lying in the positive axis,
and $\left\{  L_{c,jk_{0}}\right\}  $ $\left(  j\geq2\right)  $ are positive.
So by the proof of Theorem \ref{thm-linear-transveral-instability}, there
exist a pair of stable and unstable modes of the form $e^{\pm\lambda_{u}%
t}u^{\pm}\ \left(  \lambda_{u}>0\right)  $, where $u^{\pm}=\sin\left(
k_{0}x_{3}\right)  u_{\pm}\left(  x_{1},x_{2}\right)  $. Define $E^{u}%
=span\left\{  u^{+}\right\}  ,\ E^{s}=span\left\{  u^{-}\right\}  ,\ $and
\[
E^{e}=\left\{  u=\sum_{j=1}^{\infty}\sin\left(  jk_{0}x_{3}\right)
u_{j}\left(  x_{1},x_{2}\right)  \in X_{1,k_{0}}\ |\ \left\langle L_{c,k_{0}%
}u_{1},u^{\pm}\right\rangle =0\right\}  .
\]
Then by the arguments in the proof of Lemma \ref{lemma-dichotomy-U}, for any
$u\in E^{e}$, we have
\[
\left\Vert e^{tJL_{c,k_{0}}}u_{1}\right\Vert _{H^{1}\left(  \mathbf{R}%
^{2}\right)  }\leq C\left\Vert u_{1}\right\Vert _{H^{1}\left(  \mathbf{R}%
^{2}\right)  },\text{ for some constant }C,
\]
and by the positivity of $L_{c,jk_{0}}$ $\left(  j\geq2\right)  ,$%
\[
\left\Vert e^{tJL_{c,jk_{0}}}u_{j}\right\Vert _{H^{1}\left(  \mathbf{R}%
^{2}\right)  }^{2}+j^{2}\left\Vert e^{tJL_{c,jk_{0}}}u_{j}\right\Vert
_{L^{2}\left(  \mathbf{R}^{2}\right)  }^{2}\leq C\left(  \left\Vert
u_{j}\right\Vert _{H^{1}\left(  \mathbf{R}^{2}\right)  }^{2}+j^{2}\left\Vert
u_{j}\right\Vert _{L^{2}\left(  \mathbf{R}^{2}\right)  }^{2}\right)
\]
with some constant $C$ independent of $j$. So by (\ref{norm-H1-peiodic})$,$ we
have
\[
\left\Vert e^{tJ\tilde{L}_{c}}u\right\Vert _{H^{1}\left(  \mathbf{R}^{2}\times
S_{\frac{2\pi}{k_{0}}}\right)  }\text{\ }\leq C\left\Vert u\right\Vert
_{H^{1}\left(  \mathbf{R}^{2}\times S_{\frac{2\pi}{k_{0}}}\right)
},\ \text{\ for }u\in E^{e}.
\]
Define $E^{cs}=E^{s}\cup E^{e}$. Then $X_{1,k_{0}}=E^{u}\oplus E^{cs}$ is a
direct sum decomposition for the exponential dichotomy of $e^{tJ\tilde{L}_{c}%
}$. Define
\[
E_{3}^{cs}=\left\{  u\in X_{3,k_{0}}|\ J\tilde{L}_{c}u\in E^{cs}\right\}  .
\]
Then $X_{3,k_{0}}=E^{u}\oplus E_{3}^{cs}$ and the exponential dichotomy follow
by the same argument in the proof of Lemma \ref{lemma-dichotomy-x3}.
\end{proof}

In the equation (\ref{eqn-GP-g-TF}), we let $U=U_{c}+u$, with $u\in
X_{3,k_{0}}.$ Then the equation can be written as
\[
u_{t}=J\tilde{L}_{c}u+\Psi\left(  u\right)  \text{.}%
\]
If $F\in C^{5}\left(  \mathbf{R}\right)  ,\ $it is easy to show that the
nonlinear term $\Psi\left(  u\right)  $ is $C^{2}\left(  X_{3,k_{0}%
},X_{3,k_{0}}\right)  .$ So by using Lemma \ref{lemma-dichotomy-periodic}, we
have the following

\begin{theorem}
\label{thm-transversal-mfld-2d}For $0<c_{0}<\sqrt{2}$, let $U_{c_{0}}\ $be a
$2$D traveling wave solution of (\ref{eqn-GP-generalized}), satisfying
$\frac{\partial P(U_{c})}{\partial c}|_{c=c_{0}}>0$ or more generally
$\lambda_{0}\left(  L_{c}\right)  <0$. For any $k_{0}$ satisfying
(\ref{assumption-k0}), there exists a unique $C^{1}$ local unstable manifold
$W^{u}$ of $U_{c_{0}}$ in $X_{3,k_{0}}$ which satisfies

\begin{enumerate}
\item It is one-dimensional and tangent to $E^{u}$ at $U_{c_{0}}$.

\item It can be written as the graph of a $C^{1}\ $mapping from a neighborhood
of $U_{c_{0}}$ in $E^{u}$ to $E_{3}^{cs}$.

\item It is locally invariant under the flow of the equation
(\ref{eqn-GP-g-TF}).

\item Solutions starting on $W^{u}$ converge to $U_{c_{0}}$ at the rate
$e^{\lambda_{u}t}$ as $t\rightarrow-\infty$.
\end{enumerate}
\end{theorem}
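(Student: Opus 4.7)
The plan is to apply the standard local invariant manifold theorem for semilinear evolution equations (as developed in \cite{bates-jones88, CL88}) to the equation $u_t = J\tilde L_c u + \Psi(u)$ on the Banach space $X_{3,k_0}$. The two ingredients such a theorem demands are (a) an exponential dichotomy of the linear semigroup with a strict spectral gap and (b) sufficient smoothness of the nonlinearity. Both are by now essentially in hand: item (a) is precisely the content of Lemma \ref{lemma-dichotomy-periodic}, which splits $X_{3,k_0}=E^u\oplus E^{cs}$ with $e^{tJ\tilde L_c}|_{E^u}$ growing like $e^{\lambda_u t}$ backward in time and $e^{tJ\tilde L_c}|_{E^{cs}}$ uniformly bounded forward in time.

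For item (b) I would verify that $\Psi:X_{3,k_0}\to X_{3,k_0}$ is $C^2$. The space $X_{3,k_0}\subset H^3(\mathbf{R}^2\times S_{2\pi/k_0})$ is a Banach algebra because the ambient domain is $3$-dimensional and $H^3$ embeds in $L^\infty$. Decomposing $\Psi(u)$ into its polynomial part in $u$ and the part involving $F(|U_c+u|^2)-F(|U_c|^2)$ and expanding via Taylor's formula, each term is either a product of $H^3$ functions or a composition of $F$ (which is $C^5$ by hypothesis) with an $H^3$ function; the chain rule, the algebra property, and the fact that $|U_c+u|^2$ ranges in a compact set on which $F$ is smooth then yield $C^2$ regularity with two derivatives to spare. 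The oddness-in-$x_3$ constraint is preserved by $\Psi$ because $U_c$ is independent of $x_3$ and $\Psi$ is an odd-order real-analytic combination of $u$ and $U_c$ in this sense, so $X_{3,k_0}$ is an invariant subspace of the nonlinear flow as well.

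With (a) and (b) established, the Lyapunov--Perron (or graph transform) construction produces a local unstable manifold $W^u$ with exactly the four properties listed: tangency to the one-dimensional $E^u$ at $U_{c_0}$, representation as a $C^1$ graph over $E^u$ into $E_3^{cs}$, local invariance under the semiflow, and the backward decay rate $e^{\lambda_u t}$. The spectral gap (exponential growth versus uniform boundedness) is more than sufficient to obtain $C^1$ regularity of the graph map; in fact the same construction would yield higher regularity up to a limit set by the gap ratio and the regularity of $\Psi$.

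The only genuinely delicate step is item (b), verifying that $\Psi$ maps $X_{3,k_0}$ into itself with two continuous Fr\'echet derivatives. This is routine given the algebra property of $H^3$ on a 3-dimensional domain and the $C^5$ hypothesis on $F$, but one must be careful near the boundary values of $|U_c|^2$ (which stay away from any singular set of $F$ for $U_c$ bounded away from the vortex, and only a compact range of $|U_c+u|^2$ is relevant for small $u$) and one must not lose derivatives when differentiating the composition $F(|U_c+u|^2)$ twice in $u$. No further obstacles arise from the $2\pi/k_0$-periodicity in $x_3$ or the oddness constraint, since both are preserved by the equation and by each term appearing in $\Psi$.
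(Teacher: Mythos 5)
Your skeleton is exactly the one the paper uses: take the exponential dichotomy of Lemma \ref{lemma-dichotomy-periodic} as given, check that the nonlinearity is $C^{2}$ from $X_{3,k_{0}}$ to itself using $F\in C^{5}$ and the Banach algebra property of $H^{3}$ on the three-dimensional domain $\mathbf{R}^{2}\times S_{\frac{2\pi}{k_{0}}}$, and then quote the standard invariant manifold theorem of \cite{bates-jones88, CL88}. On that level the proposal and the paper agree.

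There is, however, a genuine gap in the step you yourself single out as delicate. You assert that ``the oddness-in-$x_{3}$ constraint is preserved by $\Psi$'' because $\Psi$ is an ``odd-order'' combination of $u$ and $U_{c}$. It is not. Writing $N(U)=F(|U|^{2})U$, the nonlinear remainder $\Psi(u)=N(U_{c}+u)-N(U_{c})-DN(U_{c})u$ begins with the \emph{quadratic} term $\tfrac12 D^{2}N(U_{c})[u,u]$, and the square of a function odd in $x_{3}$ is even in $x_{3}$. Concretely, for $u=\sin(k_{0}x_{3})\,v(x_{1},x_{2})$ the quadratic part contains $F^{\prime}(|U_{c}|^{2})|u|^{2}U_{c}=\tfrac12 F^{\prime}(|U_{c}|^{2})\bigl(1-\cos(2k_{0}x_{3})\bigr)|v|^{2}U_{c}$, whose $x_{3}$-average is $x_{3}$-independent and in general nonzero; the full quadratic part has $x_{3}$-average $\tfrac12 D^{2}N(U_{c})[v,v]$, which vanishes only if $N$ is affine. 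Hence $\Psi$ does not map $X_{3,k_{0}}$ (odd functions) into itself, the zero mode and the cosine modes are excited by the nonlinearity, and the Lyapunov--Perron scheme cannot be run in $X_{3,k_{0}}$ as you set it up. (The paper's own assertion that $\Psi\in C^{2}(X_{3,k_{0}},X_{3,k_{0}})$ is equally silent on this point, so the justification you supply is not one the paper provides either; but as written your argument for invariance is false.) Repairing it requires either enlarging the phase space to all of $\bigl(H^{3}(\mathbf{R}^{2}\times S_{\frac{2\pi}{k_{0}}})\bigr)^{2}$ --- at the price of proving an exponential dichotomy for the $j=0$ (purely two-dimensional) and cosine modes, i.e.\ exactly the 2D linearized dynamics the oddness restriction was introduced to avoid --- or exhibiting a subspace that contains the unstable mode $\sin(k_{0}x_{3})u_{g}$, excludes the problematic modes, and is closed under the nonlinearity; your proposal supplies neither.
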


As a corollary of above theorem, we get nonlinear transversal instability of
any $2$D traveling wave of (GP) equation.

\begin{remark}
Assumption \eqref{assumption-k0} ensures that the unstable subspace of the
linearized equation in $X_{3,k_{0}}$ is 1-dimensional. In fact this assumption
can be generalized to
\begin{equation}
\exists\ j_{0}\geq1\text{ such that }j_{0}k_{0}\text{ satisfies
\eqref{assumption-k0}.} \label{E:assumption-k0}%
\end{equation}
Since the subspace corresponding to the $j_{0}$-th mode is decoupled in the
linearized equation, this assumption ensures that there exists a 1-dimensional
unstable subspace in the $j_{0}$-th mode which implies the linear instability
with possibly multiple dimensional unstable subspaces. $\tilde{L}_{c}$ is
uniformly positive in all but finitely many directions, one may prove the
linear exponential dichotomy and the existence of unstable manifolds through a
similar procedure.
\end{remark}

\section{Slow traveling waves of cubic-quintic type equations}

In this section, we assume the nonlinear term of (\ref{eqn-GP-generalized})
satisfies the following:

(H1) $F \in C^{1}([0, \infty))$, $F(r_{0})=0$, and $F^{\prime}(r_{0})<0$,
where $r_{0}$ is a positive constant.

(H2) $\exists~C>0$ such that $\left\vert F^{\prime}\left(  s\right)
\right\vert \leq C\left\vert s\right\vert ^{p_{0}-1},~$for $s\geq1$, where
$p_{0}=\frac{2}{n-2}.$

(H3) $\exists\ r_{1}$ such that $0\leq r_{1}<r_{0}$ and $V(r_{1})<0$, where
$V(r)=\int_{0}^{r}F(s)ds$.

A typical example is the so-called cubic-quintic (or $\psi^{3}-\psi^{5}$)
nonlinear Schr\"{o}dinger equation
\begin{equation}
i\psi_{t}+\Delta\psi-\alpha_{1}\psi+\alpha_{3}\psi|\psi|^{2}-\alpha_{5}%
\psi|\psi|^{4}=0,x\in{\mathbf{R}}^{3}, \label{eqn-cubic-quintic}%
\end{equation}
where $\alpha_{1},\alpha_{3},\alpha_{5}$ are positive constants satisfying
(\ref{condition-3-5}). The main result of this section is to show the
existence and instability of traveling waves with small speeds.

\subsection{Existence of slow traveling waves}

First we recall the result of stationary solutions.

\begin{theorem}
\label{thm-stationary bubble}\cite{de95} Under assumptions (H1)-(H3), there
exists a real-valued function $\phi_{0}\in C^{2}({\mathbf{R}}^{n})$ satisfying

(1)\ $\phi_{0}(x)=\phi_{0}(\left\vert x\right\vert )$ (i.e. $\phi$ is radially symmetric)

(2)\
\begin{equation}
\Delta\phi_{0}+F(\phi_{0}^{2})\phi_{0}=0,\ \text{in }{\mathbf{R}}^{n}%
\ (n\geq2). \label{eqn-stationary}%
\end{equation}

(3)\ $0<\phi_{0}(r)<\sqrt{r_{0}},\ \forall r\in\lbrack0,\infty),\ $and
$\lim_{r\rightarrow\infty}\phi_{0}(r)=\sqrt{r_{0}}$

(4)\ $\phi_{0}^{\prime}(0)=0,\phi_{0}^{\prime}(r)>0\ \forall r\in(0,+\infty)$

(5) There exist $C>0,\delta>0$ such that: $\forall\alpha\in{\mathbf{N}}^{n}$
with $|\alpha|<2$,
\[
|\partial_{x}^{\alpha}(\phi_{0}(x)-\sqrt{r_{0}})|\leq Ce^{-\delta|x|},\forall
x\in{\mathbf{R}}^{n}.
\]

\end{theorem}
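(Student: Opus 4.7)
The plan is to seek the solution radially: write $\phi_{0}(x) = u(|x|)$ with $u:[0,\infty) \to (0, \sqrt{r_{0}})$, reducing the PDE to the ODE
\[
u''(r) + \frac{n-1}{r}\, u'(r) + F(u(r)^{2})\,u(r) = 0, \qquad u'(0) = 0, \quad \lim_{r\to\infty} u(r) = \sqrt{r_{0}}.
\]
The key identity is that the mechanical energy $H(r) := \tfrac{1}{2}(u'(r))^{2} + \tfrac{1}{2} V(u(r)^{2})$ satisfies $H'(r) = -\tfrac{n-1}{r}(u'(r))^{2} \leq 0$. Since $V'(r_{0})=F(r_{0})=0$ and $V''(r_{0})=F'(r_{0})<0$ by (H1), the function $V(\cdot)$ has a strict local maximum at $r_{0}$, so $(\sqrt{r_{0}},0)$ is a saddle for the unforced Newton equation, and the desired $\phi_{0}$ should correspond to a trajectory lying on its stable manifold under the dissipative flow.

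I would carry out a shooting argument in the initial height $a := u(0) \in (0, \sqrt{r_{0}})$. Let $u_{a}$ denote the solution of the IVP with $u_{a}(0)=a,\ u_{a}'(0)=0$, and partition $(0,\sqrt{r_{0}})$ into the open sets
\[
B = \{a : u_{a} \text{ reaches } \sqrt{r_{0}} \text{ at some finite } r\}, \qquad C = \{a : u_{a} \text{ vanishes at some finite } r\}.
\]
Hypothesis (H3) furnishes $r_{1}<r_{0}$ with $V(r_{1})<0$, which I would use to produce initial heights with mechanical energy strictly exceeding $\tfrac12 V(r_{0})$: for such $a$ the frictional loss cannot prevent the solution from overshooting $\sqrt{r_{0}}$, giving $B\neq\emptyset$. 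Conversely, linearizing at the saddle via $u=\sqrt{r_{0}}-v$ gives $v'' + \tfrac{n-1}{r}v' - \mu^{2}v = o(v)$ with $\mu^{2} = -2r_{0}F'(r_{0}) > 0$; the one-dimensional stable manifold of the saddle in the full nonlinear ODE produces solutions that approach $\sqrt{r_{0}}$ from below without crossing, which are in $(0,\sqrt{r_0})\setminus B$ and force $C\neq\emptyset$ by openness of $B$. Taking $a_{*}$ to be the relevant boundary point between $B$ and $C$, continuity of the flow in initial data together with the monotonicity $H' \le 0$ force $u_{a_{*}}$ to stay strictly in $(0, \sqrt{r_{0}})$ with $H(r) \searrow \tfrac{1}{2}V(r_{0})$, which is enough to conclude $u_{a_{*}}(r) \to \sqrt{r_{0}}$ and $u_{a_{*}}'(r) \to 0$. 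An interior turning point of $u_{a_{*}}$ would, via the saddle structure at $\sqrt{r_{0}}$ and energy dissipation, be incompatible with $u_{a_{*}}$ remaining in $B^{c}\cap C^{c}$, so strict monotonicity $u_{a_{*}}' > 0$ on $(0,\infty)$ follows, yielding (1)--(4).

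For the exponential decay (5), I would revisit the linearization $v'' + \tfrac{n-1}{r}v' - \mu^{2}v = 0$ and compare $\sqrt{r_{0}}-u_{a_{*}}$ with the decaying modified-Bessel profile $r^{(2-n)/2}K_{(n-2)/2}(\mu r) \sim r^{-(n-1)/2}e^{-\mu r}$ through a standard super-solution/ODE comparison, using (H1) to control the quadratic error. Derivative bounds then follow by differentiating the ODE and bootstrapping with $F\in C^{1}$ and (H2); $C^{2}$ regularity of $\phi_{0}$ on $\mathbf{R}^{n}$ is a consequence of the ODE plus elliptic regularity up to the origin.

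The hard part, I expect, is the borderline shooting step: one must exclude ``pathological'' limits of $u_{a_{*}}$ such as rattling oscillations around some non-saddle critical point of $V$, and show that $a_{*}$ is literally the single value of $a$ for which the trajectory lands on the stable manifold of $(\sqrt{r_{0}},0)$. This requires a careful classification of $\omega$-limit sets of the dissipative planar flow, exploiting the strict monotonicity $H'<0$ off $\{u'=0\}$, the nondegeneracy $F'(r_{0})<0$, and the specific energy level separation provided by (H3).
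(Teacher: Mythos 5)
The paper offers no proof of this statement: it is imported wholesale from de Bouard \cite{de95}, where the construction is variational rather than ODE-based. One sets $\phi_{0}=\sqrt{r_{0}}-Q$, so that $Q$ solves the ground-state problem $-\Delta Q=g(Q)$ for the truncated, oddly reflected nonlinearity $g$ of (\ref{defn-g}); $Q$ is then obtained by minimizing $\int|\nabla u|^{2}$ under the constraint $\int V(|u|^{2})\,dx=0$ (Berestycki--Lions for $n\geq3$, \cite{berestychi-et-83-2d} for $n=2$), with radial symmetry and monotonicity coming from symmetrization and the decay (5) from $g^{\prime}(0)=2r_{0}F^{\prime}(r_{0})<0$. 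This is not merely a matter of taste: the paper later needs the \emph{minimizing} bubble, since Lemma \ref{lemma-spectra-M0} uses the variational characterization to bound the Morse index of the operator $A$. Your shooting scheme is a genuinely different, Berestycki--Lions--Peletier style route; a correct implementation would establish existence, but it would not by itself deliver the variational information the rest of the paper relies on.

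As written, the shooting argument has a concrete gap at the step $B\neq\emptyset$. The claim that an initial height whose mechanical energy exceeds $\tfrac{1}{2}V(r_{0})$ must overshoot because ``the frictional loss cannot prevent it'' is false: the damping $\tfrac{n-1}{r}(u^{\prime})^{2}$ can dissipate an arbitrary fraction of the initial energy surplus, and under (H1)--(H3) alone $F$ may vanish at several points of $(0,r_{0})$, so such a trajectory can stall and converge to an interior equilibrium, or to $\sqrt{r_{0}}$ itself, without ever reaching $\sqrt{r_{0}}$ at finite $r$. (Note also that the comparison of energy levels you invoke requires $V(a^{2})>V(r_{0})$, which does not follow from the literal statement $V(r_{1})<0$ of (H3) without sorting out the normalization of $V$.) For the same reason $B\cup C$ does not exhaust $(0,\sqrt{r_{0}})$: the complement contains, besides the desired heteroclinic, every trajectory trapped by an interior equilibrium, so the classification of $\omega$-limit sets that you defer to ``the hard part'' is the bulk of the proof rather than a loose end. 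Finally, you should not aim to show that $a_{*}$ is the unique landing value: the theorem asserts only existence, and uniqueness/non-degeneracy of the bubble is a substantially harder question that this paper resolves only for the cubic-quintic nonlinearity in dimension two, via the separate Kwong-type analysis \cite{Kwong} in Appendix 2.
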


The steady solution $\phi_{0}$ constructed above is called a stationary bubble
of (\ref{eqn-GP-generalized}). To simplify notations, below we assume
$r_{0}=1,\ F^{\prime}\left(  1\right)  =-1$. Denote the operator
$A:H^{2}\left(  \mathbf{R}^{n}\right)  \rightarrow L^{2}\left(  \mathbf{R}%
^{n}\right)  $ by
\begin{equation}
A:=-\Delta-F(\phi_{0}^{2})-2F^{\prime}\left(  \phi_{0}^{2}\right)  \phi
_{0}^{2}. \label{defn-operator-A}%
\end{equation}
Note that $A$ is the linearized operator with the steady equation
(\ref{eqn-stationary}). Differentiating (\ref{eqn-stationary}) to $x_{i}$, we
get $\partial_{x_{i}}\phi_{0}\in\ker A$. We state the following non-degeneracy
condition
\begin{equation}
\ker A=\left\{  \partial_{x_{i}}\phi_{0},i=1,\cdots,n\right\}  .
\label{cond-NDG-stationary}%
\end{equation}

First, we study the two and three dimensional cases.

\begin{theorem}
\label{thm-existence-slow} Let $n=2,3.\ $Under assumptions (H1)-(H3), and the
condition (\ref{cond-NDG-stationary}), there exists $b_{0}>0$, such that for
any $c\in\left(  -b_{0},b_{0}\right)  $, there exist $\left(  \rho_{c}%
,\theta_{c}\right)  \in X_{2}^{s}$ such that%
\[
\phi^{c}\left(  x_{1}-ct,x^{\perp}\right)  =\left(  \left(  \rho_{0}+\rho
_{c}\right)  ^{\frac{1}{2}}e^{i\theta_{c}}\right)  \left(  x_{1}-ct,x^{\perp
}\right)
\]
is a cylindrically symmetric traveling wave solution of equation
(\ref{eqn-GP-generalized}). Here,$\ \sqrt{\rho_{0}}=\phi_{0}\left(  r\right)
$ is the stationary solution to (\ref{eqn-GP-generalized}). Moreover, $\left(
\rho_{c},\theta_{c}\right)  $ is $C^{1}$ for $c\in\left(  -b_{0},b_{0}\right)
,\ $%
\[
\Vert\rho_{c}\Vert_{H^{2}}+\Vert\theta_{c}\Vert_{\dot{H}^{2}}\leq K\left\vert
c\right\vert ,\ \text{for some }K>0.
\]

\end{theorem}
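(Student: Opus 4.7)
I plan to use the Madelung (hydrodynamic) formulation and apply the implicit function theorem based at $(c, \rho, \theta) = (0, \rho_0, 0)$, where $\rho_0 = \phi_0^2$ is the stationary bubble from Theorem \ref{thm-stationary bubble}. The $(\rho, \theta)$ variables are preferable here because $\rho_0 \geq \phi_0(0)^2 > 0$ everywhere, so the change of variables $u = \sqrt{\rho}\,e^{i\theta}$ is smooth near the bubble. Seeking a cylindrically symmetric traveling wave $u(t,x) = \sqrt{\rho(x_1-ct,x^\perp)}\,e^{i\theta(x_1-ct,x^\perp)}$ recasts \eqref{eqn-GP-generalized} as
\begin{align*}
E_1(\rho, \theta, c) &= -c\partial_{x_1}\theta + |\nabla\theta|^2 - \tfrac{\Delta\sqrt{\rho}}{\sqrt{\rho}} - F(\rho),\\
E_2(\rho, \theta, c) &= -c\partial_{x_1}\rho + 2\nabla\cdot(\rho\nabla\theta).
\end{align*}
Writing $\rho = \rho_0 + \tilde\rho$, $\theta = \tilde\theta$, the stationary equation $\Delta\phi_0 + F(\phi_0^2)\phi_0 = 0$ gives $(E_1, E_2)(0, 0, 0) = 0$, so the task reduces to inverting the linearization in $(\tilde\rho, \tilde\theta)$ with $c$ as parameter.

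To break the $x_1$-translation degeneracy, I would further restrict the unknowns to the subspace of $X_2^s$ in which $\tilde\rho$ is even in $x_1$ and $\tilde\theta$ is odd in $x_1$. These parities are preserved by $(E_1, E_2)$ and match the symmetry expected of a traveling wave (density even, phase odd in the direction of travel). More importantly, $\partial_{x_1}\phi_0$ is odd in $x_1$, so the only cylindrically symmetric direction of $\ker A$ is excluded from the even subspace.

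The heart of the argument is that at $(0,0,0)$ the Fr\'echet derivative $D_{(\tilde\rho, \tilde\theta)}(E_1, E_2)$ is block diagonal (all cross derivatives vanish since $\theta = 0$ and $c = 0$): the $\tilde\rho$-block is, after the change of variable $\tilde\phi = \tilde\rho/(2\phi_0)$, the operator $A$ of \eqref{defn-operator-A}, and the $\tilde\theta$-block is $L_\theta\tilde\theta := 2\nabla\cdot(\rho_0\nabla\tilde\theta)$. Under \eqref{cond-NDG-stationary}, $\ker A = \operatorname{span}\{\partial_{x_i}\phi_0\}$, whose intersection with the cylindrically symmetric, even-in-$x_1$ subspace is trivial. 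Since the asymptotic potential $V := -F(\phi_0^2) - 2F'(\phi_0^2)\phi_0^2 \to -2F'(1) > 0$ at infinity (using $F(1) = 0$ and $F'(1) < 0$), $A$ is a compact perturbation of $-\Delta + $ const, hence Fredholm of index zero on $H^2_{r_\perp}$; trivial kernel on the even-in-$x_1$ subspace thus upgrades it to an isomorphism onto the corresponding $L^2$ subspace. For $L_\theta$, the bound $\rho_0 \geq \phi_0(0)^2 > 0$ gives uniform ellipticity together with the coercivity $\int \rho_0|\nabla\tilde\theta|^2 \geq c\|\nabla\tilde\theta\|_{L^2}^2$, which combined with elliptic regularity yields an isomorphism $\dot{H}^2_{r_\perp,\,\mathrm{odd}} \to L^2_{r_\perp,\,\mathrm{odd}}$. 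The IFT then produces a locally unique $C^1$ family $(\rho_c, \theta_c) \in X_2^s$ for $|c| < b_0$, and $\|\rho_c\|_{H^2} + \|\theta_c\|_{\dot{H}^2} \leq K|c|$ is immediate from $C^1$-dependence and $(\rho_c, \theta_c)|_{c=0} = 0$.

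I expect the technical obstacles to be: (i) verifying that $(\tilde\rho, \tilde\theta, c) \mapsto (E_1, E_2)$ is a $C^1$ map into an appropriate symmetric subspace of $L^2_{r_\perp} \times L^2_{r_\perp}$; the quantum pressure term $-\Delta\sqrt{\rho}/\sqrt{\rho}$ forces us to keep $\rho$ pointwise bounded away from zero, which requires the Sobolev embedding $H^2 \hookrightarrow L^\infty$ and is precisely where the dimensional restriction $n \leq 3$ enters; (ii) the Fredholm/asymptotic analysis of $A$ compatible with the symmetry restriction, which rests on \eqref{cond-NDG-stationary} and the asymptotic positivity of $V$; and (iii) some care in the case $n = 2$, where $\dot{H}^2$ does not embed in $L^\infty$ or $L^2$, but the oddness of $\tilde\theta$ in $x_1$ fixes the constant ambiguity and we manipulate $\nabla\tilde\theta \in H^1$ throughout.
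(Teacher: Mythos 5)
Your overall strategy --- Madelung variables plus the implicit function theorem at $(\rho_0,0;0)$, with the block-diagonal linearization whose $\rho$-block is conjugate to $A$ and whose $\theta$-block is $-2\nabla\cdot(\rho_0\nabla\,\cdot)$ --- is the paper's strategy. Where you differ is in how the translation kernel $\partial_{x_1}\rho_0$ is removed: you restrict to the subspace with $\rho$ even and $\theta$ odd in $x_1$ (a parity that is indeed preserved by the equations), whereas the paper projects onto the orthogonal complement of $K_0=\mathrm{span}\{(\partial_{x_1}\rho_0,0)\}$, applies the IFT to the projected equation, and then uses the variational structure $S=2D_{(\rho,\theta)}(E+cP)$ together with translation invariance to show that the residual multiple of $(\partial_{x_1}\rho_0,0)$ vanishes. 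Your parity reduction, if carried out, would avoid that last Lagrange-multiplier step entirely, which is a genuine simplification; the paper's projection argument has the advantage of giving local uniqueness in the full cylindrically symmetric class and of being the template reused for the continuation results (Theorem \ref{thm-continuation-3d}).

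There is, however, one concrete gap: your target space for the continuity equation. You claim $L_\theta=-2\nabla\cdot(\rho_0\nabla\,\cdot)$ is an isomorphism from $\dot H^2_{r_\perp,\mathrm{odd}}$ onto $L^2_{r_\perp,\mathrm{odd}}$. It is not surjective onto $L^2$: already for $-\Delta$, solving $-\Delta\theta=f$ with $\nabla\theta\in L^2$ forces $\int|\hat f(\xi)|^2/|\xi|^2\,d\xi<\infty$, i.e. $f\in\dot H^{-1}$, and $L^2\not\subset\dot H^{-1}$; oddness of $f$ in $x_1$ does not remove the singularity of $\hat f(\xi)/|\xi|$ at $\xi=0$. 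Coercivity of $\int\rho_0|\nabla\theta|^2$ on $\dot H^1$ gives invertibility onto $\dot H^{-1}$ by Lax--Milgram, and elliptic regularity then upgrades this to an isomorphism $\dot H^2\to L^2\cap\dot H^{-1}$; the intersection is essential. This is exactly why the paper introduces $Z=L^2_{r_\bot}\cap\dot H^{-1}_{r_\bot}$ and $Y=L^2_{r_\bot}\times Z$, and why the lower bound in Lemmas \ref{lemma-inverse-bound} and \ref{lemma-M-infinity} is phrased as $\Vert M\psi\Vert_Y\geq\gamma\Vert\psi\Vert_{X_2}$, the $\dot H^{-1}$ norm being what pairs with $\theta\in\dot H^1$ in the coercivity estimate. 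To close your argument you must (a) take the second component of the target to be $L^2\cap\dot H^{-1}$ and (b) check that the full nonlinear continuity equation lands there --- which it does, since $c\rho_{x_1}-2\nabla\cdot(\rho\nabla\theta)$ is $\partial_{x_1}$ of an $L^2$ function plus a divergence of an $L^2$ field. With that correction the rest of your outline goes through.
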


For $n=2,$ the non-degeneracy condition (\ref{cond-NDG-stationary}) is proved
for cubic-quintic nonlinearity in Appendix 2.

To prove the existence of traveling waves, we use the hydrodynamic formulation
\eqref{hrdrodynamic nls}.
The traveling wave solution
\[
\psi(x_{1}-ct,x_{\bot})=\sqrt{\rho}e^{i\theta}(x_{1}-ct,x_{\bot})
\]
satisfies
\begin{equation}%
\begin{cases}
-c\theta_{x_{1}}+|\nabla\theta|^{2}-\frac{1}{2}\frac{1}{\rho}\Delta\rho
+\frac{1}{4}\frac{1}{\rho^{2}}|\nabla\rho|^{2}-F(\rho)=0\\
c\rho_{x_{1}}-2\nabla.(\rho\nabla\theta)=0
\end{cases}
\label{eqn-3-5-TW-Madelung}%
\end{equation}
We define $S(\rho,\theta;c)$ to be the left-hand side of
(\ref{eqn-3-5-TW-Madelung}), then (\ref{eqn-3-5-TW-Madelung}) becomes
$S(\rho,\theta;c)=0$. First, we define several function spaces. Define the
spaces\
\[
Z:=L_{r_{\bot}}^{2}\cap\dot{H}_{r_{\bot}}^{-1}\,,\text{with norm }\left\Vert
\cdot\right\Vert _{Z}=\left\Vert \cdot\right\Vert _{2}+\left\Vert
\cdot\right\Vert _{\dot{H}^{-1}},
\]
and $Y:=L_{r_{\bot}}^{2}\times Z$. The energy functional is defined by
(\ref{energy-hrdrodynamic}) and the momentum is%
\[
P(\rho,\theta)=-\frac{1}{2}\int_{\mathbf{R}^{n}}\left(  \rho-1\right)
\theta_{x_{1}}\ dx,
\]
where $(\rho,\theta)\in\left(  \rho_{0},0\right)  +B_{\varepsilon_{0}}$ with
\[
B_{\varepsilon_{0}}=\left\{  (\rho,\theta)\in X_{2}^{s}\ |\ \left\Vert
\rho\right\Vert _{H^{2}}+\left\Vert \theta\right\Vert _{\dot{H}^{2}}%
\leq\varepsilon_{0}\right\}  .
\]
Since $H^{2}\left(  \mathbf{R}^{n}\right)  \hookrightarrow L^{\infty}$
$\left(  \mathbf{R}^{n}\right)  $ for $n=2,3$ and $\rho_{0}\left(  r\right)
\geq\rho_{0}\left(  0\right)  >0$, thus when $\varepsilon_{0}$ is small
enough, the functional $E$ and $P$ are well-defined.

\begin{proof}
[Proof of Theorem \ref{thm-existence-slow}]When $c$ is small enough, we look
for solutions of (\ref{eqn-3-5-TW-Madelung}) in the form $(\rho+\rho
_{0},\theta)$ where $(\rho,\theta)\in B_{\varepsilon_{0}}$ with $\varepsilon
_{0}$ small enough such that $\rho+\rho_{0}>0$. First, we note the following
variational structure of (\ref{eqn-3-5-TW-Madelung}):
\[
S(\rho+\rho_{0},\theta;c)=2D_{\left(  \rho,\theta\right)  }\left(  E(\rho
+\rho_{0},\theta)+cP(\rho+\rho_{0},\theta)\right)  .
\]
Since the functionals $E,P$ are translation invariant to $x_{1}$, the above
implies that
\begin{equation}
\left\langle S(\rho+\rho_{0},\theta;c),\left(
\begin{array}
[c]{c}%
\partial_{x_{1}}\left(  \rho+\rho_{0}\right) \\
\partial_{x_{1}}\theta
\end{array}
\right)  \right\rangle =0 \label{eqn-var-transl-invaraint}%
\end{equation}
for any $(\rho,\theta)\in B_{\varepsilon_{0}}$. Define $K_{0}=$ span$\left\{
(\partial_{x_{1}}\rho_{0}(x),0)\right\}  $. Let $K_{0}^{\bot}$ be the
orthogonal complement of $K_{0}$ in $Y$,\ and $\Pi^{\bot}:Y\mapsto K_{0}%
^{\bot}$ be the $L^{2}\ $orthogonal projection. We solve the equation
\begin{equation}
\Pi^{\bot}S(\rho_{0}+\rho,\theta;c)=0,\ \ (\rho,\theta)\in K_{0}^{\bot},
\label{eqn-reduced-TW}%
\end{equation}
near $(0,0;0)\ $by the implicit function theorem. The linearized operator of
$S$ with respect to $(\rho,\theta)\ $at $(0,0;0)$ is
\begin{equation}
D_{\left(  \rho,\theta\right)  }S(\rho_{0},0;0):=M_{0}=%
\begin{pmatrix}
M_{1} & 0\\
0 & M_{2}%
\end{pmatrix}
:\ \ X_{2}^{s}\mapsto Y\ , \label{defn-operator-M}%
\end{equation}
where
\begin{align}
M_{1}  &  =-\nabla\cdot(\frac{\nabla}{2\rho_{0}})-\frac{1}{2}\frac{1}{\rho
_{0}^{3}}|\nabla\rho_{0}|^{2}+\frac{\Delta\rho_{0}}{2\rho_{0}^{2}}-F^{\prime
}(\rho_{0}),\label{defn-M1-M2}\\
M_{2}  &  =-2\nabla\cdot(\rho_{0}\nabla).\nonumber
\end{align}
The linearized mapping of $\Pi^{\bot}S(\rho_{0}+\rho,\theta;c)|_{K_{0}^{\perp
}\cap X_{2}^{s}}$ at $(0,0;0)$ is $\Pi^{\bot}M_{0}|_{X_{2}^{s}\cap K_{0}%
^{\bot}}=M_{0}|_{X_{2}^{s}\cap K_{0}^{\bot}}$.\ It can be checked that for any
$\rho\in H^{2},$
\begin{equation}
M_{1}\rho=A\left(  \frac{\rho}{2\sqrt{\rho_{0}}}\right)  \frac{1}{\sqrt
{\rho_{0}}}, \label{relation-M1}%
\end{equation}
which also follows from (\ref{relation-2nd-variation-madelung}) below. So by
the assumption (\ref{cond-NDG-stationary}), $\ $%
\[
\ker M_{1}=span\left\{  \sqrt{\rho_{0}}\partial_{x_{1}}\phi_{0}\right\}
=span\left\{  \partial_{x_{1}}\rho_{0}\right\}  ,\ \text{on }H_{r_{\bot}}%
^{2}.
\]
Moreover, $\left(  M_{2}\theta,\theta\right)  >0$ for any $\theta\in\dot
{H}^{1}$. Thus $\ker M=\left\{  (\partial_{x_{1}}\rho_{0}(x),0)\right\}
=K_{0}$. By Lemma \ref{lemma-inverse-bound} below, the operator $M_{0}%
:X_{2}^{s}\cap K_{0}^{\bot}\rightarrow K_{0}^{\bot}$ is bounded with a bounded
inverse. Moreover, it is easy to show that $S(\rho+\rho_{0},\theta;c)\in
C^{1}\left(  B_{\varepsilon_{0}}\times\mathbf{R;}Y\right)  $. Thus by the
Implicit Function Theorem (\cite{chow-hale}), there exists a neighborhood
$B_{\delta_{0}}\times\left(  -b_{0},b_{0}\right)  $ of $\left(  0,0;0\right)
\ $in $\left(  X_{2}^{s}\cap K_{0}^{\bot}\right)  \times\mathbf{R}$ such that
\[
\left(  \rho\left(  c\right)  ,\theta\left(  c\right)  \right)  :\left(
-b_{0},b_{0}\right)  \rightarrow B_{\delta_{0}}%
\]
is the unique solution to (\ref{eqn-reduced-TW}) near $\left(  0,0;0\right)  $
which is $C^{1}$ in $c$. Moreover, as implied by the proof of IFT, we have
\[
\left\Vert \rho\left(  c\right)  \right\Vert _{H^{2}}+\left\Vert \theta\left(
c\right)  \right\Vert _{\dot{H}^{2}}\leq K\left\Vert S(\rho_{0}%
,0;c)\right\Vert _{Y}\leq K\left\vert c\right\vert ,
\]
for some constant $K$. We claim that $\left(  \rho\left(  c\right)
,\theta\left(  c\right)  \right)  $ solves the original problem, that is,
$S(\rho\left(  c\right)  +\rho_{0},\theta\left(  c\right)  ;c)=0$. Indeed, by
the equation (\ref{eqn-reduced-TW}), we have
\[
S(\rho\left(  c\right)  +\rho_{0},\theta\left(  c\right)  ;c)=k(\partial
_{x_{1}}\rho_{0}(x),0)
\]
for some constant $k$. We claim that $k=0$. Suppose otherwise $k\neq0$, then
by (\ref{eqn-var-transl-invaraint}),
\[
\left\langle \left(
\begin{array}
[c]{c}%
\partial_{x_{1}}\rho_{0}\\
0
\end{array}
\right)  ,\left(
\begin{array}
[c]{c}%
\partial_{x_{1}}\left(  \rho\left(  c\right)  +\rho_{0}\right) \\
\partial_{x_{1}}\theta\left(  c\right)
\end{array}
\right)  \right\rangle =0,
\]
or $\left\Vert \partial_{x_{1}}\rho_{0}\right\Vert _{L^{2}}^{2}+O\left(
c\right)  =0$ which is a contradiction. This finishes the proof of the Theorem.
\end{proof}

It remains to show that the operator $M|_{X_{2}^{s}\cap K_{0}^{\bot}}\ $has a
bounded inverse. We study this in a more general setting. For $0<c_{0}%
<\sqrt{2},\ $suppose $\left(  \rho_{c_{0}},\theta_{c_{0}}\right)  $ is a
traveling wave solution satisfying (\ref{eqn-3-5-TW-Madelung}) and $\min
\rho_{c_{0}}>0$. The linearized operator of $S(\rho+\rho_{c_{0}},\theta
+\theta_{c_{0}};c)$ at $\left(  0,0;c_{0}\right)  $ is
\begin{equation}
D_{\left(  \rho,\theta\right)  }S(\rho_{c_{0}},\theta_{c_{0}};c_{0}%
):=M_{c_{0}}=%
\begin{pmatrix}
M_{11} & M_{12}\\
M_{21} & M_{22}%
\end{pmatrix}
:\ \ X_{2}^{s}\mapsto Y\ . \label{defn-M}%
\end{equation}
Here,
\begin{equation}
M_{11}=-\nabla\cdot(\frac{\nabla}{2\rho_{c_{0}}})-\frac{1}{2}\frac{1}%
{\rho_{c_{0}}^{3}}|\nabla\rho_{c_{0}}|^{2}+\frac{\Delta\rho_{c_{0}}}%
{2\rho_{c_{0}}^{2}}-F^{\prime}(\rho_{c_{0}}), \label{defn-M11}%
\end{equation}

\[
M_{22}=-2\nabla\cdot(\rho_{c_{0}}\nabla),
\]
and%
\begin{align}
M_{21}  &  =c_{0}\partial_{x_{1}}-2\nabla\cdot\left(  \nabla\theta_{c_{0}%
}\cdot\right)  ,\ \label{defn-M12}\\
M_{12}  &  =M_{21}^{\ast}=-c_{0}\partial_{x_{1}}+2\nabla\theta_{c_{0}}%
\cdot\nabla.\nonumber
\end{align}
Define $K_{c_{0}}=\left\{  \left(  \partial_{x_{1}}\rho_{c_{0}},\partial
_{x_{1}}\theta_{c_{0}}\right)  \right\}  $. Let $K_{c_{0}}^{\bot}$ be the
orthogonal complement of $K_{c_{0}}$ in $Y$,\ and $\Pi_{c_{0}}^{\bot}:Y\mapsto
K_{c_{0}}^{\bot}$ be the orthogonal projection. Note that $\Pi_{c_{0}}^{\bot
}M_{c_{0}}|_{K_{c_{0}}^{\bot}\cap X_{2}^{s}}=M_{c_{0}}|_{K_{c_{0}}^{\bot}\cap
X_{2}^{s}}.$

\begin{lemma}
\label{lemma-inverse-bound}Assume
\begin{equation}
\ker M_{c_{0}}=span\left\{  \left(  \partial_{x_{1}}\rho_{c_{0}}%
,\partial_{x_{1}}\theta_{c_{0}}\right)  \right\}  \text{ on }X_{2}^{s}.
\label{cond-NDG-c0}%
\end{equation}
Then there exists $\gamma>0$, such that for any $(\rho,\theta)\in K_{c_{0}%
}^{\bot}\cap X_{2}^{s}$,
\begin{equation}
\left\Vert M_{c_{0}}(\rho,\theta)\right\Vert _{Y}\geq\gamma\left\Vert
(\rho,\theta)\right\Vert _{X_{2}^{s}}. \label{estimate-inverse-bound}%
\end{equation}
In particular, $M_{c_{0}}|_{K_{c_{0}}^{\bot}\cap X_{2}^{s}}:K_{c_{0}}^{\bot
}\cap X_{2}^{s}\rightarrow K_{c_{0}}^{\bot}$ is invertible and
\[
\left\Vert M_{c_{0}}|_{K_{c_{0}}^{\bot}\cap X_{2}^{s}}^{-1}\right\Vert
\leq\gamma^{-1}.
\]

\end{lemma}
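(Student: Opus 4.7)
The plan is to establish that $M_{c_0}: X_2^s \to Y$ is Fredholm of index zero, identify its kernel and cokernel explicitly using the hypothesis (\ref{cond-NDG-c0}) together with the translation invariance identity (\ref{eqn-var-transl-invaraint}), and then invoke the open mapping theorem on the restricted map.

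First I would decompose $M_{c_0} = M_{c_0,\infty} + (M_{c_0} - M_{c_0,\infty})$, where $M_{c_0,\infty}$ is the constant-coefficient operator obtained by replacing $\rho_{c_0}$ with $1$ and $\nabla \theta_{c_0}$ with $0$. A direct Fourier computation shows that the symbol determinant equals $|\xi|^4 + 2|\xi|^2 - c_0^2 \xi_1^2 \geq |\xi|^4 + (2 - c_0^2)|\xi|^2$, which is strictly positive for $\xi \neq 0$ since $c_0 \in (0, \sqrt{2})$. Hence $M_{c_0,\infty}$ is an isomorphism from $X_2^s$ onto $Y$. The remaining piece $M_{c_0} - M_{c_0,\infty}$ amounts to differential operators of order at most two with coefficients ($\rho_{c_0} - 1$, $\nabla\rho_{c_0}$, $\Delta\rho_{c_0}$, $\nabla\theta_{c_0}$, $F'(\rho_{c_0}) - F'(1)$) that decay at infinity, composed on the right with operators of no higher order, and the off-diagonal second-row entry has divergence form. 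A standard argument with cutoff functions, decay of the coefficients, and local compactness of $H^2 \hookrightarrow L^2$ gives compactness of $M_{c_0} - M_{c_0,\infty}: X_2^s \to Y$, so $M_{c_0}$ is Fredholm of index zero.

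By hypothesis (\ref{cond-NDG-c0}), $\ker M_{c_0}|_{X_2^s} = K_{c_0}$ is one-dimensional. On the other hand, differentiating the identity (\ref{eqn-var-transl-invaraint}) in $(\rho,\theta)$ at $(0,0)$ with $c = c_0$, and using $S(\rho_{c_0},\theta_{c_0};c_0) = 0$, yields
\[
\bigl\langle M_{c_0}(\phi,\psi),\ (\partial_{x_1}\rho_{c_0},\partial_{x_1}\theta_{c_0})\bigr\rangle = 0, \qquad \forall\, (\phi,\psi) \in X_2^s,
\]
so $\mathrm{Range}(M_{c_0}) \subset K_{c_0}^\perp$. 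Since $M_{c_0}$ is Fredholm of index zero with one-dimensional kernel, its range has codimension one in $Y$, which forces $\mathrm{Range}(M_{c_0}) = K_{c_0}^\perp$. Because $K_{c_0}$ sits inside $X_2^s$, we get the splitting $X_2^s = (K_{c_0}^\perp \cap X_2^s) \oplus K_{c_0}$, and the restricted map $M_{c_0}|_{K_{c_0}^\perp \cap X_2^s}: K_{c_0}^\perp \cap X_2^s \to K_{c_0}^\perp$ is a continuous bijection between Banach spaces. The open mapping theorem provides a bounded inverse, which is exactly the estimate (\ref{estimate-inverse-bound}) for some $\gamma > 0$.

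The main obstacle will be verifying the compactness of $M_{c_0} - M_{c_0,\infty}$ into the mixed target space $Y = L^2_{r_\perp} \times (L^2_{r_\perp} \cap \dot H^{-1}_{r_\perp})$, particularly the $\dot H^{-1}$ component of the second row, where one must carefully exploit the divergence form of the perturbation of $M_{21}$ together with the decay of $\rho_{c_0} - 1$ and $\nabla\theta_{c_0}$ at infinity. Invertibility of $M_{c_0,\infty}$ on $X_2^s \to Y$ at the scale of low frequencies (needed for the $\dot H^{-1}$ estimate) is also a point that deserves careful Fourier-side checking, but the condition $c_0 < \sqrt{2}$ makes this work.
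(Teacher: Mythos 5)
There is a genuine gap in the compactness step. Your reference operator $M_{c_{0},\infty}$ freezes $\rho_{c_{0}}\equiv 1$ and $\nabla\theta_{c_{0}}\equiv 0$, so the difference $M_{c_{0}}-M_{c_{0},\infty}$ contains the \emph{second-order} terms $-\nabla\cdot\big((\tfrac{1}{2\rho_{c_{0}}}-\tfrac{1}{2})\nabla\rho\big)$ and $-2\nabla\cdot\big((\rho_{c_{0}}-1)\nabla\theta\big)$, whose leading parts are $(\tfrac{1}{2}-\tfrac{1}{2\rho_{c_{0}}})\Delta\rho$ and $-2(\rho_{c_{0}}-1)\Delta\theta$. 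An operator of the form $a(x)\Delta$ with $a$ bounded and decaying (but not small) is \emph{not} compact from $H^{2}$ (or $\dot H^{2}$) into $L^{2}$: testing against $\rho_{n}=n^{-2}\phi(x)\cos(nx_{1})$ with $\phi$ a fixed bump radial in $x_{\bot}$, which is bounded and weakly null in $H^{2}$, gives $a\Delta\rho_{n}\approx -a\phi\cos(nx_{1})$, whose $L^{2}$ norm does not vanish. Local compactness of $H^{2}\hookrightarrow L^{2}$ only absorbs perturbation terms of order at most one, where there is a derivative to spare; it cannot absorb a full second derivative, and divergence form only helps the $\dot H^{-1}$ component of $Z$, not the $L^{2}$ component. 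Since $\rho_{c_{0}}$ is not uniformly close to $1$ (it interpolates between $\phi_{0}(0)^{2}$ and $1$), a smallness/Neumann-series argument does not rescue the step either. So the Fredholmness of $M_{c_{0}}$ is not established by your decomposition, and the index-zero argument collapses at this point.

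The fix --- and what the paper actually does --- is to keep the variable-coefficient principal part in the reference operator: the paper's $M_{c_{0}}^{\infty}$ in (\ref{defn-M-infinity}) retains $-\nabla\cdot(\frac{\nabla}{2\rho_{c_{0}}})$ and $-2\nabla\cdot(\rho_{c_{0}}\nabla)$ and drops only the decaying zeroth- and first-order terms, so the difference genuinely gains a derivative and sends weakly null sequences to norm-null ones, which is all that is needed. The price is that $M_{c_{0}}^{\infty}$ is no longer constant-coefficient, so its invertibility cannot be read off a symbol determinant; the paper instead proves the lower bound $\Vert M_{c_{0}}^{\infty}\psi\Vert_{Y}\geq\eta\Vert\psi\Vert_{X_{2}}$ in Lemma \ref{lemma-M-infinity} from coercivity of the quadratic form $\langle M_{c_{0}}^{\infty}\psi,\psi\rangle$ (using $\inf\rho_{c_{0}}>0$ and $c_{0}<\sqrt{2}$, via the same completion of squares underlying your symbol computation) together with elliptic regularity, and then runs a Floer--Weinstein type contradiction argument on a normalized weakly null sequence, as in \cite{fw86}, rather than invoking Fredholm theory. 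Your cokernel identification via the kernel hypothesis and the open mapping theorem is sound and would go through once the reference operator is chosen this way; your low-frequency analysis of the $\dot H^{-1}$ component is also correctly flagged and does work out.
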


\begin{proof}
We follow the arguments of the proof of Proposition 2.3 in \cite{fw86}.
Suppose (\ref{estimate-inverse-bound}) is not true. Then there exists a
sequence
\[
\psi_{n}=\left(  \rho_{n},\theta_{n}\right)  \in K_{c_{0}}^{\bot}\cap
X_{2}^{s},\ \ n=1,2,\cdots,
\]
such that $\left\Vert \psi_{n}\right\Vert _{X_{2}}=1$ and $\left\Vert
M_{c_{0}}\psi_{n}\right\Vert _{Y}\rightarrow0$ when $n\rightarrow\infty$.
Since $\left\Vert \psi_{n}\right\Vert _{X_{2}}=1$, we may assume (by passing
to a subsequence) that $\psi_{n}\rightarrow\psi_{\infty}$ weakly in $X_{2}%
^{s}\,$\ for some $\psi_{\infty}\in X_{2}^{s}$. The fact that $\left\{
\psi_{n}\right\}  $ is orthogonal to $K_{c_{0}}$ implies that $\psi_{\infty
}\in K_{c_{0}}^{\bot}\cap X_{2}^{s}$. The weak convergence of $\psi_{n}$ to
$\psi_{\infty}$ in $X_{2}^{s}$ implies the weak convergence of $M_{c_{0}}%
\psi_{n}$ to $M_{c_{0}}\psi_{\infty}$ in $L^{2}$. It follows from $\left\Vert
M_{c_{0}}\psi_{n}\right\Vert _{Y}\rightarrow0$ that $M_{c_{0}}\psi_{\infty}%
=0$. Thus $\psi_{\infty}=0$ since $\psi_{\infty}$ is orthogonal to $\ker
M_{c_{0}}=K_{c_{0}}$.

Denote the operator
\begin{equation}
M_{c_{0}}^{\infty}=%
\begin{pmatrix}
M_{11}^{\infty} & -c_{0}\partial_{x_{1}}\\
c_{0}\partial_{x_{1}} & M_{22}%
\end{pmatrix}
,\text{ with }M_{11}^{\infty}=-\nabla.(\frac{\nabla}{2\rho_{c_{0}}})+\frac
{1}{\rho_{c_{0}}}. \label{defn-M-infinity}%
\end{equation}
We claim that: $\left\Vert M_{c_{0}}^{\infty}\psi_{n}\right\Vert
_{Y}\rightarrow0$ when $n\rightarrow\infty$. First, $\psi_{n}\rightarrow0$
weakly in $X_{2}$ implies that $\rho_{n}\rightarrow0$ weakly in $H^{2}$ and
$\theta_{n}\rightarrow0$ weakly in $\dot{H}^{2}$. So for any bounded function
$a\left(  x\right)  $ decaying at infinity, we have $a\left(  x\right)
\rho_{n},\ a\left(  x\right)  \nabla\rho_{n},\ a\left(  x\right)  \nabla
\theta_{n}\rightarrow0$ strongly in $L^{2}$, since the restriction of
$\rho_{n},\nabla\rho_{n},\nabla\theta_{n}$ to a bounded domain implies strong
convergence. Thus, we have
\[
\left\Vert \left(  M_{11}-M_{11}^{\infty}\right)  \rho_{n}\right\Vert _{L^{2}%
}\rightarrow0,
\]%
\[
\left\Vert \left(  M_{21}-c_{0}\partial_{x_{1}}\right)  \rho_{n}\right\Vert
_{L^{2}}=2\left\Vert \nabla\cdot\left(  \nabla\theta_{c_{0}}\rho_{n}\right)
\right\Vert _{L^{2}}\rightarrow0,
\]%
\[
\left\Vert \left(  M_{12}+c_{0}\partial_{x_{1}}\right)  \theta_{n}\right\Vert
_{L^{2}}=2\left\Vert \nabla\theta_{c_{0}}\cdot\nabla\theta_{n}\right\Vert
_{L^{2}}\rightarrow0,
\]
and
\[
\left\Vert \left(  M_{21}-c_{0}\partial_{x_{1}}\right)  \rho_{n}\right\Vert
_{\dot{H}^{-1}}\leq C\left\Vert \nabla\theta_{c_{0}}\rho_{n}\right\Vert
_{L^{2}}\rightarrow0.
\]
This shows that $\left\Vert \left(  M_{c_{0}}-M_{c_{0}}^{\infty}\right)
\psi_{n}\right\Vert _{Y}\rightarrow0$ and thus $\left\Vert M_{c_{0}}^{\infty
}\psi_{n}\right\Vert _{Y}\rightarrow0$. By Lemma \ref{lemma-M-infinity} below,
there exists $\eta>0$ such that
\[
\left\Vert M_{c_{0}}^{\infty}\psi_{n}\right\Vert _{Y}\geq\eta\left\Vert
\psi_{n}\right\Vert _{X_{2}}=\eta.
\]
This contradiction proves the lemma.
\end{proof}

\begin{lemma}
\label{lemma-M-infinity}Assume $0<c_{0}<\sqrt{2}$ and $\inf\rho_{c_{0}}\left(
x\right)  =\delta_{0}>0$. Then there exists $\eta>0$ such that
\begin{equation}
\left\Vert M_{c_{0}}^{\infty}\psi\right\Vert _{Y}\geq\eta\left\Vert
\psi\right\Vert _{X_{2}}, \label{estimate-M-infinity}%
\end{equation}
for any $\psi\in X_{2}^{s}$.
\end{lemma}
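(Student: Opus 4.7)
The plan is to prove the estimate in two steps: first, a coercivity bound at the $X_1^s$-level for the bilinear form $\langle M_{c_0}^\infty\cdot,\cdot\rangle$, which via duality gives $\|\psi\|_{X_1^s}\le C\|M_{c_0}^\infty\psi\|_Y$; second, an elliptic regularity upgrade from $X_1^s$ to $X_2^s$.

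For the coercivity, integrating by parts yields
\begin{equation*}
\langle M_{c_0}^\infty\psi,\psi\rangle = \int_{\mathbf{R}^n}\left(\frac{|\nabla\rho|^2}{2\rho_{c_0}} + \frac{\rho^2}{\rho_{c_0}} + 2\rho_{c_0}|\nabla\theta|^2 - 2c_0\,\rho\,\partial_{x_1}\theta\right)dx.
\end{equation*}
Choosing $a_0 \in (c_0/\sqrt{2},1)$ (nonempty since $c_0<\sqrt{2}$) and completing the square in the cross term in the spirit of Lemma \ref{le-6}, one rewrites the integrand as a sum of nonnegative squares plus the strictly positive expression $\left(1-c_0^2/(2a_0^2)\right)\rho^2/\rho_{c_0}$. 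Since $\rho_{c_0}$ is bounded above and below by positive constants---the lower bound $\delta_0$ being given, and the upper bound following since $\rho_{c_0}$ is the density of a smooth, finite-energy traveling wave---this yields $\langle M_{c_0}^\infty\psi,\psi\rangle \geq \eta_1\|\psi\|_{X_1^s}^2$ for some $\eta_1>0$. Pairing via the $(L^2,L^2)$ product in the first component and the $(\dot{H}^{-1},\dot{H}^{1})$ duality in the second gives
\begin{equation*}
\eta_1\|\psi\|_{X_1^s}^2 \leq \langle M_{c_0}^\infty\psi,\psi\rangle \leq \|M_{c_0}^\infty\psi\|_Y\,\|\psi\|_{X_1^s},
\end{equation*}
and hence $\|\psi\|_{X_1^s} \leq \eta_1^{-1}\|M_{c_0}^\infty\psi\|_Y$.

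For the upgrade to $X_2^s$, denote $(f,g):=M_{c_0}^\infty\psi$. Expanding
\begin{equation*}
M_{11}^\infty\rho = -\frac{1}{2\rho_{c_0}}\Delta\rho + \frac{\nabla\rho_{c_0}\cdot\nabla\rho}{2\rho_{c_0}^2} + \frac{\rho}{\rho_{c_0}},\qquad M_{22}\theta=-2\rho_{c_0}\Delta\theta -2\nabla\rho_{c_0}\cdot\nabla\theta,
\end{equation*}
the equations $M_{11}^\infty\rho = f + c_0\partial_{x_1}\theta$ and $M_{22}\theta = g - c_0\partial_{x_1}\rho$, together with standard elliptic estimates for $-\Delta$ on $\mathbf{R}^n$ (applicable because $\rho_{c_0}\in C^2$ is bounded away from $0$ and $\infty$), give
\begin{equation*}
\|\rho\|_{H^2}+\|\theta\|_{\dot{H}^2}\leq C\bigl(\|f\|_{L^2}+\|g\|_{L^2}+\|\psi\|_{X_1^s}\bigr)\leq C'\|M_{c_0}^\infty\psi\|_Y,
\end{equation*}
establishing the lemma with $\eta=1/C'$.

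The main obstacle is the coercivity step. The subsonic condition $c_0^2<2$ enters essentially: the cross term $-2c_0\,\rho\,\partial_{x_1}\theta$ must be absorbed by the positive $\rho^2/\rho_{c_0}$ term (of effective weight $2$, coming from the factor pairing $2\rho_{c_0}\cdot(1/\rho_{c_0})$) and the $2\rho_{c_0}(\partial_{x_1}\theta)^2$ term, which the AM-GM style completion of squares accomplishes precisely when $c_0^2<2$. Once this coercivity is in hand, the elliptic regularity step is routine.
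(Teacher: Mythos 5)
Your proposal is correct and follows essentially the same route as the paper's proof: a completion-of-squares coercivity estimate for $\langle M_{c_0}^\infty\psi,\psi\rangle$ at the $H^1\times\dot H^1$ level using the subsonic condition $c_0^2<2$ (the paper picks $a_0\in(0,1)$ with $2-c_0^2/a_0^2>0$, an equivalent parametrization of your choice), followed by the $(L^2,L^2)$ and $(\dot H^{-1},\dot H^1)$ duality pairing and a standard elliptic regularity upgrade to $X_2$. No substantive differences.
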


\begin{proof}
Take any $\psi=\left(  \rho,\theta\right)  \in X_{2}^{s}.$ First, we estimate
$\left\Vert \psi\right\Vert _{H^{1}\times\dot{H}^{1}}$ as in the proof of
Lemma \ref{le-6}. Since $0<c_{0}<\sqrt{2}$, we can choose $0<a_{0}<1$ such
that $2-\frac{c_{0}^{2}}{a_{0}^{2}}>0$. Then,
\begin{align}
\left\langle M_{c_{0}}^{\infty}\psi,\psi\right\rangle  &  =\int_{\mathbf{R}%
^{n}}\left[  \frac{1}{2\rho_{c_{0}}}\left\vert \nabla\rho\right\vert
^{2}+\frac{1}{\rho_{c_{0}}}\rho^{2}-2c_{0}\rho\partial_{x_{1}}\theta
+2\rho_{c_{0}}\left\vert \nabla\theta\right\vert ^{2}\right]
dx\label{estimate-quadratic-m-infty}\\
&  =\int_{\mathbf{R}^{n}}[\frac{1}{2\rho_{c_{0}}}\left\vert \nabla
\rho\right\vert ^{2}+\frac{1}{\rho_{c_{0}}}\left(  1-a_{0}^{2}\right)
\rho^{2}+2\rho_{c_{0}}\left\vert \nabla^{\perp}\theta\right\vert
^{2}\nonumber\\
&  \ \ \ \ \ \ +(2-\frac{c_{0}^{2}}{a_{0}^{2}})\rho_{c_{0}}\left\vert
\partial_{x_{1}}\theta\right\vert ^{2}+\left(  \frac{a_{0}\rho}{\sqrt
{\rho_{c_{0}}}}-\frac{c_{0}}{a_{0}}\partial_{x_{1}}\theta\sqrt{\rho_{c_{0}}%
}\right)  ^{2}]\ dx\nonumber\\
&  \geq\eta_{0}\left(  \left\Vert \rho\right\Vert _{H^{1}}^{2}+\left\Vert
\theta\right\Vert _{\dot{H}^{1}}^{2}\right)  ,\text{ for some }\eta
_{0}>0\text{. }\nonumber
\end{align}
So
\[
\eta_{0}\left(  \left\Vert \rho\right\Vert _{H^{1}}^{2}+\left\Vert
\theta\right\Vert _{\dot{H}^{1}}^{2}\right)  \leq\left\Vert M_{c_{0}}^{\infty
}\psi\right\Vert _{L^{2}\times\dot{H}^{-1}}\left(  \left\Vert \rho\right\Vert
_{L^{2}}+\left\Vert \theta\right\Vert _{\dot{H}^{1}}\right)  ,
\]
and thus
\[
\frac{1}{2}\eta_{0}\left(  \left\Vert \rho\right\Vert _{H^{1}}+\left\Vert
\theta\right\Vert _{\dot{H}^{1}}\right)  \leq\left\Vert M_{c_{0}}^{\infty}%
\psi\right\Vert _{L^{2}\times\dot{H}^{-1}}.
\]
From the standard elliptic estimates, there exists $C>0$ such that
\[
\left\Vert \nabla^{2}\rho\right\Vert _{L^{2}}+\left\Vert \nabla^{2}%
\theta\right\Vert _{L^{2}}\leq C\left(  \left\Vert \rho\right\Vert _{H^{1}%
}+\left\Vert \theta\right\Vert _{\dot{H}^{1}}+\left\Vert M_{c_{0}}^{\infty
}\psi\right\Vert _{L^{2}\times L^{2}}\right)  .
\]
Combining above two inequalities, we get (\ref{estimate-M-infinity}).
\end{proof}

For dimension $n\geq4,\ $we need to study the equation in the function space
of higher regularity. Choose $k>\frac{n}{2}$ such that $H^{k}\left(
\mathbf{R}^{n}\right)  \hookrightarrow L^{\infty}\left(  \mathbf{R}%
^{n}\right)  $. Let $Y_{k}=H_{r_{\bot}}^{k-2}\times\left(  H_{r_{\bot}}%
^{k-2}\cap\dot{H}_{r_{\bot}}^{-1}\right)  ,$We construct traveling waves near
stationary bubbles by solving the equation $S(\rho+\rho_{0},\theta;c)=0$ in
the space $X_{k}^{s}$. Assuming $F\in C^{k-1}$, from Lemma
\ref{lemma-inverse-bound}, by bootstrapping we get the estimate
\[
\left\Vert M_{c_{0}}(\rho,\theta)\right\Vert _{Y_{k}}\geq\gamma\left\Vert
(\rho,\theta)\right\Vert _{X_{k}}%
\]
and thus $M_{c_{0}}|_{K_{c_{0}}^{\bot}\cap X_{k}^{s}}:K_{c_{0}}^{\bot}\cap
X_{k}^{s}\rightarrow K_{c_{0}}^{\bot}\cap Y_{k}$ is invertible. Then by the
same proof of Theorem \ref{thm-existence-slow}, we get the existence of slow
traveling waves near $\rho_{0}$ for $n\geq4$ in the space $X_{k}^{s}.\ $

\begin{remark}
The two non-degeneracy conditions (\ref{cond-NDG-c0}) and
(\ref{assumption-NDG-s}) are equivalent. This can be seen from the relation of
operators $M_{c_{0}}$ and $L_{c_{0}}$. For a traveling wave $U_{c_{0}}%
=\sqrt{\rho_{c_{0}}}e^{i\theta_{c_{0}}}$ with no vortices, denote the matrix
operator
\begin{equation}
T_{c_{0}}=\left(
\begin{array}
[c]{cc}%
\frac{1}{2}\frac{1}{\sqrt{\rho_{c_{0}}}}\cos\theta_{c_{0}} & -\sqrt
{\rho_{c_{0}}}\sin\theta_{c_{0}}\\
\frac{1}{2}\frac{1}{\sqrt{\rho_{c_{0}}}}\sin\theta_{c_{0}} & \sqrt{\rho
_{c_{0}}}\cos\theta_{c_{0}}%
\end{array}
\right)  . \label{definition T_c}%
\end{equation}
Then
\begin{equation}
M_{c_{0}}=2T_{c_{0}}^{t}L_{c_{0}}T_{c_{0}}.
\label{relation-2nd-variation-madelung}%
\end{equation}
Since $T_{c_{0}}$ is obviously an isomorphism of $X$, we have
\[
T_{c_{0}}\left(  \ker M_{c_{0}}\right)  =\ker L_{c_{0}}%
\]
which implies the equivalence of (\ref{cond-NDG-c0}) and
(\ref{assumption-NDG-s}). To show (\ref{relation-2nd-variation-madelung}), we
note that: 1) $M_{c_{0}}$ and $L_{c_{0}}$ are from the second variation of the
energy-momentum functional $2\left(  E+cP\right)  $ in $\left(  \rho
,\theta\right)  \ $and $E+cP$ in $\left(  u,v\right)  $ respectively$\ $and 2)
the first order variations of $\left(  u,v\right)  $ and $\left(  \rho
,\theta\right)  $ are related by the matrix $T_{c_{0}}$.
\end{remark}

\begin{remark}
The existence of slow traveling waves for cubic-quintic type equations was
proved for $n\geq4$ in \cite{maris-4d-slow} by using the critical point
theory, and for $n=2,3$ in an unpublished manuscript of Z. Lin
(\cite{lin-note-99}) by using the hydrodynamic formulation and
Liapunov-Schmidt reduction. The proof we give here adapts the formulation of
(\cite{lin-note-99}), but it is much simpler and works for any dimension
$n\geq2$. The new observation is to use the variational structure of the
traveling wave equation (\ref{eqn-3-5-TW-Madelung}) in hydrodynamic variables
to reduce it to equation (\ref{eqn-reduced-TW}) which is solved by the
implicit function theorem. Moreover, as a corollary of the proof we get the
local uniqueness and differentiability of the traveling wave branch.
\end{remark}

\bigskip

\subsection{Continuation of traveling waves}

By using Lemma \ref{lemma-inverse-bound} and the proof of Theorem
\ref{thm-existence-slow}, we get the following result on the continuation of
traveling waves without vortices (i.e. $\left\vert U_{c}\right\vert \neq0$).

\begin{proposition}
For $n\geq2$, fix $k>\frac{n}{2}$ and assume $F\in C^{k-1}$, $0<c_{0}<\sqrt
{2}$, $\left(  \rho_{c_{0}},\theta_{c_{0}}\right)  $ is a cylindrically
symmetric traveling wave of (\ref{eqn-3-5-TW-Madelung}) satisfying $\inf
\rho_{c_{0}}\left(  x\right)  >0$ and the non-degeneracy condition
(\ref{cond-NDG-c0}). Then $\exists$ $\varepsilon_{0}>0$, such that for%
\[
c\in\left(  -\varepsilon_{0}+c_{0},\varepsilon_{0}+c_{0}\right)
\subset\left(  0,\sqrt{2}\right)  ,
\]
there exists a locally unique $C^{1}$ solution curve $\left(  \rho_{c}%
,\theta_{c}\right)  $ of (\ref{eqn-3-5-TW-Madelung}), with $\left(  \rho
_{c}-\rho_{c_{0}},\theta_{c}\right)  \in X_{k}^{s}$. That is,
\[
\phi^{c}\left(  x_{1}-ct,,x^{\perp}\right)  =\left(  \sqrt{\rho_{c}}%
e^{i\theta_{c}}\right)  \left(  x_{1}-ct,,x^{\perp}\right)
\]
are the only traveling wave solutions of (\ref{eqn-GP-generalized}) near
$\left(  \rho_{c_{0}},\theta_{c_{0}}\right)  .$
\end{proposition}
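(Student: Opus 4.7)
The plan is to mimic the implicit function theorem argument in the proof of Theorem~\ref{thm-existence-slow}, based now at $(\rho_{c_0},\theta_{c_0})$ instead of at the stationary bubble. Write candidate traveling waves as $(\rho_{c_0}+\rho,\,\theta_{c_0}+\theta)$ with small $(\rho,\theta)\in X_k^s$, and consider
\[
\tilde S(\rho,\theta;c) := S(\rho_{c_0}+\rho,\,\theta_{c_0}+\theta;\,c)
\]
in a neighborhood of $(0,0;c_0)$. Since $\inf\rho_{c_0}>0$ and $H^k\hookrightarrow L^\infty$ for $k>n/2$, the reciprocals $1/\rho$ and $1/\rho^2$ appearing in $S$ cause no trouble for sufficiently small $\rho$; combined with $F\in C^{k-1}$ and the divergence structure of the continuity equation (which places the second component of $\tilde S$ naturally in $\dot H^{-1}$), this shows $\tilde S\in C^1$ as a map from a neighborhood of $(0,0;c_0)$ in $X_k^s\times\mathbb{R}$ into $Y_k$, with $D_{(\rho,\theta)}\tilde S(0,0;c_0)=M_{c_0}$ as defined in \eqref{defn-M}.

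Next, project off the kernel direction using $\Pi_{c_0}^\perp:Y_k\to K_{c_0}^\perp\cap Y_k$, and solve
\[
\Pi_{c_0}^\perp\tilde S(\rho,\theta;c)=0,\qquad (\rho,\theta)\in K_{c_0}^\perp\cap X_k^s.
\]
By the non-degeneracy hypothesis \eqref{cond-NDG-c0} and the $X_k$-upgrade of Lemma~\ref{lemma-inverse-bound} via elliptic bootstrapping (as already indicated in the paragraph preceding this proposition), $M_{c_0}:K_{c_0}^\perp\cap X_k^s\to K_{c_0}^\perp\cap Y_k$ is a bounded isomorphism. The Implicit Function Theorem then produces $\varepsilon_0>0$ and a locally unique $C^1$ branch $c\mapsto(\rho_c',\theta_c')\in K_{c_0}^\perp\cap X_k^s$ with $(\rho_{c_0}',\theta_{c_0}')=(0,0)$ and $\Pi_{c_0}^\perp\tilde S(\rho_c',\theta_c';c)=0$ for $|c-c_0|<\varepsilon_0$.

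To recover a solution of the full system, the translation-invariance identity \eqref{eqn-var-transl-invaraint}, which depends only on invariance of $E+cP$ under $x_1$-translation, holds here verbatim. Since the projection equation forces $\tilde S(\rho_c',\theta_c';c)=\kappa(c)\bigl(\partial_{x_1}\rho_{c_0},\,\partial_{x_1}\theta_{c_0}\bigr)$ for some scalar $\kappa(c)$, pairing with $\bigl(\partial_{x_1}(\rho_{c_0}+\rho_c'),\,\partial_{x_1}(\theta_{c_0}+\theta_c')\bigr)$ and using \eqref{eqn-var-transl-invaraint} gives $\kappa(c)\cdot\bigl(\|\partial_{x_1}\rho_{c_0}\|_{L^2}^2 + \|\partial_{x_1}\theta_{c_0}\|^2_{\star} + O(\varepsilon_0)\bigr)=0$, where the leading term is positive (otherwise $\partial_{x_1}\rho_{c_0}\equiv0$, forcing $\rho_{c_0}$ independent of $x_1$ and contradicting the cylindrical traveling wave structure at $c_0\neq0$). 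Hence $\kappa\equiv0$ and $(\rho_c,\theta_c):=(\rho_{c_0}+\rho_c',\theta_{c_0}+\theta_c')$ is the desired $C^1$ family of locally unique traveling waves.

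The main obstacle is really the $X_k$ upgrade of Lemma~\ref{lemma-inverse-bound}. The $k=2$ proof uses local compactness plus weak convergence to reduce to the asymptotic operator $M_{c_0}^\infty$ of \eqref{defn-M-infinity}. For the higher-regularity version one needs a clean elliptic bootstrap controlling the variable-coefficient commutators $[\partial^\alpha,M_{11}]$ and $[\partial^\alpha,\nabla\theta_{c_0}\!\cdot\!\nabla]$ in $H^{k-2}$, using only $\inf\rho_{c_0}>0$ and the decay of $\rho_{c_0}-1$ and $\nabla\theta_{c_0}$. A secondary, more routine, issue is verifying that the translation-invariance identity \eqref{eqn-var-transl-invaraint} extends to $(\rho,\theta)\in X_k^s$ with $\rho+\rho_{c_0}>0$; this follows from the same variational computation once well-definedness of $E+cP$ on this class is confirmed via the embeddings above.
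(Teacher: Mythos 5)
Your proposal is correct and follows essentially the same route the paper intends: the paper gives no separate proof of this proposition, stating only that it follows from Lemma \ref{lemma-inverse-bound} and the proof of Theorem \ref{thm-existence-slow}, and your argument (Lyapunov--Schmidt projection off $K_{c_0}$, the bootstrapped $X_k^s$ invertibility of $M_{c_0}|_{K_{c_0}^{\bot}}$, the Implicit Function Theorem, and the translation-invariance identity \eqref{eqn-var-transl-invaraint} to kill the residual multiple of $(\partial_{x_1}\rho_{c_0},\partial_{x_1}\theta_{c_0})$) is precisely that intended adaptation. The two technical points you flag (the $H^{k-2}$ elliptic bootstrap for the coercivity estimate and the $C^1$ smoothness of $S$ into $Y_k$) are exactly the ones the paper also leaves implicit.
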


For $n=3$, we can prove the continuation of general traveling waves even with
vortices, under the non-degeneracy condition (\ref{assumption-NDG-s}). Instead
of using the hydrodynamic formulation, this is achieved by using the original
equation (\ref{eqn-GP-G-TW}). First, we need an analogue of Lemma
\ref{lemma-inverse-bound}. We still use $X,\ Y$ for the cylindrical symmetric
spaces defined before.

\begin{lemma}
\label{lemma-inverse-bound-3d} For $n=3$ and $0\leq c_{0}<\sqrt{2},$ let
$U_{c_{0}}=u_{c_{0}}+iv_{c_{0}}$ be a traveling wave solution of
(\ref{eqn-GP-G-TW}) satisfying the decay condition (\ref{decay-TW-g}). Let
$L_{c_{0}}:X_{2}^{s}\rightarrow Y$ be the operator defined in
(\ref{operator-Lc-g}). Assume (\ref{assumption-NDG-s}), i.e.,
\[
\ker L_{c_{0}}=\bar{K}_{c_{0}}=\left\{  \partial_{x_{1}}U_{c_{0}}\right\}
,\text{ on }X_{2}^{s},
\]
and denote $\bar{K}_{c_{0}}^{\bot}$ to be the $L^{2}$ orthogonal complement of
$\bar{K}_{c_{0}}$ in $Y$. Then, there exists $\gamma>0$, such that
\[
\left\Vert L_{c_{0}}\phi\right\Vert _{Y}\geq\gamma\left\Vert \phi\right\Vert
_{X_{2}^{s}},\text{ for any }\phi\in\bar{K}_{c_{0}}^{\bot}\cap X_{2}^{s}.
\]
In particular, $L_{c_{0}}:\bar{K}_{c_{0}}^{\bot}\cap X_{2}^{s}\rightarrow
\bar{K}_{c_{0}}^{\bot}$ is invertible and
\[
\left\Vert \Big(L_{c_{0}}|_{\bar{K}_{c_{0}}^{\bot}\cap X_{2}^{s}}%
\Big)^{-1}\right\Vert \leq\gamma^{-1}.
\]

\end{lemma}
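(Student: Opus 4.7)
The plan is to mimic the contradiction argument of Lemma~\ref{lemma-inverse-bound}, using the asymptotic constant-coefficient operator $L_{c_{0},\infty}$ from (\ref{defn-L-infty}) and the quadratic form coercivity of Lemma~\ref{le-6} as the two workhorses. Suppose the claimed inequality fails; then there is a sequence $\phi_{n}\in\bar{K}_{c_{0}}^{\bot}\cap X_{2}^{s}$ with $\Vert\phi_{n}\Vert_{X_{2}^{s}}=1$ and $\Vert L_{c_{0}}\phi_{n}\Vert_{Y}\to 0$. By reflexivity of $X_{2}^{s}$, I would pass to a weak limit $\phi_{n}\rightharpoonup\phi_{\infty}$. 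Weak closedness of $\bar{K}_{c_{0}}^{\bot}$ preserves the orthogonality, while the continuity of $L_{c_{0}}:X_{2}^{s}\to Y$ combined with $\Vert L_{c_{0}}\phi_{n}\Vert_{Y}\to 0$ forces $L_{c_{0}}\phi_{\infty}=0$. Hypothesis (\ref{assumption-NDG-s}) then places $\phi_{\infty}$ in $\bar{K}_{c_{0}}\cap\bar{K}_{c_{0}}^{\bot}=\{0\}$.

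Next I would pass from $L_{c_{0}}$ to $L_{c_{0},\infty}$. The entries of $L_{c_{0}}-L_{c_{0},\infty}$ are multiplication operators by the coefficients $F(|U_{c_{0}}|^{2})$, $F'(|U_{c_{0}}|^{2})u_{c_{0}}^{2}+1$, $F'(|U_{c_{0}}|^{2})v_{c_{0}}^{2}$, and $F'(|U_{c_{0}}|^{2})u_{c_{0}}v_{c_{0}}$, each of which vanishes at infinity thanks to (\ref{decay-TW-g}). Cutting off outside a large ball and using the Hardy inequality (\ref{hardy}) to dominate the exterior contribution by this pointwise decay, combined with the local Rellich compactness of $X_{2}^{s}\hookrightarrow X_{1}^{s}$ restricted to bounded balls and $\phi_{n}\rightharpoonup 0$, I expect to obtain $\Vert(L_{c_{0}}-L_{c_{0},\infty})\phi_{n}\Vert_{Y}\to 0$.

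The third step is to upgrade the coercive quadratic form of Lemma~\ref{le-6} to an operator bound $\Vert L_{c_{0},\infty}\phi\Vert_{Y}\geq\eta\Vert\phi\Vert_{X_{2}^{s}}$, mirroring Lemma~\ref{lemma-M-infinity}. Cauchy--Schwarz applied to $\langle L_{c_{0},\infty}\phi,\phi\rangle\geq\delta_{0}\Vert\phi\Vert_{X_{1}}^{2}$ yields the first-order estimate $\Vert\phi\Vert_{X_{1}}\lesssim\Vert L_{c_{0},\infty}\phi\Vert_{Y}$, and standard elliptic regularity for the diagonal Laplacian terms then boosts it to $\Vert\phi\Vert_{X_{2}^{s}}\lesssim\Vert\phi\Vert_{X_{1}}+\Vert L_{c_{0},\infty}\phi\Vert_{Y}$, producing some $\eta>0$. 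Combining the three pieces yields $\eta\leq\Vert L_{c_{0},\infty}\phi_{n}\Vert_{Y}\leq\Vert L_{c_{0}}\phi_{n}\Vert_{Y}+\Vert(L_{c_{0}}-L_{c_{0},\infty})\phi_{n}\Vert_{Y}\to 0$, the contradiction. Invertibility and the operator-norm bound for $L_{c_{0}}|_{\bar{K}_{c_{0}}^{\bot}\cap X_{2}^{s}}^{-1}$ then follow from the coercivity together with the self-adjointness of $L_{c_{0}}$ in the appropriate duality, which identifies $\mathrm{Range}(L_{c_{0}})$ with $\bar{K}_{c_{0}}^{\bot}$.

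The main obstacle I anticipate is the compactness step. Under the general-nonlinearity decay (\ref{decay-TW-g}) the off-diagonal coefficient $F'(|U_{c_{0}}|^{2})u_{c_{0}}v_{c_{0}}$ falls off only like $o(|x|^{-1})$, so in the exterior region one must match this decay against the purely homogeneous regularity of the second component $\phi_{n,2}\in\dot{H}^{2}$ in the target norm on $Y$; Hardy's inequality in dimension $3$ is just barely enough to cover this. Keeping the precise form of $Y$ (a mixture of $L^{2}$ and $\dot{H}^{-1}$ components) aligned with these decay rates, so that the cross terms genuinely land in $Y$ with the required quantitative smallness, is the delicate bookkeeping that I expect will occupy most of the detailed proof.
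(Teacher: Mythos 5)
Your proposal is correct and follows essentially the same route as the paper: the contradiction argument with a weakly convergent normalized sequence (as in Lemma \ref{lemma-inverse-bound}), the reduction $\Vert(L_{c_{0}}-L_{c_{0},\infty})\psi_{n}\Vert_{Y}\to 0$ via the decay (\ref{decay-TW-g}), Hardy's inequality and local compactness, and the lower bound $\Vert L_{c_{0},\infty}\phi\Vert_{Y}\geq\eta\Vert\phi\Vert_{X_{2}}$ obtained from Lemma \ref{le-6} plus elliptic regularity as in Lemma \ref{lemma-M-infinity}. The delicate point you flag — matching the $o(|x|^{-1})$ and $o(|x|^{-2})$ coefficient decay against the $\dot H$-type components of $Y$ — is exactly how the paper handles the entries $L^{21}, L^{22}$, borrowing the estimates from Lemma \ref{lemma-compactness}.
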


\begin{proof}
The proof is almost the same as that of Lemma \ref{lemma-inverse-bound}. So we
only point out some key points in the proof. For any sequence $\left\{
\psi_{n}\right\}  \in X_{2}^{s}$ with $\left\Vert \psi_{n}\right\Vert _{X_{2}%
}=1$ and $\psi_{n}\rightarrow0$ weakly in $X_{2}^{s}$, we show that
\[
\left\Vert \left(  L_{c_{0}}-L_{c_{0},\infty}\right)  \psi_{n}\right\Vert
_{Y}\rightarrow0,
\]
where
\[
L_{c_{0},\infty}:=\left(
\begin{array}
[c]{cc}%
-\Delta+2 & -c_{0}\partial_{x_{1}}\\
c_{0}\partial_{x_{1}} & -\Delta
\end{array}
\right)  .
\]
Let $L_{c_{0}}=\left(  L^{ij}\right)  $ and $L_{c_{0},\infty}=\left(
L_{\infty}^{ij}\right)  ,\ i,j=1,2$, and $\psi_{n}=u_{n}+iv_{n}$. By
(\ref{decay-TW-g}),
\[
L^{ij}-L_{\infty}^{ij}=a^{ij}\left(  x\right)  =o\left(  \frac{1}{\left\vert
x\right\vert }\right)  ,\text{ }a^{22}\left(  x\right)  =o\left(  \frac
{1}{\left\vert x\right\vert ^{2}}\right)  .
\]
Then, since $u_{n}\rightarrow0$ weakly in $H^{2},$
\[
\left\Vert \left(  L^{11}-L_{\infty}^{11}\right)  u_{n}\right\Vert _{L^{2}%
}=\left\Vert a^{11}\left(  x\right)  u_{n}\right\Vert _{L^{2}}\rightarrow0,
\]%
\[
\left\Vert \left(  L^{21}-L_{\infty}^{21}\right)  u_{n}\right\Vert _{L^{2}%
}=\left\Vert a^{21}\left(  x\right)  u_{n}\right\Vert _{L^{2}}\rightarrow0,
\]%
\[
\left\Vert \left(  L^{21}-L_{\infty}^{21}\right)  u_{n}\right\Vert _{\dot
{H}^{-1}}\leq\left\Vert \left\vert x\right\vert a^{21}\left(  x\right)
u_{n}\right\Vert _{L^{2}}\rightarrow0,
\]
by the local compactness of $H^{2}\hookrightarrow L^{2}$. Since $v_{n}%
\rightarrow0$ weakly in $\dot{H}^{2},$ we have
\[
\left\Vert \left(  L^{12}-L_{\infty}^{12}\right)  v_{n}\right\Vert _{L^{2}%
}=\left\Vert a^{12}\left(  x\right)  v_{n}\right\Vert _{L^{2}}\rightarrow0,
\]%
\[
\left\Vert \left(  L^{22}-L_{\infty}^{22}\right)  v_{n}\right\Vert _{L^{2}%
}=\left\Vert a^{22}\left(  x\right)  v_{n}\right\Vert _{L^{2}}\rightarrow0,
\]%
\[
\left\Vert \left(  L^{22}-L_{\infty}^{22}\right)  v_{n}\right\Vert _{\dot
{H}^{-1}}\leq\left\Vert \left\vert x\right\vert a^{22}\left(  x\right)
v_{n}\right\Vert _{L^{2}}\rightarrow0,
\]
by the arguments in the proof of Lemma \ref{lemma-compactness}.

By Lemma \ref{le-6}, there exists $\eta_{0}>0$ such that
\[
\left\langle L_{c_{0},\infty}\phi,\phi\right\rangle \geq\eta_{0}\left\Vert
\phi\right\Vert _{H^{1}\times\dot{H}^{1}}^{2},\text{ for any }\phi\in
H^{1}\times\dot{H}^{1}.
\]
Then by the same proof of Lemma \ref{lemma-M-infinity}, for some $\eta>0,$
\[
\left\Vert L_{c_{0},\infty}\phi\right\Vert _{Y}\geq\eta\left\Vert
\phi\right\Vert _{X_{2}},\ \ \text{for any }\phi\in X_{2}^{s}.
\]
The rest of the proof is the same as Lemma \ref{lemma-inverse-bound}.
\end{proof}

\begin{theorem}
\label{thm-continuation-3d}For $0<c_{0}<\sqrt{2}$, assume $U_{c_{0}}%
=\psi\left(  w_{c_{0}}\right)  $ is a cylindrical symmetric $3$D$\ $traveling
wave solution of (\ref{eqn-GP-generalized}) satisfying the non-degeneracy
condition (\ref{assumption-NDG-s}). Then $\exists$ $\varepsilon_{0}>0$, such
that for%
\[
c\in\left(  -\varepsilon_{0}+c_{0},\varepsilon_{0}+c_{0}\right)
\subset\left(  0,\sqrt{2}\right)  ,
\]
there exists a locally unique $C^{1}$ solution curve $U_{c}=\psi\left(
w_{c}\right)  \ $of (\ref{eqn-GP-G-TW}) near $U_{c_{0}}$, where $w_{c}\in
H_{r_{\bot}}^{2}({\mathbf{R}}^{3},{\mathbf{R}})\times\dot{H}_{r_{\bot}}%
^{2}({\mathbf{R}}^{3},{\mathbf{R}})\ $and $\left\Vert w_{c}-w_{c_{0}%
}\right\Vert =O\left(  \left\vert c-c_{0}\right\vert \right)  $.
\end{theorem}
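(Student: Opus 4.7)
The plan is to apply the implicit function theorem directly to the traveling wave equation (\ref{eqn-GP-G-TW}) written in the unknown $U$, using Lemma \ref{lemma-inverse-bound-3d} as the key invertibility ingredient, and then to pull the resulting curve $U_c$ back to the $w$ coordinate via $\psi^{-1}$. Concretely, I would set $\phi=U-U_{c_0}$ and study
\[
\tilde S(\phi,c):=\Delta(U_{c_0}+\phi)-ic\,\partial_{x_1}(U_{c_0}+\phi)+F(|U_{c_0}+\phi|^2)(U_{c_0}+\phi)
\]
as a $C^1$ mapping from $X_2^s\times\mathbf{R}$ into $Y=L^2_{r_\bot}\times(L^2_{r_\bot}\cap\dot H^{-1}_{r_\bot})$, the target space appearing in Lemma \ref{lemma-inverse-bound-3d}. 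The $C^1$ regularity follows from the decay (\ref{decay-TW-g}) of $U_{c_0}$, the smoothness of $F$ near the range of $|U_{c_0}|^2$, and standard Sobolev/Hardy estimates, and the partial derivative at $(0,c_0)$ is $D_\phi\tilde S(0,c_0)=-L_{c_0}$.

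By (\ref{assumption-NDG-s}) the kernel of $L_{c_0}$ on $X_2^s$ is $\bar K_{c_0}=\mathrm{span}\{\partial_{x_1}U_{c_0}\}$, and Lemma \ref{lemma-inverse-bound-3d} gives that $L_{c_0}:X_2^s\cap\bar K_{c_0}^\perp\to\bar K_{c_0}^\perp$ is an isomorphism. Let $\Pi^\perp$ be the $L^2$-orthogonal projection onto $\bar K_{c_0}^\perp$. The implicit function theorem applied to the reduced problem $\Pi^\perp\tilde S(\phi,c)=0$ with $\phi\in X_2^s\cap\bar K_{c_0}^\perp$ then yields a locally unique $C^1$ curve $c\mapsto\phi(c)$ satisfying $\phi(c_0)=0$ and $\|\phi(c)\|_{X_2^s}=O(|c-c_0|)$.

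To upgrade this to a solution of the full equation, I would exploit the fact that (\ref{eqn-GP-G-TW}) is the Euler--Lagrange equation of the $x_1$-translation-invariant functional $E+cP$, which implies $\langle\tilde S(U,c),\partial_{x_1}U\rangle=0$ for every admissible $U$. Writing $\tilde S(\phi(c),c)=\kappa(c)\,\partial_{x_1}U_{c_0}$ and pairing with $\partial_{x_1}(U_{c_0}+\phi(c))$ then forces
\[
\kappa(c)\bigl(\|\partial_{x_1}U_{c_0}\|_{L^2}^2+O(|c-c_0|)\bigr)=0,
\]
so $\kappa(c)\equiv0$ in a small interval around $c_0$, exactly as in the closing paragraph of the proof of Theorem \ref{thm-existence-slow}. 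Setting $U_c=U_{c_0}+\phi(c)$, I finally define $w_c=\psi^{-1}(U_c)$ via the explicit formulas $w_{2c}=\operatorname{Im}U_c$ and $w_{1c}=\operatorname{Re}U_c-1+\chi(D)(w_{2c}^2/2)$, and check $w_c-w_{c_0}\in X_2^s$ with the desired bound using Sobolev embedding $\dot H^2\hookrightarrow L^6$, the product estimate (\ref{ineq-2}), and the boundedness of $\chi(D)$ on $L^p$ from (\ref{ineq-1}). Local uniqueness in $w$ transfers from local uniqueness of $\phi(c)$ since $\psi$ is a local diffeomorphism between the appropriate neighborhoods.

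The main obstacle I anticipate is the transfer of regularity through $\psi^{-1}$ at the $X_2^s$ level, specifically showing that $\chi(D)(w_{2c}^2/2)-\chi(D)(w_{2c_0}^2/2)\in H^2$ with norm $O(|c-c_0|)$. This is delicate because $w_{2c}\in\dot H^2$ only embeds into $L^6$ rather than $L^2$ in $3$D, so the product $w_{2c}^2$ is merely in $L^3$; however, the decomposition $\chi(D)(w_{2c}^2)=w_{2c}^2-(1-\chi(D))(w_{2c}^2)$ together with (\ref{ineq-2}) and the decay of $v_{c_0}=w_{2c_0}$ provided by (\ref{decay-TW-g}) supplies the missing $L^2$ information, and higher derivatives are controlled since $\chi(D)$ is a smoothing Fourier multiplier with compactly supported symbol.
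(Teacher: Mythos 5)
Your skeleton is the right one and is essentially the paper's: project out the kernel $\bar K_{c_0}=\mathrm{span}\{\partial_{x_1}U_{c_0}\}$, apply the implicit function theorem using the invertibility from Lemma \ref{lemma-inverse-bound-3d}, and then kill the Lagrange multiplier $\kappa(c)$ by pairing with $\partial_{x_1}(U_{c_0}+\phi(c))$ exactly as at the end of the proof of Theorem \ref{thm-existence-slow}. The identification $D_\phi\tilde S(0,c_0)=-L_{c_0}$ is also correct. However, there is a genuine gap in the very first step: the map $\tilde S$ does \emph{not} send $X_2^s\times\mathbf{R}$ into $Y=L^2_{r_\bot}\times(L^2_{r_\bot}\cap\dot H^{-1}_{r_\bot})$. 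For $\phi=(\phi_1,\phi_2)\in X_2^s$ the second component $\phi_2$ lies only in $\dot H^1\cap\dot H^2\hookrightarrow L^6\cap L^\infty$, so the nonlinearity $F(|U_{c_0}+\phi|^2)(U_{c_0}+\phi)$ produces terms such as $2F^{\prime}(1)\,u_{c_0}\phi_2^2$ in the real part and $F'(1)\phi_2^2(v_{c_0}+\phi_2)$ in the imaginary part; $\phi_2^2$ is only in $L^3\cap L^\infty$, which is contained neither in $L^2(\mathbf{R}^3)$ nor in $\dot H^{-1}(\mathbf{R}^3)$. So the reduced equation $\Pi^\perp\tilde S(\phi,c)=0$ is not an equation in $\bar K_{c_0}^\perp\subset Y$, and the IFT cannot be invoked against the target space for which Lemma \ref{lemma-inverse-bound-3d} supplies the inverse. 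Enlarging $Y$ does not help, because the coercivity estimate $\|L_{c_0}\phi\|_Y\geq\gamma\|\phi\|_{X_2^s}$ is an estimate from below of precisely the $Y$-norm; and shrinking the domain to $(H^2)^2$ fails because $0$ lies in the essential spectrum of $L_{c_0}$ on $(L^2)^2$.

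This is exactly the difficulty the paper's proof is built to avoid: it runs the IFT not on the raw PDE residual in the $U$ variable but on the variational equation $\bar E_c'(w_{c_0}+w)=0$ in the G\'erard coordinate $U=\psi(w)$. There the correction $-\chi(D)(w_2^2/2)$ guarantees $|\psi(w)|^2-1\in L^2$, the first variation of the $C^2$ functional $\bar E_c$ automatically lands in $X_1^{*}=H^{-1}\times\dot H^{-1}$ (so the $\dot H^{-1}$ control of the second component is free, and the extra regularity $w\in X_2^s$ upgrades it to $Y$), and the linearization is $K^{*}L_{c_0}K$ with $K,K^{*}$ isomorphisms, so Lemma \ref{lemma-inverse-bound-3d} transfers. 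Note that the "main obstacle" you flag at the pullback stage is the same issue resurfacing, and your proposed fix $\chi(D)(w_2^2)=w_2^2-(1-\chi(D))(w_2^2)$ does not close it: $(1-\chi(D))(w_2^2)\in L^2$ by (\ref{ineq-2}), but $w_2^2$ itself is not in $L^2$ for a general $w_2\in\dot H^1\cap\dot H^2$, and the decay (\ref{decay-TW-g}) controls only the cross term $w_{2c_0}\phi_2$, not the purely quadratic term $\phi_2^2$. To repair the argument you should formulate the fixed-point problem in the $w$ coordinate from the outset, as the paper does.
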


\begin{proof}
By Lemma \ref{le-2}, for a traveling wave solution $U_{c}=\psi\left(
w_{c}\right)  ,$ it is equivalent to solve $(\bar{E}_{c})^{\prime}(w_{c})=0$
for $w_{c}$. Define $K_{c_{0}}=span\left\{  \partial_{x_{1}}w_{c_{0}}\right\}
$. Let $K_{c_{0}}^{\bot}$ be the $L^{2}\ $orthogonal complement of $K_{c_{0}}$
in $Y$,\ and $\Pi_{c_{0}}^{\bot}:Y\mapsto K_{c_{0}}^{\bot}$ be the orthogonal
projection. Let $\tilde{K}_{c_{0}}^{\bot}~$be orthogonal complement of
$K_{c_{0}}$ in $X_{2}^{s}$, in the inner product $\left[  \cdot,\cdot\right]
=\left(  K\cdot,K\cdot\right)  ,$ where the operator%
\[
K\left(
\begin{array}
[c]{c}%
\phi_{1}\\
\phi_{2}%
\end{array}
\right)  =\left(
\begin{array}
[c]{c}%
\phi_{1}-\chi(D)\left(  v_{c}\phi_{2}\right) \\
\phi_{2}%
\end{array}
\right)
\]
is defined in (\ref{definition-K}). We use the implicit function theorem to
find solutions $\left(  w_{c_{0}}+w,c\right)  $ near $\left(  w_{c_{0}}%
,c_{0}\right)  $ of the equation%
\[
\Pi_{c_{0}}^{\bot}\bar{E}_{c}{}^{\prime}(w_{c_{0}}+w)=0,\text{ }w\in\tilde
{K}_{c_{0}}^{\bot}\text{. }%
\]
The linearized operator with respect to $w$ of the left hand side above at
$\left(  w_{c_{0}},c_{0}\right)  $ is
\[
\Pi_{c_{0}}^{\bot}\bar{E}_{c_{0}}{}^{\prime\prime}(w_{c_{0}})|_{\tilde
{K}_{c_{0}}^{\bot}}=\bar{E}_{c_{0}}{}^{\prime\prime}(w_{c_{0}})|_{\tilde
{K}_{c_{0}}^{\bot}}:\tilde{K}_{c_{0}}^{\bot}\rightarrow K_{c_{0}}^{\bot},
\]
which will be shown to be invertible below. In fact, by
(\ref{relation-quadratic forms}), we have
\[
\bar{E}_{c_{0}}{}^{\prime\prime}(w_{c_{0}})=K^{\ast}L_{c_{0}}K,
\]
where $K^{\ast}$ is given by (\ref{defn-K*}). So by (\ref{assumption-NDG-s}),%
\[
\ker\bar{E}_{c_{0}}{}^{\prime\prime}(w_{c_{0}})=K_{c_{0}}=\left\{
\partial_{x_{1}}w_{c_{0}}\right\}
\]
and
\[
\Pi_{c_{0}}^{\bot}\bar{E}_{c_{0}}{}^{\prime\prime}(w_{c_{0}})|_{\tilde
{K}_{c_{0}}^{\bot}}=\bar{E}_{c_{0}}{}^{\prime\prime}(w_{c_{0}})|_{\tilde
{K}_{c_{0}}^{\bot}}.
\]
By the definition of $\tilde{K}_{c_{0}}^{\bot}$, $\phi\in\tilde{K}_{c_{0}%
}^{\bot}$ iff $K\phi\in\bar{K}_{c_{0}}^{\bot}\cap X_{2}^{s}$ where $\bar
{K}_{c_{0}}$ is defined in Lemma \ref{lemma-inverse-bound-3d}. By Lemma
\ref{lemma-inverse-bound-3d}, there exists $\gamma>0$, such that
\begin{equation}
\left\Vert L_{c_{0}}K\phi\right\Vert _{Y}\geq\gamma\left\Vert K\phi\right\Vert
_{X_{2}},\text{ for any }\phi\in\tilde{K}_{c_{0}}^{\bot}.
\label{inequality-L-c0}%
\end{equation}
It is easy to show that the mappings $K:X_{2}^{s}\rightarrow X_{2}^{s}$ and
$K^{\ast}:Y\rightarrow Y$ are isometric, that is, there exist $C_{1},C_{2}>0,$
such that
\[
C_{1}\left\Vert \phi\right\Vert _{X_{2}}\leq\left\Vert K\phi\right\Vert
_{X_{2}}\leq C_{2}\left\Vert \phi\right\Vert _{X_{2}}%
\]
and
\[
C_{1}\left\Vert \phi\right\Vert _{Y}\leq\left\Vert K^{\ast}\phi\right\Vert
_{Y}\leq C_{2}\left\Vert \phi\right\Vert _{Y}.
\]
Thus by (\ref{inequality-L-c0}), there exists some $\gamma_{1}>0,$ such that
\[
\left\Vert K^{\ast}L_{c_{0}}K\phi\right\Vert _{Y}\geq\gamma_{1}\left\Vert
\phi\right\Vert _{X_{2}},\text{ for any }\phi\in\tilde{K}_{c_{0}}^{\bot}.
\]
That is, the operator
\[
\bar{E}_{c_{0}}{}^{\prime\prime}(w_{c_{0}})|_{\tilde{K}_{c_{0}}^{\bot}%
}=K^{\ast}L_{c_{0}}K\phi|_{\tilde{K}_{c_{0}}^{\bot}}:\tilde{K}_{c_{0}}^{\bot
}\rightarrow K_{c_{0}}^{\bot}%
\]
has a bounded inverse. The rest of the proof is the same as that of Theorem
\ref{thm-existence-slow}. So we skip it.
\end{proof}

\subsection{Instability of slow traveling waves}

In this subsection, we prove the instability of slow traveling waves
constructed in Theorem \ref{thm-existence-slow}. The approach is the same as
developed in Section 3. The linearized equation is
(\ref{eqn-linearized-madelung}). To find unstable eigenvalues of $JM_{c}$, we
first study the quadratic form $\left\langle M_{c}u,u\right\rangle $, where
$u=\left(  \rho,\theta\right)  \in X_{1}^{s}$, the cylindrical symmetric
subspace of $X_{1}=H^{1}\left(  \mathbf{R}^{n}\right)  \times\dot{H}%
^{1}\left(  \mathbf{R}^{n}\right)  $.


\begin{proposition}
\label{prop-quadratic-madelung} Assume (H1)-(H3) and the non-degeneracy
condition (\ref{cond-NDG-stationary}) on the stationary bubble. Then $\exists$
$a_{0}\in\left(  0,\sqrt{2}\right)  $, such that for any $0\leq c<a_{0}%
,~$there exists a traveling wave solution $U_{c}=\sqrt{\rho_{c}}e^{i\theta
_{c}}$ of (\ref{eqn-GP-generalized}) without vortices and
(\ref{assumption-NDG-s}) is satisfied. Moreover, the space $X_{1}^{s}$ can be
decomposed as a direct sum%
\[
X_{1}^{s}=N\oplus Z \oplus P,
\]
where $Z=\left\{  \left(  \partial_{x_{1}}\rho_{c},\partial_{x_{1}}\theta
_{c}\right)  \right\}  $, $N$ is a one-dimensional subspace such that
$\left\langle M_{c}u,u\right\rangle <0$ for $0\neq u\in N$, and $P$ is a
closed subspace such that
\[
\left\langle M_{c}u,u\right\rangle \geq\delta\left\Vert u\right\Vert _{X_{1}%
}^{2},\ \forall\text{ }u\in P,
\]
for some constant $\delta>0.$
\end{proposition}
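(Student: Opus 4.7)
The plan is to parallel the proof of Proposition \ref{prop-quadratic} in the Madelung variables, using the operator $M_c$ from (\ref{defn-M}) and leveraging Theorem \ref{thm-existence-slow}, Lemma \ref{lemma-inverse-bound}, and Lemma \ref{lemma-M-infinity}.

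First I would handle existence, no-vortex, and non-degeneracy. Writing $U_c=\sqrt{\rho_c}\,e^{i\theta_c}$ with $\rho_c=\rho_0+\tilde\rho_c$, Theorem \ref{thm-existence-slow} (together with its extension to $X_k^s$ for $n\geq 4$) produces a $C^1$ family with $\|\tilde\rho_c\|_{H^k}+\|\theta_c\|_{\dot{H}^k}=O(|c|)$ for $k>n/2$; Sobolev embedding $H^k\hookrightarrow L^\infty$ and $\rho_0\geq\phi_0(0)^2>0$ yield $\inf\rho_c>0$, so $U_c$ has no vortex. For (\ref{assumption-NDG-s}), which by the remark following Lemma \ref{lemma-inverse-bound} is equivalent to (\ref{cond-NDG-c0}), I would start at $c=0$: then $\theta_0=0$ makes $M_0=\mathrm{diag}(M_1,M_2)$ of (\ref{defn-M1-M2}), and (\ref{relation-M1}) combined with (\ref{cond-NDG-stationary}) forces $\ker M_1\cap H^2_{r_\perp}=\{\partial_{x_1}\rho_0\}$ (since $\partial_{x_i}\phi_0$ for $i\geq 2$ fails cylindrical symmetry), while $\langle M_2\theta,\theta\rangle=2\int\rho_0|\nabla\theta|^2\,dx$ forces $\ker M_2=\{0\}$ on $\dot{H}^2$. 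Hence $\ker M_0\cap X_2^s=K_0$. Lemma \ref{lemma-inverse-bound} at $c_0=0$ then delivers coercivity of $M_0$ on $K_0^\perp\cap X_2^s$, and the $C^1$ dependence of the coefficients of $M_c$ on $c$ propagates this coercivity to all $|c|<a_0$ by a standard perturbation argument, yielding $\ker M_c\cap X_2^s=K_c$ and hence (\ref{cond-NDG-c0}).

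Second, for the spectral decomposition of $\langle M_c\cdot,\cdot\rangle$ on $X_1^s$, I would mimic the isomorphism-plus-compact-perturbation argument of Proposition \ref{prop-quadratic}. Conjugating by $\tilde G:=\mathrm{diag}((-\Delta+1)^{-1/2},(-\Delta)^{-1/2})$ restricted to cylindrically symmetric functions produces a bounded self-adjoint operator $\tilde M_c$ on $L^2_{r_\perp}\times L^2_{r_\perp}$ generating the same form, so it suffices to analyze its spectrum. The uniform lower bound (\ref{estimate-quadratic-m-infty}) extracted inside the proof of Lemma \ref{lemma-M-infinity} yields $\langle M_c^\infty\psi,\psi\rangle\geq\eta_0\|\psi\|_{X_1}^2$ for some $\eta_0>0$, hence $\tilde M_c^\infty$ is uniformly positive on $L^2\times L^2$. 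The difference $M_c-M_c^\infty$ has spatially decaying coefficients: the $\nabla\theta_c\cdot\nabla$ off-diagonal piece decays by elliptic regularity (the source in the TW equation decays with $\tilde\rho_c$), and the zeroth-order difference $M_{11}-M_{11}^\infty$ decays since $\rho_c\to r_0$ exponentially and $F'(r_0)+1/r_0=0$ in the normalized case. A Lemma \ref{lemma-compactness}-type argument (local compactness of Sobolev embeddings plus spatial decay) then gives compactness of $\tilde M_c-\tilde M_c^\infty$ on $L^2\times L^2$, and Weyl's theorem yields $\sigma_{\mathrm{ess}}(\tilde M_c)\subset[\eta_0',\infty)$ for some $\eta_0'>0$, making the nonpositive spectrum of $\tilde M_c$ finite-dimensional.

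The final step is to count negative eigenvalues, and the matching upper bound is what I expect to be the main obstacle. At $c=0$, positivity of $M_2$ reduces the count on $X_1^s$ to the Morse index of $A$ on $H^1_{r_\perp}$ via (\ref{relation-M1}). For $n\geq 3$, the test function $\partial_{r_\perp}\phi_0$ realizes the lower bound of one by the computation $\langle A\partial_{r_\perp}\phi_0,\partial_{r_\perp}\phi_0\rangle=-(n-2)\int|\partial_{r_\perp}\phi_0|^2/r_\perp^2\,dx<0$ in exact parallel with Lemma \ref{le-4}, with integrability justified by an Appendix 3-type estimate. In the borderline case $n=2$ this computation returns $0$ because of a delta-function correction at $r_\perp=0$, so the negative direction has to be produced separately; this is where the analysis of Appendix 2 for the cubic-quintic bubble enters. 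The matching upper bound of one is the main obstacle, because the slow traveling waves arise here from the implicit function theorem rather than a variational minimization, so the argument of Lemma \ref{le-5} does not transfer directly. My plan is to invoke the Berestycki--Lions-type variational characterization of the bubble $\phi_0$ of Theorem \ref{thm-stationary bubble} as a radial constrained energy minimizer, which pins the Morse index of $A$ at exactly one on the radial (hence cylindrically symmetric) subspace, and then to propagate the count to $|c|<a_0$ by a continuity argument: the nonpositive spectrum is finite with a uniform essential-spectrum gap $[\eta_0',\infty)$, the kernel dimension is locked at one by the non-degeneracy established above, and no eigenvalue can cross zero without violating (\ref{cond-NDG-c0}). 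Taking $N$ to be the one-dimensional negative eigenspace, $Z=K_c$, and $P$ their $L^2$-orthogonal complement in $X_1^s$ then yields the desired decomposition, with the coercivity $\langle M_c u,u\rangle\geq\delta\|u\|_{X_1}^2$ on $P$ coming from the uniform positive gap.
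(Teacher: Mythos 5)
Your proposal follows essentially the same route as the paper: existence and non-vortex from Theorem \ref{thm-existence-slow}, conjugation by $\tilde G$ to get a bounded self-adjoint operator on $L^2_{r_\bot}$, uniform positivity of $\tilde M_c^\infty$ from (\ref{estimate-quadratic-m-infty}) plus compactness of $\tilde M_c-\tilde M_c^\infty$ and Weyl's theorem, the Morse-index count at $c=0$ via the diagonal structure $M_0=\mathrm{diag}(M_1,M_2)$ with $M_2>0$ and the constrained-minimizer characterization of $\phi_0$ (exactly the paper's Lemma \ref{lemma-spectra-M0}), and finally continuity of the discrete spectrum in $c$ to reach all $c\in[0,a_0)$.

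The one concrete soft spot is your source for the \emph{lower} bound on the Morse index of $A$ when $n=2$. You correctly note that the cylindrical test function degenerates there (the $(n-2)$ factor vanishes and $\partial_{r_\bot}\phi_0$ lands in $\ker A$), but you then point to Appendix 2, which proves the non-degeneracy (\ref{condition-NDG-ground}) and does not produce a negative direction. The paper instead cites Lemma 3.3 of \cite{de95} (or Lemma 2.1 of \cite{maris-4d-slow}) for the existence of a negative eigenvalue of $A$ with radial eigenfunction in all dimensions $n\geq2$. Alternatively, you can avoid the citation entirely by testing with the \emph{full} radial derivative: differentiating (\ref{eqn-stationary}) in $r=|x|$ gives $A\,\partial_r\phi_0=-\frac{n-1}{r^2}\partial_r\phi_0$, and since $\phi_0'(0)=0$ forces $\partial_r\phi_0=O(r)$ near the origin, the quantity $\frac{1}{r}\partial_r\phi_0$ lies in $L^2$ and
\[
\left\langle A\,\partial_r\phi_0,\partial_r\phi_0\right\rangle=-(n-1)\int_{\mathbf{R}^n}\frac{|\partial_r\phi_0|^2}{r^2}\,dx<0
\]
for every $n\geq2$, which supplies the missing radial negative direction. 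With that repaired, the rest of your argument goes through as in the paper.
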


\begin{proof}
The proof is similar to that of Proposition \ref{prop-quadratic}, so we only
sketch it. The existence of traveling waves is shown in Theorem
\ref{thm-existence-slow}, for $c\in\lbrack0,b_{0}),\ b_{0}>0$. Define the
operator
\[
\tilde{M}_{c}:=\tilde{G}\circ M_{c}\circ\tilde{G}:L_{r_{\bot}}^{2}\rightarrow
L_{r_{\bot}}^{2},
\]
where $\tilde{G}$ is defined in (\ref{defn-G-tilde})$.$We will show that:
there exists $a_{0}>0$, such that when $c\in\lbrack0,a_{0}),$

(i) $\tilde{M}_{c}:L^{2}\rightarrow L^{2}$ is self-adjoint and bounded.

(ii) $\tilde{L}_{c}$ has one-dimensional cylindrical symmetric negative
eigenspace,
\[
\ker\tilde{M}_{c}\cap L_{r_{\bot}}^{2}=\left\{  \tilde{G}^{-1}\left(
\partial_{x_{1}}\rho_{c},\partial_{x_{1}}\theta_{c}\right)  \right\}
\]
and the rest of the spectrum are positive.

The conclusions in the Proposition follow from the above properties of the
operator $\tilde{M}_{c}$. Denote $\tilde{M}_{c}^{\infty}:=\tilde{G}\circ
M_{c}^{\infty}\circ\tilde{G},\ $where $M_{c}^{\infty}$ is defined in
(\ref{defn-M-infinity}). Then it is easy to see that $\tilde{M}_{c}^{\infty}$
is bounded and self-adjoint, and by the estimate
(\ref{estimate-quadratic-m-infty}), the essential spectrum of $\tilde{M}%
_{c}^{\infty}\subset\lbrack\delta_{0},\infty)$ for some $\delta_{0}>0$. We
show$\ \tilde{M}_{c}$ is compact perturbation of $\tilde{M}_{c}^{\infty}$.
Indeed, $\tilde{M}_{c}-\tilde{M}_{c}^{\infty}=\left(  \tilde{M}_{ij}\right)
$, where $M_{22}=0,$
\[
\tilde{M}_{11}=(-\Delta+1)^{-\frac{1}{2}}a_{1}\left(  x\right)  (-\Delta
+1)^{-\frac{1}{2}},
\]%
\[
\tilde{M}_{21}=-2(-\Delta)^{-\frac{1}{2}}\nabla\cdot\left(  \vec{a}_{2}\left(
x\right)  (-\Delta+1)^{-\frac{1}{2}}\right)  ,\ \tilde{M}_{12}=\tilde{M}%
_{21}^{\ast},
\]
with
\[
a_{1}\left(  x\right)  =\frac{1}{2}\frac{1}{\rho_{c}^{3}}|\nabla\rho_{c_{0}%
}|^{2}-\frac{\Delta\rho_{c_{0}}}{2\rho_{c_{0}}^{2}}-F^{\prime}(\rho_{c_{0}%
})-\frac{1}{\rho_{c_{0}}}\rightarrow0,
\]%
\[
\vec{a}_{2}\left(  x\right)  =\nabla\theta_{c_{0}}\rightarrow0,
\]
when $\left\vert x\right\vert \rightarrow\infty$. Thus by the local
compactness of $H^{1}\hookrightarrow L^{2}$, the operators $\tilde{M}_{11},$
$\tilde{M}_{21}$ and $\tilde{M}_{12}$ are compact. So by the perturbation
theory of self-adjoint operators, $\tilde{M}_{c}$ is self-adjoint and bounded,
with its essential spectrum in $[\delta_{0},\infty)$. By Lemma
\ref{lemma-spectra-M0} below, $\ker\tilde{M}_{0}\cap L_{r_{\bot}}^{2}=\left\{
\tilde{G}^{-1}\left(  \partial_{x_{1}}\rho_{0},0\right)  \right\}  $ and
$\tilde{M}_{0}$ has exactly one negative eigenvalue which is simple with
radially symmetric eigenfunction. Since the traveling wave solution $\left(
\rho_{c},\theta_{c}\right)  $ is $C^{1}$ for $c\in\lbrack-b_{0},b_{0})$, the
discrete spectrum of $\tilde{M}_{c}$ are continuous in $c$. Thus, there exists
$a_{0}>0$, such that for $c\in\lbrack0,a_{0})$, $\tilde{M}_{c}$ has
one-dimensional kernel in $L_{r_{\bot}}^{2}$ spanned by $\tilde{G}^{-1}\left(
\partial_{x_{1}}\rho_{c},\partial_{x_{1}}\theta_{c}\right)  \ $and exactly one
negative eigenvalue which is simple with a cylindrically symmetric eigenfunction.
\end{proof}

\begin{lemma}
\label{lemma-spectra-M0}Assume (H1)-(H3). Let $\phi_{0}=\sqrt{\rho_{0}}$ be a
stationary bubble of (\ref{eqn-GP-generalized}) obtained in \cite{de95} (via
constrained minimization) satisfying (\ref{cond-NDG-stationary}).Then
\[
\ker\tilde{M}_{0}\cap L_{r_{\bot}}^{2}=span\left\{  \tilde{G}^{-1}\left(
\partial_{x_{1}}\rho_{0},0\right)  \right\}
\]
and $\tilde{M}_{0}$ has exactly one negative eigenvalue which is simple and
with radially symmetric eigenfunction.
\end{lemma}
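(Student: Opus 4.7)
The plan is to exploit the block-diagonal form $M_0=\mathrm{diag}(M_1,M_2)$, which gives $\tilde M_0=\mathrm{diag}(\tilde M_1,\tilde M_2)$, so the kernel and negative subspace of $\tilde M_0$ split into those of the two blocks. First I would dispose of the $\theta$-block: since
\[
\langle M_2\theta,\theta\rangle=2\int_{\mathbf{R}^n}\rho_0|\nabla\theta|^2\,dx,
\]
and $\rho_0\geq\phi_0(0)^2>0$, this quadratic form is nonnegative on $\dot H^1_{r_\perp}$ and vanishes only for $\theta\equiv 0$ in $\dot H^1$. Conjugating by $(-\Delta)^{-1/2}$ yields $\tilde M_2\geq 0$ on $L^2_{r_\perp}$ with trivial kernel, so both the kernel and the entire negative subspace of $\tilde M_0$ must live in the $\rho$-component.

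Next I would reduce the analysis of $\tilde M_1$ to that of $A$. The identity $M_1\rho=\tfrac{1}{\sqrt{\rho_0}}A\bigl(\tfrac{\rho}{2\sqrt{\rho_0}}\bigr)$ from \eqref{relation-M1}, combined with the fact that $\sqrt{\rho_0}$ is smooth, bounded, and bounded away from $0$, shows that $\rho\mapsto u:=\rho/(2\sqrt{\rho_0})$ is an isomorphism of $H^1_{r_\perp}$ and that $\langle M_1\rho,\rho\rangle=2\langle Au,u\rangle$. This transfers Morse index, nullity, and the positive spectral gap from $M_1$ to $A$, and further to $\tilde M_1$ after conjugation by $(-\Delta+1)^{-1/2}$. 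Thus it suffices to show that $A$ on $H^1_{r_\perp}$ has $1$-dimensional kernel $\mathrm{span}\{\partial_{x_1}\phi_0\}$ and exactly one simple negative eigenvalue whose eigenfunction is radial in $\mathbf{R}^n$.

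Since $\phi_0$ is fully radial, $A$ commutes with $\mathrm{SO}(n)$ and decomposes over spherical harmonics as $A=\bigoplus_l A_l$, where
\[
A_l=-\partial_r^2-\tfrac{n-1}{r}\partial_r+\tfrac{l(l+n-2)}{r^2}+V(r),\qquad V=-F(\phi_0^2)-2F'(\phi_0^2)\phi_0^2.
\]
The cylindrical subspace $H^1_{r_\perp}$ intersects each $l$-sector in a single copy of the radial part (the unique $\mathrm{SO}(n-1)$-invariant harmonic in that sector), so $\sigma\bigl(A|_{H^1_{r_\perp}}\bigr)=\bigcup_l\sigma(A_l)$. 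The non-degeneracy assumption \eqref{cond-NDG-stationary} places $\ker A=\mathrm{span}\{\partial_{x_i}\phi_0\}$ entirely in the $l=1$ sector; only $\partial_{x_1}\phi_0=\phi_0'(r)x_1/r$ is cylindrically symmetric, giving $\ker A\cap H^1_{r_\perp}=\mathrm{span}\{\partial_{x_1}\phi_0\}$. Since $\phi_0'>0$ on $(0,\infty)$ and $A_1\phi_0'=0$, the standard ground-state / Perron--Frobenius argument yields $A_1\geq 0$ with simple kernel; for $l\geq 2$, $A_l=A_1+\tfrac{(l-1)(l+n-1)}{r^2}>A_1\geq 0$ is strictly positive with trivial kernel. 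All negative modes of $A$ must therefore come from $A_0=A_1-\tfrac{n-1}{r^2}$, where eigenvalues are simple by Sturm--Liouville theory and eigenfunctions are radial.

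The main obstacle is to pin down that $A_0$ has \emph{exactly} one negative eigenvalue. Existence of at least one is immediate by testing against $\phi_0'$ viewed as a radial function on $\mathbf{R}^n$: using $A_1\phi_0'=0$ we get
\[
\langle A_0\phi_0',\phi_0'\rangle=-(n-1)\int_{\mathbf{R}^n}\tfrac{|\phi_0'(|x|)|^2}{|x|^2}\,dx<0,
\]
and the integral is finite because $\phi_0'(0)=0$ forces $\phi_0'(r)=O(r)$ near the origin while $\phi_0'$ decays exponentially at infinity. For the upper bound I would invoke the variational construction of $\phi_0$ in \cite{de95}, where $\phi_0$ is a minimizer of the Dirichlet energy under a single scalar Pohozaev-type constraint; the Lagrange multiplier structure then forces the Morse index of $A$ on the radial subspace to be at most one, by essentially the argument of Lemma~\ref{le-5}. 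Combining with the strict positivity of $A_l$ for $l\geq 1$, exactly one simple negative eigenvalue of $A$ survives on $H^1_{r_\perp}$, with radial eigenfunction. Pulling back through $\rho=2\sqrt{\rho_0}\,u$ and $\tilde G$ yields the asserted spectral description of $\tilde M_0$.
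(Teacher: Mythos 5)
Your proposal is correct and follows essentially the same route as the paper: block-diagonalize $M_{0}$, dispose of the positive $\theta$-block, conjugate $M_{1}$ to $A$ via $\rho\mapsto\rho/(2\sqrt{\rho_{0}})$, obtain at least one radial negative direction of $A$ by testing against $\partial_{r}\phi_{0}$, and cap the count of negative modes at one using the constrained variational characterization of $\phi_{0}$ together with the argument of Lemma~\ref{le-5}. The only difference is that you unpack into explicit arguments (the spherical-harmonics decomposition of $A$ and the computation $\langle A_{0}\phi_{0}',\phi_{0}'\rangle=-(n-1)\int|\phi_{0}'|^{2}/|x|^{2}\,dx<0$) the steps the paper delegates to Lemma 3.3 of \cite{de95} and Lemmas 2.1--2.2 of \cite{maris-4d-slow}.
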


\begin{proof}
Since $\tilde{M}_{0}=\tilde{G}\circ M_{0}\circ\tilde{G}$, it is equivalent to
show that%
\[
\ker M_{0}\cap L_{r_{\bot}}^{2}=\left\{  \left(  \partial_{x_{1}}\rho
_{0},0\right)  \right\}
\]
and $M_{0}$ has only one negative eigenvalue which is simple with a radially
symmetric eigenfunction. Recall that
\[
M_{0}=%
\begin{pmatrix}
M_{1} & 0\\
0 & M_{2}%
\end{pmatrix}
,
\]
where
\[
M_{1}=A\left(  \frac{1}{2\sqrt{\rho_{0}}}\cdot\right)  \frac{1}{\sqrt{\rho
_{0}}},\ M_{2}=-2\nabla\cdot(\rho_{0}\nabla).
\]
Since $M_{2}>0$, by (\ref{cond-NDG-stationary}) the cylindrically symmetric
kernel of $M_{1}$ is spanned by $\partial_{x_{1}}\rho_{0}$. It remains to show
that the operator $A$ has only one negative eigenvalue which is simple with a
radially symmetric eigenfunction. This property was shown in Lemmas 2.1 and
2.2 of \cite{maris-4d-slow} for $n\geq3$. The proof for $n=2$ is almost the
same and we sketch it below. By Lemma 3.3 of \cite{de95} or Lemma 2.1 of
\cite{maris-4d-slow}, $A$ has at least one negative eigenvalue with radial
symmetric eigenfunction. It was shown in \cite{berestychi-et-83-2d}
\cite{alves-et-2012} that $\phi_{0}$ minimizes the functional
\[
T\left(  u\right)  =\frac{1}{2}\int_{\mathbf{R}^{2}}\left\vert \nabla
u\right\vert ^{2}\ dx
\]
subject to the constraint
\[
I\left(  u\right)  =\int_{\mathbf{R}^{2}}V\left(  |u|^{2}\right)  dx=0.
\]
By the arguments in the proof of Lemma \ref{le-5} or the proof of Lemma 2.2 in
\cite{maris-4d-slow}, it follows that $A$ has at most one-dimensional negative
eigenspace. Thus $A$ has exactly one-dimensional negative eigenspace.
\end{proof}

\begin{remark}
In Appendix 2, we prove the non-degeneracy condition
(\ref{cond-NDG-stationary}) for cubic-quintic nonlinearity and $n=2$. For
$n\geq3$, the condition (\ref{cond-NDG-stationary}) was proved in
\cite{maris-4d-slow} for nonlinearity satisfying some additional condition
\ ((H5) in P. 1209 of \cite{maris-4d-slow}). However, our computation
indicates that this additional condition appears to be not satisfied by the
cubic-quintic nonlinearity.
\end{remark}

Much as Proposition \ref{P-linear-insta-G} and Lemma \ref{lemma-dichotomy-x3},
we have the same linear instability criterion $\frac{\partial P(U_{c}%
)}{\partial c}<0$ and the subsequent linear exponential dichotomy.

\begin{proposition}
\label{P-Linear-Insta-nV} Let $U_{c}=\sqrt{\rho_{c}}e^{i\theta_{c}}$, $c
\in[c_{1}, c_{2}]$, be a $C^{1}$ (with respect to the wave speed $c$) family
of traveling waves of (\ref{eqn-GP-generalized}). For $c_{0} \in(c_{1},
c_{2})$, assume

\begin{enumerate}
\item $U_{c_{0}}(x) \ne0$, for all $x\in\mathbf{R}^{n}$;

\item non-degeneracy condition (\ref{cond-NDG-c0}) is satisfied;

\item $F \in C^{1}$ on $U_{c_{0}} (\mathbf{R}^{n})$;

\item $M_{c_{0}}$ satisfies the decomposition result stated in Proposition
\ref{prop-quadratic-madelung}, and

\item $\frac{\partial P(U_{c})}{\partial c}|_{c=c_{0}}<0$;
\end{enumerate}

then there exists $w_{u}\in X_{1}^{s}$ and $\lambda_{u}>0$, such that
$e^{\lambda_{u}t}w_{u}\left(  x\right)  $ is a solution of
(\ref{eqn-linearized-madelung}). Moreover, the linearized semigroup
$e^{tJM_{c_{0}}}$ also has an exponential dichotomy in the space $X_{3}^{s}$.
\end{proposition}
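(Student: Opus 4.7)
The plan is to adapt the proof of Proposition \ref{P-linear-insta-G} and Lemma \ref{lemma-dichotomy-x3} to the hydrodynamic setting, replacing $L_{c_0}$ with $M_{c_0}$ and $X_1^s = H^1\times\dot H^1$ (cylindrically symmetric) as the working Hilbert space. The key preliminary identity to establish is the analog of (\ref{relation-Lc-par-c-U-c}): differentiating the traveling wave equation $S(\rho_c,\theta_c;c)=0$ in $c$ and using that $S = 2 D_{(\rho,\theta)}(E+cP)$ yields
\[
M_{c_0}\,\partial_c(\rho_c,\theta_c)\big|_{c=c_0} \;=\; -J^{-1}\partial_{x_1}(\rho_{c_0},\theta_{c_0}),
\]
so $\left\langle M_{c_0}\partial_c(\rho_c,\theta_c),\partial_c(\rho_c,\theta_c)\right\rangle\big|_{c=c_0} = -\partial_c P(U_c)|_{c=c_0}>0$.

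For the eigenvalue part, I would introduce the constrained subspace
\[
Y_1^s = \bigl\{u\in X_1^s \;|\; \langle u,J^{-1}\partial_{x_1}(\rho_{c_0},\theta_{c_0})\rangle = \langle u,J^{-1}\partial_c(\rho_c,\theta_c)|_{c=c_0}\rangle = 0\bigr\}
\]
and show, using the assumed decomposition of Proposition \ref{prop-quadratic-madelung} together with the identity above, that $X_1^s$ splits orthogonally (with respect to the symmetric form $\langle M_{c_0}\cdot,\cdot\rangle$) as $Y_1^s \oplus \mathrm{span}\{\partial_{x_1}(\rho_{c_0},\theta_{c_0}),\partial_c(\rho_c,\theta_c)|_{c=c_0}\}$, that both summands are invariant under $JM_{c_0}$, and that $\langle M_{c_0}\cdot,\cdot\rangle|_{Y_1^s}$ is non-degenerate with exactly one negative direction (the counting is forced by $\partial_c P<0$, which sends the positive contribution $-\partial_c P>0$ to the finite-dimensional piece and leaves the lone negative mode on $Y_1^s$). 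I then run the same min-max/tangent-field argument on the sphere $\mathcal{C}^n \subset X^{(n)} = N \oplus P^{(n)}$ used in the proof of Proposition \ref{P-linear-insta-G}, producing a weakly convergent subsequence $y_n \rightharpoonup y_\infty\ne 0$ and a real eigenvalue $a\ne 0$ of $JM_{c_0}$; Hamiltonian symmetry gives the pair $\pm a$, and the positive one is the desired $\lambda_u$.

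For the exponential dichotomy in $X_3^s$, I would mimic Lemmas \ref{lemma-dichotomy-U}–\ref{lemma-dichotomy-x3}. First, using the unstable/stable eigenfunctions $w_u,w_s$, show $\langle M_{c_0}w_u,w_s\rangle\ne 0$ (otherwise the 2-plane $\mathrm{span}\{w_u,w_s\}$ would be totally isotropic for $\langle M_{c_0}\cdot,\cdot\rangle$, contradicting the single negative mode on $Y_1^s$). Define the codimension-2 invariant subspace
\[
E^e = \{u\in Y_1^s \;|\; \langle u,M_{c_0}w_u\rangle=\langle u,M_{c_0}w_s\rangle=0\},
\]
which is invariant under $e^{tJM_{c_0}}$ because $\langle M_{c_0}u(t),v(t)\rangle$ is conserved along linearized solutions. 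Splitting $X_1^s = E^u\oplus E^s\oplus E^e\oplus E_g^{\ker}$ and counting negative modes shows $\langle M_{c_0}\cdot,\cdot\rangle|_{E^e}$ is positive definite, hence the flow is uniformly bounded there; combined with the at-most-linear growth on the generalized kernel this produces the $(1+t)$ bound on the center-stable piece $E^{cs} = E^s\oplus E^e\oplus E_g^{\ker}$. The passage to $X_3^s$ follows as in Lemma \ref{lemma-dichotomy-x3}, using $\|u\|_{X_3}\sim \|u\|_{X_1}+\|JM_{c_0}u\|_{X_1}$ and commutation of $JM_{c_0}$ with the semigroup.

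The main obstacle is the verification that $\langle M_{c_0}\cdot,\cdot\rangle$ is well-defined and bounded on all of $X_1^s$, and that $E^e$ is a closed codimension-2 subspace intersecting the domain of $JM_{c_0}$ densely: unlike in Section 3, $M_{c_0}$ contains the singular-looking coefficient $\rho_{c_0}^{-1}$ (and its derivatives) in (\ref{defn-M11}), so one needs the vortex-free assumption $U_{c_0}\ne 0$ (hence $\inf\rho_{c_0}>0$) together with the decay of $\nabla\rho_{c_0},\nabla\theta_{c_0}$ at infinity to justify the estimates, much as the decay (\ref{decay-TW-g}) was used in Lemma \ref{lemma-inverse-bound-3d}. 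Once these mapping properties are in place, every step parallels the arguments already carried out in Sections 3.1–3.2 verbatim.
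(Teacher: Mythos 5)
Your proposal is correct and follows essentially the same route the paper takes: the paper gives no separate proof of Proposition \ref{P-Linear-Insta-nV} but explicitly reduces it to repeating Proposition \ref{P-linear-insta-G} and Lemmas \ref{lemma-dichotomy-U}--\ref{lemma-dichotomy-x3} with $L_{c_0}$ replaced by $M_{c_0}$, using the identity $M_{c}\partial_{c}(\rho_{c},\theta_{c})=-J^{-1}\partial_{x_{1}}(\rho_{c},\theta_{c})$ and the assumed decomposition of Proposition \ref{prop-quadratic-madelung}, exactly as you outline (up to an inessential factor of $2$ in $\left\langle M_{c_0}\partial_c,\partial_c\right\rangle=-2\,\partial_c P$). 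Your closing concern about the boundedness of $\left\langle M_{c_0}\cdot,\cdot\right\rangle$ on $X_1^s$ is already subsumed in hypothesis (4) of the proposition, so no additional work is needed there.
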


After these preparations, we show the linear instability of slow traveling
waves of (\ref{eqn-GP-generalized}) with cubic-quintic type nonlinear terms.

\begin{theorem}
\label{thm-instability-madelung}Assume (H1)-(H3) and
(\ref{cond-NDG-stationary}). For any $n\geq2$, $\exists$ $\varepsilon_{0}>0$,
such that for all $0\leq c<\varepsilon_{0}$, the traveling wave solutions
$U_{c}=\sqrt{\rho_{c}}e^{i\theta_{c}}\ $constructed in Theorem
\ref{thm-existence-slow} are linearly unstable in the following sense: there
exists an unstable solution $e^{\lambda_{u}t}w_{u}\left(  x\right)  $ with
\[
w_{u}=\left(  \rho_{u},\theta_{u}\right)  \in X_{3}^{s},\ \ \lambda_{u}>0,
\]
of the linearized equation (\ref{eqn-linearized-madelung}). Moreover, the
linearized semigroup $e^{tJM_{c}}$ also has an exponential dichotomy in the
space $X_{3}^{s}$.
\end{theorem}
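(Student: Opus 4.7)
The plan is to deduce the theorem from Proposition \ref{P-Linear-Insta-nV} applied to the $C^1$ family of slow traveling waves produced in Theorem \ref{thm-existence-slow}. Four of the five hypotheses of that proposition are essentially free: (i) $U_c\neq 0$ for all small $c$ follows from $\rho_0=\phi_0^2\geq \phi_0(0)^2>0$ uniformly (properties (3)(5) of Theorem \ref{thm-stationary bubble}) together with the $H^2$-closeness $\|\rho_c-\rho_0\|_{H^2}\leq K|c|$ and the Sobolev embedding $H^2\hookrightarrow L^\infty$; (ii) the non-degeneracy condition (\ref{cond-NDG-c0}) and (iii) the spectral decomposition of $M_c$ on $X_1^s$ are exactly the content of Proposition \ref{prop-quadratic-madelung}, extended to $X_2^s$ by elliptic regularity of kernel elements; and (iv) $F\in C^1$ is part of (H1). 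Thus everything reduces to verifying the sign condition $\tfrac{\partial P(U_c)}{\partial c}\big|_{c=c_0}<0$ for $c_0$ near $0$.

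I would carry out this computation first at $c_0=0$, where $\theta_0\equiv 0$ and $\rho=\rho_0$. Differentiating the constrained solution curve $S(\rho_c,\theta_c;c)=0$ from the proof of Theorem \ref{thm-existence-slow} in $c$ at $c=0$ gives
\begin{equation*}
M_0\begin{pmatrix}\dot\rho\\ \dot\theta\end{pmatrix}
=-\partial_c S\big|_{(\rho_0,0;0)}
=\begin{pmatrix}0\\ -\partial_{x_1}\rho_0\end{pmatrix},
\end{equation*}
where $\dot\rho=\partial_c\rho_c|_{c=0}$, $\dot\theta=\partial_c\theta_c|_{c=0}$, and $M_0$ is block-diagonal as in (\ref{defn-operator-M}). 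The first block reads $M_1\dot\rho=0$; together with the normalization $(\dot\rho,\dot\theta)\in K_0^\perp$ and $\ker M_1\cap L^2_{r_\bot}=\mathrm{span}\{\partial_{x_1}\rho_0\}\subset K_0$, this forces $\dot\rho=0$. The second block is the elliptic equation $-2\nabla\cdot(\rho_0\nabla\dot\theta)=-\partial_{x_1}\rho_0$.

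Since $P(\rho,\theta)=-\tfrac12\int(\rho-1)\theta_{x_1}\,dx$, using $\dot\rho=0,\theta_0=0$ and integrating by parts,
\begin{equation*}
\frac{dP(U_c)}{dc}\bigg|_{c=0}=-\frac12\int(\rho_0-1)\,\partial_{x_1}\dot\theta\,dx
=\frac12\int\partial_{x_1}\rho_0\cdot\dot\theta\,dx
=-\int\rho_0|\nabla\dot\theta|^2\,dx,
\end{equation*}
the last equality by testing the equation for $\dot\theta$ against $\dot\theta$ itself. Because $\partial_{x_1}\rho_0\not\equiv 0$ (by property (4) of Theorem \ref{thm-stationary bubble}), $\dot\theta$ cannot be constant, so $\tfrac{dP(U_c)}{dc}|_{c=0}<0$ strictly.

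Continuity of $(\rho_c,\theta_c)$ in $c$ (the $C^1$ regularity of the traveling-wave branch in Theorem \ref{thm-existence-slow}) then gives $\tfrac{\partial P(U_c)}{\partial c}<0$ on $[0,\varepsilon_0)$ for some $\varepsilon_0\in(0,a_0)$, with $a_0$ from Proposition \ref{prop-quadratic-madelung}. Applying Proposition \ref{P-Linear-Insta-nV} yields both the unstable mode $e^{\lambda_u t}w_u$ with $w_u\in X_1^s$ (lifted to $X_3^s$ by elliptic regularity, since $JM_c w_u=\lambda_u w_u$ and $F\in C^{k-1}$ as needed) and the exponential dichotomy of $e^{tJM_c}$ in $X_3^s$, proving the theorem. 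The main obstacle in this plan is the careful implementation of the sign computation in the degenerate hydrodynamic setting — in particular, ensuring the solvability of $M_2\dot\theta=-\partial_{x_1}\rho_0$ in $\dot{H}^2_{r_\bot}$ (which rests on the mapping properties established in Lemmas \ref{lemma-inverse-bound}–\ref{lemma-M-infinity} and the fact that $\partial_{x_1}\rho_0\in L^2\cap\dot{H}^{-1}$) and justifying the integration by parts via the exponential decay of $\rho_0-1$ from Theorem \ref{thm-stationary bubble}.
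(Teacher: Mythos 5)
Your proposal is correct and follows essentially the same route as the paper: reduce to Proposition \ref{P-Linear-Insta-nV} via the $C^1$ branch from Theorem \ref{thm-existence-slow}, compute $\frac{\partial P(U_c)}{\partial c}\big|_{c=0}=-\frac{1}{2}\left(M_2^{-1}\partial_{x_1}\rho_0,\partial_{x_1}\rho_0\right)<0$ using the positivity of $M_2=-2\nabla\cdot(\rho_0\nabla)$, and extend the sign to small $c$ by continuity. Your derivation of $M_0\,\partial_c(\rho_c,\theta_c)=(0,-\partial_{x_1}\rho_0)$ by differentiating $S=0$ directly is just the explicit form of the paper's identity $\frac{1}{2}M_c\partial_c(\rho_c,\theta_c)=-P'(\rho_c,\theta_c)$, so the two arguments coincide.
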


\begin{proof}
Since the traveling wave branch $U_{c}\ $constructed in Theorem
\ref{thm-existence-slow} is $C^{1}$ for $c\in\left(  -b_{0},b_{0}\right)  $,
according to Proposition \ref{P-Linear-Insta-nV}, it is reduced to show that
$\frac{\partial P(U_{c})}{\partial c}|_{c=0}<0$. We note that
\[
\frac{1}{2}M_{c}\partial_{c}(\rho_{c},\theta_{c})=-P^{\prime}(\rho_{c}%
,\theta_{c})=-\frac{1}{2}J^{-1}\partial_{x_{1}}(\rho_{c},\theta_{c}),
\]
since $(\rho_{c},\theta_{c})$ satisfies $\left(  E^{\prime}+cP^{\prime
}\right)  (\rho_{c},\theta_{c})=0$ and $\left(  E^{\prime\prime}%
+cP^{\prime\prime}\right)  (\rho_{c},\theta_{c})=\frac{1}{2}M_{c}$. Here, we
use $^{\prime}$ to denote the functional derivative in $\left(  \rho
,\theta\right)  $. Thus%
\begin{align*}
\frac{\partial P(U_{c})}{\partial c}|_{c=0}  &  =\langle P^{\prime}(\rho
_{c},\theta_{c}),\partial_{c}(\rho_{c},\theta_{c})\rangle|_{c=0}\\
&  =\frac{1}{2}\left(  \partial_{c}(\rho_{c},\theta_{c}),\left(
0,\partial_{x_{1}}\rho_{0}\right)  \right)  =-\frac{1}{2}\left(  M_{2}%
^{-1}\partial_{x_{1}}\rho_{0},\partial_{x_{1}}\rho_{0}\right)  <0
\end{align*}
since $M_{2}=-2\nabla.(\rho_{0}\nabla)>0$.

By Proposition \ref{prop-quadratic-madelung}, we can show the exponential
dichotomy for the semigroup $e^{tJM_{c}}$ in the space $X_{3}^{s}$, as in
Lemma \ref{lemma-dichotomy-x3}. The proof is the same as in Lemmas
\ref{lemma-dichotomy-U} and \ref{lemma-dichotomy-x3}, thus we skip it.
\end{proof}

Due to the presence of derivative terms in the nonlinearity of the
hydrodynamic equation
\eqref{hrdrodynamic nls}, it is much easier to obtain the unstable manifolds
and thus the nonlinear instability of traveling waves by working with the
original form of the nonlinear Schr\"{o}dinger equation
\eqref{eqn-GP-generalized}, which is semilinear, based on the linear
instability obtained in the above theorems. We first state the following
proposition.

\begin{proposition}
\label{thm-UM-general} For any dimension $n\geq1$, let $k>\frac{n}{2}$ be an
integer and $\Omega_{\bot}\subset\mathbf{R}^{n-1}$ be a smooth domain.
Consider \eqref{eqn-GP-generalized} for $x\in\Omega=\mathbf{R}\times
\Omega_{\bot}$ subject to homogeneous Dirichlet, Neumann, or periodic boundary
condition on $\partial\Omega$. Suppose $F\in C^{k+l}$ and $U_{c}=u_{c}+iv_{c}$
be a traveling wave of \eqref{eqn-GP-generalized} on $\Omega$ such that
\[
u_{c}-1\in H^{k}(\Omega)\text{ and }v_{c}\in\dot{H}^{k}(\Omega).
\]
Assume the linearized flow $e^{tJL_{c}}$, where $L_{c}$ is defined in
\eqref{operator-Lc-g}, has an exponential dichotomy in $H^{k}(\Omega)\times
H^{k}(\Omega)$, i.e. there exist closed subspaces $E^{cs,u}\subset
H^{k}(\Omega)\times H^{k}(\Omega)$ and $\lambda_{u,cs},M\geq0$ such that
$\lambda_{u}>\lambda_{cs}$, $e^{tJL_{c}}E^{u,cs}=E^{u,cs}$, and
\[
\left\vert e^{tJL_{c}}|_{E^{u}}\right\vert \leq Me^{\lambda_{u}t},\;\forall
t\leq0\;\text{ and }\;\left\vert e^{tJL_{c}}|_{E^{cs}}\right\vert \leq
Me^{\lambda_{cs}t},\;\forall t\geq0.
\]
Then there exists a unique $C^{l}$ locally invariant unstable manifold
$W^{u}\subset U_{c}+H^{k}(\Omega)\times H^{k}(\Omega)$ of $U_{c}$ in the sense
as described in Theorem \ref{thm:invariant manifold}.
\end{proposition}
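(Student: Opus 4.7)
The plan is to set up a semilinear evolution equation for the perturbation $w = U - U_c$ in the Banach space $Y_k \triangleq H^k(\Omega)\times H^k(\Omega)$ and then invoke a standard invariant manifold theorem. Substituting $U = U_c + w$ into the traveling frame equation \eqref{eqn-GP-g-TF} and using that $U_c$ solves \eqref{eqn-GP-G-TW}, one obtains
\[
\partial_t w = JL_c w + N(w),
\]
where $L_c$ is the operator from \eqref{operator-Lc-g} and $N(w)$ gathers all terms of order $\ge 2$ in $w$ arising from expanding $F(|U_c+w|^2)(U_c+w)$. Since the nonlinearity of \eqref{eqn-GP-generalized} contains no derivatives of $U$, the map $N$ involves no loss of regularity; this is the decisive advantage of working with the original equation rather than with the hydrodynamic formulation \eqref{hrdrodynamic nls}, which is why the construction is placed in the $H^k$ setting instead of the $(\rho,\theta)$-coordinates.

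The main step will be to verify that $N : Y_k \to Y_k$ is $C^l$ with $N(0) = DN(0) = 0$. For this I would use that $k > n/2$ implies $H^k(\Omega) \hookrightarrow L^\infty(\Omega)$ and that $H^k(\Omega)$ is a Banach algebra, so for $w$ in a bounded neighborhood of $0$ in $Y_k$ the quantity
\[
|U_c+w|^2 - |U_c|^2 = 2\operatorname{Re}(\overline{U_c}w) + |w|^2
\]
lies in $H^k$ and stays in an $L^\infty$-bounded range determined by $U_c(\Omega)$ and $\|w\|_{Y_k}$. Combined with the assumption $F \in C^{k+l}$, a repeated chain rule computation (the Nemitski operator argument used in Lemma \ref{le-1} and Appendix 1, specialized to $H^k$ with $k>n/2$) yields that $w \mapsto F(|U_c+w|^2)(U_c+w) - F(|U_c|^2)U_c - DN(0)w$ is $C^l$ from $Y_k$ to $Y_k$ with a vanishing derivative at $w=0$. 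The three admissible boundary conditions (Dirichlet, Neumann, periodic) all preserve the Banach algebra structure on the appropriate closed subspace of $H^k(\Omega)$ as well as the semigroup $e^{tJL_c}$, so the setup closes up in $Y_k$.

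With the dichotomy decomposition $Y_k = E^u \oplus E^{cs}$ assumed, the gap $\lambda_u > \lambda_{cs}$, and the semilinear estimate $N \in C^l$ with $N(0) = DN(0) = 0$ in hand, the conclusion then follows from the classical Lyapunov--Perron (or graph transform) construction of local invariant manifolds for semilinear evolution equations developed in \cite{bates-jones88, CL88}. The unstable manifold $W^u$ arises as the fixed point of a Lyapunov--Perron integral operator on an exponentially weighted space of backward trajectories $w(t)$, $t \le 0$, and is realized as the graph of a $C^l$ map from a neighborhood of $0$ in $E^u$ into $E^{cs}$, tangent to $E^u$ at $U_c$, locally invariant under the semiflow, and consisting of orbits that converge to $U_c$ at rate $e^{\lambda_u t}$ as $t \to -\infty$.

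The hardest part of such an argument is almost always producing the linear exponential dichotomy, but that is precisely what has been granted as a hypothesis. The only real remaining work is the regularity check for $N$, which, given $F \in C^{k+l}$ and the algebra/embedding properties of $H^k$ with $k>n/2$, is essentially a mechanical Nemitski estimate; no compactness or dispersive input is required.
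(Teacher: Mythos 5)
Your proposal is correct and follows essentially the same route as the paper: the paper's proof consists precisely of the observation that the perturbation $z=U-U_{c}$ in the traveling frame satisfies an equation with linear part $JL_{c}z$ and a nonlinear part that is a $C^{l}$ map of $H^{k}(\Omega)\times H^{k}(\Omega)$ to itself (via $k>n/2$ and $F\in C^{k+l}$), after which the general theorems of \cite{bates-jones88, CL88} apply with the assumed dichotomy. Your Nemitski/Banach-algebra verification of the nonlinearity and the Lyapunov--Perron construction are exactly the details the paper leaves implicit.
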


\begin{remark}
If $\Omega_{\bot}$ and $U_{c}$ are invariant under certain symmetries like
rotations or reflections, then one may work in the subspace of $H^{k}(\Omega)$
with the same symmetries and thus the unstable manifold would consist of
functions in $U_{c}+H^{k}(\Omega)\times H^{k}(\Omega)$ with the same symmetries.
\end{remark}

The proposition can be obtained from the general theorems in
\cite{bates-jones88, CL88} simply based on the observation that, under the
above assumptions, the dynamic equation of $z(t,x)=U(t,x-ct\vec{\mathbf{e}%
}_{1})-U_{c}(x-ct\vec{\mathbf{e}}_{1})$, where $U(t,x)$ is a solution of
\eqref{eqn-GP-generalized}, has the linear part $JL_{c}z$ and its nonlinear
part defines a $C^{l}$ transformation from $H^{k}(\Omega)\times H^{k}(\Omega)$
to itself. As a corollary, we have

\begin{corollary}
\label{thm-mfld-madelung}Let $n\geq2$ and $U_{c}=\sqrt{\rho_{c}}e^{i\theta
_{c}}$ be a traveling wave of \eqref{eqn-GP-generalized}, radially symmetric
in $x_{\bot}$ and linearly unstable with the linear exponential dichotomy for
$(\rho,\theta)\in H_{r_{\bot}}^{k}(\mathbf{R}^{n})\times\dot{H}_{r_{\bot}}%
^{k}(\mathbf{R}^{n})$ $\left(  k\geq2\right)  $, including those proved in
Theorem \ref{thm-instability-madelung}. Assume $F\in C^{k+l}$, then there
exists a unique $C^{l}$ local unstable (stable) manifolds $W^{u}\left(
W^{s}\right)  $ of
$U_{c}$ in $U_{c}+H_{r_{\bot}}^{k}(\mathbf{R}^{n})\times H_{r_{\bot}}%
^{k}(\mathbf{R}^{n})$.

\end{corollary}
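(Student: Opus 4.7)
The plan is to reduce the corollary to Proposition \ref{thm-UM-general} by transferring the linear exponential dichotomy of $e^{tJM_c}$ on the Madelung space $H^k_{r_\bot}(\mathbf{R}^n)\times\dot H^k_{r_\bot}(\mathbf{R}^n)$ into an exponential dichotomy of $e^{tJL_c}$ on the target space $H^k_{r_\bot}(\mathbf{R}^n)\times H^k_{r_\bot}(\mathbf{R}^n)$, at which point Proposition \ref{thm-UM-general} applies (in its cylindrically symmetric variant) to produce $W^{u,s}$. The key linking object is the matrix multiplication operator $T_c$ from \eqref{definition T_c}, which is precisely the linearization at $U_c$ of the coordinate change $u+iv=\sqrt{\rho}\,e^{i\theta}$.

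A direct $2\times 2$ calculation shows $2T_c J T_c^t = J$, which combined with the identity $M_c = 2T_c^t L_c T_c$ from \eqref{relation-2nd-variation-madelung} gives the intertwining $(JL_c)\,T_c = T_c\,(JM_c)$, and therefore $e^{tJL_c}T_c = T_c\,e^{tJM_c}$. Since $U_c$ is vortex-free, $\rho_c$ is bounded below and above, and $\theta_c$ is smooth with $\nabla\theta_c$ bounded (and decaying at infinity), so $T_c$ and $T_c^{-1}$ are bounded isomorphisms both on $H^k_{r_\bot}\times\dot H^k_{r_\bot}$ and on $H^k_{r_\bot}\times H^k_{r_\bot}$.

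The crucial step is to show that the generalized unstable/stable eigenspaces $E^{u,s}_{(\rho,\theta)}$ of $JM_c$, a priori only in $H^k_{r_\bot}\times\dot H^k_{r_\bot}$, are actually contained in $H^k_{r_\bot}\times H^k_{r_\bot}$. For any eigenfunction $(\rho,\theta)$ with eigenvalue $\lambda\ne 0$, the second row of $JM_c(\rho,\theta)^t=\lambda(\rho,\theta)^t$ reads
\[
\lambda\theta = -M_{11}\rho + c\,\partial_{x_1}\theta - 2\nabla\theta_c\cdot\nabla\theta,
\]
and every term on the right is in $L^2$ (the first by $\rho\in H^k$, $k\geq 2$; the second and third by $\theta\in\dot H^1$ together with bounded $\nabla\theta_c$), whence $\theta\in L^2$ and, by elliptic bootstrap, $\theta\in H^k_{r_\bot}$; an analogous Jordan-chain bootstrap handles generalized eigenvectors. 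For the translation kernel direction $\partial_{x_1}(\rho_c,\theta_c)$, the inclusion $\partial_{x_1}\theta_c\in L^2$ is immediate from $\theta_c\in\dot H^k$. Thus $E^{u,s}_{(u,v)}:=T_c E^{u,s}_{(\rho,\theta)}$ is a finite-dimensional subspace of $H^k_{r_\bot}\times H^k_{r_\bot}$, spanned by smooth generalized eigenvectors of $JL_c$.

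With these smooth eigenvectors $\phi_j$ and the biorthogonal generalized eigenvectors $\phi_j^*$ of $(JL_c)^*$ in hand, the spectral projection $P^u f=\sum_j\langle f,\phi_j^*\rangle\phi_j$ defines a bounded finite-rank projection on $H^k_{r_\bot}\times H^k_{r_\bot}$ commuting with $JL_c$; its range $E^u_{(u,v)}$ and kernel $E^{cs}_{(u,v)}$ are closed invariant subspaces. The unstable estimate is automatic in finite dimensions, and on $E^{cs}_{(u,v)}$ the at-most-polynomial growth transfers from Madelung coordinates using $e^{tJL_c}=T_c e^{tJM_c}T_c^{-1}$ together with the boundedness of $T_c^{\pm 1}$. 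With the dichotomy of $e^{tJL_c}$ on $H^k_{r_\bot}\times H^k_{r_\bot}$ so established, Proposition \ref{thm-UM-general} applied on $\Omega=\mathbf{R}^n$ with the radial-in-$x_\bot$ symmetry yields the asserted $C^l$ local unstable manifold, and time-reversibility of \eqref{eqn-GP-generalized} yields the stable manifold. The main obstacle I foresee is precisely calibrating the growth estimate on $E^{cs}_{(u,v)}$ in the $H^k\times H^k$ norm: the norm transported by $T_c^{-1}$ is naturally the $H^k\times\dot H^k$ norm, which is a priori weaker, so one must exploit the finite codimension of $E^{cs}_{(u,v)}$ inside $H^k_{r_\bot}\times H^k_{r_\bot}$ together with the smoothness and boundedness of the spectral projection to close the estimate in the stronger norm.
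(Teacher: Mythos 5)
Your overall route coincides with the paper's: conjugate $JL_{c}$ to $JM_{c}$ via $T_{c}$, upgrade the unstable/stable eigenfunctions from $H^{k}_{r_\bot}\times\dot H^{k}_{r_\bot}$ to $H^{k}_{r_\bot}\times H^{k}_{r_\bot}$ using the second row of the eigenvalue equation (which is exactly the paper's observation that $M_{12}\theta$ contains only derivatives of $\theta$), and then invoke Proposition \ref{thm-UM-general}. Up to that point your proposal is sound. The genuine gap is the step you yourself flag as the "main obstacle": the growth estimate for $e^{tJM_{c}}$ on the center-stable complement in the $H^{k}\times H^{k}$ norm, i.e. the control of $|\theta(t)|_{L^{2}}$. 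Your proposed resolution --- exploiting the finite codimension of $\tilde E^{cs}$ in $H^{k}_{r_\bot}\times H^{k}_{r_\bot}$ together with the boundedness of the spectral projection --- does not close this. Finite codimension gives you a bounded complementary projection and hence that $\tilde E^{cs}$ is closed and complemented, but it does not make the $H^{k}\times H^{k}$ and $H^{k}\times\dot H^{k}$ norms equivalent on the infinite-dimensional subspace $\tilde E^{cs}$ (they are not: one can make $\|\theta\|_{\dot H^{k}}$ small while $\|\theta\|_{L^{2}}$ is large by spreading a profile out), so the decay estimate transported through $T_{c}^{-1}$ from the weaker norm cannot be promoted to the stronger norm by soft functional-analytic arguments.

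The paper closes this by using the structure of $M_{12}$ a second time, now dynamically rather than spectrally: for $z(t)=(\rho(t),\theta(t))=e^{tJM_{c}}z^{0}$ with $z^{0}\in\tilde E^{cs}$ one writes $\theta_{t}=-(M_{11}\rho+M_{12}\theta)$ and integrates in time,
\begin{equation*}
|\theta(t)|_{L^{2}}\leq|\theta(0)|_{L^{2}}+\int_{0}^{t}\left(|M_{11}\rho(s)|_{L^{2}}+|M_{12}\theta(s)|_{L^{2}}\right)ds\leq|\theta(0)|_{L^{2}}+C\int_{0}^{t}\left(|\rho(s)|_{H^{2}}+|\theta(s)|_{\dot H^{1}}\right)ds,
\end{equation*}
where the last inequality uses that $M_{12}=-c\,\partial_{x_{1}}+2\nabla\theta_{c}\cdot\nabla$ involves only $\nabla\theta$. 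The integrand is then controlled by the already-established dichotomy in $H^{k}_{r_\bot}\times\dot H^{k}_{r_\bot}$, yielding $|\theta(t)|_{L^{2}}\leq M'(1+t)e^{\lambda_{cs}t}|z^{0}|_{H^{k}\times H^{k}}$ and hence the dichotomy in $H^{k}_{r_\bot}\times H^{k}_{r_\bot}$ with the same $\lambda_{u}$ and any $\tilde\lambda_{cs}\in(\lambda_{cs},\lambda_{u})$. You should replace the finite-codimension argument with this time-integration estimate; with that substitution the rest of your proof goes through as written.
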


\begin{proof}
In order to prove the corollary, we only need to establish the linear
exponential dichotomy of $e^{tJL_{c}}$ in $H_{r_{\bot}}^{k}\times H_{r_{\bot}%
}^{k}$. Since $JL_{c}$ and $JM_{c}$, where $M_{c}$ is defined in
(\ref{defn-M}), are conjugate through $T_{c}$ defined in (\ref{definition T_c}%
) and $T_{c}$ is an isomorphism on $H_{r_{\bot}}^{k}\times H_{r_{\bot}}^{k}$,
we only need to obtain the exponential dichotomy of $e^{tJM_{c}}$ on
$H_{r_{\bot}}^{k}\times H_{r_{\bot}}^{k}$. Based on Theorem
\ref{thm-instability-madelung}, it is straightforward to repeat the arguments
to derive the exponential dichotomy of $e^{tJM_{c}}\ $in $H_{r_{\bot}}%
^{k}\times\dot{H}_{r_{\bot}}^{k}$. The form of $JM_{c}$ implies that its
unstable and stable eigenfunctions in $H_{r_{\bot}}^{k}\times\dot{H}_{r_{\bot
}}^{k}\ $actually belong to $H_{r_{\bot}}^{k}\times H_{r_{\bot}}^{k}$, that
is, $E^{u},E^{s}\subset H_{r_{\bot}}^{k}\times H_{r_{\bot}}^{k}$. Indeed, for
an unstable eigenvalue $\lambda_{u}>0$, the eigenfunction $\left(  \rho
_{u},\theta_{u}\right)  \in H_{r_{\bot}}^{k}\times\dot{H}_{r_{\bot}}^{k}$
satisfies
\[
\theta_{u}=-\frac{1}{\lambda_{u}}\left(  M_{11}\rho_{u}+M_{12}\theta
_{u}\right)  \in L^{2}\text{,}%
\]
and the same is true for the stable eigenfunction. Here, $M_{11}$ and $M_{12}$
are defined in (\ref{defn-M11})-(\ref{defn-M12}) and we use the observation
that $M_{12}\theta$ contains only $\partial\theta(t)$, instead of $\theta(t)$
itself. It is easy to see that $\tilde{E}^{cs}=E^{cs}\cap H_{r_{\bot}}%
^{k}\times H_{r_{\bot}}^{k}$ is a closed subspace of $H_{r_{\bot}}^{k}\times
H_{r_{\bot}}^{k}$, invariant under $e^{tJM_{c}}$, and $H_{r_{\bot}}^{k}\times
H_{r_{\bot}}^{k}=E^{u}\oplus\tilde{E}^{cs}$. To complete the proof, we only
need to obtain the following growth estimate of $|\theta(t)|_{L^{2}}$ where
$z(t)=\left(  \rho\left(  t\right)  ,\theta\left(  t\right)  \right)
=e^{tJM_{c}}z^{0}$ for $z^{0}\in\tilde{E}^{cs}$. When $z^{0}=\left(
\rho\left(  0\right)  ,\theta\left(  0\right)  \right)  \in\tilde{E}^{cs}$, we
have that for any $t\geq0,\ $
\begin{align*}
\left\vert \theta(t)\right\vert _{L^{2}}  &  \leq\left\vert \theta
(0)\right\vert _{L^{2}}+\int_{0}^{t}\left\vert \theta_{t}(s)\right\vert ds\\
&  \leq\left\vert \theta(0)\right\vert _{L^{2}}+\int_{0}^{t}\left(  \left\vert
M_{11}\rho\left(  s\right)  \right\vert _{L^{2}}+\left\vert M_{12}%
\theta\left(  s\right)  \right\vert _{L^{2}}\right)  ds\\
&  \leq\left\vert \theta(0)\right\vert _{L^{2}}+C\int_{0}^{t}\left(
\left\vert \rho\left(  s\right)  \right\vert _{H^{2}}+\left\vert \theta\left(
s\right)  \right\vert _{\dot{H}^{1}}\right)  ds\\
&  \leq\left\vert \theta(0)\right\vert _{L^{2}}+C\int_{0}^{t}Me^{\lambda
_{cs}s}|z^{0}|_{H^{k}\times\dot{H}^{k}}ds\\
&  \leq M^{\prime}(1+t)e^{\lambda_{cs}t}|z^{0}|_{H^{k}\times H^{k}},
\end{align*}
In the above, we use the special form of $M_{12}\ $again and the exponential
dichotomy of the linear equation $z_{t}(t)=JM_{c}z(t)$ in $H_{r_{\bot}}%
^{k}\times\dot{H}_{r_{\bot}}^{k}$. Therefore the exponential dichotomy of
$e^{tJM_{c}}\ $holds in $H_{r_{\bot}}^{k}\times H_{r_{\bot}}^{k}$ with
$\lambda_{u}$ and any $\tilde{\lambda}_{cs}\in(\lambda_{cs},\lambda_{u})$.
\end{proof}

As a corollary, we get nonlinear orbital instability for slow traveling waves
in dimension $n\geq2$.

\begin{remark}
\label{rmk-instability-3-5}The instability of stationary bubbles was proved in
\cite{de95}. It is possible to show the instability of slow traveling waves by
certain perturbation argument. However, the instability proof of Theorem
\ref{thm-instability-madelung} contains more information than what can be
obtained from a perturbation theory. First, it yields the exponential
dichotomy of the semigroup which is essential for constructing invariant
manifolds. The unstable manifold theorem automatically implies the optimal
orbital instability result, that is, the instability is measured in a weak
norm with initial deviation in a strong norm and the growth is exponentially
fast. By contrast, the nonlinear instability proof in \cite{de95} used an
abstract theorem of \cite{henry-et-83} (see \cite{shatah-strauss00} for a
similar theorem). It does not require the exponential dichotomy or the precise
growth estimate of the semigroup, but the instability was proved in a strong
norm $H^{k}$ $\left(  k>\frac{n}{2}\right)  $ and no estimate of the growth
time scale was given. Second, the proof of Theorem
\ref{thm-instability-madelung} actually gives a instability criterion
$\frac{\partial P(U_{c})}{\partial c}<0$ under the non-degeneracy condition
(\ref{assumption-NDG-s}). In particular, the instability of stationary
solution persists until the first travel speed $c\ $at which either
$\frac{\partial P(U_{c})}{\partial c}=0$ or the condition
(\ref{assumption-NDG-s}) fails.
\end{remark}

\section{Extensions and future problems}

In this section, we discuss the extensions of the results in previous
sections. We also mention some remaining issues on stability of traveling
waves of (\ref{eqn-GP-generalized}).

In the one-dimensional case, when $0<c<\sqrt{2}$, the traveling waves $U_{c}$
of (\ref{eqn-GP-generalized}) is nonvanishing. In this case, a sharp stability
criterion was obtained in \cite{lin-bubble-1d} by using the hydrodynamic
formulation and the theory of \cite{gss87}. The traveling waves are stable if
and only if $\frac{d}{dc}P\left(  U_{c}\right)  >0$, where
\[
P\left(  u\right)  =-\int_{\mathbf{R}}\operatorname{Im}\left(  \bar
{u}u^{\prime}\right)  \left(  1-\frac{1}{\left\vert u\right\vert ^{2}}\right)
dx\text{. }%
\]
However, in \cite{lin-bubble-1d} nonlinear instability was only proved in the
energy space and without any estimate of the growth time scale, as in
\cite{gss87} \cite{bona-et-87}, where the linear instability problem was
bypassed and the nonlinear instability was proved by a contradiction argument.
Recently, the nonlinear orbital instability with exponential growth was proved
in \cite{chiron-1d} by studying the linearized problem. In Remark
\ref{remark-gap-chiron-1d} below, we comment on some possible gap in the proof
of \cite{chiron-1d}. By using the methods in Sections 3 and 5.3, we can prove
the existence of unstable (stable) manifolds near $U_{c}\ $and thus obtain the
optimal nonlinear instability result when $\frac{d}{dc}P\left(  U_{c}\right)
<0.$ In fact, for $1$D traveling waves, the spectral property of the quadratic
form $\left\langle M_{c}\cdot,\cdot\right\rangle \ $as in Proposition
\ref{prop-quadratic-madelung} was essentially proved in \cite{lin-bubble-1d}.
By using this spectral property, the linear instability when $\frac{d}%
{dc}P\left(  U_{c}\right)  <0\ $and the exponential dichotomy of the linear
semigroup $e^{tJM_{c}}$ can be proved by the same approach as in Section 3.
Then the unstable (stable) manifolds can be constructed via Theorem
\ref{thm-UM-general} as in corollary \ref{thm-mfld-madelung}.

Another extension is to prove the transversal instability of any $1$D
traveling waves of (\ref{eqn-GP-generalized}) by the approach of Section 4.
Let
\begin{equation}
L_{c}:=\left(
\begin{array}
[c]{cc}%
-\partial_{x}^{2}-F\left(  \left\vert U_{c}\right\vert ^{2}\right)
-F^{\prime}\left(  \left\vert U_{c}\right\vert ^{2}\right)  2u_{c}^{2} &
-c\partial_{x}-2F^{\prime}\left(  \left\vert U_{c}\right\vert ^{2}\right)
u_{c}v_{c}\\
c\partial_{x}-2F^{\prime}\left(  \left\vert U_{c}\right\vert ^{2}\right)
u_{c}v_{c} & -\partial_{x}^{2}-F\left(  \left\vert U_{c}\right\vert
^{2}\right)  -F^{\prime}\left(  \left\vert U_{c}\right\vert ^{2}\right)
2v_{c}^{2}%
\end{array}
\right)  . \label{operator-L_c-1d}%
\end{equation}
When $0<c<\sqrt{2}$, the operator $L_{c}$ has exactly one negative eigenvalue
$\lambda_{0}<0$ and the second eigenvalue is zero. Since by the analogue of
the formula (\ref{relation-2nd-variation-madelung}) in $1$D, the number of
negative modes of the quadratic form with $L_{c}$ equals that of the quadratic
form $\left\langle M_{c}\cdot,\cdot\right\rangle .$ The latter number was
shown to be the one in \cite{lin-bubble-1d}. Thus, by the same proof of
Theorem \ref{thm-linear-transveral-instability}, when $k\in\left(
0,\sqrt{-\lambda_{0}}\right)  $ the $1$D traveling wave is transversal
unstable with the transversal period $\frac{2\pi}{k}$ and is transversely
stable when $k\geq\sqrt{-\lambda_{0}}$. Similar to the proof of Lemma
\ref{lemma-dichotomy-periodic} and Theorem \ref{thm-transversal-mfld-2d}, we
can construct unstable (stable) manifolds from the above transversal
instability. For (GP), when $c=0$, the stationary solution $U_{0}=\tanh\left(
\frac{x}{\sqrt{2}}\right)  $ vanishes at $x=0$, the operator
\[
L_{0}=\left(
\begin{array}
[c]{cc}%
-\partial_{x}^{2}-1+3U_{0}^{2} & 0\\
0 & -\partial_{x}^{2}-1+U_{0}^{2}%
\end{array}
\right)
\]
can be verified to have exactly one negative eigenvalue. So the above
discussions on transversal instability are also valid for $U_{0}$. We should
note that in \cite{rousett-et-transversal}, the linear transversal instability
of $1$D traveling waves of (GP) was shown for $k$ near $\sqrt{-\lambda_{0}}$
by a different method. Our results are novel in the following two aspects: 1)
locating the sharp interval for unstable transversal wave numbers; 2)
constructing the unstable (stable) manifolds.

\begin{remark}
\label{remark-gap-chiron-1d}In \cite{chiron-1d}, the proof of nonlinear
orbital instability of 1D traveling waves (Theorem 4 and Corollary 3) consists
of several steps. For linear instability when $\frac{d}{dc}P\left(
U_{c}\right)  <0$, the author cited the result in \cite{benzoni-instability}.
Then, an abstract result (Theorem B.3 in \cite{chiron-1d}) is used to get the
growth estimate (and actually exponential dichotomy) of the semigroup
$e^{tJL_{c}}$ in $H^{1}\left(  \mathbf{R}\right)  \times H^{1}\left(
\mathbf{R}\right)  $, where the operator $L_{c}$ is defined in
(\ref{operator-L_c-1d}). By using this semigroup estimate, the nonlinear
orbital instability follows since the equation (\ref{eqn-GP-generalized}) is
semilinear in $H^{1}\times H^{1}$. However, the operator $L_{c}$ does not
satisfy Assumption (A) for Theorem B.3, which requires that $\sigma
_{ess}\left(  L_{c}\right)  =[\delta_{0},+\infty)$ for some $\delta_{0}>0$.
Since by (\ref{relation-2nd-variation-madelung}) we have $\sigma_{ess}\left(
L_{c}\right)  =\sigma_{ess}\left(  M_{c}\right)  =[0,+\infty)$, where $M_{c}$
is the 1D version of the operator defined in (\ref{defn-M}). Such a lack of
spectral gap is exactly one of the main difficulty for studying stability of
traveling waves of (\ref{eqn-GP-generalized}) with nonvanishing condition at
infinity. To overcome this difficulty in 1D, first we get the exponential
dichotomy of $e^{tJM_{c}}$ on $H^{k}\times\dot{H}^{k}$ $\left(  k\geq2\right)
\ $where $M_{c}$ has a spectral gap. Then we lift this exponential dichotomy
of $e^{tJM_{c}}$ to $H^{k}\times H^{k}$ by noting that the unstable
eigenfunction lies in this space. The exponential dichotomy of $e^{tJL_{c}}$
on $H^{k}\times H^{k}$ then follows.
\end{remark}

Theorems \ref{thm:stability}, \ref{thm:invariant manifold} and Corollary
\ref{C:critical} in Sections 2 and 3 give a completed theory for the orbital
stability (instability) of $3$D traveling waves. We briefly comment on the
extensions to higher dimensions $n\geq4$. Assume the nonlinear term $F\left(
u\right)  $ in (\ref{eqn-GP-generalized}) satisfies:

(F1) $F\in C^{1}(\mathbf{R})$, $C^{2}$ in a neighborhood of $1$, $F(1)=0$ and
$F^{\prime}(1)=-1$.

(F2) There exists $0\leq p_{1}\leq1\leq p_{0}<\frac{2}{n-2}$ such that
$|F^{\prime}(s)|\leq C(1+s^{p_{1}-1}+s^{p_{0}-1})\ $for all $s\geq0$.\newline

Under the assumption (F1)-(F2), as in 3D case, the traveling waves have been
constructed in \cite{Maris-annal} by minimizing the energy subject to Pohozaev
type constraint, for $n\geq4$. We can prove the spectral property for the
quadratic form $\left\langle L_{c}\cdot,\cdot\right\rangle \ $as in
Proposition \ref{prop-quadratic}, in the space $X_{1}=H^{1}\left(
\mathbf{R}^{n}\right)  \times\dot{H}^{1}\left(  \mathbf{R}^{n}\right)  $. This
allows us to prove a linear instability criterion that $\frac{\partial
P(U_{c})}{\partial c}|_{c=c_{0}}<0$ where
\[
P\left(  u\right)  =\frac{1}{2}\int_{\mathbf{R}^{n}}\langle i\partial_{x_{1}%
}u,u-1\rangle\ dx=-\int_{\mathbf{R}^{n}}\left(  u_{1}-1\right)  \partial
_{x_{1}}u_{2}dx.
\]
To pass to nonlinear results, first we note that when $n\geq4$, it was shown
in \cite{maris-4d-slow} that the energy-momentum functional $E_{c}=E+cP$ is
$C^{2}$ on the space $1+X_{1}$. Thus, by the proof of Theorem
\ref{thm:stability}, we can prove that the traveling waves with $\frac
{\partial P(U_{c})}{\partial c}|_{c=c_{0}}>0$ is orbitally stable in the
distance $\left\Vert u-U_{c}\right\Vert _{X_{1}}.\ $Moreover, when $n=4$, it
can be shown that the energy space
\begin{align*}
X_{0}  &  =\left\{  u\ |\nabla u\in L^{2}\left(  \mathbf{R}^{n}\right)
,\ V\left(  u\right)  \in L^{1}\left(  \mathbf{R}^{n}\right)  \right\} \\
&  =\left\{  u\ |\nabla u\in L^{2}\left(  \mathbf{R}^{n}\right)
,\ 1-\left\vert u\right\vert ^{2}\in L^{1}\left(  \mathbf{R}^{n}\right)
\right\}
\end{align*}
exactly consists of functions of the form $\left\{  c\left(  1+w\right)
\ |\ c\in\mathbb{S}^{1},\ w\in X_{1}\right\}  $. So when $n=4$, $\frac
{\partial P(U_{c})}{\partial c}|_{c=c_{0}}>0$ is sharp for orbital stability
in the energy space. When $n>4$, the energy space $X_{0}$ might be strictly
larger than the set $\left\{  c\left(  1+X_{1}\right)  \right\}  $. To show
the orbital stability in $X_{0}$ for $n>4$, we need to find a coordinate
mapping $u=g\left(  w\right)  ,w\in X_{1}$ for $u\in X_{0}$, as for the $3$D
case. To construct unstable (stable) manifolds under the instability criterion
$\frac{\partial P(U_{c})}{\partial c}|_{c=c_{0}}<0$, we first note that the
exponential dichotomy is still true in $X_{1}$ and then in $X_{k}=H^{k}%
\times\dot{H}^{k}$ for any integer $k>1$, by the proof of Lemmas
\ref{lemma-dichotomy-U} and \ref{lemma-dichotomy-x3}. Then we write
$U=U_{c}+w$ $\left(  w\in X_{k}\right)  \ $in (\ref{eqn-GP-g-TF}). It can be
shown that, assuming $F\in C^{k+2}$, the nonlinear term $F\left(  \left\vert
U\right\vert ^{2}\right)  U\in C^{2}\left(  X_{k},X_{k}\right)  $ for $k$
large. Therefore, the unstable (stable) manifolds can be constructed in the
space $U_{c}+X_{k}$, which is contained in the energy space by
\cite{maris-4d-slow} as mentioned earlier.

We notice that equation (GP) for $n=4$ is just the borderline case and does
not satisfy (F2) for nonlinear stability (keep in mind (F1--2) are not needed
for unstable manifolds). Exactly as in Corollary \ref{C:critical}, we may
instead assume \newline\newline(F2') There exists $C, \alpha_{0}, s_{0}>0$,
and $0< p_{1} \le1 \le p_{0} \le\frac2{n-2}$, such that $|F^{\prime}(s)|\leq
C(1 + s^{p_{1}-1} + s^{p_{0}-1})$ for all $s\geq0$ and $F(s) \le
-Cs^{\alpha_{0}}$ for all $s>s_{0}$.\newline\newline Following the same
argument for Corollary \ref{C:critical}, we obtain the nonlinear instability
of traveling waves obtained in \cite{Maris-annal}.

Now we discuss the $2$D case. When the traveling wave $U_{c}$ has no vortices
($U_{c}\neq0$), we can use the Madelung transform to derive the instability
criterion $\frac{\partial P(U_{c})}{\partial c}|_{c=c_{0}}<0$ and construct
stable (unstable) manifolds. See Theorems \ref{thm-instability-madelung} and
\ref{thm-mfld-madelung}. However, things get more tricky for traveling waves
with vortices. The Madelung transform is not applicable. Also, it is improper
to use the base space $X_{1}$ to study the linearized problem
(\ref{eqn-linearized-u}) for two reasons. First, the Hardy's inequality
(\ref{hardy}) is not valid for $n=2$. So we can not even define the quadratic
form $\left\langle L_{c}\cdot,\cdot\right\rangle $ on $X_{1}$. Secondly, due
to the oscillations at infinity of functions in $X_{0}$, we don't have a
manifold structure of $X_{0}$ with the base $X_{1}$. In \cite{Gerard1-energy},
a manifold structure of $X_{0}$ is given with the base space
\[
X^{\prime}=H_{\mathbf{R}}^{1}\left(  \mathbf{R}^{2}\right)  \times\left(
X_{\mathbf{R}}^{1}\left(  \mathbf{R}^{2}\right)  +H_{\mathbf{R}}^{1}\left(
\mathbf{R}^{2}\right)  \right)  ,
\]
where $X_{\mathbf{R}}^{1}=\left\{  u\in L^{\infty}|\ \nabla u\in
L^{2}\right\}  $. However, it is unclear how to use $X^{\prime}$ in place of
$X_{1}$ in our approach.

In an ongoing work, we construct center manifolds near the unstable $3$D
traveling waves of (GP) as proved in Proposition \ref{prop-linear-insta}. The
linear exponential trichotomy of the semigroup $e^{tJL_{c}}$ has been
established for the space $X_{1}\ $in Lemma \ref{lemma-trichotomy}. This
trichotomy can also be lifted to the space $X_{3}$, similar to Lemma
\ref{lemma-dichotomy-x3}. Then we can construct center manifold in the orbital
neighborhood of $w_{c}$ in $X_{3}$. However, it is more desirable to construct
center manifold in the energy space $X_{1}$. We note that in Lemma
\ref{lemma-trichotomy} it is shown that the second variation of the
energy-momentum is positive definite when restricted on the center space and
modulo the generalized kernel. So the construction of center manifold in
$X_{1}\ $would imply the orbital stability restricted there and also the local
uniqueness of the center manifold. Together with the unstable (stable)
manifolds in Theorem \ref{thm:invariant manifold}, these will give a foliation
of the local dynamics near the unstable $3$D traveling waves.

\section{Appendix}

\subsection*{Appendix 1}

In this appendix, we prove the $C^{2}$ smoothness of the nonlinear term
$\Psi(w)$ (defined in (\ref{U-7}) and (\ref{U-8})) on $X_{3}$. Indeed, we will
show that $\Psi\in C^{2}\left(  X_{3},H^{3}\left(  \mathbf{R}^{3}%
;\mathbf{C}\right)  \right)  .$

Let $U_{c}=u_{c}+iv_{c}$ be a finite energy traveling wave solution of
equation (\ref{eqn-GP-generalized}), that is, $|U_{c}|^{2}-1,\ \nabla U_{c}\in
L^{2}(\mathbf{R}^{3})$. Let $U_{c}=\psi(w_{c})$, then $w_{c}\in X_{1}%
=H^{1}\times\dot{H}^{1}$. Moreover, by the proof of Lemma 5.5 of
\cite{Maris-annal}, $u_{c}-1,v_{c}\in\dot{H}^{3}(\mathbf{R}^{3})$. So it
follows from the definition of the coordinate mapping $\psi$ that
\[
w_{1c}\in H^{3}(\mathbf{R}^{3}),\ w_{2c}\in\dot{H}^{3}(\mathbf{R}^{3}).
\]

\begin{lemma}
\label{le-7} Assume that $F\in C^{5}(\mathbf{R})$ and $F\left(  1\right)  =0$.
Then $\Psi\in C^{2}(X_{3},H^{3})$.
\end{lemma}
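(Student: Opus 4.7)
The plan is to first eliminate $\partial_t w_2$ from formula \eqref{U-8} by substituting the spatial expression for it given by \eqref{U-4}, so that $\Psi(w)$ becomes a purely spatial nonlinear operator in $w\in X_3$. After this reduction, $\Psi$ splits into two types of pieces: (a) expressions of the form $\chi(D)[\,\cdot\,]$ or $\Delta\chi(D)[\,\cdot\,]$ whose arguments are polynomial in $w_{1c},w_{2c},w_1,w_2,\nabla w_1,\nabla w_2$; and (b) products of $F(|U|^2)-F(|U_c|^2)$ or $F(|U|^2)$ with one of $u_c,v_c,w_1,w_2$, or with a $\chi(D)$-type factor. The underlying tools are: $w_{1c},w_{2c}=v_c\in H^3(\mathbf{R}^3)$ with $u_c-1,v_c$ decaying at infinity together with all their derivatives, the algebra property of $H^3(\mathbf{R}^3)$ (which follows from $H^2\hookrightarrow L^\infty$ in three dimensions), and Sobolev embeddings for $X_3$.

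For the type (a) pieces I would exploit that $\chi(D)$ is strongly smoothing: since $\chi\in C_0^\infty$, both $\chi(D)$ and $\Delta\chi(D)$ map $L^p(\mathbf{R}^3)$ into $H^s(\mathbf{R}^3)$ boundedly for every $p\in[1,\infty]$ and every $s\ge 0$, by Bernstein's inequality. The arguments of the multiplier lie in some $L^p$ via Sobolev embeddings ($w_2,w_{2c}\in L^6$; $\nabla w_2\in H^2\hookrightarrow L^\infty$) and depend multilinearly on $w$, so these pieces produce $C^\infty$ maps $X_3\to H^s$ for every $s\ge 0$.

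For the type (b) pieces, the first step is to show $w\mapsto\eta(w):=|U|^2-|U_c|^2\in C^\infty(X_3,H^3)$, using the decomposition
\[
\eta=2u_c\operatorname{Re}\delta+2v_c w_2+|\delta|^2,\qquad \delta=w_1-\chi(D)\bigl(w_{2c}w_2+\tfrac12 w_2^2\bigr)+iw_2,
\]
where the first and third summands land in $H^3$ by the algebra property, while $v_c w_2$ is handled by H\"older together with the decay of $v_c$. Given $\eta\in C^\infty(X_3,H^3)$ and $F\in C^5$, a Taylor expansion of $F$ about $|U_c|^2$ combined with $H^3$-valued Nemitski estimates (a refinement of Lemma \ref{L:Nemitski} using $H^3\hookrightarrow L^\infty$) would yield $w\mapsto F(|U|^2)-F(|U_c|^2)\in C^2(X_3,H^3)$. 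Multiplication by $u_c,v_c\in 1+H^3$ preserves $H^3$ via the algebra property. For the remaining products $F(|U|^2)w_1$ and $F(|U|^2)w_2$, I would expand $F(|U|^2)=-\eta+O(\eta^2)$ using $F(1)=0$ and $F'(1)=-1$, observe that $F(|U|^2)\in H^3$ decays at infinity, and close by the algebra property for the former and by H\"older for the latter.

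The main obstacle is precisely that $w_2\in\dot H^3$ need not be in $L^2$, so every product involving $w_2$ itself (rather than its derivatives) must be handled by H\"older's inequality and Sobolev embeddings rather than the naive algebra property. The crucial observation is that in $\Psi$ every factor paired with $w_2$ is either $v_c$, $F(|U|^2)$, or a $\chi(D)$-smoothed expression, each of which lies in $H^3$ with full decay at infinity, so $\|f w_2\|_{L^2}\le\|f\|_{L^3}\|w_2\|_{L^6}$, and analogous bounds on each derivative up to order three (using the chain $H^3\hookrightarrow H^2\hookrightarrow L^\infty$ and $\dot H^1\hookrightarrow L^6$ in three dimensions) close the $H^3$ estimate. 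The $C^2$ dependence on $w$ then follows by tracking Fr\'echet derivatives and applying Lemma \ref{L:Nemitski} at each order.
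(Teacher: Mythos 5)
Your overall architecture parallels the paper's: you substitute the spatial expression for $\partial_t w_2$ from \eqref{U-4} into \eqref{U-8}, exploit the smoothing of $\chi(D)$, handle the composition with $F$ by an $H^3$-valued Moser/Nemitski argument (the paper writes $F(|U|^2)=\tilde F(|U|^2-1)$ with $\tilde F(s)=F(s+1)$, $\tilde F(0)=0$, and uses $\tilde F\in C^2(H^3,H^3)$), and close products involving $w_2$ by pairing with decaying factors via H\"older, which is exactly the paper's estimate $\Vert fg\Vert_{H^3}\le C\Vert f\Vert_{H^3}\Vert g\Vert_{\dot H^3}$. You also correctly identify the central difficulty, namely that $w_2\in\dot H^3$ need not lie in $L^2$.

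There is, however, a genuine gap at the quadratic term $w_2^2$, caused by two false claims. First, $\chi(D)$ and $\Delta\chi(D)$ do \emph{not} map $L^p(\mathbf{R}^3)$ into $H^s$ for $p>2$: Young's inequality gives $\check\chi\ast:L^p\to L^2$ only for $p\le 2$ (cf.\ \eqref{ineq-1}, which only yields $L^p\cap L^\infty$), and indeed for $\hat w_2=|\xi|^{-a}\mathbf 1_{\{|\xi|<1\}}$ with $a\in(9/4,5/2)$ one has $w_2\in\dot H^1\cap\dot H^3$ but $w_2\notin L^4$, so neither $w_2^2$ nor its low-frequency part $\chi(D)(w_2^2)$ is in $L^2$. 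The paper's estimate \eqref{ineq-6} is rescued precisely by the extra $\Delta$ (or $\partial_{x_1}$) in front, which contributes a symbol vanishing at $\xi=0$. Second, and consequently, your decomposition $\eta=2u_c\operatorname{Re}\delta+2v_cw_2+|\delta|^2$ with the claim that the first and third summands land in $H^3$ ``by the algebra property'' fails: $|\delta|^2=(\operatorname{Re}\delta)^2+w_2^2$ contains $w_2^2\notin L^2$, and $2u_c\operatorname{Re}\delta$ contains $-u_c\chi(D)(w_2^2)\notin L^2$; moreover $w_2$ is here paired with itself, not with one of your decaying factors, so your H\"older mechanism does not apply. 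The term is saved only by the structural cancellation built into the coordinate map \eqref{coordinate-mapping}: grouping $w_2^2$ with the $-\chi(D)(w_2^2)$ coming from $\operatorname{Re}\delta$ produces $(1-\chi(D))(w_2^2)$, which \eqref{ineq-2} and its higher-order analogues place in $H^3$ with norm controlled by $\Vert w_2\Vert_{\dot H^3}^2$ --- this cancellation is the entire reason $\psi$ carries the correction $-\chi(D)(w_2^2/2)$. The paper's proof expands $|U|^2-1=\bigl(w_1-\chi(D)(w_2^2/2)\bigr)^2+(1-\chi(D))w_2^2+2w_1$ exactly so that this is manifest, whereas your grouping destroys it. The fix is localized (regroup $\eta$, i.e., work with $|U|^2-1$ rather than $|U|^2-|U_c|^2$, and never detach $\chi(D)(w_2^2)$ from $w_2^2$ without a derivative or a decaying prefactor), but as written the argument does not close.
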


\begin{remark}
For the construction of unstable (stable) manifolds, we work on a small
$X_{3}$ neighborhood of a linearly unstable traveling wave $U_{c}$. By Sobolev
embedding, $X_{3}\hookrightarrow L^{\infty}\left(  \mathbf{R}^{3}\right)  \,$,
so we only need to assume the smoothness of $F$ in a finite interval $\left[
\min\left\vert U_{c}\right\vert ^{2} -\varepsilon_{0},\max\left\vert
U_{c}\right\vert ^{2} +\varepsilon_{0}\right]  $ for some $\varepsilon_{0}>0$.
In particular, for traveling waves with no vortices $\left(  U_{c}%
\neq0\right)  $, we do not need to assume the smoothness of $F\left(
s\right)  $ near $s=0.~$
\end{remark}

\begin{proof}
In the sequel, let $C(\Vert w\Vert_{X_{3}})$ be a constant depending
(increasingly) on $\Vert w\Vert_{X_{3}}$.\ In the proof, we will use the
following basic facts:

(i) Let $n\in\mathbb{N}$, $F\in C^{3+n}(\mathbf{R})$ with $F(0)=0$, and $g\in
H^{3}(\mathbf{R}^{3})$. Then
\begin{equation}
F(g)\in H^{3}(\mathbf{R}^{3})\text{ and }F\in C^{n}\left(  H^{3},H^{3}\right)
. \label{moser-composition}%
\end{equation}

(ii)
\begin{equation}
\Vert fg\Vert_{H^{3}}\leq C\Vert f\Vert_{H^{3}}\Vert g\Vert_{\dot{H}^{3}%
},\ \forall\ f\in H^{3}(\mathbf{R}^{3}),g\in\dot{H}^{3}(\mathbf{R}^{3}).
\label{ineq-5}%
\end{equation}

(iii)
\begin{equation}
\Vert fg\Vert_{\dot{H}^{3}}\leq C\Vert f\Vert_{\dot{H}^{3}}\Vert g\Vert
_{\dot{H}^{3}},\ \forall\ f,g\in\dot{H}^{3}(\mathbf{R}^{3}), \label{ineq-4}%
\end{equation}

(iv) Let $\chi\in C_{0}^{\infty}(\mathbf{R}^{3},[0,1]),\ $then $\forall
\ f,g\in\dot{H}^{3}(\mathbf{R}^{3})$,
\begin{equation}
\Vert\Delta\chi(D)(fg)\Vert_{H^{3}}\leq C\Vert(|\xi|+|\xi|^{3})\widehat{fg}%
\Vert_{L^{2}}\leq C\Vert f\Vert_{\dot{H}^{3}}\Vert g\Vert_{\dot{H}^{3}}
\label{ineq-6}%
\end{equation}
and
\begin{equation}
\Vert D^{\alpha}\chi(D)(fg)\Vert_{\dot{H}^{3}}\leq C\Vert f\Vert_{\dot{H}^{3}%
}\Vert g\Vert_{\dot{H}^{3}},\ \forall\ 0\leq|\alpha|<+\infty. \label{ineq-7}%
\end{equation}
Here, (i) is by Moser's composition inequality, (ii)-(iv) can be shown by
Sobolev embedding and Fourier transforms.

\textbf{Step 1. }Show $\operatorname{Re}\Psi(w)\in C^{2}(X_{3},H^{3}).$

Let $\tilde{F}(s)=F(s+1)$, then $\tilde{F}\in C^{5}(\mathbf{R})$ and
$\tilde{F}(0)=0$. Denote $U=\psi\left(  w+w_{c}\right)  ,$ we write
\begin{align*}
Re\Psi(w)  &  =\Delta\chi(D)(w_{2c}w_{2}+\frac{w_{2}^{2}}{2})+[F(|U_{c}%
|^{2})-F(|U|^{2})]u_{c}\\
&  \ \ \ -F(|U|^{2})[w_{1}-\chi(D)(w_{2c}w_{2}+\frac{w_{2}^{2}}{2})]\\
&  =F(|U_{c}|^{2})u_{c}+\Delta\chi(D)(w_{2c}w_{2}+\frac{w_{2}^{2}}{2})\\
&  \ \ \ \ -F(|U|^{2})\left[  u_{c}+w_{1}-\chi(D)(w_{2c}w_{2}+\frac{w_{2}^{2}%
}{2})\right]  .\\
&  =\tilde{F}(|U_{c}|^{2}-1)u_{c}+\Psi_{1}\left(  w\right)  +\tilde{F}\left(
\Psi_{2}\left(  w+w_{c}\right)  \right)  \Psi_{3}\left(  w\right)  ,
\end{align*}
where%
\[
\Psi_{1}\left(  w\right)  =\Delta\chi(D)(w_{2c}w_{2}+\frac{w_{2}^{2}}{2}),
\]%
\begin{align*}
\Psi_{2}\left(  w\right)   &  =\left\vert \psi\left(  w\right)  \right\vert
^{2}-1=\left(  1+w_{1}-\chi(D)(\frac{w_{2}^{2}}{2})\right)  ^{2}+w_{2}^{2}-1\\
&  =\left(  w_{1}-\chi(D)(\frac{w_{2}^{2}}{2})\right)  ^{2}+\left(
1-\chi(D)\right)  w_{2}^{2}+2w_{1},
\end{align*}
and
\[
\Psi_{3}\left(  w\right)  =u_{c}+w_{1}-\chi(D)(w_{2c}w_{2}+\frac{w_{2}^{2}}%
{2}).
\]
Since $\tilde{F}\in C^{2}\left(  H^{3},H^{3}\right)  ,$ by (\ref{ineq-4}) and
(\ref{ineq-5}) it suffices to show that $\Psi_{1},\Psi_{2}\in C^{2}\left(
X^{3},H^{3}\right)  $ and $\Psi_{3}\in C^{2}\left(  X^{3},\dot{H}^{3}\right)
.$ It follows from (\ref{ineq-6}) and (\ref{ineq-7}) that $\Psi_{1}\in
C^{\infty}\left(  X^{3},H^{3}\right)  $ and $\Psi_{3} -1\in C^{\infty}\left(
X^{3},\dot{H}^{3}\right)  $. Let
\[
\Psi_{2}\left(  w\right)  =\left(  \Psi_{4}\left(  w\right)  \right)
^{2}+\Psi_{5}\left(  w\right)  ,
\]
where
\[
\Psi_{4}\left(  w\right)  =w_{1}-\chi(D)(\frac{w_{2}^{2}}{2}),\ \Psi
_{5}\left(  w\right)  =\left(  1-\chi(D)\right)  w_{2}^{2}+2w_{1}.
\]
By (\ref{ineq-2}), (\ref{ineq-5}) and (\ref{ineq-7}), for any$\ f,g\in\dot
{H}^{3}(\mathbf{R}^{3})$,
\[
\left\Vert \left(  1-\chi(D)\right)  \left(  fg\right)  \right\Vert _{H^{3}%
}\leq C\Vert f\Vert_{\dot{H}^{3}}\Vert g\Vert_{\dot{H}^{3}}.
\]
This implies that $\Psi_{5}\in C^{\infty}\left(  X^{3},H^{3}\right)  $. By
(\ref{ineq-2}),
\[
\left\Vert \chi(D)\left(  fg\right)  \right\Vert _{L^{4}}\leq C\Vert
f\Vert_{\dot{H}^{1}}\Vert g\Vert_{\dot{H}^{1}}.
\]
Combining with (\ref{ineq-7}), this implies that
\[
\Psi_{4}\left(  w\right)  \in C^{\infty}\left(  X^{3},L^{4}\cap\dot{H}%
^{3}\right)
\]
and thus $\left(  \Psi_{4}\right)  ^{2}\in C^{\infty}\left(  X^{3}%
,H^{3}\right)  \ $by (\ref{ineq-4}). This finishes the proof for
$\operatorname{Re}\Psi(w).$

\textbf{Step 2. }Show $\operatorname{Im}\Psi(w)\in C^{2}(X_{3},H^{3})$.

We write
\begin{align}
\operatorname{Im}\Psi(w)  &  =\chi(D)((w_{2c}+w_{2})\partial_{t}%
w_{2})-c\partial_{x_{1}}\chi(D)(w_{2c}w_{2}+\frac{w_{2}^{2}}{2}%
)\label{eqn-imiginary}\\
&  \ \ \ \ \ +[F(|U_{c}|^{2})-F(|U|^{2})]v_{c}-F(|U|^{2})w_{2}\nonumber\\
&  =F(|U_{c}|^{2})v_{c}+\Psi_{6}\left(  w\right)  +\Psi_{7}\left(  w\right)
-\tilde{F}\left(  \Psi_{2}\left(  w+w_{c}\right)  \right)  \left(  v_{c}%
+w_{2}\right)  ,\nonumber
\end{align}
where%
\[
\Psi_{6}\left(  w\right)  =\chi(D)((w_{2c}+w_{2})\Psi_{8}\left(  w\right)  ),
\]%
\[
\Psi_{8}\left(  w\right)  =\partial_{t}w_{2}=\Delta w_{1}+c\partial_{x_{1}%
}w_{2}-\operatorname{Re}\Psi(w),
\]
and%
\[
\Psi_{7}\left(  w\right)  =-c\partial_{x_{1}}\chi(D)(w_{2c}w_{2}+\frac
{w_{2}^{2}}{2}).
\]
By the proof in Step 1, the last term in (\ref{eqn-imiginary}) is in
$C^{2}(X_{3},H^{3})$ and $\Psi_{8}\in C^{2}\left(  X_{3},H^{1}\right)  .$
Since for any $f\in\dot{H}^{1},g\in H^{1},$%
\[
\left\Vert \chi(D)(fg)\right\Vert _{H^{3}}\leq C\left\Vert fg\right\Vert
_{L^{2}}\leq C\left\Vert f\right\Vert _{\dot{H}^{1}}\left\Vert g\right\Vert
_{H^{1}},
\]
so $\Psi_{6}\in C^{2}\left(  X_{3},H^{3}\right)  $. For any $f,g\in\dot{H}%
^{1}$, by (\ref{ineq-1}) we have
\begin{align*}
&  \left\Vert \partial_{x_{1}}\chi(D)\left(  fg\right)  \right\Vert _{L^{2}}\\
&  =\left\Vert \chi(D)\left(  f\partial_{x_{1}}g+g\partial_{x_{1}}f\right)
\right\Vert _{L^{2}}\leq C\left\Vert f\partial_{x_{1}}g+g\partial_{x_{1}%
}f\right\Vert _{L^{\frac{3}{2}}}\\
&  \leq C\left(  \left\Vert f\right\Vert _{L^{6}}\left\Vert \partial_{x_{1}%
}g\right\Vert _{L^{2}}+\left\Vert g\right\Vert _{L^{6}}\left\Vert
\partial_{x_{1}}f\right\Vert _{L^{2}}\right)  \leq C\left\Vert f\right\Vert
_{\dot{H}^{1}}\left\Vert g\right\Vert _{\dot{H}^{1}}.
\end{align*}
Combined with (\ref{ineq-7}), above implies that
\[
\left\Vert \partial_{x_{1}}\chi(D)\left(  fg\right)  \right\Vert _{H^{3}}\leq
C\Vert f\Vert_{\dot{H}^{3}}\Vert g\Vert_{\dot{H}^{3}},\
\]
for any$\ f,g\in\dot{H}^{3}(\mathbf{R}^{3}).$ Thus $\Psi_{7}\in C^{\infty
}\left(  X_{3},H^{3}\right)  $. This finishes the proof for $\operatorname{Im}%
\Psi(w)$.
\end{proof}

\subsection*{Appendix 2}

In this Appendix, we show the non-degeneracy condition
(\ref{cond-NDG-stationary}) of stationary bubbles $\phi_{0}$ of the
cubic-quintic equation for $N=2$. Consider the cubic-quintic nonlinear
Schr\"{o}dinger equation (\ref{eqn-cubic-quintic}). Denote
\[
F(s)=-\alpha_{1}+\alpha_{3}s-\alpha_{5}s^{2},\ \ \ \alpha_{1},\alpha
_{3},\alpha_{5}>0,
\]
with
\begin{equation}
\frac{\alpha_{1}\alpha_{5}}{\alpha_{3}^{2}}\in(\frac{3}{16},\frac{1}{4}).
\label{eq:1.2}%
\end{equation}
Set $\rho_{0}=\frac{\alpha_{3}+\sqrt{\alpha_{3}^{2}-4\alpha_{1}\alpha_{5}}%
}{2\alpha_{5}}$, then $F(\rho_{0})=0$ and $F^{\prime}(\rho_{0})<0$. Define
\begin{equation}
g(s)=\left\{
\begin{array}
[c]{ll}%
-F((\sqrt{\rho_{0}}-s)^{2})(\sqrt{\rho_{0}}-s),\ 0\leq s\leq\sqrt{\rho_{0}}, &
\\
0,\ s\geq\sqrt{\rho_{0}}, & \\
-g(-s),\ s\leq0, &
\end{array}
\right.  . \label{defn-g}%
\end{equation}
According to Theorem 2.1 of \cite{de95}, if (\ref{eq:1.2}) holds, the
following semilinear elliptic equation
\begin{equation}
-\Delta u=g(u),\ \ u\in H^{1}(\mathbb{R}^{N}),u\neq0, \label{eq:1.3}%
\end{equation}
has a positive radially symmetric, decreasing solution $Q\left(  \left\vert
x\right\vert \right)  \in(0, \sqrt{\rho_{0}})$, which is usually called a
\textit{ground state}. Then $\phi_{0}=\sqrt{\rho_{0}}-Q\left(  \left\vert
x\right\vert \right)  $ is a stationary bubble of (\ref{eqn-cubic-quintic})
with the the nonzero boundary condition $|\phi_{0}|\rightarrow\sqrt{\rho_{0}}$
as $|x|\rightarrow\infty$. See Theorem \ref{thm-stationary bubble} for more
properties of $\phi_{0}$.


\begin{theorem}
For $N=2,\ $let $Q\left(  \left\vert x\right\vert \right)  $ be the ground
state of (\ref{eq:1.3}) with the cubic-quintic nonlinear term $g\left(
u\right)  $ defined in (\ref{defn-g}). Consider the operator
\[
L_{0}=-\Delta-g^{\prime}(Q):H^{2}\left(  \mathbf{R}^{2}\right)  \rightarrow
L^{2}\left(  \mathbf{R}^{2}\right)  .
\]
Then
\begin{equation}
\ker L_{0}=span \left\{  \partial_{x_{1}}Q,\partial_{x_{2}}Q\right\}  .
\label{condition-NDG-ground}%
\end{equation}

\end{theorem}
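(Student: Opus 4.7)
The approach is to use polar coordinates in $\mathbf{R}^2$ to decompose $\ker L_0$ by angular momentum. Writing $\phi(r,\theta)=\sum_{k\in\mathbf{Z}}\phi_k(r)e^{ik\theta}$ with $\phi_k\in L^2(r\,dr)$, the equation $L_0\phi=0$ decouples into the family of one-dimensional problems
\[
L_k\phi_k \,\equiv\, -\phi_k''-\tfrac{1}{r}\phi_k'+\tfrac{k^2}{r^2}\phi_k-g'(Q)\phi_k=0,\qquad k\in\mathbf{Z},
\]
on $L^2(r\,dr)$. The translation modes $\partial_{x_1}Q=Q'(r)\cos\theta$ and $\partial_{x_2}Q=Q'(r)\sin\theta$ span the $|k|=1$ sector, so the task reduces to showing $\ker L_0^{\mathrm{rad}}\cap L^2=\{0\}$, $\ker L_1\cap L^2=\mathrm{span}\{Q'\}$, and $\ker L_k\cap L^2=\{0\}$ for $|k|\geq 2$.

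For $|k|\geq 2$, I would use the translation mode: since $L_1 Q'=0$ and $Q$ is strictly radially decreasing we have $Q'(r)<0$ for all $r>0$, so $Q'$ is a sign-definite $L^2$ eigenfunction of $L_1$ and therefore a principal (ground-state) eigenfunction, forcing $L_1\geq 0$ as a quadratic form. Then $L_k=L_1+(k^2-1)/r^2>L_1\geq 0$ is strictly positive for $|k|\geq 2$, so $\ker L_k\cap L^2=\{0\}$. For $|k|=1$, the ODE $L_1\phi=0$ has $Q'$ as one regular solution; by reduction of order a second linearly independent solution is $\phi_2(r)=Q'(r)\int^r \frac{ds}{s\,(Q'(s))^2}$. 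Since $Q$ is smooth at the origin with $Q'(0)=0$ and $Q''(0)<0$, one has $Q'(s)\sim Q''(0)s$ and $\phi_2(r)\sim C/r$ near $r=0$, hence $\phi_2\notin L^2$. Thus $\ker L_1\cap L^2_{\mathrm{rad}}=\mathrm{span}\{Q'\}$.

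The main obstacle is the radial sector $k=0$: showing that no nonzero $\phi_0\in L^2_{\mathrm{rad}}$ solves $L_0^{\mathrm{rad}}\phi_0=0$. The other linearly independent solution of $L_0^{\mathrm{rad}}\phi=0$ has a logarithmic singularity at $r=0$ coming from the $2$D radial Laplacian, so any candidate $\phi_0\in L^2_{\mathrm{rad}}$ is uniquely determined (up to scalar) by regularity at the origin and is completely pinned down by the ODE. The strategy I would pursue is to exploit the specific polynomial structure of $g(u)=\alpha_1 u-\alpha_3 u^3+\alpha_5 u^5$ via the identity $L_0^{\mathrm{rad}}(rQ')=2g(Q)$ (which follows from differentiating the scaled equation $-\Delta Q_\lambda=\lambda^2 g(Q_\lambda)$ at $\lambda=1$), producing an explicit ODE relation between $rQ'$, $g(Q)$, and any hypothetical $\phi_0$. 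Combined with a Sturm-type comparison argument using the known Morse index one of $L_0^{\mathrm{rad}}$ (inherited from the mountain-pass/Nehari characterization of $Q$), this would force the regular zero-energy solution to be oscillatory or non-$L^2$ and yield the contradiction. An alternative route, which I would keep in reserve, is to invoke the uniqueness of the radial ground state of $-\Delta u=g(u)$ for cubic-quintic nonlinearity (Serrin--Tang/McLeod type results) and argue that a nontrivial $\phi_0\in\ker L_0^{\mathrm{rad}}$ would generate, via the implicit function theorem applied to a modified problem, a one-parameter family of distinct radial ground states, contradicting uniqueness.

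Steps 1 and 2 are essentially soft and depend only on general properties of $Q$ (decay, monotonicity, regularity). The real work is the radial non-degeneracy in Step 3, which is where the specific cubic-quintic algebraic structure must enter; I expect the scaling identity $L_0^{\mathrm{rad}}(rQ')=2g(Q)$ together with the explicit sign structure of $g$ on $(0,\sqrt{\rho_0})$ to drive the argument.
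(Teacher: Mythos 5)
Your angular-momentum decomposition and your treatment of the sectors $|k|\ge 1$ are correct and essentially coincide with the reduction the paper invokes: it cites \cite{weinstein85} and \cite{maris-4d-slow} for exactly this step, reducing (\ref{condition-NDG-ground}) to the statement that the regular radial solution $\phi(\alpha_0,r)=\partial_\alpha u(\alpha,r)|_{\alpha=\alpha_0}$ of the linearized ODE does not tend to $0$ at infinity. The genuine gap is in your Step 3, the radial sector, which is precisely where all of the paper's work lies (Lemma \ref{lemma-1-appendix2} together with the structural conditions (G1)--(G4) of Lemma \ref{lemma-appendix2-g}). Neither of your two proposed strategies closes it as stated. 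First, knowing that the radial operator $A_0=-\partial_r^2-\frac1r\partial_r-g'(u)$ has Morse index one does not rule out a kernel element: the dangerous scenario is exactly that the \emph{second} radial eigenvalue is zero, i.e.\ that the regular solution has one zero and decays, and Morse index one is fully consistent with this, so a Sturm-type comparison against the index cannot by itself force the regular zero-energy solution to be oscillatory or non-$L^2$. Second, the reserve route is logically backwards: a nontrivial radial kernel element \emph{obstructs} the implicit function theorem rather than producing a curve of distinct ground states, and non-degeneracy is not a formal consequence of uniqueness (in the Kwong/McLeod line of results the two are proved together by a monotone-separation argument, not deduced one from the other).

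What the paper actually does in the radial sector is a contradiction argument that you would still need to supply. Assuming $\phi(\alpha_0,r)\to0$, it deduces from $A_0u'=-(N-1)r^{-2}u'$ and Proposition B.1 of \cite{Fr} that $0$ is the second eigenvalue of $A_0$, so $\phi$ has exactly one zero $z_1$, located in $(0,r_0)$ where $u(r_0)=u_0$. It then tests against the family $v_\beta=ru'+\beta u$, which satisfies $v_\beta''+\frac{N-1}{r}v_\beta'+g'(u)v_\beta=\Phi_\beta(u)$ with $\Phi_\beta(u)=\beta(ug'(u)-g(u))-2g(u)$ (your identity $L_0(rQ')=2g(Q)$ is the case $\beta=0$), and extracts sign information from the Green's identity (\ref{eq:1.7}) on $(0,r_0)$ and on $(z_1,r_0)$ with the specific choice $\beta_0=-z_1u'(z_1)/u(z_1)$. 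This requires the cubic-quintic sign structure (G2)--(G4) of $\Phi_\beta$ on $(u_0,\sqrt{\rho_0})$ \emph{and} the $N=2$-specific fact that $(-ru')'=rg(u)>0$ on $(0,r_0)$, so that $-ru'/u$ is increasing there (Claim 3), which is what yields $\beta_0<b_1$ and hence the positivity of the boundary term in Claim 4. So you have identified the correct starting identity, but the quantitative argument converting it into a contradiction --- the choice of $\beta_0$, the location of $z_1$ relative to $r_0$, and the monotonicity of $-ru'/u$ --- is missing, and it is the heart of the proof.
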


First, we check some properties of the cubic-quintic nonlinear term.

\begin{lemma}
\label{lemma-appendix2-g} Let $\frac{\alpha_{1}\alpha_{5}}{\alpha_{3}^{2}}%
\in(\frac{3}{16},\frac{1}{4})$, then $g$ satisfies the following conditions
\newline(G1) $g(0)=0$ and $g^{\prime}(0)<0$, \newline(G2) there exists
$u_{0}\in(0,\sqrt{\rho_{0}})$ such that $g(u_{0})=0$, $g^{\prime}(u_{0})>0$,
$g(u)<0$ for all $0<u<u_{0}$, and $g(u)>0$ for all $u_{0}<u<\sqrt{\rho_{0}}$.
\newline Furthermore, there exists $c_{0}\in(\frac{3}{16},\frac{21}{100})$ and
$u_{1}\in(u_{0},\sqrt{\rho_{0}})$ such that if $\frac{\alpha_{1}\alpha_{5}%
}{\alpha_{3}^{2}}=c_{0}$, then $g$ satisfies \newline(G3) $g^{\prime}%
(u_{1})=0$, $g^{\prime}(u)<0$ for all $u_{1}<u<\sqrt{\rho_{0}}$, $g^{\prime
}(u)>0$ for all $u_{0}<u<u_{1}$ and $G(u_{1})=\int_{0}^{u_{1}}g(s)ds=0$,
$G(u)<0$ for all $0<u<u_{1}$, and $G(u)>0$ for all $u_{1}<u\leq\sqrt{\rho_{0}%
}$. \newline(G4) for any $\beta>0$, $\Phi_{\beta}(u)=\beta(ug^{\prime
}(u)-g(u))-2g(u)$ has exactly one zero in $(u_{0},\sqrt{\rho_{0}})$.
\end{lemma}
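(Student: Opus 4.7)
The plan is to prove (G1)--(G4) in that order, making crucial use of two simplifying reductions: first, a positive rescaling in $u$ and in the amplitude of $F$ (which preserve all four conditions) allows us to normalize $\alpha_3 = \alpha_5 = 1$ and $\alpha_1 = \mu := \alpha_1\alpha_5/\alpha_3^2 \in (3/16,1/4)$; second, the substitution $v := \sqrt{\rho_0}-u$ exposes the underlying polynomial structure
\begin{equation*}
g(u) \;=\; v(v^2-\rho_0)(v^2-\rho_1) \;=\; \mu v - v^3 + v^5,
\end{equation*}
where $\rho_{0,1} = \tfrac{1}{2}(1\pm\sqrt{1-4\mu})$ are the roots of $F(s) = -\mu + s - s^2$, so $\rho_0+\rho_1 = 1$ and $\rho_0\rho_1=\mu$.

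For (G1) and (G2), the factorization shows that $g$ vanishes on $(0,\sqrt{\rho_0})$ precisely when $v^2=\rho_1$, i.e., at $u_0 := \sqrt{\rho_0}-\sqrt{\rho_1}$, and the sign of $g$ on either side of $u_0$ is governed by the single factor $(v^2-\rho_1)$. A direct differentiation gives $g'(0) = -2\rho_0 F'(\rho_0) = -2\rho_0\sqrt{1-4\mu} < 0$ and $g'(u_0) = -2\rho_1 F'(\rho_1) = 2\rho_1\sqrt{1-4\mu} > 0$, since $F'(\rho_0) = -\sqrt{1-4\mu}$ and $F'(\rho_1) = \sqrt{1-4\mu}$ whenever $\mu < 1/4$.

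For (G3), the critical-point equation $g'(u_1)=0$ becomes $5v_1^4 - 3v_1^2 + \mu = 0$, whose unique root in $(0,\sqrt{\rho_1})$ is
\begin{equation*}
v_1^2(\mu) \;=\; \frac{3-\sqrt{9-20\mu}}{10};
\end{equation*}
the inequality $v_1^2 < \rho_1$ is obtained by noting $5v^4-3v^2+\mu$ evaluates to $2\rho_1(2\rho_1-1)<0$ at $v^2=\rho_1$. Then
\begin{equation*}
H(\mu) \;:=\; G(u_1(\mu)) \;=\; \frac{\mu\rho_0}{2} - \frac{\rho_0^2}{4} + \frac{\rho_0^3}{6} \;-\; \frac{\mu v_1^2}{2} + \frac{v_1^4}{4} - \frac{v_1^6}{6}
\end{equation*}
is an explicit continuous function of $\mu$ on $(3/16,1/4)$. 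Direct numerical evaluation yields $H(3/16)<0$ and $H(21/100)>0$, so the intermediate value theorem supplies $c_0\in(3/16,21/100)$ with $H(c_0)=0$. The remaining sign statements on $G$ are automatic from $G'(u)=g(u)$: since $G(0)=0$, $G$ is strictly decreasing on $(0,u_0)$ and strictly increasing on $(u_0,\sqrt{\rho_0})$, so $G(u_1)=0$ forces $G<0$ on $(0,u_1)$ and $G>0$ on $(u_1,\sqrt{\rho_0}]$.

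For (G4), the key is the identity
\begin{equation*}
\Phi_\beta(u) \;=\; \beta\, u^{\alpha+1}\,\frac{d}{du}\!\left(\frac{g(u)}{u^\alpha}\right), \qquad \alpha := 1 + \frac{2}{\beta} > 1,
\end{equation*}
so zeros of $\Phi_\beta$ in $(u_0,\sqrt{\rho_0})$ correspond to critical points of $u \mapsto g(u)/u^\alpha$, equivalently to solutions of $\phi(u) := u g'(u)/g(u) = \alpha$. On $(u_1,\sqrt{\rho_0})$ one has $g>0$ and $g'<0$, hence $\Phi_\beta<0$ pointwise, so every zero lies in $(u_0,u_1)$; there $\phi(u_0^+)=+\infty$ and $\phi(u_1)=0$, which already yields existence of a zero. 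Uniqueness reduces to strict monotonicity of $\phi$ on $(u_0,u_1)$, that is to the polynomial inequality $g(u)g'(u) + u[g''(u)g(u)-g'(u)^2] < 0$ on $(u_0,u_1)$; with the explicit $g(u) = \mu v - v^3 + v^5$ at $\mu = c_0$, this becomes a polynomial inequality in $v\in(0,\sqrt{\rho_1})$ to be verified by Sturm's theorem or by exhibiting an explicit sign-definite decomposition. This last polynomial sign check is the main obstacle of the proof: the value $c_0$ is only implicitly determined by (G3), so the monotonicity of $\phi$ must be established in a form robust enough not to depend on the exact numerical value of $c_0$, and it is here that the most delicate (possibly computer-assisted) computation is required.
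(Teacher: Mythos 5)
Your treatment of (G1)--(G3) follows essentially the same route as the paper (the substitution $v=\sqrt{\rho_0}-u$, the quadratic $5w^2-3w+\mu$ in $w=v^2$ giving $v_1^2=\tilde\rho_1=\frac{3-\sqrt{9-20\mu}}{10}$, the explicit antiderivative for $G(u_1)$ and an intermediate value argument in $\mu$), and the conclusions there are correct; note only that your intermediate formulas $g'(0)=-2\rho_0F'(\rho_0)$ and $g'(u_0)=-2\rho_1F'(\rho_1)$ carry a spurious sign --- the chain rule through $v=\sqrt{\rho_0}-u$ gives $g'(u)=F(v^2)+2v^2F'(v^2)$, hence $g'(0)=+2\rho_0F'(\rho_0)$ and $g'(u_0)=+2\rho_1F'(\rho_1)$, which are the values consistent with your final (correct) signs.

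The genuine gap is in (G4). You reduce uniqueness of the zero of $\Phi_\beta$ to strict monotonicity of $\phi(u)=ug'(u)/g(u)$ on $(u_0,u_1)$, and then leave the resulting inequality $gg'+u(gg''-(g')^2)<0$ as an unverified ``polynomial sign check,'' flagged as possibly computer-assisted. This is both unproven and stronger than what is needed (uniqueness of the level set $\{\phi=\alpha\}$ for $\alpha>1$ does not require $\phi$ to be monotone where $\phi\le 1$). The missing observation --- and the reason the lemma restricts $c_0$ to $(\frac{3}{16},\frac{21}{100})$ rather than $(\frac{3}{16},\frac14)$ --- is that the upper bound $\frac{21}{100}$ is exactly the condition $\rho_1<\frac{3}{10}$. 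Since $g''(u)=-6\alpha_3 v+20\alpha_5 v^3=2\alpha_5 v\bigl(10 v^2-3\alpha_3/\alpha_5\bigr)$ (in your normalization $2v(10v^2-3)$) and $v^2\le\rho_1<\frac{3}{10}$ on $[u_0,\sqrt{\rho_0})$, one gets $g''<0$ there uniformly in $c\in(\frac{3}{16},\frac{21}{100})$. Then $\Phi_\beta'(u)=\beta u g''(u)-2g'(u)<0$ on $[u_0,u_1]$ because $g'>0$ there, so $\Phi_\beta$ is strictly decreasing from $\Phi_\beta(u_0)=\beta u_0 g'(u_0)>0$ to $\Phi_\beta(u_1)=-(\beta+2)g(u_1)<0$, giving exactly one zero in $(u_0,u_1)$; and on $(u_1,\sqrt{\rho_0})$ one has $g>0$, $g'<0$, hence $\Phi_\beta<0$ with no further zeros. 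This disposes of (G4) with no Sturm-type or numerical computation and is automatically uniform in $c$, which resolves the robustness concern you raise at the end.
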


\begin{proof}
By the definition of $g$, we have for $u\in\lbrack0,\sqrt{\rho_{0}})$,
\[
g^{\prime}(u)=(-1)[\alpha_{1}-3\alpha_{3}(\sqrt{\rho_{0}}-u)^{2}+5\alpha
_{5}(\sqrt{\rho_{0}}-u)^{4}],
\]%
\[
g^{\prime\prime}(u)=-6\alpha_{3}(\sqrt{\rho_{0}}-u)+20\alpha_{5}(\sqrt
{\rho_{0}}-u)^{3}.
\]
Let $\rho_{1}=\frac{\alpha_{3}-\sqrt{\alpha_{3}^{2}-4\alpha_{1}\alpha_{5}}%
}{2\alpha_{5}}$, $\tilde{\rho}_{0}=\frac{3\alpha_{3}+\sqrt{9\alpha_{3}%
^{2}-20\alpha_{1}\alpha_{5}}}{10\alpha_{5}}$, $\tilde{\rho}_{1}=\frac
{3\alpha_{3}-\sqrt{9\alpha_{3}^{2}-20\alpha_{1}\alpha_{5}}}{10\alpha_{5}}$.
For all $\frac{\alpha_{1}\alpha_{5}}{\alpha_{3}^{2}}\in(\frac{3}{16},\frac
{1}{4})$, we have
\[
\tilde{\rho}_{1}<\rho_{1}<\tilde{\rho}_{0}<\rho_{0}.
\]
Choose $u_{0}=\sqrt{\rho_{0}}-\sqrt{\rho_{1}}$. Then (G1)(G2) hold.

Choose $u_{1}=\sqrt{\rho_{0}}-\sqrt{\tilde{\rho}_{1}}$. Then $u_{1}\in
(u_{0},\sqrt{\rho_{0}})$,
\[
g^{\prime}(u_{1})=(-1)[\alpha_{1}-3\alpha_{3}\tilde{\rho}_{1}+5\alpha
_{5}\tilde{\rho}_{1}^{2}]=0,
\]
$g^{\prime}(u)<0$ for all $u_{1}<u<\sqrt{\rho_{0}}$ and $g^{\prime}(u)>0$ for
all $u_{0}\leq u<u_{1}$.

Let $c=\frac{\alpha_{1}\alpha_{5}}{\alpha_{3}^{2}}$. If $c\in(\frac{3}%
{16},\frac{21}{100})$, we have $g^{\prime\prime}(u)<0$ for all $u_{0}\leq
u<\sqrt{\rho_{0}}$. Then for any $\beta>0$,
\[
\Phi_{\beta}^{\prime}(u)=\beta ug^{\prime\prime}(u)-2g^{\prime}(u)<0,\ \forall
u\in\lbrack u_{0},u_{1}].
\]
Moreover, $\Phi_{\beta}(u_{0})>0$, $\Phi_{\beta}(u_{1})<0$ and $\Phi_{\beta
}(u)<0$ for all $u_{1}<u<\sqrt{\rho_{0}}$.

By direct calculations, we have
\begin{align*}
G(u_{1})  &  =\frac{-\alpha_{3}^{3}}{2\alpha_{5}^{2}}\{\frac{3-\sqrt{9-20c}%
}{10}(\frac{14c}{15}-\frac{9-9\sqrt{9-20c}}{100})\\
&  -\frac{1+\sqrt{1-4c}}{24}[8c-(1+\sqrt{1-4c})]\}\triangleq\frac{-\alpha
_{3}^{3}}{2\alpha_{5}^{2}}h(c).
\end{align*}
Since $h(\frac{3}{16})>0$, $h(\frac{21}{100})<0$, there exists $c_{0}\in
(\frac{3}{16},\frac{21}{100})$ such that $G(u_{1})=0$ if $\frac{\alpha
_{1}\alpha_{5}}{\alpha_{3}^{2}}=c_{0}$. Then (G3)(G4) hold.
\end{proof}

Let $u(\alpha,r)$ be the solution of the initial value problem
\begin{equation}
\left\{
\begin{array}
[c]{ll}%
u^{\prime\prime}(r)+\frac{N-1}{r}u^{\prime}(r)+g(u)=0 & \\
u(0)=\alpha,\ u^{\prime}(0)=0. &
\end{array}
\right.  \label{eq:1.4}%
\end{equation}
Then $\phi(\alpha,r)=\frac{\partial u(\alpha,r)}{\partial\alpha}$ solves
\begin{equation}
\left\{
\begin{array}
[c]{ll}%
\phi^{\prime\prime}(r)+\frac{N-1}{r}\phi^{\prime}(r)+g^{\prime}(u(\alpha
,r))\phi=0 & \\
\phi(0)=1,\ \phi^{\prime}(0)=0. &
\end{array}
\right.  \label{eq:1.5}%
\end{equation}
Let $Q\left(  \left\vert x\right\vert \right)  =u(\alpha_{0},\left\vert
x\right\vert )$ be a ground state of (\ref{eq:1.3}). To show the
non-degeneracy condition
\[
\ker L_{0}=\left\{  \partial_{x_{1}}Q,\cdots,\partial_{x_{N}}Q\right\}  ,
\]
it suffices to show that the function $\phi(\alpha_{0},r)$ does not vanish at
infinity. (See \cite{weinstein85} or \cite{maris-4d-slow} for the proof). When
$N=2,\ $such a result is provided in the following lemma, which was motivated
by \cite{Lin} and \cite{Jang}.

\begin{lemma}
\label{lemma-1-appendix2} Suppose that (G1)-(G4) hold and let $u(\alpha
_{0},r)$ be a ground state of (\ref{eq:1.3}), then $\lim\limits_{r\rightarrow
+\infty}\phi(\alpha_{0},r)\neq0$ when $N=2.$
\end{lemma}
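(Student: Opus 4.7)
The plan is to argue by contradiction. Suppose $\phi(\alpha_0,r)\to 0$ as $r\to\infty$. Since $Q(r)=u(\alpha_0,r)\to 0$ exponentially and $g'(0)<0$ by (G1), the radial ODE
\[
\phi''+\tfrac{1}{r}\phi'+g'(Q)\phi=0
\]
becomes asymptotically a modified Bessel equation with decay constant $\sqrt{|g'(0)|}$, so the vanishing hypothesis forces exponential decay of both $\phi$ and $\phi'$. Consequently $\phi\in H^1(\mathbb{R}^2)$ provides a nontrivial radial element of $\ker L_0$, and $\phi(0)=1>0$, so $\phi$ has at least one zero on $(0,\infty)$.

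First I would extract two integral orthogonalities. The 2D Pohozaev identity gives $\int_{\mathbb{R}^2}G(Q)\,dx=0$; differentiating this with respect to $\alpha$ at $\alpha=\alpha_0$ yields
\[
\int_0^\infty g(Q(r))\,\phi(r)\,r\,dr=0.
\]
Testing $L_0\phi=0$ against $Q$ and using $-\Delta Q=g(Q)$ then gives
\[
\int_0^\infty\bigl[Q\,g'(Q)-g(Q)\bigr]\phi\,r\,dr=0,\qquad\text{hence}\qquad\int_0^\infty Q\,g'(Q)\,\phi\,r\,dr=0.
\]
A direct computation from differentiating the radial equation for $Q$ also shows $L_0(rQ')=2g(Q)$, so $rQ'$ satisfies an inhomogeneous version of the linearized equation; this identity will be the bridge to Sturm-type comparison with $\phi$.

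Next I would run a zero-counting argument in the spirit of Kwong--McLeod, adapted to the cubic-quintic situation as in Jang and Lin. Since $Q$ is radial decreasing, $rQ'<0$ on $(0,\infty)$; writing down the Wronskian $W=\phi(rQ')'-\phi'(rQ')$ and using the two equations above gives
\[
\bigl[rW\bigr]'=-2g(Q)\phi\,r,
\]
whose integrated form is compatible with $\int g(Q)\phi r\,dr=0$. To extract a contradiction I would test $L_0\phi=0$ against a carefully chosen combination $V_\beta=\beta(rQ')+\gamma Q$ (with $\beta,\gamma$ tuned by the two orthogonalities), so that after integration by parts the integrand reduces, up to boundary terms that vanish thanks to the exponential decay of $\phi$, to a multiple of
\[
\int_0^\infty \Phi_\beta(Q(r))\,\phi(r)\,r\,dr,\qquad \Phi_\beta(u)=\beta\bigl(ug'(u)-g(u)\bigr)-2g(u).
\]
Condition (G4) guarantees that $\Phi_\beta$ has exactly one zero in $(u_0,\sqrt{\rho_0})$, and hence (after reading off signs from (G2)--(G3)) a single sign change along $Q(r)$. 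Combined with the fact that $\phi$ can have at most one zero on $(0,\infty)$ where $Q$ lies in this transition interval (otherwise Sturm comparison with $rQ'$, for which $(rQ')$ has no zeros, gives a direct contradiction), this forces the above integral to have a definite sign and hence to be nonzero, contradicting the orthogonality.

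The hard step will be the last one: constructing $V_\beta$ with the right coefficients $(\beta,\gamma)$ and verifying that all boundary terms in the integration by parts genuinely vanish, then combining (G3) (notably $G(u_1)=0$) with (G4) to obtain a strict sign for $\int\Phi_\beta(Q)\phi\,r\,dr$. The inner region $\{r:Q(r)>u_1\}$ (where $G>0$) and the outer region $\{r:Q(r)<u_1\}$ (where $G<0$) contribute with opposite signs, and the special choice $c_0\in(3/16,21/100)$ forced by (\ref{eq:1.2}) is what makes these contributions balance in one's favor rather than cancel; making this balancing fully quantitative is the real technical core of the argument.
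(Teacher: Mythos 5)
Your setup (contradiction hypothesis, exponential decay of $\phi$, the family $v_\beta = rQ'+\beta Q$ with $-L_0 v_\beta=\Phi_\beta(Q)$, and the single sign change of $\phi$) matches the paper's ingredients, but the mechanism you propose for the contradiction does not close. You want to show that $\int_0^\infty \Phi_\beta(Q)\phi\, r\,dr$ is nonzero for a suitable $\beta$ and contradict an orthogonality. However, that integral is \emph{identically zero for every} $\beta$: it is exactly $\beta\int (Qg'(Q)-g(Q))\phi\,r\,dr - 2\int g(Q)\phi\,r\,dr$, a linear combination of the two orthogonalities you derived, so integrating over all of $(0,\infty)$ yields $0=0$ and no information. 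To get a contradiction you would instead have to show the \emph{integrand} $\Phi_\beta(Q(r))\phi(r)$ has one sign on all of $(0,\infty)$, and this is out of reach of the hypotheses: (G4) controls the zeros of $\Phi_\beta$ only on $(u_0,\sqrt{\rho_0})$, i.e., only for $r<r_0$ where $u(r_0)=u_0$; nothing constrains the sign of $\Phi_\beta(Q(r))$ for $r>r_0$. A related symptom is that your argument never uses $N=2$, whereas the statement is false to prove by this route in general dimension. (Also, your derivation of $\int g(Q)\phi\,r\,dr=0$ by differentiating the Pohozaev identity in $\alpha$ is invalid --- $\int G(u(\alpha,\cdot))\,dx=0$ holds only at $\alpha=\alpha_0$, not on a neighborhood --- though the identity itself is recoverable from $L_0(rQ')=2g(Q)$ and self-adjointness.)

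The paper circumvents exactly this obstruction by working on \emph{finite} intervals and making the boundary terms, rather than the bulk integrals, carry the argument. It first shows $\phi$ has exactly one zero $z_1$ (second eigenvalue of the radial operator is $0$) and that $z_1<r_0$, using the Green identity on $(0,r_0)$ with $\beta=0$ and the input $H(0)=\phi(r_0)v_0'(r_0)-v_0(r_0)\phi'(r_0)>0$ from Jang. It then picks $\beta_0=-z_1u'(z_1)/u(z_1)$ so that $v_{\beta_0}(z_1)=0$, which kills the boundary term at $z_1$ and gives $\int_0^{z_1}\Phi_{\beta_0}(u)\phi\,r\,dr=0$; combined with (G3) this \emph{locates} $u(z_1)<u_1$, and (G4) then fixes the sign of $\Phi_{\beta_0}(u(r))$ on $(z_1,r_0)$, making $\int_{z_1}^{r_0}\Phi_{\beta_0}(u)\phi\,r\,dr<0$. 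The contradiction comes from showing the remaining boundary term $H(\beta_0)$ at $r_0$ is positive, and this is where $N=2$ enters: for $N=2$ one has $(-ru')'=rg(u)>0$ on $(0,r_0)$, so $-ru'/u$ is increasing, giving $\beta_0<b_1=-r_0u'(r_0)/u(r_0)$ and hence $H(\beta_0)>H(b_1)\ge 0$. This boundary-term analysis at $z_1$ and $r_0$ is the real core of the proof and is absent from your outline; the steps you flag as ``the hard step'' (balancing inner and outer regions of the full line via $G(u_1)=0$) are not how the conditions (G3)--(G4) are actually deployed.
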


\begin{proof}
To simplify notations, we denote $u(\alpha_{0},r),\phi(\alpha_{0},r)$ by
$u(r),\phi(r)$ respectively. Since $u(r)$ is a ground state of (\ref{eq:1.3}),
then $u(r)>0$, $u^{\prime}(r)<0$ for all $r>0$ and $u_{1}<u(0)<\sqrt{\rho_{0}%
}$. Moreover, by (G1) it follows from Lemma 6 of \cite{Kwong} that $\phi$
becomes monotone for large $r$. Therefore $\lim\limits_{r\rightarrow+\infty
}\phi(r)$ exists. In order to prove this lemma, we suppose to the contrary
that $\lim\limits_{r\rightarrow+\infty}\phi(r)=0$. \newline

Claim 1. $\phi$ has exactly one zero in $(0,+\infty)$. \newline Let
$A_{0}=-\partial_{r}^{2}-\frac{N-1}{r}\partial_{r}-g^{\prime}(u(r))$. From
$A_{0}u^{\prime}=-\left(  N-1\right)  r^{-2}u^{\prime}$, we deduce that the
first eigenvalue of $A_{0}$ is negative. By Proposition B.1 of \cite{Fr}, the
second eigenvalue of $A_{0}$ is nonnegative. Since $A_{0}\phi=0$ and
$\lim\limits_{r\rightarrow+\infty}\phi(r)=0$, $0$ must be the second
eigenvalue of $A_{0}$. Thus, $\phi$ has exactly one zero $z_{1}\in$
$(0,+\infty)$.

Claim 2. Let $r_{0}\in(0,+\infty)$ be such that $u(r_{0})=u_{0}$, then
$0<z_{1}<r_{0}$. Here, $u_{0}$ is defined in (G2). \newline For $\beta\geq0$,
let $v_{\beta}(r)=ru^{\prime}(r)+\beta u(r)$. Then $v_{\beta}$ solves
\begin{equation}
v_{\beta}^{\prime\prime}(r)+\frac{N-1}{r}v_{\beta}^{\prime}(r)+g^{\prime
}(u)v_{\beta}=\Phi_{\beta}(u), \label{eq:1.6}%
\end{equation}
where $\Phi_{\beta}(u)=\beta(ug^{\prime}(u)-g(u))-2g(u)$. By (\ref{eq:1.6})
and Green's Theorem, for any $0\leq r_{1}<r_{2}$, we have
\begin{align}
\int_{r_{1}}^{r_{2}}r^{N-1}\Phi_{\beta}(u)\phi dr  &  =r_{2}^{N-1}[\phi
(r_{2})v_{\beta}^{\prime}(r_{2})-v_{\beta}(r_{2})\phi^{\prime}(r_{2}%
)]\label{eq:1.7}\\
&  -r_{1}^{N-1}[\phi(r_{1})v_{\beta}^{\prime}(r_{1})-v_{\beta}(r_{1}%
)\phi^{\prime}(r_{1})].\nonumber
\end{align}
Set $H(\beta)=\phi(r_{0})v_{\beta}^{\prime}(r_{0})-v_{\beta}(r_{0}%
)\phi^{\prime}(r_{0})$. By the proof of lemma 2.8 in \cite{Jang}, we deduce
that $H(0)>0$. Then from (\ref{eq:1.7}) we get
\begin{equation}
\int_{0}^{r_{0}}r^{N-1}\Phi_{0}(u)\phi dr=r_{0}^{N-1}H(0)>0. \label{eq:1.7a}%
\end{equation}
By (G2) we know that $\Phi_{0}(u)=-2g(u)<0$ for all $u_{0}<u<\sqrt{\rho_{0}}$.
If $\phi>0$ on $[0,r_{0})$, it is impossible by (\ref{eq:1.7a}). Thus we must
have $\phi(r_{0})<0$ and $0<z_{1}<r_{0}$.

Claim 3. $\theta(r)=\frac{-ru^{\prime}(r)}{u(r)}$ is increasing in $(0,r_{0}%
)$. \newline For the proof of this claim, we need $N=2$. In fact, by
(\ref{eq:1.4}) we have $(-ru^{\prime}(r))^{\prime}=rg(u(r))$ for $N=2$. Thus,
from (G2) we know that $(-ru^{\prime}(r))^{\prime}>0$ in $(0,r_{0})$. Since
$u(r)$ is decreasing in $(0,+\infty)$, we get that $\theta(r)=\frac
{-ru^{\prime}(r)}{u(r)}$ is increasing in $(0,r_{0})$. \newline Set $\beta
_{0}=\frac{-z_{1}u^{\prime}(z_{1})}{u(z_{1})}$, then $\beta_{0}>0$ and
$v_{\beta_{0}}(z_{1})=0$. From (\ref{eq:1.7}), we get
\begin{equation}
\int_{0}^{z_{1}}r^{N-1}\Phi_{\beta_{0}}(u)\phi dr=0. \label{eq:1.8}%
\end{equation}
By (G2)(G3), we have $\Phi_{\beta_{0}}(u)<0$ for all $u_{1}\leq u<\sqrt
{\rho_{0}}$. Note that $\phi>0$ on $[0,z_{1})$, $u^{\prime}(r)<0$ for all
$r>0$ and $u_{1}<u(0)<\sqrt{\rho_{0}}$, then from (\ref{eq:1.8}) we deduce
that $u(z_{1})<u_{1}$ and $\Phi_{\beta_{0}}(u(z_{1}))>0$. Furthermore, by (G4)
we have $\Phi_{\beta_{0}}(u(r))>0$ for all $r\in(z_{1},r_{0})$. Since $\phi<0$
on $(z_{1},r_{0}]$, we have
\begin{equation}
\int_{z_{1}}^{r_{0}}r^{N-1}\Phi_{\beta_{0}}(u)\phi dr<0. \label{eq:1.9}%
\end{equation}
On the other hand, from (\ref{eq:1.7}) we get
\begin{equation}
\int_{z_{1}}^{r_{0}}r^{N-1}\Phi_{\beta_{0}}(u)\phi dr=r_{0}^{N-1}[\phi
(r_{0})v_{\beta_{0}}^{\prime}(r_{0})-v_{\beta_{0}}(r_{0})\phi^{\prime}%
(r_{0})]. \label{eq:1.10}%
\end{equation}

Claim 4. $\phi(r_{0})v_{\beta_{0}}^{\prime}(r_{0})-v_{\beta_{0}}(r_{0}%
)\phi^{\prime}(r_{0})>0$. \newline By Claim 4 and (\ref{eq:1.9})
(\ref{eq:1.10}), we get a contradiction. \newline\textit{proof of Claim 4.}
Let $H(\beta)=\phi(r_{0})v_{\beta}^{\prime}(r_{0})-v_{\beta}(r_{0}%
)\phi^{\prime}(r_{0})$, then
\begin{equation}
H(\beta)=H(0)+\beta\lbrack\phi(r_{0})u^{\prime}(r_{0})-\phi^{\prime}%
(r_{0})u(r_{0})]. \label{eq:1.11}%
\end{equation}
We show $H(\beta_{0})>0$ in two cases. \newline Case 1. $\phi(r_{0})u^{\prime
}(r_{0})-\phi^{\prime}(r_{0})u(r_{0})\geq0$. In this case, by $H(0)>0$ and
(\ref{eq:1.11}) we obviously have $H(\beta_{0})>0$. \newline Case 2.
$\phi(r_{0})u^{\prime}(r_{0})-\phi^{\prime}(r_{0})u(r_{0})<0$. Since
$\phi(r_{0})<0,u^{\prime}(r_{0})<0,u(r_{0})>0$, we must have $\phi^{\prime
}(r_{0})>0$.

Let $b_{1}=\frac{-r_{0}u^{\prime}(r_{0})}{u(r_{0})}$. Since $z_{1}<r_{0}$ and
$\theta(r)=\frac{-ru^{\prime}(r)}{u(r)}$ is increasing in $(0,r_{0})$, we have
$\beta_{0}<b_{1}$. Then by $\phi(r_{0})u^{\prime}(r_{0})-\phi^{\prime}%
(r_{0})u(r_{0})<0$ and (\ref{eq:1.11}) we get $H(\beta_{0})>H(b_{1})$. Note
that $v_{N-2}^{\prime}(r_{0})=-r_{0}g(u_{0})=0$ by (G2) and $v_{b_{1}}%
(r_{0})=0$. If $b_{1}\geq N-2$, we have $v_{b_{1}}^{\prime}(r_{0})\leq
v_{N-2}^{\prime}(r_{0})=0$ and
\[
H(b_{1})=\phi(r_{0})v_{b_{1}}^{\prime}(r_{0})-v_{b_{1}}(r_{0})\phi^{\prime
}(r_{0})=\phi(r_{0})v_{b_{1}}^{\prime}(r_{0})\geq0.
\]
This finishes the proof of the lemma for $N=2$.
\end{proof}

\subsection*{Appendix 3}

Consider a function in the form of $U(x_{1},x_{\bot})=U(x_{1},r_{\bot})$,
where $x_{\bot}=(x_{2},x_{3})$ and $r_{\bot}=|x_{\bot}|$, and assume $\nabla
U\in H^{s}(\mathbf{R}^{3})$, $s>1$, not necessarily an integer. In this
appendix, we prove
\[
\frac{1}{r_{\bot}}\partial_{r_{\bot}}U\in L^{2}(\mathbf{R}^{3})\;\text{ and
}\;\partial_{r_{\bot}}U\in H^{1}(\mathbf{R}^{3})
\]
which are needed in Lemma \ref{le-4} to show that the Hessian $L_{c}$ of the
energy functional has a negative mode.

Due to the density of Schwartz class functions, we will work on Schwartz class
functions, but keep tracking of the norms carefully. Denote $\vec{\mathbf{e}%
}_{\perp}=\frac{1}{r_{\bot}}\left(  0,x_{\bot}\right)  $, then%

\[
\partial_{r_{\bot}}U(x_{1},r_{\bot})=\nabla_{x_{\bot}}U(x_{1},x_{\bot}%
)\cdot\frac{x_{\bot}}{r_{\bot}}=DU(x_{1},x_{\bot})\cdot\vec{\mathbf{e}}%
_{\perp}%
\]
and%
\[
\partial_{x_{1}}\partial_{r_{\bot}}U(x_{1},r_{\bot})=D^{2}U(x_{1},x_{\bot
})\left(  \vec{\mathbf{e}}_{1},\vec{\mathbf{e}}_{\perp}\right)  .
\]
Moreover, since $\partial_{r_{\bot}}U(x_{1},x_{\bot})$ is radial in $x_{\bot}%
$, its gradient in $x_{\bot}$ must be in the radial direction and thus
\[
\nabla_{x_{\bot}}\partial_{r_{\bot}}U(x_{1},x_{\bot})=D^{2}U(x_{1},x_{\bot
})\left(  \vec{\mathbf{e}}_{\perp},\vec{\mathbf{e}}_{\perp}\right)
\frac{x_{\perp}}{r_{\bot}}.
\]
Therefore $\partial_{r_{\bot}}U\in H^{1}(\mathbf{R}^{3})$ is obvious.
Computing higher order derivatives in a similar fashion and applying an
interpolation argument if $s$ is not an integer, one can prove $\partial
_{r_{\bot}}U\in H^{s}(\mathbf{R}^{3})$.

To show $\frac{1}{r_{\bot}}\partial_{r_{\bot}}U\in L^{2}(\mathbf{R}^{3})$, we
first observe that the radial symmetry of $U$ implies its linearization at
$x_{\bot}=0$ is also a radially symmetric linear function, which can only be
$0$, and thus
\[
\nabla_{x_{\bot}}U(x_{1},0)=0\implies\partial_{r_{\bot}}U(x_{1},0)=0.
\]
Fix $x_{1}$, on the one hand, one may estimate by using the Cauchy-Schwarz
inequality
\[%
\begin{split}
&  \big(\partial_{r_{\bot}}U(x_{1},r_{\bot})\big)^{2}=2\int_{0}^{r_{\bot}%
}\partial_{r_{\bot}}U(x_{1},r_{\bot}^{\prime})\partial_{r_{\bot}r_{\bot}%
}U(x_{1},r_{\bot}^{\prime})dr_{\bot}^{\prime}\\
\leq &  Cr_{\bot}^{\frac{p-2}{p}}|\nabla U(x_{1},\cdot)|_{L^{\infty
}(\mathbf{R}^{2})}\left(  \int_{0}^{r_{\bot}}r_{\bot}^{\prime}\left\vert
\partial_{r_{\bot}r_{\bot}}U\left(  x_{1},r_{\bot}^{\prime}\right)
\right\vert ^{p}dr_{\bot}^{\prime}\right)  ^{\frac{1}{p}}\\
\leq &  Cr_{\bot}^{\frac{p-2}{p}}|\nabla U(x_{1},\cdot)|_{H^{s}(\mathbf{R}%
^{2})}|D^{2}U(x_{1},\cdot)|_{L^{p}(\{|x_{\bot}|<r_{\bot}\})}\leq Cr_{\bot
}^{\frac{p-2}{p}}|\nabla U(x_{1},\cdot)|_{H^{s}(\mathbf{R}^{2})}^{2}%
\end{split}
\]
for some $p>2$. Integrating in $x_{1}$
we obtain
\[
\int_{\mathbf{R}}\big(\partial_{r_{\bot}}U(x_{1},r_{\bot})\big)^{2}dx_{1}\leq
Cr_{\bot}^{1-\frac{2}{p}}|\nabla U|_{H^{s}(\mathbf{R}^{3})}^{2}.
\]
On the other hand, via integration by parts, we have%

\[%
\begin{split}
&  \int_{\tilde{r}_{\bot}}^{1}\frac{1}{r_{\bot}^{\prime}}\big(\partial
_{r_{\bot}}U(x_{1},r_{\bot}^{\prime})\big)^{2}dr_{\bot}^{\prime}%
=-2\int_{\tilde{r}_{\bot}}^{1}(\log r_{\bot}^{\prime})\partial_{r_{\bot}%
}U(x_{1},r_{\bot}^{\prime})\partial_{r_{\bot}r_{\bot}}U(x_{1},r_{\bot}%
^{\prime})dr_{\bot}^{\prime}\\
&  \hspace{3.1in}-(\log\tilde{r}_{\bot})\big(\partial_{r_{\bot}}U(x_{1}%
,\tilde{r}_{\bot})\big)^{2}.
\end{split}
\]
Integrating it with respect to $x_{1}$, letting $\tilde{r}_{\bot}%
\rightarrow0+$, and using the above inequality, we obtain
\[
|\frac{1}{r_{\bot}}\partial_{r_{\bot}}U|_{L^{2}(\{|x_{\bot}|<1\})}^{2}%
=-2\int_{|x_{\bot}|<1}(\log r_{\bot})\partial_{r_{\bot}}U(x_{1},r_{\bot
})\partial_{r_{\bot}r_{\bot}}U(x_{1},r_{\bot})dx
\]
Splitting the integrand on the right side into the product of $r_{\bot
}^{-\frac{1}{2}}\partial_{r_{\bot}}U$, $r_{\bot}^{\frac{1}{p}}\partial
_{r_{\bot}r_{\bot}}U$, and $r_{\bot}^{\frac{1}{2}-\frac{1}{p}}\log r_{\bot}$
and applying the H\"{o}lder inequality first with indices $\frac{1}{2}$,
$\frac{1}{p}$, and $\frac{1}{2}-\frac{1}{p}$ to the integral in $x_{\bot}$ and
then the Cauchy-Schwarz inequality to the $x_{1}$ integral, we have
\[%
\begin{split}
&  |\frac{1}{r_{\bot}}\partial_{r_{\bot}}U|_{L^{2}(\{|x_{\bot}|<1\})}^{2}\\
\leq &  C\int_{\mathbf{R}}|\frac{1}{r_{\bot}}\partial_{r_{\bot}}U(x_{1}%
,\cdot)|_{L^{2}(\{|x_{\bot}|<1\})}|D^{2}U(x_{1},\cdot)|_{L^{p}(\{|x_{\bot
}|<1\})}dx_{1}\\
\leq &  C|\frac{1}{r_{\bot}}\partial_{r_{\bot}}U|_{L^{2}(\{|x_{\bot}%
|<1\})}|\nabla U|_{H^{s}(\mathbf{R}^{3})}%
\end{split}
\]
Therefore we obtain
\[
|\frac{1}{r_{\bot}}\partial_{r_{\bot}}U|_{L^{2}(\{|x_{\bot}|<1\})}\leq
C|\nabla U|_{H^{s}(\mathbf{R}^{3})}.
\]
As the estimate is trivially true on $\{|x_{\bot}|>1\}$,
the proof is complete.

\begin{center}
{\Large Acknowledgement}
\end{center}

This work is supported partly by a US NSF grant DMS-1411803, a Simons
Fellowship during 2013-2014 (Lin), a Chinese NSF grant NSFC-11271360 (Wang),
and a US NSF grant DMS-1362507 (Zeng).

\end{document}